\RequirePackage{fix-cm}
\documentclass[aos,preprint]{imsart}

\usepackage{amsthm,amsmath,amsfonts,amssymb}
\usepackage[numbers,sort&compress]{natbib}
\usepackage[colorlinks,citecolor=blue,urlcolor=blue]{hyperref}
\usepackage{bm}
\usepackage{dsfont}
\usepackage{mathtools}
\usepackage{microtype}
\usepackage{enumitem}
\usepackage{mathrsfs}
\usepackage{xcolor}
\usepackage{tikz}
\usepackage{pgfplots}
\usepackage[normalem]{ulem}

\DeclareFontFamily{U}{rsfs}{\skewchar\font127}
\DeclareFontShape{U}{rsfs}{m}{n}{%
   <5> <5.5> <6> rsfs5
   <7> rsfs7
   <8> <9> <10> <10.95> <12> <14.4> <17.28> <20.74> <24.88> rsfs10
}{}
\DeclareFontShape{T1}{ptm}{m}{scit}{<->ssub * ptm/m/it}{}

\startlocaldefs

\theoremstyle{plain}
\newtheorem{theorem}{Theorem}
\newtheorem{cor}[theorem]{Corollary}
\newtheorem{lemma}[theorem]{Lemma}
\newtheorem{prop}[theorem]{Proposition}

\theoremstyle{definition}
\newtheorem{definition}{Definition}
\newtheorem{remark}{Remark}
\newtheorem{assumption}{Assumption}

\newcommand{\parhead}[1]{\par\smallskip\noindent{\normalfont\bfseries\itshape #1.}\enspace}

\newtheorem*{theorem*}{Theorem}

\newcommand{\de}{\mathsf d}

\newcommand{\cB}{\mathcal{B}}

\newcommand{\cE}{\mathcal{E}}
\newcommand{\cF}{\mathcal{F}}

\newcommand{\cH}{\mathcal{H}}

\newcommand{\cK}{\mathcal{K}}
\newcommand{\cL}{\mathcal{L}}

\newcommand{\cR}{\mathcal{R}}

\newcommand{\cV}{\mathcal{V}}

\newcommand{\cX}{\mathcal{X}}

\newcommand{\bbR}{\mathbb{R}}

\newcommand{\sD}{\mathsf{D}}
 
\newcommand{\Id}{\mathrm{I}}

\newcommand{\dd}{\mathrm{d}}

\newcommand{\eps}{\epsilon}
\newcommand{\normal}{\mathsf{N}}

\newcommand{\E}{\mathbb{E}}
\renewcommand{\P}{\mathbb{P}}

\DeclarePairedDelimiter\bkt{[}{]}     
\makeatletter
\newcommand{\@exstar}[1]{\E \bkt*{#1}}
\newcommand{\@exnostar}[2][]{\E \bkt[#1]{#2}}
\newcommand{\ex}{\@ifstar\@exstar\@exnostar}
\makeatother

\makeatletter
\newcommand{\@prstar}[1]{\P \bkt*{#1}}
\newcommand{\@prnostar}[2][]{\P \bkt[#1]{#2}}
\newcommand{\pr}{\@ifstar\@prstar\@prnostar}
\makeatother

\DeclareMathOperator{\cov}{\mathsf{Cov}}

\DeclareMathOperator{\var}{\mathsf{Var}}

\pgfplotsset{compat=newest} 

\let\originalleft\left
\let\originalright\right
\renewcommand{\left}{\mathopen{}\mathclose\bgroup\originalleft}
\renewcommand{\right}{\aftergroup\egroup\originalright}

\newcommand{\wenxuan}[1]{\textcolor{red}{Wenxuan:~#1}}

\endlocaldefs

\begin{document}
\begin{frontmatter}
\title{Gaussian Comparison Theorems for High-Dimensional Posterior Inference}
\runtitle{Gaussian Comparison for Posterior Inference}

\begin{aug}
\author[A]{\fnms{Wenxuan}~\snm{Zou}\ead[label=e1]{wenxuan.zou@duke.edu}}
\author[B]{\fnms{Galen}~\snm{Reeves}\ead[label=e2]{galen.reeves@duke.edu}}
\address[A]{Department of Physics, Duke University\printead[presep={,\ }]{e1}}
\address[B]{Departments of Statistical Science and Electrical and Computer Engineering,
Duke University\printead[presep={,\ }]{e2}}
\end{aug}

\begin{abstract}
We develop a Gaussian comparison theory for posterior inference in
non-Gaussian high-dimensional models. The framework allows for model
misspecification and does not require the posterior distribution itself to be
approximately Gaussian. Working directly with likelihood processes on
separable function spaces, we establish explicit nonasymptotic free energy
bounds controlled by first two moment discrepancies and local third moments. A
perturbation-and-convexity argument transfers these comparisons to sandwich
bounds for posterior mean squared error and posterior variance, including
one-sided conclusions at nondifferentiable phase transitions. In
high-dimensional product models, these conditions reduce to local moment
controls and a canonical feature-radius scaling. Under weak regularity
requirements, the resulting universality theory applies to a broad class of
likelihood models, ranging from exponential families to more general regular
local log-likelihoods. The framework is illustrated on sparse Bernoulli
hypergraph inference.
\end{abstract}

\begin{keyword}[class=MSC]
\kwdgroup[type=primary]{\kwd{62F15}}
\kwdgroup[type=secondary]{\kwd{62B10}}
\end{keyword}

\begin{keyword}
\kwd{Universality}
\kwd{Gaussian comparison}
\kwd{Posterior inference}
\end{keyword}
\end{frontmatter}

\section{Introduction}


A central theme in asymptotic statistics is that complex statistical
models can often be approximated by simpler Gaussian ones. In classical
fixed-dimensional settings, this principle is formalized through local
asymptotic normality and related Gaussian approximation
theories~\cite{lecam:1986,lecam:2000,vaart:1998}. After centering and
rescaling around the true parameter, the likelihood ratio process
becomes asymptotically quadratic, the score becomes Gaussian, and the
statistical experiment is locally approximated by a Gaussian shift
model. These approximations underlie many classical results for
estimators, tests, and posterior distributions, including Laplace
approximations~\cite{kadane:1986} and Bernstein--von Mises
theorems~\cite{vaart:1998,lehmann:1998}.


Modern high-dimensional inference problems exhibit fundamentally
different behavior. The parameter dimension and sample size may grow
simultaneously~\cite{baik:2005,hastie:2022,perry:2018}, and local
Gaussian approximations need not accurately describe either the
likelihood process or the posterior distribution.
In particular, posterior distributions may remain highly non-Gaussian
and multimodal, leading to sharp threshold phenomena in inferential
behavior as signal strength or sample size
vary~\cite{lelarge:2018,lesieur:2017a,barbier:2019,barbier:2022}. Nevertheless, Gaussianity
often persists at the level of posterior observables. Quantities such
as posterior mean squared error and posterior variance asymptotically
coincide with those induced by an associated Gaussian model. The
present work develops a general framework for this phenomenon through
Gaussian comparison of posterior-generating likelihood processes.

We consider statistical inference based on observed data $W$ generated under a law $P_\theta$, where the true parameter $\theta$ belongs to a parameter space $\Theta$. The statistical model is specified by a prior distribution $\pi$ on a candidate space $T \subseteq \Theta$ together with a criterion or log-likelihood function $\ell(t;W)$ indexed by candidate parameters $t \in T$. The criterion induces a random likelihood process $X(t)=\ell(t;W)$, 
which determines the posterior distribution
\begin{equation}\label{eq:intro_posterior}
\pi_X(\dd t)
=
\frac{\exp\{X(t)\}\pi(\dd t)}
{\int_T \exp\{X(s)\}\pi(\dd s)}.
\end{equation}
Importantly, the criterion $\ell$ need not coincide with the true log-likelihood associated with the data-generating law $P_\theta$. Consequently, the framework naturally includes both well-specified Bayesian inference and inference under model mismatch. It also encompasses generalized or Gibbs posterior constructions based on alternative loss or scoring functions.

Our primary objects of interest are posterior observables associated with the likelihood process. For a feature map $\eta \colon \Theta \to \cH$ into a Hilbert space $\cH$, let $\widehat\eta_X = \int_T \eta(t)\,\pi_X(\dd t)$ denote the posterior mean estimator. With $\E_\theta$ denoting expectation under the data-generating law $P_\theta$, the principal observables studied in this paper are the corresponding posterior mean squared error and posterior variance:
\begin{equation}\label{eq:intro_mse_variance}
\begin{aligned}
\cR(\theta)
\coloneqq \E_\theta\big[\|\eta(\theta)-\widehat\eta_X\|^2 \big],
\qquad
\cV(\theta)
\coloneqq
\E_\theta \Big[\int_T\|\eta(t)-\widehat\eta_X\|^2\,\pi_X(\dd t)\Big].
\end{aligned}
\end{equation}
These quantities measure, respectively, posterior estimation accuracy and uncertainty. 

To study these observables, we compare the original likelihood process $X$ with a Gaussian process $Y=(Y(t))_{t\in T}$ indexed by the same parameter space $T$ and having matching mean and covariance structure. The Gaussian process induces a posterior distribution through the same prior $\pi$. In high-dimensional settings, many non-Gaussian likelihood processes decompose into a large number of weak local contributions. Under suitable scaling, their effect on posterior observables is captured by the associated Gaussian process, even when the likelihood processes $X$ and $Y$ remain far apart in distribution and the induced posterior distributions remain highly non-Gaussian.

\subsection{Overview of the approach and main results}

We study posterior observables by first analyzing the free energy, the
$P_\theta$-averaged log-normalizing constant associated with the likelihood
process $X$:
\begin{equation}\label{eq:intro_free_energy}
F(\theta)
=
\E_\theta
\Bigl[
\log
\int_T e^{X(t)}\pi(\dd t)
\Bigr].
\end{equation}
The free energy is a global functional of the likelihood process, summarizing
the aggregate posterior behavior rather than the value of the likelihood at a
single parameter. The nonasymptotic comparison proceeds in two stages, each
built around a different comparison principle. We then study its consequences
along asymptotic sequences of inference problems.

\parhead{First stage: free energy comparison}
Let $Y=(Y(t))_{t\in T}$ be any Gaussian process on the same index set, with law
$Q_\theta$, and define its free energy by
\begin{equation}
\label{eq:intro_gaussian_free_energy}
F^{\mathrm G}(\theta)
=
\E_{\theta}
\Bigl[
\log
\int_T e^{Y(t)}\pi(\dd t)
\Bigr].
\end{equation}
Comparing the free energy of a non-Gaussian model with that of a Gaussian
surrogate is a recurring strategy for establishing universality in
high-dimensional inference~\cite{korada:2011,deshpande:2015a,krzakala:2016,lesieur:2017a,lelarge:2018,reeves:2019ab,guionnet2025estimating,guionnet:2025,mergny:2024,zou:2026}.
Our first main result is an explicit, nonasymptotic bound on the free energy
gap $\lvert F(\theta)-F^{\mathrm G}(\theta)\rvert$: under regularity
assumptions on $\Theta$, $T$, and on the processes $X$ and $Y$, we show that
(see Theorem~\ref{thm:fe_comparison})
\begin{equation}\label{eq:intro_fe_bound}
\begin{aligned}
\bigl|
F(\theta)-F^{\mathrm G}(\theta)
\bigr|
&\le
\bigl\|
\E_\theta[X]-\E_\theta[Y]
\bigr\|_\infty
\\
&\qquad +\bigl\|
\cov_\theta(X)-\cov_\theta(Y)
\bigr\|_\infty
+
\E_\theta
\bigl\|
X-\E_\theta[X]
\bigr\|_\infty^3,
\end{aligned}
\end{equation}
where $\|\cov_\theta(X)-\cov_\theta(Y)\|_\infty=\sup_{s,t\in T}|\cov_\theta(X(s),X(t))-\cov_\theta(Y(s),Y(t))|$.
The bound separates two sources of error: the mean and covariance terms
measure how well $Y$ approximates $X$ in its first two moments, while the
third-moment term quantifies the non-Gaussianity of~$X$. When $X$ is itself
Gaussian, the third-moment term can be removed, and the bound reduces to a
comparison of the first two moments alone. The proof builds on a Gaussian
interpolation in the spirit of Stein's
method~\cite{guerra:2003,talagrand:2011,rollin:2013}, which controls the
interpolated derivative via integration by parts, exact on the Gaussian side
and approximate on the non-Gaussian side. The main technical novelty is to
carry out this argument for infinite-dimensional likelihood processes, using
the Fr\'{e}chet differentiability of the log-partition map.

To illustrate the advantage of this function-space approach, note that in
many of the high-dimensional inference problems mentioned above, the
log-likelihood process $X$ and its Gaussian surrogate $Y$ take the form
\[
X(t)=\langle W,\eta(t)\rangle+R(t),
\qquad
Y(t)=\langle W^{\mathrm G},\eta(t)\rangle,
\qquad t\in T,
\]
where $W\in\bbR^d$ is a data vector, $\eta:T\to\bbR^d$ is a feature map,
$W^{\mathrm G}\in\bbR^d$ is a Gaussian vector with the same first and second
moments as $W$, and $R$ is a remainder process.
Existing comparison arguments operate in the finite-dimensional space of~$W$
and therefore cannot accommodate the remainder~$R$ directly. A common
workaround is to introduce an intermediate process
$X'(t)=\langle W,\eta(t)\rangle$, compare $X'$ with~$Y$ via
finite-dimensional interpolation, and bound the free energy gap between~$X$
and~$X'$ separately through the Lipschitz continuity of the log-partition map.
This second step requires conditions such as
\[
\E_\theta\sup_{t\in T}|R(t)|\le\varepsilon,
\]
together with uniform
boundedness~\cite{krzakala:2016,lesieur:2017a,lelarge:2018,guionnet2025estimating,guionnet:2025,mergny:2024}
or Bernstein-type
conditions~\cite{deshpande:2015a,reeves:2019ab,zou:2026} on~$R$.
By contrast, applying~\eqref{eq:intro_fe_bound} directly to the full
process~$X$ controls the remainder~$R$ only through its contribution to the
mean, covariance, and third-moment terms:
\[
\sup_{t\in T}|\E_\theta R(t)|,
\qquad
\sup_{t\in T}\E_\theta|R(t)|^2,
\qquad
\E_\theta\sup_{t\in T}|R(t)|^3.
\]
Since~$R$ is typically small in a suitable scaling, all three terms are
small, requiring only the corresponding moment conditions. This provides
a unified framework that accommodates a broader class of inference problems
without requiring finite-dimensional decompositions or strong tail assumptions
on the remainder.

\parhead{Second stage: comparing posterior observables}
The first-stage comparison of $F(\theta)$ and $F^{\mathrm G}(\theta)$ does not
directly extend to posterior mean squared error or posterior variance, since
these quantities depend on the posterior law rather than only its normalizing
constant. We obtain the corresponding comparison through a perturbation
argument. Specifically, let $F(\theta,\beta)$ be a scalar perturbation of the
non-Gaussian free energy. On the Gaussian side, let $Y_\beta$ be a perturbed
Gaussian process with $Y_0=Y$, and denote its free energy by
$F^{\mathrm G}(\theta,\beta)$. Both perturbations are indexed by $\beta$ near
zero, with $F(\theta,0)=F(\theta)$ and
$F^{\mathrm G}(\theta,0)=F^{\mathrm G}(\theta)$.

Let $\Phi(\theta)$ denote either $\cR(\theta)$ or $\cV(\theta)$ in
\eqref{eq:intro_mse_variance}. We define
$\Phi^{\mathrm G}(\theta,\beta)$ by replacing $X$ with $Y_\beta$ in the
corresponding posterior functional, and write
$\Phi^{\mathrm G}(\theta)=\Phi^{\mathrm G}(\theta,0)$. The perturbations are
chosen so that, schematically,
\[
\partial_\beta F(\theta,0)
=
\Phi(\theta)+\delta(\theta),
\qquad
\partial_\beta F^{\mathrm G}(\theta,\beta)
=
\Phi^{\mathrm G}(\theta,\beta)+\delta^{\mathrm G}(\theta,\beta),
\]
where the remainders are explicit and
$\delta^{\mathrm G}(\theta,0)=0$.

The key step to recover posterior observables from the free energy comparison
is an explicit, nonasymptotic sandwich bound.
By convexity of $\beta\mapsto F(\theta,\beta)$ and
$\beta\mapsto F^{\mathrm G}(\theta,\beta)$, the derivative at zero is bounded
by difference quotients evaluated at $\pm\eps$. Combined with the first-stage
free energy comparison, this gives
\begin{align}\label{eq:intro_sandwich}
\Phi^{\mathrm G}(\theta,-\eps)-\Delta(\theta,\eps)
\;\;\lesssim\;\;
\Phi(\theta)
\;\;\lesssim\;\;
\Phi^{\mathrm G}(\theta,\eps)+\Delta(\theta,\eps),
\end{align}
where $\Delta(\theta,\eps)$ is an explicit error term that combines the free
energy comparison error of the first stage with the derivative approximation errors.
Optimizing over $\eps$ yields a quantitative bound on the gap between the
posterior observable and its Gaussian counterpart.

\parhead{Asymptotic consequences}
We consider a sequence of inference problems indexed by a dimension parameter
$n\to\infty$, with
$\theta=(\theta_1,\ldots,\theta_n)\in\Theta_n$. The parameter and candidate
spaces, prior, data-generating laws, likelihood processes, and their
perturbations may all depend on $n$. We use the normalized quantities
\begin{align}
F_n(\theta,\beta)
&\coloneqq
\frac1n F(\theta,\beta),
&
F_n^{\mathrm G}(\theta,\beta)
&\coloneqq
\frac1n F^{\mathrm G}(\theta,\beta),
\label{eq:normalized_free_energies}
\\
\Phi_n(\theta)
&\coloneqq
\frac1n\Phi(\theta),
&
\Phi_n^{\mathrm G}(\theta,\beta)
&\coloneqq
\frac1n\Phi^{\mathrm G}(\theta,\beta).
\label{eq:normalized_posterior_observables}
\end{align}
The unperturbed quantities correspond to $\beta=0$.

For the asymptotic theory, we specialize $Y$ to the concrete Gaussian location
family of Definition~\ref{def:gaussian_loc}. We identify structural and moment
conditions on $X$ under which its local first- and second-order behavior
determines a corresponding Gaussian location process $Y$ and the normalized
comparison errors satisfy
\[
\bigl|F_n(\theta)-F_n^{\mathrm G}(\theta)\bigr|\longrightarrow0,
\qquad
\Delta_n(\theta,\eps)\longrightarrow \eps,
\]
where $\Delta_n(\theta,\eps)$ denotes the normalized error in the
sandwich~\eqref{eq:intro_sandwich}. The first limit transfers any limiting
Gaussian free energy to the non-Gaussian model.


Beyond these qualitative conclusions, our bounds explicitly quantify the free
energy discrepancy and the sandwich error $\Delta_n(\theta,\eps)$, and hence
yield convergence rates whenever the constituent errors vanish at controlled
rates. Suppose, in addition, that
$\cF^{\mathrm G}(\theta,\beta)\coloneqq
\lim_{n\to\infty}F_n^{\mathrm G}(\theta,\beta)$ exists for $\beta$ in a
neighborhood of zero and is differentiable at zero. The
sandwich~\eqref{eq:intro_sandwich} then gives, 
\[
\lim_{n\to\infty}\Phi_n(\theta)
=
\lim_{n\to\infty}\Phi_n^{\mathrm G}(\theta)
=
\partial_\beta \cF^{\mathrm G}(\theta,0).
\]
Thus, asymptotic universality of the posterior observables reduces to
identifying the limiting Gaussian free energy and its derivative at the origin.
The finite-sample sandwich bound~\eqref{eq:intro_sandwich}, however, requires
neither the existence nor the differentiability of this limit. If the limiting
Gaussian free energy exists but is not differentiable at the origin, its
one-sided derivatives still provide asymptotic lower and upper bounds. Such
nondifferentiability is characteristic of a phase transition and may manifest
itself as a jump in the posterior mean squared error or posterior variance.

\subsection{Relation to prior work}


Our approach draws on ideas from statistical physics and information theory,
particularly the study of Gibbs measures, free energies, and
high-dimensional Bayesian
inference~\cite{talagrand:2003,mezard:2009}. Closely related comparison
principles have also been developed in statistics and machine learning. We
review the literature most relevant to our results.
\parhead{Universality in high-dimensional inference}
Perturbative identities relating derivatives of log-partition functionals to
observables such as energy, overlap, and mean squared error play a central role
in the analysis of Gibbs measures and Bayesian inference. A substantial body
of work has used related ideas to study universality in high-dimensional
inference~\cite{korada:2010,korada:2011,deshpande:2015a,krzakala:2016,lesieur:2017a,lelarge:2018,reeves:2019ab,guionnet2025estimating,guionnet:2025,mergny:2024}.
A recurring principle is that sufficiently regular non-Gaussian observation
models share their asymptotic behavior with Gaussian models having the same
local second-order structure, suggesting asymptotically equivalent Gaussian
descriptions at the level of the log-partition functional.

Our results differ from this earlier work in several respects. The comparison
framework is formulated directly in terms of posterior-generating likelihood
processes, separating the comparison principle from model-specific structure.
The results are nonasymptotic and apply on infinite-dimensional function
spaces, with the approximation error controlled by explicit third-moment
conditions. The framework further converts comparison of log-partition
functionals into comparison of posterior observables, including posterior mean
squared error and posterior variance. While related perturbative arguments
have been developed for particular models, our framework provides a unified
route from Gaussian comparison of free energies to inference on posterior
observables.

\parhead{Comparison principles and high-dimensional Gaussian approximation}
In the high-dimensional regime considered here, the parameter dimension
grows with the sample size, and Gaussian approximation is often formulated
by comparing $\E[f(X)]$ and $\E[f(Y)]$ for selected test functionals~$f$,
rather than by approximating the full distributions of~$X$ and~$Y$.
One standard approach is the Lindeberg replacement method, which exploits
independence to replace components successively
\cite{chatterjee:2006,korada:2011}. In random matrix theory, entrywise
replacement underlies the Four Moment Theorem and Green-function comparison
methods for local spectral universality
\cite{tao:2011,erdos-yau-yin:2012}.
A second approach is Stein's method, which uses couplings or interpolation
identities to compare expectations of smooth test functions
\cite{chatterjee-meckes:2008,reinert-rollin:2009,rollin:2013}. A prominent
high-dimensional implementation is the work of Chernozhukov, Chetverikov,
and Kato
\cite{chernozhukov:2013,chernozhukov:2017}, where a closely related
Stein-interpolation argument is combined with smoothing and
anti-concentration to approximate maxima and probabilities of
high-dimensional sets.

Our proof combines these two approaches and carries out the comparison
directly in $C(T)$, the Banach space of real-valued continuous functions on
the compact metric space~$T$, equipped with the supremum norm.
\parhead{Gaussian equivalence in machine learning}
Our emphasis on posterior observables is also related to recent Gaussian
equivalence and universality results in high-dimensional statistics and
machine learning~\cite{dobriban:2018,hastie:2022,goldt:2022,hu:2023,
montanari:2022,han:2023}, including work on random feature models, neural
networks, and high-dimensional empirical risk minimization. These works
primarily concern estimators and prediction functionals, whereas our
comparison acts on the likelihood process and the posterior quantities it
induces. Methodologically, these works are related to ours through their use
of Lindeberg-type replacement or interpolation arguments and, in some cases,
Stein-type Gaussian approximation~\cite{hu:2023,montanari:2022,han:2023}.


\parhead{Beyond posterior contraction}
This work also differs from classical Bayesian asymptotics, which
primarily characterizes posterior concentration around the true parameter
through contraction rates~\cite{ghosal:2017,ghosal:2000convergence} and Bernstein--von
Mises phenomena, including extensions to infinite-dimensional
models~\cite{castillo:2013,castillo:2014} to misspecified
models~\cite{kleijn:2012}, and to generalized
posteriors~\cite{miller:2021}. In the high-dimensional regimes considered here,
the posterior need not concentrate at a single point, and posterior
uncertainty may remain macroscopic asymptotically. We instead characterize
this residual uncertainty through Gaussian comparison of posterior
observables. 

A stronger direction is to pass from posterior observables to
the asymptotic posterior distribution itself.
Universality at this level has been studied in
high-dimensional spin-glass models and relies on additional symmetry and
concentration properties~\cite{carmona:2006,panchenko:2013book,chen:2019spin}. Related
work transfers this perspective to high-dimensional statistical inference
through multioverlap concentration~\cite{barbier:2022strong,
barbier:2022logconcave}. We do not pursue this stronger direction here.

\subsection{Organization}

Section~\ref{sec:general_frame} introduces the general framework, including
likelihood processes, posterior observables, and their Gaussian location
counterparts. Section~\ref{sec:comparison} develops the nonasymptotic
comparison principles for free energies and posterior observables through a
perturbation argument. Section~\ref{sec:universality} applies these principles
to high-dimensional product models, first for exponential-family
log-likelihoods and then for the broader class of regular local
log-likelihoods. Finally, Section~\ref{sec:app_sparse_bernoulli} specializes
the theory to sparse Bernoulli hypergraph models.



\section{Problem Formulation}~\label{sec:general_frame}

Let $\Theta$ be a compact parameter space, and let $T\subseteq\Theta$ be a
compact metric space equipped with a prior distribution $\pi$. The space
$\cX\coloneqq C(T)$, endowed with the norm
$\|x\|_\infty\coloneqq\sup_{t\in T}|x(t)|$, is a separable Banach space. We
regard the likelihood process $X=(X(t))_{t\in T}$ as a $\cX$-valued random
variable on a common sample space $\Omega$, and write
$X(t,\omega)=X(\omega)(t)$. Compactness of $T$ and continuity of the sample
paths imply
\[
0
<
\int_T \exp\{X(t)\}\pi(\dd t)
\le
\exp\{\|X\|_\infty\}
<
\infty.
\]
Thus the posterior distribution in~\eqref{eq:intro_posterior} is well defined,
and the logarithm appearing in the free energy~\eqref{eq:intro_free_energy} is
finite for every sample path;
finiteness of the free energy itself is discussed in
Theorem~\ref{thm:fe_comparison}.
All Gaussian processes $Y$ considered below are
also taken to have continuous sample paths. Their posterior normalizing
constants and the logarithms defining the Gaussian free energies
in~\eqref{eq:intro_gaussian_free_energy} are therefore finite pathwise.

Let $(P_\theta)_{\theta\in\Theta}$ denote the data-generating family on
$\Omega$, and write $\E_{P_\theta}$ for expectation under $P_\theta$. The
statistical model represented by $X$ need not coincide with this family,
allowing for model misspecification. Under correct specification, $T=\Theta$
and there exists a reference probability measure $\mu$ such that
\[
X(t,\omega)
=
\log\frac{\dd P_t}{\dd\mu}(\omega),
\qquad
t\in T.
\]

\subsection{Feature Maps and Posterior Observables}

Throughout, a feature map is understood to be a continuous map
$\eta:\Theta\to\cH$, where $\cH$ is a separable real Hilbert space. This representation provides a geometric embedding of the candidate space and allows one to quantify posterior uncertainty and estimation error. Write the posterior mean estimator associated with $\eta$ as
\[
\widehat\eta_X
\coloneqq
\int_T \eta(t)\,\pi_X(\dd t).
\]

The posterior variance $\cV(\theta)$ and posterior mean squared error
$\cR(\theta)$ defined in~\eqref{eq:intro_mse_variance} admit the following
representations under the product posterior measure. For each
$\theta\in\Theta$,
\begin{equation}\label{eq:pv}
\begin{aligned}
\cV(\theta)
&=
\iint_{T^2}
\Big[
\|\eta(t)\|_\cH^2
-
\langle \eta(t),\eta(s)\rangle_\cH
\Big]
\,\E_{P_\theta}[\pi_X^{\otimes2}]
(\dd t,\dd s),
\end{aligned}
\end{equation}
and
\begin{equation}\label{eq:mse}
\begin{aligned}
\cR(\theta)
&=
\iint_{T^2}
\Big[
\|\eta(\theta)\|_\cH^2
+
\langle \eta(t),\eta(s)\rangle_\cH
-
2\langle \eta(\theta),\eta(t)\rangle_\cH
\Big]
\,\E_{P_\theta}[\pi_X^{\otimes2}]
(\dd t,\dd s).
\end{aligned}
\end{equation}
More generally, let $(\phi_\theta)_{\theta\in\Theta}$ be a family of test functions with $\phi_\theta:T^2\to\bbR$. We define the associated \emph{posterior observables} by
\begin{align}\label{eq:Phi_X}
\Phi(\theta)
\coloneqq
\iint_{T^2}
\phi_\theta(t,s)\,
\E_{P_\theta}[\pi_X^{\otimes2}]
(\dd t,\dd s).
\end{align}

\subsection{Gaussian Log-Likelihood Processes}

Let $(Q_\theta)_{\theta\in\Theta}$ be a family of probability measures on $\Omega$, and let $Y=(Y(t))_{t\in T}$ be a Gaussian random variable taking values in $\cX=C(T)$. For each $\theta\in\Theta$, the law of $Y$ is induced by $Q_\theta$. The Gaussian posterior distribution induced by $Y$ is
\begin{equation}\label{eq:gaussian_posterior}
\pi_Y(\dd t)
=
\frac{\exp\{Y(t)\}\pi(\dd t)}
{\int_T \exp\{Y(s)\}\pi(\dd s)}.
\end{equation}
For a family of test functions $(\phi_\theta)_{\theta\in\Theta}$ with $\phi_\theta:T^2\to\bbR$, we define the Gaussian posterior observables by
\begin{align}\label{eq:Phi_Y}
\Phi^{\mathrm G}(\theta)
\coloneqq
\int_{T^2}
\phi_\theta(t,s)\,
\E_{Q_\theta}[\pi_Y^{\otimes2}]
(\dd t,\dd s).
\end{align}

We focus on a class of Gaussian log-likelihood processes associated with a given feature map, which are directly related to the posterior observables of interest.
For a given feature map $\eta:\Theta\to\cH$ into a real separable Hilbert space $\cH$, we construct a Gaussian process $Y=(Y(t))_{t\in T}$ of the form
\begin{align}\label{eq:Y_eta}
	Y(t,\omega) = \langle \xi(\omega),\eta(t) \rangle + m_\theta(t),\qquad \xi\sim \normal(0,\Sigma_\theta).
\end{align}
Here $\Sigma_\theta:\cH\to\cH$ is a covariance operator, and $m_\theta:T\to\bbR$ is a deterministic mean function. Consequently,
\begin{align}\label{eq:Y_eta1}
\E_{Q_\theta}[Y(t)]
=
m_\theta(t),
\qquad
\cov_{Q_\theta}(Y(s),Y(t))
=
\langle \Sigma_\theta\,\eta(s),\eta(t)\rangle_{\cH}, \qquad s,t\in T.
\end{align}
The practical significance of the above construction becomes apparent in the Gaussian location setting, which motivates the following family.

\begin{definition}[Gaussian location family]~\label{def:gaussian_loc}
	A Gaussian location family is specified by a bounded linear operator $A:\cH\to\cH$, a self-adjoint bounded operator $K:\cH\to\cH$, and a covariance operator $\Sigma:\cH\to\cH$. The associated Gaussian log-likelihood process is the Gaussian process $Y=(Y(t))_{t\in T}$ whose mean function and covariance kernel are given by
		\begin{align*}
		\E_{Q_\theta}[Y(t)]
		&=
		\langle A\,\eta(\theta),\eta(t)\rangle_{\cH}
		-\frac12
		\langle K\,\eta(t),\eta(t)\rangle_{\cH},
		\\
		\cov_{Q_\theta}(Y(s),Y(t))
		&=
		\langle \Sigma\,\eta(s),\eta(t)\rangle_{\cH}.
		\end{align*}
	Equivalently, there exists $\xi\sim N(0,\Sigma)$ such that one may write
	\begin{equation}\label{eq:gaussian_location_xi_representation}
	Y(t)
	=
	\langle \xi,\eta(t)\rangle_{\cH}
	+
	\langle A\,\eta(\theta),\eta(t)\rangle_{\cH}
	-\frac12
	\langle K\,\eta(t),\eta(t)\rangle_{\cH}.
	\end{equation}
\end{definition}

The term \emph{location} refers to the fact that $A$, $\Sigma$, and $K$ are
independent of $\theta$. The preceding definition can be realized by a Gaussian
observation model. Under the data-generating law $Q_\theta$, suppose that the observation is
\[
Z(\omega)
=
A\eta(\theta)+\xi(\omega),
\qquad
\xi\sim\normal(0,\Sigma),
\]
while the statistical model is given, possibly under misspecification, by
\[
\{\normal(K\eta(t),K):t\in T\}.
\]
This interpretation requires that $K$ be a covariance operator and that the
compatibility conditions stated in
Appendix~\ref{app:gaussian_loglik_derivation} hold; the construction is
justified there via the Cameron--Martin formula. The case $A=K=\Sigma$
corresponds to the well-specified setting.

\section{Gaussian Comparison via Perturbation}~\label{sec:comparison}

This section develops the nonasymptotic comparison principles for free
energies and posterior observables outlined in the introduction. We first
introduce perturbations that represent posterior observables as derivatives of
perturbed free energies. We then establish a free energy comparison theorem
and combine it with convexity to obtain explicit sandwich bounds between these
observables and their Gaussian counterparts.


\subsection{Perturbation Mechanism}\label{subsec:perturbation_mechanism}

Let $\cB\subset\bbR$ be an open interval containing $0$, and let
$(X_\beta)_{\beta\in \cB}$ be a family of $\cX$-valued random variables on
$\Omega$ such that $X_0=X$.
We assume that, for every $\omega\in\Omega$, the map
$\beta\mapsto X_\beta(\omega)$ is differentiable as an $\cX$-valued map, and
write
\[
\dot X_h(t)
\coloneqq
\frac{\partial}{\partial\beta}X_\beta(t)\big|_{\beta=h},
\qquad
h\in \cB.
\]
Thus $\dot X_h$ is again a $\cX$-valued random variable for every
$h\in\cB$. We refer to $(X_\beta)_{\beta\in \cB}$ as a
\emph{perturbation} of $X$, and define the corresponding perturbed free
energy by
\[
F(\theta,\beta)
\coloneqq
\E_{P_\theta}
\left[
\log
\int_T e^{X_\beta(t)}\,\pi(\dd t)
\right].
\]
By construction, $F(\theta,0)=F(\theta)$. Similarly, let
$(Y_\beta)_{\beta\in \cB}$ be a perturbation of $Y$ in the sense above,
with $Y_0=Y$. We additionally require that, for every $\beta\in\cB$,
$Y_\beta$ is Gaussian and $(Y_\beta,\dot Y_\beta)$ is jointly Gaussian.
Define the corresponding perturbed Gaussian free energy by
\begin{equation}\label{eq:perturbed_gaussian_free_energy}
F^{\mathrm G}(\theta,\beta)
\coloneqq
\E_{Q_\theta}
\left[
\log\int_T \exp\{Y_\beta(t)\}\,\pi(\dd t)
\right].
\end{equation}

The derivatives of perturbed free energies are closely related to the
Gibbs product measures generated by the perturbed processes. To make this
relation explicit, for a perturbation $(X_\beta)_{\beta\in \cB}$ of $X$ and
$s,t\in T$, we define
\begin{align}\label{eq:psi_X}
\psi_{\theta,\beta}(s,t)
=
\E_{P_\theta}[\dot X_\beta(t)]
+
\cov_{P_\theta}(X_\beta(t),\dot X_\beta(t))
-
\cov_{P_\theta}(X_\beta(s),\dot X_\beta(t))
.
\end{align}
Similarly, for a perturbation $(Y_\beta)_{\beta\in \cB}$ of $Y$ and
$s,t\in T$, we define
\begin{align}\label{eq:psi_Y}
	\psi^{\mathrm G}_{\theta,\beta}(s,t)
	=
	\E_{Q_\theta}[\dot Y_\beta(t)]
	+
	\cov_{Q_\theta}(Y_\beta(t),\dot Y_\beta(t))
	-
	\cov_{Q_\theta}(Y_\beta(s),\dot Y_\beta(t))
	.
\end{align}
The next two lemmas express the free energy derivatives in terms of these
functions.

\begin{lemma}\label{lem:deri_FG}
	Suppose that
	$\E_{Q_\theta}[\sup_{\beta\in\cB}\|\dot Y_\beta\|_\infty]<\infty$.
	Then for all $\theta\in\Theta$ and $\beta\in\cB$,
	\begin{equation}\label{eq:gaussian_free_energy_derivative}
	\partial_\beta F^{\mathrm G}(\theta,\beta)
	=
	\iint_{T^2}
	\psi^{\mathrm G}_{\theta,\beta}(s,t)
	\,\E_{Q_\theta}[\pi_{Y_\beta}^{\otimes2}]
	(\dd s,\dd t).
	\end{equation}
\end{lemma}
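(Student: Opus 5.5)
We argue in two steps: differentiate under the expectation, then evaluate the resulting cross term by Gaussian integration by parts in the joint Gaussian pair $(Y_\beta,\dot Y_\beta)$. Write $Z_\beta=\int_T e^{Y_\beta(t)}\pi(\dd t)$. Pathwise, since $\beta\mapsto Y_\beta$ is differentiable as a $C(T)$-valued map and $x\mapsto\log\int_T e^{x(t)}\pi(\dd t)$ is Fréchet differentiable on $C(T)$ with derivative $h\mapsto\int_T h\,\dd\pi_x$, the chain rule gives
\[
\partial_\beta\log Z_\beta=\int_T \dot Y_\beta(t)\,\pi_{Y_\beta}(\dd t).
\]
This quantity is bounded in absolute value by $\|\dot Y_\beta\|_\infty$. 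By the mean value inequality, difference quotients of $\log Z_\beta$ are dominated by $\sup_{\beta\in\cB}\|\dot Y_\beta\|_\infty$, which is integrable by hypothesis. Dominated convergence then yields
\[
\partial_\beta F^{\mathrm G}(\theta,\beta)=\E_{Q_\theta}\Big[\int_T\dot Y_\beta(t)\,\pi_{Y_\beta}(\dd t)\Big].
\]

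Next, fix $\beta$ and split $\dot Y_\beta(t)=\E[\dot Y_\beta(t)]+\widetilde{\dot Y}_\beta(t)$, where the second term is centered. The mean part contributes $\iint \E[\dot Y_\beta(t)]\,\E[\pi_{Y_\beta}^{\otimes2}](\dd s,\dd t)$, since $\pi_{Y_\beta}$ is a probability measure. For the centered part we apply Gaussian integration by parts, $\E[G\,f(Y_\beta)]=\E[\langle \cov(Y_\beta,G),Df(Y_\beta)\rangle]$ for $G$ jointly Gaussian with $Y_\beta$. We use $G=\widetilde{\dot Y}_\beta(t)$ and $f(y)=e^{y(t)}/\int e^{y}\dd\pi$, whose Fréchet derivative in direction $h$ is
\[
\pi_y(t)\Big(h(t)-\int_T h(s)\,\pi_y(\dd s)\Big)
\]
(stated at the density level; concretely we work with $\int_T g(t)\,\pi_y(\dd t)$ and exchange integrals). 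The result is
\[
\E\int_T\widetilde{\dot Y}_\beta(t)\,\pi_{Y_\beta}(\dd t)=\E\iint\big[\cov(Y_\beta(t),\dot Y_\beta(t))-\cov(Y_\beta(s),\dot Y_\beta(t))\big]\,\pi_{Y_\beta}^{\otimes2}(\dd s,\dd t).
\]
Adding the mean part gives exactly $\psi^{\mathrm G}_{\theta,\beta}$ integrated against $\E_{Q_\theta}[\pi_{Y_\beta}^{\otimes2}]$.

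The main obstacle is justifying Gaussian integration by parts for a $C(T)$-valued Gaussian element rather than a finite vector. I would first reduce to finite dimensions: choose finite nets $T_k\subset T$ and approximate $\pi$ by discrete measures $\pi_k\Rightarrow\pi$ supported on $T_k$, for which the identity is the classical finite-dimensional Stein identity applied to the vector $(Y_\beta(u),\dot Y_\beta(u))_{u\in T_k}$. Then pass to the limit $k\to\infty$. Pathwise convergence holds by continuity of sample paths. Domination comes from the bounds $\|\dot Y_\beta\|_\infty\le\sup_\beta\|\dot Y_\beta\|_\infty$ and $|\cov|\le(\E\|Y_\beta\|_\infty^2\,\E\|\dot Y_\beta\|_\infty^2)^{1/2}$. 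These are finite by Fernique's theorem, which gives all moments of sup-norms of Gaussian elements of $C(T)$; this also covers the square-integrability of $\dot Y_\beta$ needed for the covariances. Alternatively, one can apply the integration by parts formula for Gaussian measures on separable Banach spaces via the Cameron–Martin space directly; in that case the only check is that $f$ is $C^1$ with bounded derivative, which holds because $|Df(y)h|\le 2\|h\|_\infty$.
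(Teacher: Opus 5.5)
Your proof is correct. The first step (pathwise chain rule, domination of difference quotients by $\sup_{r\in\cB}\|\dot Y_r\|_\infty$, dominated convergence) is the same as the paper's. The integration-by-parts step, however, follows a genuinely different route.

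The paper stays in $C(T)$ in two stages. It first proves a fixed-direction identity $\E\bigl[Df(Y_1)[h]\,(Y_2(t)-m_2(t))\bigr]=\E\bigl[D^2f(Y_1)[h,K_{12}(\cdot,t)]\bigr]$, by applying the abstract Cameron--Martin integration-by-parts theorem to the pair $(Y_1,Y_2)$ on $C(T)\times C(T)$. It then reaches the random direction $Y_2-m_2$ by expanding it in an orthonormal basis of $L^2(T,\pi)$ chosen inside $C(T)$. That step needs $L^2(\pi)$ bounds on $Df$ and $D^2f$ and several dominated-convergence passages.

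You instead discretize the prior. For $\pi_k$ supported on a finite net, the Gibbs masses $\pi_{k,y}(\{u\})$ are bounded with bounded derivatives. The identity for $\pi_k$ is then the classical Stein identity for the Gaussian vector $(Y_\beta(u),\dot Y_\beta(u))_{u\in T_k}$, and the limit $k\to\infty$ is taken pathwise under deterministic or integrable domination. This is more elementary: it needs only Fernique's theorem and weak convergence. The paper's route, in exchange, produces a standalone identity (Lemma~\ref{lem:exact_gibp_two}) that it reuses in the proof of Theorem~\ref{thm:fe_comparison}; your discretization would cover that use as well.

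One point you should make explicit. The passage $\iint\psi^{\mathrm G}_{\theta,\beta}\,\dd\pi_{k,Y_\beta}^{\otimes2}\to\iint\psi^{\mathrm G}_{\theta,\beta}\,\dd\pi_{Y_\beta}^{\otimes2}$ rests on weak convergence, so it requires $\psi^{\mathrm G}_{\theta,\beta}$ to be continuous on $T^2$. This holds because the mean and cross-covariance functions are continuous, by path continuity and dominated convergence using the Fernique moments.

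Your closing alternative is stated inaccurately, though your main argument does not rely on it. For $f(y)=e^{y(t)}/\int_T e^{y}\,\dd\pi$ one has $Df(y)[h]=f(y)\bigl(h(t)-\int_T h\,\dd\pi_y\bigr)$. The available bound is therefore $|Df(y)[h]|\le 2e^{2\|y\|_\infty}\|h\|_\infty$, not $2\|h\|_\infty$. Moreover, $\dot Y_\beta(t)$ need not be a functional of $Y_\beta$. The Cameron--Martin formula must then be applied to the joint law of $(Y_\beta,\dot Y_\beta)$, as the paper does, or to $\E[\dot Y_\beta(t)\mid Y_\beta]$. With those corrections, Fernique integrability suffices. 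Integrating the resulting pointwise identity over $t$ against $\pi$ would then also bypass the paper's basis expansion.
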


\begin{proof}
The proof, based on Gaussian integration by parts on the separable Banach
space $\cX=C(T)$, is given in Appendix~\ref{app:proof_deri_FG}.
\end{proof}

\begin{lemma}\label{lem:deri_F}
	Suppose that $X_\beta=\sum_{m=1}^M X_{\beta,m}$
	where $\{(X_{\beta,m},\dot X_{\beta,m})\}_{m=1}^M$ are independent
	under $P_\theta$, that
	$\E_{P_\theta}[\sup_{\beta\in\cB}\|\dot X_\beta\|_\infty]<\infty$,
	and that for every $m\in[M]$ and $\beta\in\cB$,
	$\E_{P_\theta}\|X_{\beta,m}\|_\infty^3<\infty$ and
	$\E_{P_\theta}\|\dot X_{\beta,m}\|_\infty^3<\infty$.
	Then for all $\theta\in\Theta$ and $\beta\in\cB$,
	\begin{align}
		\partial_\beta F(\theta,\beta)
		&=
		\iint_{T^2} 	\psi_{\theta,\beta}(s,t) \,
		\E_{P_\theta}[\pi_{X_\beta}^{\otimes 2}](\dd s,\dd t)
		+
		\sum_{m=1}^M R_{m,\beta},
	\end{align}
	where for each $m\in[M]$, $R_{m,\beta}$ is a remainder process with,
	\[
	|R_{m,\beta}|
	\le
	3
	\Bigl(
	\E_{P_\theta}
	\|X_{\beta,m}-\E_\theta[X_{\beta,m}]\|_\infty^3
	\Bigr)^{2/3}
	\Bigl(
	\E_{P_\theta}
	\|\dot X_{m,\beta}-\E_{P_\theta}[\dot X_{m,\beta}]\|_\infty^3
	\Bigr)^{1/3}.
	\]
\end{lemma}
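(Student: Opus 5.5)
We first differentiate under the expectation, then evaluate the resulting mean of a product by an approximate integration by parts for each independent summand $X_{\beta,m}$. Write $Z(x)=\log\int_T e^{x(t)}\pi(\dd t)$ for $x\in\cX$.

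\textbf{Differentiation under the expectation.} The map $Z$ is Fréchet differentiable on $\cX=C(T)$ with
\[
DZ(x)[h]=\int_T h\,\dd\pi_x,
\qquad
D^2Z(x)[h,k]=\int_T hk\,\dd\pi_x-\int_T h\,\dd\pi_x\int_T k\,\dd\pi_x .
\]
Moreover $|DZ(x)[h]|\le\|h\|_\infty$ and $\|D^2Z(x)\|\le 2$. Pathwise, the chain rule gives $\partial_\beta Z(X_\beta)=\int_T\dot X_\beta\,\dd\pi_{X_\beta}$. The domination hypothesis $\E\sup_\beta\|\dot X_\beta\|_\infty<\infty$, together with the mean value theorem, lets us exchange derivative and expectation. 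Hence
\[
\partial_\beta F=\int_T\E[\dot X_\beta(t)]\,\E[\pi_{X_\beta}](\dd t)+\E\Big[\int_T(\dot X_\beta-\E\dot X_\beta)(t)\,\pi_{X_\beta}(\dd t)\Big].
\]
The first term produces the $\E[\dot X_\beta(t)]$ part of $\psi_{\theta,\beta}$, after writing $\pi^{\otimes2}$ and integrating out $s$. Write $\bar{\dot X}_m=\dot X_{\beta,m}-\E\dot X_{\beta,m}$. Then the second term is $\sum_m\E[G(X_\beta)[\bar{\dot X}_m]]$, where $G(x)[h]\coloneqq\int h\,\dd\pi_x$.

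\textbf{Leave-one-out expansion.} For each $m$, let $X^{(m)}=X_\beta-X_{\beta,m}+\E X_{\beta,m}$, which is independent of $(X_{\beta,m},\dot X_{\beta,m})$, and set $\bar X_m=X_{\beta,m}-\E X_{\beta,m}$. Taylor expand $x\mapsto G(x)[\bar{\dot X}_m]$ around $X^{(m)}$ in the direction $\bar X_m$ to second order with integral remainder:
\[
G(X_\beta)[\bar{\dot X}_m]
= G(X^{(m)})[\bar{\dot X}_m]
+ DG(X^{(m)})[\bar X_m,\bar{\dot X}_m]
+ r_m ,
\]
where $|r_m|\le C\|\bar X_m\|_\infty^2\|\bar{\dot X}_m\|_\infty$ because third derivatives of $Z$ are bounded by a universal constant. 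We evaluate each term in expectation.
\begin{itemize}
\item The zeroth-order term has zero expectation, by independence and because $\bar{\dot X}_m$ is centered.
\item In the first-order term, $DG(x)[h,k]=\int hk\,\dd\pi_x-\iint h(s)k(t)\,\pi_x^{\otimes2}(\dd s,\dd t)$. By independence its expectation is
\[
\iint\big[\cov(X_{\beta,m}(t),\dot X_{\beta,m}(t))-\cov(X_{\beta,m}(s),\dot X_{\beta,m}(t))\big]\,\E[\pi_{X^{(m)}}^{\otimes2}](\dd s,\dd t).
\]
Summing over $m$ and using independence across $m$ gives exactly the covariance part of $\psi_{\theta,\beta}$, except that $\pi_{X^{(m)}}$ appears in place of $\pi_{X_\beta}$.
\end{itemize}

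\textbf{Swapping back and collecting remainders.} We replace $\E[\pi^{\otimes2}_{X^{(m)}}]$ by $\E[\pi^{\otimes2}_{X_\beta}]$. Since $\pi_x^{\otimes2}$ integrated against a fixed bounded kernel is Lipschitz in $x$ with constant $O(1)$, and by independence, this costs at most
\[
C\,\E\|\bar X_m\|_\infty\cdot\E\big[\|\bar X_m\|_\infty\|\bar{\dot X}_m\|_\infty\big]
\]
in absolute value; the covariance kernel for summand $m$ has sup norm at most $2\E[\|\bar X_m\|\|\bar{\dot X}_m\|]$. Each of these error terms, as well as $\E|r_m|$, is bounded by $\E\|\bar X_m\|^2\|\bar{\dot X}_m\|$ or by products of lower moments. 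Hölder's inequality turns every one of them into
\[
(\E\|\bar X_m\|^3)^{2/3}(\E\|\bar{\dot X}_m\|^3)^{1/3}.
\]
The cubic moment assumptions ensure finiteness, and we define $R_{m,\beta}$ to be the sum of these error terms.

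\textbf{Main obstacle.} The hard step is to obtain the constant $3$ exactly. The plan is to avoid crude bounds and instead use an interpolation $X^{(m)}+u\bar X_m$, $u\in[0,1]$, written out exactly. The derivative $\frac{\dd}{\dd u}\E[G\,\bar{\dot X}_m]$ is a centered-covariance functional under the product Gibbs measure. The point is that the second-order remainder and the swap error combine into a single $u$-integral of third-order Gibbs cumulants. Those cumulants are bounded by $\|\bar X_m\|^2\|\bar{\dot X}_m\|$ times an explicit constant; after integrating in $u$, that constant should total at most $3$.
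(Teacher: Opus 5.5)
Your decomposition is sound and is essentially the paper's argument with different packaging. The paper conditions on $X_\beta^{(-m)}$ and applies Lemma~\ref{lem:approx_ibp_two}. That lemma also anchors at the mean: the term $DF(m_1)[\bar X_2]$ has zero expectation, which is exactly your zeroth-order term. It then interpolates $D^2F(m_1+s\bar X_1)$ directly to $D^2F(X_1')$ for an independent copy $X_1'$. As a result the main term already appears as $\langle\E[D^2f(X_\beta)],K_m\rangle$, and no separate swap step is needed. Applying the triangle inequality to the path increment $s\bar X_1-\bar X_1'$ produces the same two moment expressions as your Taylor remainder and your swap error.

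The one unfinished point is the constant $3$. The cumulant interpolation in your last paragraph is not needed to get it.
\begin{itemize}
\item \emph{Taylor remainder.} Since $\sup_x\|D^3f(x)\|_{\mathrm{op}}\le 2$, your integral-form remainder satisfies $\E|r_m|\le \tfrac12\cdot 2\,\E\bigl[\|\bar X_m\|_\infty^2\|\bar{\dot X}_m\|_\infty\bigr]$.
\item \emph{Swap.} Write the contraction as $\langle D^2f(x),K_m\rangle=\E'\bigl[D^2f(x)[\bar X_m',\bar{\dot X}_m']\bigr]$, with $(\bar X_m',\bar{\dot X}_m')$ an independent copy. Use $\|D^2f(x)-D^2f(y)\|_{\mathrm{op}}\le 2\|x-y\|_\infty$ together with $X_\beta-X^{(m)}=\bar X_m$. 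The swap then costs at most $2\,\E\|\bar X_m\|_\infty\,\E\bigl[\|\bar X_m\|_\infty\|\bar{\dot X}_m\|_\infty\bigr]$.
\item \emph{Total.} H\"older bounds both moment expressions by $(\E\|\bar X_m\|_\infty^3)^{2/3}(\E\|\bar{\dot X}_m\|_\infty^3)^{1/3}$. The total is $1+2=3$, which matches the paper's $\tfrac32M_3(F)$ with $M_3(F)\le 2$.
\end{itemize}
By contrast, the swap you sketched goes through the sup norm of the kernel $K_m(t,t)-K_m(s,t)$ and a generic Lipschitz constant for $x\mapsto\iint\kappa\,\dd\pi_x^{\otimes2}$. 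That route is lossier and does not by itself deliver $3$.
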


\begin{proof}
The proof, based on an approximate integration-by-parts argument on the separable Banach
space $\cX=C(T)$, is given in Appendix~\ref{app:proof_deri_F}.
\end{proof}

The independent-component decomposition in Lemma~\ref{lem:deri_F}
anticipates the high-dimensional product structures considered in
Section~\ref{sec:universality}: it produces a sum of local remainders that can
vanish after normalization. The statement also includes the case $M=1$. We
next use a Gaussian linear perturbation to illustrate how these derivative
identities generate the posterior mean squared error and posterior variance.

\parhead{Gaussian linear perturbations generating posterior observables}
Fix a feature map $\eta:\Theta\to\cH$, and let $Y$ be the Gaussian process
specified by~\eqref{eq:Y_eta}--\eqref{eq:Y_eta1}. We consider the following
perturbation of $Y$:
\begin{align}\label{eq:Y_beta}
	Y_{\beta}(t) \coloneqq Y(t) + \beta\,\widetilde{Y}(t),
\end{align}
where $\widetilde{Y} = (\widetilde{Y}(t))_{t\in T}$ is a $\cX$-valued Gaussian process on $\Omega$.
To specify $\widetilde{Y}$, let $H_\theta:\cH\to\cH$ be a self-adjoint bounded linear operator satisfying
\begin{align}\label{eq:H_Sigma}
	\operatorname{Range}(H_\theta)
	\subseteq
	\operatorname{Range}(\Sigma_\theta).
\end{align}
By the Douglas factorization theorem, there exists a bounded linear operator $B_\theta:\cH\to\cH$ such that $H_\theta = \Sigma_\theta B_\theta$. In finite dimensions one may take $B_\theta = \Sigma_\theta^{+}H_\theta$, where $\Sigma_\theta^{+}$ denotes the Moore--Penrose pseudoinverse. For $a \in \{0,-1\}$, define
\begin{align}\label{eq:Y_tilde}
\widetilde{Y}(t)
\coloneqq
\langle \xi(\omega),\,B_\theta\, \eta(t) \rangle
+
a \langle H_\theta[\eta(\theta) - \eta(t)],\, \eta(\theta) - \eta(t) \rangle,
\end{align}
where $\xi$ is the same as in~\eqref{eq:Y_eta}. We further specify a test function related to such perturbation,
\begin{align}\label{eq:phi_theta_lin}
	\phi_\theta(s,t)
	=
	a\,\langle H_\theta[\eta(\theta)-\eta(t)],\,\eta(\theta)-\eta(t)\rangle
	+
	\langle H_\theta\eta(t),\eta(t)\rangle
	-
	\langle H_\theta\eta(s),\eta(t)\rangle.
\end{align}

\begin{lemma}\label{lem:linear_perturb_G}
Consider the Gaussian perturbation $(Y_\beta)_{\beta\in\cB}$ defined
by~\eqref{eq:Y_beta}--\eqref{eq:Y_tilde}, with perturbed free energy
$F^{\mathrm G}(\theta,\beta)$ defined in
\eqref{eq:perturbed_gaussian_free_energy}. For the test function $\phi_\theta$
in~\eqref{eq:phi_theta_lin}, identity~\eqref{eq:gaussian_free_energy_derivative}
holds with
\[
\psi_{\theta,\beta}^{\mathrm G}(s,t)
=
\phi_\theta(s,t)
+
\beta\,\langle B_\theta^*\Sigma_\theta B_\theta[\eta(t)-\eta(s)],\eta(t)\rangle.
\]
In particular, at $\beta=0$,
\[
\partial_\beta F^{\mathrm G}(\theta,0)
=
\iint_{T^2} \phi_\theta(s,t)\,\E_{Q_\theta}[\pi_Y^{\otimes2}](\dd s,\dd t).
\]
\end{lemma}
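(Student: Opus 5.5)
The plan is to apply Lemma~\ref{lem:deri_FG} directly and then compute $\psi^{\mathrm G}_{\theta,\beta}$ from~\eqref{eq:psi_Y} using the explicit linear structure of $Y_\beta$.

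\textbf{Checking the hypotheses.} Since $Y_\beta=Y+\beta\widetilde Y$, we have $\dot Y_\beta=\widetilde Y$ for every $\beta$. The pair $(Y_\beta,\dot Y_\beta)$ is jointly Gaussian because both components are affine functionals of the same Gaussian element $\xi$. The integrability condition in Lemma~\ref{lem:deri_FG} reduces to $\E_{Q_\theta}\|\widetilde Y\|_\infty<\infty$. This follows because $T$ is compact and $\eta$ is continuous, so $\sup_t\|\eta(t)\|_\cH<\infty$. Hence
\[
\|\widetilde Y\|_\infty\le \|B_\theta^*\xi\|_\cH\sup_t\|\eta(t)\|+C,
\]
and $\E\|\xi\|_\cH<\infty$ for a Gaussian measure with trace-class covariance (Fernique). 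Continuity of the sample paths of $\widetilde Y$ also follows from the continuity of $\eta$.

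\textbf{Computing the three terms of $\psi^{\mathrm G}_{\theta,\beta}$.}
\begin{itemize}
\item[(i)] The mean term is $\E_{Q_\theta}[\dot Y_\beta(t)]=\E[\widetilde Y(t)]=a\langle H_\theta[\eta(\theta)-\eta(t)],\eta(\theta)-\eta(t)\rangle$, since $\xi$ is centered.
\item[(ii)] For the covariance terms, write $Y_\beta(s)=\langle\xi,\eta(s)+\beta B_\theta\eta(s)\rangle+{}$deterministic. Then
\[
\cov(Y_\beta(s),\widetilde Y(t))=\langle \Sigma_\theta(\eta(s)+\beta B_\theta\eta(s)),B_\theta\eta(t)\rangle.
\]
Using $\Sigma_\theta B_\theta=H_\theta$, the self-adjointness of $\Sigma_\theta$ and of $H_\theta$, we get $\langle\Sigma_\theta\eta(s),B_\theta\eta(t)\rangle=\langle \eta(s),H_\theta\eta(t)\rangle=\langle H_\theta\eta(s),\eta(t)\rangle$. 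The $\beta$-part equals $\beta\langle B_\theta^*\Sigma_\theta B_\theta\eta(s),\eta(t)\rangle$.
\end{itemize}
Taking the difference of the $t$ and $s$ versions of (ii) and adding (i) reproduces $\phi_\theta(s,t)$ from~\eqref{eq:phi_theta_lin}, plus the correction $\beta\langle B_\theta^*\Sigma_\theta B_\theta[\eta(t)-\eta(s)],\eta(t)\rangle$. Setting $\beta=0$ kills the correction, so substituting into~\eqref{eq:gaussian_free_energy_derivative} gives the stated formula for $\partial_\beta F^{\mathrm G}(\theta,0)$.

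\textbf{Main obstacle.} The delicate point is the operator bookkeeping in infinite dimensions. One must justify that $\cov(\langle\xi,u\rangle,\langle\xi,v\rangle)=\langle\Sigma_\theta u,v\rangle$ for the Gaussian element $\xi\sim\normal(0,\Sigma_\theta)$, and then apply the factorization $H_\theta=\Sigma_\theta B_\theta$ on the correct side. The cross term equals $\langle \eta(s),\Sigma_\theta B_\theta\eta(t)\rangle$, which is precisely where the Douglas factorization is used. The remaining steps are routine.
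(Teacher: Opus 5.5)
Your proposal is correct and follows essentially the same route as the paper. You verify the hypotheses of Lemma~\ref{lem:deri_FG} (affinity in $\beta$, joint Gaussianity through the common $\xi$, integrability of $\widetilde Y$), compute $\E_{Q_\theta}[\widetilde Y(t)]$ and $\cov_{Q_\theta}(Y_\beta(s),\widetilde Y(t))$ using $H_\theta=\Sigma_\theta B_\theta$ and self-adjointness, and substitute into~\eqref{eq:psi_Y}. The only cosmetic difference is that you get $\E_{Q_\theta}\|\widetilde Y\|_\infty<\infty$ from an explicit bound through $\|\xi\|_\cH$, whereas the paper applies Fernique's theorem to $\widetilde Y$ directly.
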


\begin{proof}
See Appendix~\ref{app:proof_linear_perturb_G}.
\end{proof}

At $\beta=0$, Lemma~\ref{lem:linear_perturb_G} identifies the free energy
derivative with the $H_\theta$-weighted Gaussian posterior mean squared error
and posterior variance. Let
$\widehat\eta_Y\coloneqq\int_T\eta(t)\,\pi_Y(\dd t)$. For $a=-1$,
\[
-\partial_\beta F^{\mathrm G}(\theta,0)
=
\E_{Q_\theta} \bigl[ \langle H_\theta[\eta(\theta) - \widehat\eta_Y],\, \eta(\theta) - \widehat\eta_Y \rangle \bigr],
\]
whereas for $a=0$,
\[
\partial_\beta F^{\mathrm G}(\theta,0)
=
\E_{Q_\theta} \Bigl[\, \int_T \langle H_\theta[\eta(t) - \widehat\eta_Y],\, \eta(t) - \widehat\eta_Y \rangle \,\pi_Y(\dd t) \,\Bigr].
\]

When $H_\theta=\Id$, these expressions are the Gaussian counterparts of
$\cR(\theta)$ and $\cV(\theta)$ in~\eqref{eq:mse} and~\eqref{eq:pv},
respectively. Thus $\phi_\theta$ in~\eqref{eq:phi_theta_lin} encodes both
observables: $a$ selects the observable, while $H_\theta$ specifies its
direction. This class of test functions is the main target of the posterior
comparison below.

An analogous perturbation can be constructed for a non-Gaussian likelihood
process $X$. Its precise form, however, depends on the local structure of
$X$. We therefore defer the construction and the control of its approximation
errors to Section~\ref{sec:universality}, where the relevant structure is
specified.

\subsection{Comparison Theorems}
We now establish nonasymptotic Gaussian comparison bounds for free energies
and posterior observables. Here $X$ denotes the possibly non-Gaussian
likelihood process under $P_\theta$, whereas $Y$ denotes the Gaussian
log-likelihood process under $Q_\theta$. To streamline notation, $\E_\theta$
and $\cov_\theta$ are taken under $P_\theta$ for expressions involving $X$ and under $Q_\theta$ for expressions involving $Y$. 

\begin{theorem}[Free Energy Comparison]\label{thm:fe_comparison}
	Suppose that $X=\sum_{m=1}^M X_m$, where, for every
	$\theta\in\Theta$, the processes $X_1,\ldots,X_M$ are independent under
	$P_\theta$. Then, for every $\theta\in\Theta$,
		\begin{align*}
		\bigl|
		F(\theta)-F^{\mathrm G}(\theta)
		\bigr|
		&\le
		\bigl\|
		\E_\theta[X]-\E_\theta[Y]
		\bigr\|_\infty
		\\
		&\quad+
		\bigl\|
		\cov_\theta(X)-\cov_\theta(Y)
	\bigr\|_\infty
	+
	\sum_{m=1}^M
	\E_\theta
	\bigl\|
		X_m-\E_\theta[X_m]
		\bigr\|_\infty^3,
		\end{align*}
	where $\|\cov_\theta(X)-\cov_\theta(Y)\|_\infty=\sup_{s,t\in T}|\cov_\theta(X(s),X(t))-\cov_\theta(Y(s),Y(t))|$.
\end{theorem}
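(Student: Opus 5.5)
Our plan is to use the interpolation $Z_\beta=\sqrt{\beta}\,X+\sqrt{1-\beta}\,Y$ for $\beta\in[0,1]$, with $X$ and $Y$ taken independent on a product space. Equivalently, we can view $(Z_\beta)$ as a perturbation in the sense of Section~\ref{subsec:perturbation_mechanism} and apply Lemmas~\ref{lem:deri_FG} and~\ref{lem:deri_F} to its two independent pieces.

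The first step is to reduce to centered processes. Write $X=\bar X+\mu_X$ and $Y=\bar Y+\mu_Y$, where $\mu_X=\E_\theta[X]$ and $\mu_Y=\E_\theta[Y]$. The map $x\mapsto\log\int_T e^{x}\,\dd\pi$ is $1$-Lipschitz on $C(T)$ for $\|\cdot\|_\infty$. Hence replacing $\mu_X$ by $\mu_Y$ changes $F$ by at most $\|\mu_X-\mu_Y\|_\infty$, which is the first term of the bound. It then suffices to compare $\E f(\bar X+\mu)$ with $\E f(\bar Y+\mu)$ for a fixed $\mu\in C(T)$, where
\[
f(x)=\log\int_T e^{x(t)}\pi(\dd t).
\]
If $\E\|\bar X_m\|_\infty^3=\infty$ for some $m$, the bound is trivial, so we may assume finite third moments. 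This also makes $F(\theta)$ finite.

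Next we set $g(\beta)=\E f(\sqrt\beta\,\bar X+\sqrt{1-\beta}\,\bar Y+\mu)$. The map $f$ is Fr\'echet differentiable, with $Df(x)[h]=\int h\,\dd\pi_x$, and its second derivative is the posterior covariance form. Differentiating under the expectation, which dominated convergence justifies by the moment bounds, gives
\[
g'(\beta)=\tfrac12\,\E\Bigl[\int\bigl(\beta^{-1/2}\bar X-(1-\beta)^{-1/2}\bar Y\bigr)\,\dd\pi_{Z_\beta}\Bigr].
\]
For the $\bar Y$ part we use Gaussian integration by parts on $C(T)$, exactly as in Lemma~\ref{lem:deri_FG}. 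It yields $\tfrac12\iint[\cov(\bar Y(t),\bar Y(t))-\cov(\bar Y(s),\bar Y(t))]\,\E\pi_{Z_\beta}^{\otimes2}(\dd s,\dd t)$.

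For the $\bar X$ part we use independence across $m$, as in Lemma~\ref{lem:deri_F}. Conditioning on the other components, we Taylor-expand $\E[\bar X_m(t)\pi_{Z_\beta}(t)]$ in the summand $\sqrt\beta\,\bar X_m$ to second order around $Z_\beta^{(m)}=Z_\beta-\sqrt\beta\,\bar X_m$. The zeroth-order term vanishes because $\bar X_m$ is centered and independent of $Z_\beta^{(m)}$. The first-order term gives $\sqrt\beta$ times the same covariance expression, with $\cov(\bar X_m)$ in place of $\cov(\bar Y)$, but evaluated at the leave-one-out measure. Summing over $m$, the main terms combine into
\[
\tfrac12\iint\bigl[\Delta C(t,t)-\Delta C(s,t)\bigr]\,\E\pi^{\otimes2},\qquad \Delta C=\cov(X)-\cov(Y).
\]
Since $\pi^{\otimes2}$ is a probability measure, this is bounded by $\|\Delta C\|_\infty$ after integrating in $\beta$.

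The main obstacle is the remainder: the third-order Taylor term, plus the error from replacing $\pi_{Z_\beta^{(m)}}$ by $\pi_{Z_\beta}$ in the first-order term. We need it to be bounded by $c\,\beta\,\E\|\bar X_m\|_\infty^3$ with constant $c$ small enough that the $\beta$-integral, including the $\beta^{-1/2}$ prefactor, gives at most $\E\|\bar X_m\|_\infty^3$. The key input is that the $k$-th Fr\'echet derivatives of $f$ are cumulant forms of the Gibbs measure. For directions $h_i$ they are therefore bounded by $C_k\prod_i\|h_i\|_\infty$, with $C_2\le 2$ and $C_3\le 6$ in crude form. Combining this with $\int_0^1\beta^{-1/2}\beta\,\dd\beta=2/3$ should give a constant of order one. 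If the naive constant exceeds $1$, we would expect to sharpen it using the better constants for centered cumulants, e.g.\ $|\kappa_3|\le\tfrac{2}{3\sqrt3}\|h\|^3$-type bounds. Alternatively, we would interpolate the $m$-th component via its own Taylor expansion with an integral remainder, as in the $3(\cdot)^{2/3}(\cdot)^{1/3}$ remainder of Lemma~\ref{lem:deri_F}. Finally, $|g(1)-g(0)|\le\int_0^1|g'|$ together with the mean-shift step yields the stated inequality.
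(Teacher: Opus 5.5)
Your architecture matches the paper's proof. You realize $X$ and $Y$ independently and interpolate along $\sqrt\beta\,\bar X+\sqrt{1-\beta}\,\bar Y$. The $\bar Y$ part is handled by exact two-element Gaussian integration by parts, conditionally on $X$. Each block $\bar X_m$ gets a leave-one-out second-order expansion, after which the leave-one-out Gibbs measure is replaced by $\pi_{Z_\beta}$ using an independent copy of $\bar X_m$; this is essentially the mechanism of Lemma~\ref{lem:approx_ibp_two}. Removing the means first via the $1$-Lipschitz property of $f$, and then interpolating only the centered parts, is a harmless variant of the paper's linear interpolation $\mu_t=tm_X+(1-t)m_Y$. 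You cannot literally apply Lemmas~\ref{lem:deri_FG} and~\ref{lem:deri_F} to the mixed path: the former needs joint Gaussianity, and the latter would charge a spurious third-moment remainder to the Gaussian block. The hybrid you actually describe is the right one.

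The gap is the constant in front of $\sum_m\E_\theta\|X_m-\E_\theta[X_m]\|_\infty^3$. The theorem asserts it is $1$, and you leave it at ``order one''. Put $M_3\coloneqq\sup_{x}\|\sD^3f(x)\|_{\mathrm{op}}$ and $h=\sqrt\beta\,\bar X_m$. Your own bookkeeping then gives:
\begin{itemize}
\item third-order Taylor remainder at most $\tfrac12M_3\beta\,\E\|\bar X_m\|_\infty^3$;
\item replacement error at most $\sqrt\beta\cdot M_3\sqrt\beta\,\E\|\bar X_m\|_\infty\,\E\|\bar X_m\|_\infty^2\le M_3\beta\,\E\|\bar X_m\|_\infty^3$;
\item after the prefactor $\tfrac{1}{2\sqrt\beta}$ and $\int_0^1\sqrt\beta\,\dd\beta=\tfrac23$, a coefficient of exactly $M_3/2$.
\end{itemize}
Your crude $C_3\le 6$ therefore yields $3$, and the stated inequality does not follow; you need precisely $M_3\le2$.

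That is the estimate in Lemma~\ref{lem:logpartition_smooth}. The third cumulant equals the third central moment: $\sD^3f(x)[h_1,h_2,h_3]=\int_T\bar h_1\bar h_2\bar h_3\,\dd\pi_x$ with $\bar h_i=h_i-\int_Th_i\,\dd\pi_x$. For $\|h_i\|_\infty\le1$ one has $\|\bar h_3\|_\infty\le2$ and $\var_{\pi_x}(h_i)\le1$. Cauchy--Schwarz then gives $|\sD^3f(x)[h_1,h_2,h_3]|\le\|\bar h_3\|_\infty\sqrt{\var_{\pi_x}(h_1)\var_{\pi_x}(h_2)}\le2$.

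Your suggested sharpening through diagonal bounds $|\kappa_3(h,h,h)|\le c\|h\|_\infty^3$ is not enough by itself. The replacement term is the off-diagonal form $\sD^3f(\cdot)[h,V,V]$, with $V$ an independent copy of $\bar X_m$, so you need a bound on the full trilinear operator norm.

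A smaller point: differentiation under $\E$ is dominated only on compact subintervals of $(0,1)$, because of the $\beta^{-1/2}$ and $(1-\beta)^{-1/2}$ factors. You must therefore also show continuity of $g$ at $\beta=0,1$ (e.g.\ from $\E\|Z_\beta-Z_{\beta'}\|_\infty\to0$) before integrating $g'$ over $[0,1]$.
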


\begin{proof}[Proof sketch]
The proof interpolates between $X$ and $Y$ via a Guerra path and
exploits the Fr\'{e}chet differentiability of the log-partition map
on $C(T)$.
Approximate integration by parts, which is exact for the Gaussian component
and approximate for the non-Gaussian one, controls the interpolated derivative
by the mean, covariance, and third-moment terms.
Integrating along the path yields the result; see
Appendix~\ref{app:proof_fe_comparison}.
\end{proof}


\begin{remark}
The independence assumption in Theorem~\ref{thm:fe_comparison} anticipates
the high-dimensional product models of Section~\ref{sec:universality}.
The theorem may be viewed as a $C(T)$-valued extension of the
independent-coordinate comparison in
\cite[Section~3.2, Theorem~3.1]{rollin:2013}, specialized to the
log-partition functional. Since the Stein-coupling framework
of~\cite{rollin:2013} also accommodates dependent structures, we expect
analogous comparisons to hold for dependent $C(T)$-valued processes under
suitable coupling conditions.
\end{remark}

We now compare the posterior observables $\Phi(\theta)$ and
$\Phi^{\mathrm G}(\theta)$ in~\eqref{eq:Phi_X} and~\eqref{eq:Phi_Y} associated
with a target test function $\phi_\theta:T^2\to\bbR$. The argument combines
the perturbation mechanism of Section~\ref{subsec:perturbation_mechanism} with
the free energy comparison in Theorem~\ref{thm:fe_comparison} and proceeds in
two steps.
\begin{enumerate}
	\item \emph{Choice of perturbation.} Identify perturbations of $X$ and $Y$
	whose induced test functions $\psi_{\theta,\beta}$ and
	$\psi_{\theta,\beta}^{\mathrm G}$ are close to $\phi_\theta$ for small
	$\beta$. The Gaussian linear perturbation in
	Lemma~\ref{lem:linear_perturb_G} gives an explicit construction satisfying
	$\psi_{\theta,0}^{\mathrm G}=\phi_\theta$.
	\item \emph{Convexity argument.} Lemmas~\ref{lem:deri_FG}
	and~\ref{lem:deri_F} express the derivatives of the perturbed free energies
	through these induced test functions. Convexity then converts the free
	energy comparison into a comparison of the derivatives, and hence of the
	corresponding posterior observables.
\end{enumerate}
To carry out the second step, we define the perturbed Gaussian functional
\begin{equation}\label{eq:Phi_G_beta}
\Phi^{\mathrm G}(\theta,\beta)
\coloneqq
\int_{T^2}
\phi_\theta(t,s)\,
\E_{\theta}[\pi_{Y_\beta}^{\otimes2}](\dd t,\dd s),
\qquad \beta\in\cB.
\end{equation}
In addition, for any function $\phi:T^2\to\bbR$, we define
$\|\phi\|_\infty
\coloneqq
\sup_{(s,t)\in T^2} |\phi(s,t)|$.

\begin{theorem}[Comparison of Posterior Observables]\label{thm:Phi_comparison}
	Fix a target test function $\phi_\theta:T^2\to\bbR$.
	Let $(X_\beta)_{\beta\in\cB}$ and $(Y_\beta)_{\beta\in\cB}$ be
	perturbations of $X$ and $Y$, with posterior functional
	$\Phi(\theta)$ defined in~\eqref{eq:Phi_X}, perturbed Gaussian functional
	$\Phi^{\mathrm G}(\theta,\beta)$ defined in~\eqref{eq:Phi_G_beta}, and
	induced test functions $\psi_{\theta,\beta}$ and
	$\psi_{\theta,\beta}^{\mathrm G}$ defined by~\eqref{eq:psi_X} and
	\eqref{eq:psi_Y}.
	Suppose that
	\begin{enumerate}[label=\textup{(\alph*)}]
		\item $X_\beta=\sum_{m=1}^M X_{\beta,m}$,
		where the pairs $\{(X_{\beta,m},\dot X_{\beta,m})\}_{m=1}^M$ are
		independent under $P_\theta$, and $(Y_\beta,\dot Y_\beta)$ is jointly
		Gaussian under $Q_\theta$;
		\item $\E_{P_\theta}[\sup_{\beta\in\cB}\|\dot X_\beta\|_\infty]<\infty$
		and
		$\E_{Q_\theta}[\sup_{\beta\in\cB}\|\dot Y_\beta\|_\infty]<\infty$;\vspace{0.2em}
		\item $\beta\mapsto F(\theta,\beta)$ and
		$\beta\mapsto F^{\mathrm G}(\theta,\beta)$ are convex on $\cB$.
	\end{enumerate}
	Then for all $\theta\in\Theta$ and any $\eps>0$ such that
	$\pm\eps\in\cB$,
	\begin{equation}
		\Phi^{\mathrm G}(\theta,-\eps)
		- \Delta_1(\theta,-\eps) -
		\frac{\Delta_2(\theta,-\eps)}{\eps}
		\le \Phi(\theta) \le
		\Phi^{\mathrm G}(\theta,\eps)
		+ \Delta_1(\theta,\eps) +
		\frac{\Delta_2(\theta,\eps)}{\eps}.
	\end{equation}
	where $\Delta_1(\theta,\cdot),\Delta_2(\theta,\cdot)\colon \cB \to \bbR_+$ are given by
	\begin{equation}\label{eq:posterior_comparison_delta1}
		\begin{aligned}
			\Delta_1(\theta,\beta)
			&=
			\|\phi_\theta -\psi_{\theta,0} \|_\infty
			+
			\|\phi_\theta -\psi^{\mathrm{G}}_{\theta,\beta} \|_\infty
			\\
			&\quad
			+
			3\sum_{m=1}^M
			\Bigl(
			\E_\theta\|X_{m} - \E_\theta[X_{m}]\|_\infty^3
			\Bigr)^{\frac23}
			\times
				\Bigl(
				\E_\theta\|\dot X_{0,m} - \E_\theta[\dot X_{0,m}]\|_\infty^3
				\Bigr)^{\frac13},
		\end{aligned}
	\end{equation}
	\begin{equation}\label{eq:posterior_comparison_delta2}
		\begin{aligned}
			\Delta_2(\theta,\beta)
			&=
			\bigl\|
			\E_\theta[X]-\E_\theta[Y]
			\bigr\|_\infty
			+
			\bigl\|
			\E_\theta[X_\beta]-\E_\theta[Y_\beta]
			\bigr\|_\infty
			\\
			&\quad
			+
			\bigl\|
			\cov_\theta(X)-\cov_\theta(Y)
			\bigr\|_\infty
			+
			\bigl\|
			\cov_\theta(X_\beta)-\cov_\theta(Y_\beta)
			\bigr\|_\infty
			\\
			&\quad
			+
			\sum_{m=1}^M
			\E_\theta
			\bigl\|
			X_m-\E_\theta[X_m]
			\bigr\|_\infty^3
			+
			\sum_{m=1}^M
			\E_\theta
			\bigl\|
			X_{\beta,m}
			-\E_\theta[X_{\beta,m}]
			\bigr\|_\infty^3.
		\end{aligned}
	\end{equation}
\end{theorem}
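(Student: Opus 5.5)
The argument rests on convexity. Since $\beta\mapsto F(\theta,\beta)$ is convex on $\cB$, its derivative at zero is bracketed by difference quotients:
\[
\frac{F(\theta,0)-F(\theta,-\eps)}{\eps}\;\le\;\partial_\beta F(\theta,0)\;\le\;\frac{F(\theta,\eps)-F(\theta,0)}{\eps}.
\]
I treat the upper bound; the lower bound is symmetric, with $-\eps$ in place of $\eps$.

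\textbf{Step 1: link $\Phi(\theta)$ to $\partial_\beta F(\theta,0)$.} Apply Lemma~\ref{lem:deri_F} at $\beta=0$, which is justified by (a) and (b). This gives
\[
\partial_\beta F(\theta,0)=\iint\psi_{\theta,0}\,\E_\theta[\pi_X^{\otimes2}]+\sum_m R_{m,0}.
\]
Because $\E_\theta[\pi_X^{\otimes2}]$ is a probability measure, replacing $\psi_{\theta,0}$ by $\phi_\theta$ costs at most $\|\phi_\theta-\psi_{\theta,0}\|_\infty$. The remainders are bounded by the third-moment product term in $\Delta_1$. Hence
\[
\Phi(\theta)\le \partial_\beta F(\theta,0)+\|\phi_\theta-\psi_{\theta,0}\|_\infty+3\sum_m(\cdots)^{2/3}(\cdots)^{1/3}.
\]

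\textbf{Step 2: move from the non-Gaussian to the Gaussian free energy.} Bound $F(\theta,\eps)-F(\theta,0)$ by
\[
\bigl[F^{\mathrm G}(\theta,\eps)-F^{\mathrm G}(\theta,0)\bigr]+|F(\theta,\eps)-F^{\mathrm G}(\theta,\eps)|+|F(\theta,0)-F^{\mathrm G}(\theta,0)|.
\]
Estimate the two comparison errors with Theorem~\ref{thm:fe_comparison}, applied once to the pair $(X,Y)$ and once to the pair $(X_\eps,Y_\eps)$. The independent decompositions needed there are supplied by (a). Summing the two bounds gives exactly $\Delta_2(\theta,\eps)$, which after dividing by $\eps$ yields the $\Delta_2/\eps$ term.

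\textbf{Step 3: return from the Gaussian free energy to $\Phi^{\mathrm G}$.} By convexity of $F^{\mathrm G}(\theta,\cdot)$ (assumption (c)), the chord slope satisfies
\[
\frac{F^{\mathrm G}(\theta,\eps)-F^{\mathrm G}(\theta,0)}{\eps}\le \partial_\beta F^{\mathrm G}(\theta,\eps).
\]
Lemma~\ref{lem:deri_FG}, justified by (b), expresses this derivative as $\iint\psi^{\mathrm G}_{\theta,\eps}\,\E_\theta[\pi_{Y_\eps}^{\otimes2}]$. Swapping $\psi^{\mathrm G}_{\theta,\eps}$ for $\phi_\theta$ costs at most $\|\phi_\theta-\psi^{\mathrm G}_{\theta,\eps}\|_\infty$ and leaves $\Phi^{\mathrm G}(\theta,\eps)$. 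Chaining Steps 1--3 gives the upper bound.

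For the lower bound I use the left difference quotient at $0$ and the inequality
\[
\frac{F^{\mathrm G}(\theta,0)-F^{\mathrm G}(\theta,-\eps)}{\eps}\ge\partial_\beta F^{\mathrm G}(\theta,-\eps),
\]
and the identical chain produces $\Phi^{\mathrm G}(\theta,-\eps)-\Delta_1(\theta,-\eps)-\Delta_2(\theta,-\eps)/\eps$.

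\textbf{Main obstacle.} The only delicate point is the regularity behind Steps 1 and 3. Lemma~\ref{lem:deri_F} requires third moments of the components $X_{\beta,m}$ and $\dot X_{\beta,m}$. If these are infinite, however, the corresponding terms in $\Delta_1$ or $\Delta_2$ are themselves infinite and the bound holds trivially. I would therefore dispose of that case first and then assume all terms are finite. Beyond that, I would check that Theorem~\ref{thm:fe_comparison} applies to the perturbed pair $(X_\eps,Y_\eps)$ exactly as stated, which only needs $Y_\eps$ Gaussian with continuous paths.
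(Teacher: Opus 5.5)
Your proposal is correct and follows essentially the same route as the paper's proof. Both dispose of the infinite case first. Both use Lemma~\ref{lem:deri_F} at $\beta=0$ and Lemma~\ref{lem:deri_FG} at $\beta=\pm\eps$ to tie the derivatives to the observables, and both chain the two chord-slope convexity inequalities with Theorem~\ref{thm:fe_comparison} applied at $0$ and at $\pm\eps$.
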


\begin{proof}
The proof is given in Appendix~\ref{app:proof_Phi_comparison}.
\end{proof}

\section{Universality in High-Dimensional Inference}\label{sec:universality}

We now apply the nonasymptotic comparison results of
Section~\ref{sec:comparison} to sequences of high-dimensional inference
problems. Our aim is to identify structural conditions under which the
normalized errors in Theorems~\ref{thm:fe_comparison}
and~\ref{thm:Phi_comparison} vanish. Under independence and suitable scaling
conditions, the normalized free energy and posterior observables are
asymptotically determined by their Gaussian counterparts.

The parameter takes the form $\theta=(\theta_1,\ldots,\theta_n)\in\Theta_n$,
where the parameter space $\Theta_n$ depends on $n$. The candidate space is
denoted by $T_n$ and may also vary with $n$. All objects introduced earlier,
including the prior $\pi$, the data-generating laws $P_\theta$ and $Q_\theta$,
the processes $X$ and $Y$, and their perturbations, are likewise allowed to
depend on $n$. Throughout, we use the normalized free energies and posterior
observables introduced in~\eqref{eq:normalized_free_energies}--\eqref{eq:normalized_posterior_observables}.

We begin with the product and feature structures used throughout the section
and the associated Gaussian comparison model. We then consider two classes of
non-Gaussian likelihood processes. Section~\ref{subsec:exp_family} uses
exponential family log-likelihoods to illustrate the comparison mechanism,
whereas Section~\ref{subsec:regular_local_log_likelihoods} develops the more
general regular local formulation.

\parhead{Product structure}
An important ingredient underlying universality in our framework is
independence across local contributions. We formalize this independence by
imposing compatible product structures on the data-generating process and the
statistical model. Recall that all randomness is defined on a common sample
space $\Omega$. For each $n$, let the common sample space and the
data-generating laws have the product form
\begin{align}\label{eq:product_data}
\Omega
=
\prod_{m=1}^{M_n}\Omega_m,
\qquad 
P_\theta(\dd\omega)
=
\bigotimes_{m=1}^{M_n}P_{\theta,m}(\dd\omega_m),
\qquad \theta\in\Theta_n.
\end{align}
Here $M_n$ is the number of local components and is allowed to grow with $n$.
Under $P_\theta$, the coordinates
$\omega=(\omega_1,\ldots,\omega_{M_n})$ are independent. Correspondingly, we
assume the log-likelihood process decomposes as
\begin{align}\label{eq:X_product}
X(t,\omega)
&=
\sum_{m=1}^{M_n}X_m(t,\omega_m),
\qquad t\in T_n .
\end{align}

Associated with each local process $X_m$ is a feature map
$\eta_m:\Theta_n\to\cH_m$, where $\cH_m$ is a real separable Hilbert space.
These local maps induce a global feature map $\eta:\Theta_n\to\cH$ that is
compatible with the preceding decomposition and is given by
\begin{align}\label{eq:product_feature_map}
\cH
\coloneqq
\bigoplus_{m=1}^{M_n}\cH_m,
\qquad 
\eta(u)
\coloneqq
\bigl(\eta_1(u),\ldots,\eta_{M_n}(u)\bigr),
\qquad u\in\Theta_n,
\end{align}
with inner product
\[
\langle h,g\rangle_\cH
=
\sum_{m=1}^{M_n}\langle h_m,g_m\rangle_{\cH_m}.
\]
Local operators are lifted in the same way: if
$L_m:\cH_m\to\cH_m$, then $L=\bigoplus_m L_m$ acts on $\cH$ and satisfies
\[
\langle L\eta(s),\eta(t)\rangle_\cH
=
\sum_{m=1}^{M_n}
\langle L_m\eta_m(s),\eta_m(t)\rangle_{\cH_m}.
\]
We refer to~\eqref{eq:product_data}, \,\eqref{eq:X_product},
and~\eqref{eq:product_feature_map} collectively as the
\emph{product structure}. 
The product feature map in~\eqref{eq:product_feature_map} fits the feature-map
framework introduced in Section~\ref{sec:general_frame}. Thus all definitions
and comparison results developed above apply to the direct-sum representation
in~\eqref{eq:product_feature_map} without modification.
For consistency with the preceding notation, the dependence of $P_\theta$,
$X$, and $\eta$ on $n$ is left implicit throughout.

\parhead{Feature radius}
We now record the feature scale induced by the product structure. This scale
will play a central role in the universality statements below. Define the
\emph{feature radius}
\begin{align}\label{eq:feature_radius}
r_n
\coloneqq
\max_{1\le m\le M_n}
\sup_{u\in\Theta_n}
\|\eta_m(u)\|_{\cH_m}.
\end{align}
Consider the test functions $\phi_\theta$ in~\eqref{eq:phi_theta_lin} with
$H_\theta=\Id$, which generate the posterior mean squared error~\eqref{eq:mse}
and posterior variance~\eqref{eq:pv} introduced in
Section~\ref{sec:general_frame}. These test functions are controlled by the
elementary bound
\[
\|\eta(u)-\eta(v)\|_{\cH}^2
\le
4M_n r_n^2,
\qquad u,v\in\Theta_n,
\]
and hence both $\Phi_n(\theta)$ and $\Phi_n^{\mathrm G}(\theta)$ have size at
most of order $M_n r_n^2/n$. Thus the natural extensive scale is
\[
r_n^2\asymp \frac{n}{M_n},
\qquad\text{equivalently}\qquad
r_n\asymp \sqrt{\frac{n}{M_n}} .
\]
On this scale, the normalized posterior observables are of order one, while
the smallness of $r_n$ is the local condition that drives the universality
errors to zero.
The use of a single maximal radius is only for notational simplicity;
heterogeneous local feature scales can be treated similarly.
 
\parhead{Gaussian comparison}
The Gaussian counterpart used in this section is the Gaussian location family
of Definition~\ref{def:gaussian_loc}. In the present product setting, we take
its operators to have the block-diagonal form
\begin{align}\label{eq:gaussian_product_operators}
A=\bigoplus_{m=1}^{M_n}A_m,
\qquad
\Sigma=\bigoplus_{m=1}^{M_n}\Sigma_m,
\qquad
K=\bigoplus_{m=1}^{M_n}K_m,
\end{align}
The associated Gaussian log-likelihood process can then be written as
\begin{align}\label{eq:Y_product}
Y(t)
=
\sum_{m=1}^{M_n}Y_m(t),
\qquad t\in T_n,
\end{align}
where $Y_1,\ldots,Y_{M_n}$ are independent Gaussian processes under
$Q_\theta$. For each $m\in[M_n]$, the local process has mean and covariance
kernel
\begin{align}\label{eq:Y_local_moments}
\E_{Q_\theta}[Y_m(t)]
&=
\langle A_m\eta_m(\theta),\eta_m(t)\rangle_{\cH_m}
-
\frac12
\langle K_m\eta_m(t),\eta_m(t)\rangle_{\cH_m},
\\
\cov_{Q_\theta}(Y_m(s),Y_m(t))
&=
\langle \Sigma_m\eta_m(s),\eta_m(t)\rangle_{\cH_m}.
\label{eq:Y_local_covariance}
\end{align}
Summing these local moments recovers the mean and covariance kernel of the
Gaussian location family associated with the operators
in~\eqref{eq:gaussian_product_operators}.

The decomposition~\eqref{eq:Y_product} also has a direct observation-model
interpretation. Each $Y_m$ may be viewed as arising from an independent
Gaussian observation
\begin{align}\label{eq:Y_local_observation}
Z_m
=
A_m\eta_m(\theta)+\xi_m,
\qquad
\xi_m\sim\normal(0,\Sigma_m),
\end{align}
paired with the local Gaussian model
$\{\normal(K_m\eta_m(t),K_m):t\in T_n\}$. This interpretation requires that
each $K_m$ be a covariance operator and that the local analogues of the
compatibility conditions stated in
Appendix~\ref{app:gaussian_loglik_derivation} hold.

\begin{remark}
The product Gaussian location model above includes the Gaussian counterparts
of many standard high-dimensional problems, including spiked matrix
models~\cite{barbier:2016a,krzakala:2016,lelarge:2018,miolane:2017a,behne:2022,guionnet2025estimating},
finite-rank matrix and tensor
models~\cite{lesieur:2017a,reeves:2020,mourrat:2020,luneau:2021,chen:2022_finite-rank},
and extensions with nonlinear feature maps or covariate
information~\cite{rossetti:2025,rossetti2025fundamental}. This breadth also
motivates the extensive scaling $r_n^2\asymp n/M_n$: it keeps the normalized
Gaussian free energy $F_n^{\mathrm G}$ and posterior observables
$\Phi_n^{\mathrm G}(\theta)$ on the nontrivial scale where phase transitions
occur. A rank-one spiked matrix model provides a concrete example. It has
$M_n\asymp n^2$ local observations and scalar feature maps
$\eta_{ij}(u)=u_i u_j/\sqrt n$, so uniformly bounded coordinates give
$r_n\asymp n^{-1/2}\asymp\sqrt{n/M_n}$, exactly the canonical scaling above.
This is also the critical scaling associated with the Baik--Ben
Arous--P\'{e}ch\'{e} transition, to which the corresponding phase transition in
the spiked matrix model is closely
related~\cite{johnstone:2001a,baik:2005,perry:2018}.
\end{remark}

\subsection{Exponential Family Log-Likelihoods}\label{subsec:exp_family}
Exponential family log-likelihoods provide a transparent illustration of the
general universality mechanism. Their cumulant structure explicitly identifies
the local first- and second-order behavior, and hence the Gaussian counterpart,
while third-order cumulants control the remaining comparison error. This class
therefore shows concretely how the process-level comparison bounds of
Section~\ref{sec:comparison} translate into conditions on the feature radius.
To keep this mechanism transparent, we work in a homogeneous product setting.
Let $\Omega_0$ be a measurable space and let $\cH_0=\bbR^d$. For each $n$ and
each $m\le M_n$, take
\[
\Omega_m=\Omega_0,\quad \Omega=\Omega_0^{M_n}\,;
\qquad
\cH_m=\cH_0,
\qquad
\cH=\bigoplus_{m=1}^{M_n}\cH_0 .
\]
Let $\lambda$ and $\gamma$ be probability measures on $\Omega_0$. Their product
measures $\lambda^{\otimes M_n}$ and $\gamma^{\otimes M_n}$ serve as the
model-side and data-generating references, respectively. Let
$W_0,W_0^\star:\Omega_0\to\cH_0$ be measurable functions and independent of $n$.

For each $m\le M_n$, let the local model log-likelihood process be given by the
exponential-family representation
\begin{align}\label{eq:expf_X}
X_m(t,\omega_m)
=
\langle W_0(\omega_m),\eta_m(t)\rangle_{\cH_0}
-
\Lambda(\eta_m(t)),
\end{align}
where $W_0$ is the sufficient statistic and $\Lambda$ is the model
log-partition function,
\begin{align}\label{eq:expf_Lambda}
\Lambda(u)
\coloneqq
\log
\int
\exp\{\langle W_0(\omega),u\rangle_{\cH_0}\}
\,\lambda(\dd\omega).
\end{align}
The local data-generating law $P_{\theta,m}$ is defined relative to $\gamma$ by
\begin{align}\label{eq:expf_P}
\log
\frac{\dd P_{\theta,m}}{\dd \gamma}(\omega_m)
=
\langle W_0^\star(\omega_m),\eta_m(\theta)\rangle_{\cH_0}
-
\Gamma(\eta_m(\theta)),
\end{align}
where $\Gamma$ is the data-generating log-partition function,
\begin{align}\label{eq:expf_Gamma}
\Gamma(u)
\coloneqq
\log
\int
\exp\{\langle W_0^\star(\omega),u\rangle_{\cH_0}\}
\,\gamma(\dd\omega).
\end{align}
To encode the joint cumulants of $W_0$ and $W_0^\star$ under $\gamma$, define
\begin{align}\label{eq:expf_Xi}
\Xi(v,u)
\coloneqq
\log
\int
\exp\{
\langle W_0(\omega),v\rangle_{\cH_0}
+
\langle W_0^\star(\omega),u\rangle_{\cH_0}
\}
\,\gamma(\dd\omega).
\end{align}
Thus $\Lambda$ and $\Gamma$ are the cumulant generating functions of the model
and data-generating sufficient statistics, respectively, whereas $\Xi$
describes their joint fluctuations under $\gamma$. The well-specified case is
obtained when $\lambda=\gamma$ and $W_0=W_0^\star$, in which case
$\Lambda=\Gamma$.

\begin{assumption}[Local exponential-family regularity]\label{assump:expfam_regular}
Let
\begin{align*}
D_\Lambda&=\{u\in\cH_0:\Lambda(u)<\infty\},
\qquad
D_\Gamma=\{u\in\cH_0:\Gamma(u)<\infty\},
\\
D_\Xi&=\{(v,u)\in\cH_0\times\cH_0:\Xi(v,u)<\infty\}.
\end{align*}
There exists $\rho>0$ such that
\[
\{u\in\cH_0:\|u\|_{\cH_0}<\rho\}
\subset D_\Lambda\cap D_\Gamma,
\qquad
\{(v,u):\|v\|_{\cH_0}<\rho,\ \|u\|_{\cH_0}<\rho\}
\subset D_\Xi .
\]
In addition,
\[
\int W_0(\omega)\,\lambda(\dd\omega)
=
\int W_0(\omega)\,\gamma(\dd\omega).
\]
\end{assumption}
The Gaussian counterpart is the Gaussian location family of
Definition~\ref{def:gaussian_loc} specialized to the product structure
above. Define the fixed local operators
\begin{align}\label{eq:expf_gaussian_operators}
A_0
&\coloneqq
\nabla_{vu}^2\Xi(0,0),
&
\Sigma_0
&\coloneqq
\nabla_{vv}^2\Xi(0,0),
&
K_0
&\coloneqq
\nabla^2\Lambda(0).
\end{align}
Its product form is obtained by setting
\[
A_m=A_0,\qquad
\Sigma_m=\Sigma_0,\qquad
K_m=K_0,
\qquad m\le M_n,
\]
in~\eqref{eq:Y_local_moments}. Here $A_0$ is the local mean response to the
data-generating tilt, $\Sigma_0$ is the reference covariance of the model
statistic, and $K_0$ is the quadratic term in the model log-partition
function. Assumption~\ref{assump:expfam_regular} ensures that the derivatives
in~\eqref{eq:expf_gaussian_operators} exist and are finite, so these operators
are well defined.
Moreover, assumption~\ref{assump:expfam_regular} provides the local expansion and
moment bounds needed to apply Theorem~\ref{thm:fe_comparison}, yielding the
following result.

\begin{theorem}[Free energy universality for exponential families]\label{thm:exp_free_energy_univ}
Consider the product structure
\eqref{eq:product_data}--\eqref{eq:product_feature_map}, with local
exponential-family likelihood processes and data-generating laws specified by
\eqref{eq:expf_X} and~\eqref{eq:expf_P}, respectively. Let $Y$ be the
associated product Gaussian comparison process
\eqref{eq:Y_product}--\eqref{eq:Y_local_covariance}, with common local
operators given by~\eqref{eq:expf_gaussian_operators}. Suppose
Assumption~\ref{assump:expfam_regular} holds, $M_n=\omega(n)$, and
$r_n\lesssim\sqrt{n/M_n}$. Then
\[
\sup_{\theta\in\Theta_n}\,
\bigl|
F_n(\theta)-F_n^{\mathrm G}(\theta)
\bigr|
=
O(r_n).
\]
\end{theorem}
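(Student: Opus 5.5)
Apply Theorem~\ref{thm:fe_comparison} directly with the independent decomposition $X=\sum_{m\le M_n}X_m$ from~\eqref{eq:expf_X}. Then bound each of the three error terms by Taylor-expanding the cumulant generating functions $\Lambda$, $\Gamma$, $\Xi$ around the origin. Fix $\theta$ and write $u_m=\eta_m(\theta)$, $v_m=\eta_m(t)$; both have norm at most $r_n$. For $n$ large we have $r_n<\rho/2$, so all expansions hold inside the analyticity region of Assumption~\ref{assump:expfam_regular}. Third derivatives there are bounded by a constant $C$ depending only on $\rho$ and the fixed laws, uniformly in $n$, $m$, $\theta$. Every bound below is then uniform in $\theta$, and the supremum comes for free.

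\emph{Mean term.} Under $P_{\theta,m}$, $\E[W_0]=\nabla_v\Xi(0,u_m)$. Expanding to second order gives $\nabla_v\Xi(0,0)+A_0u_m+O(r_n^2)$. Here $\nabla_v\Xi(0,0)=\int W_0\,\dd\gamma=\int W_0\,\dd\lambda=\nabla\Lambda(0)$ by the centering assumption. Likewise $\Lambda(v_m)=\Lambda(0)+\langle\nabla\Lambda(0),v_m\rangle+\tfrac12\langle K_0v_m,v_m\rangle+O(r_n^3)$, and $\Lambda(0)=0$ since $\lambda$ is a probability measure. Hence
\[
\E_\theta[X_m(t)]-\E_{Q_\theta}[Y_m(t)] = \langle O(r_n^2),v_m\rangle+O(r_n^3)=O(r_n^3),
\]
because the first-order terms $\langle\nabla\Lambda(0),v_m\rangle$ cancel. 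Summing over $m$ gives $O(M_nr_n^3)$.

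\emph{Covariance term.} $\cov_\theta(X_m(s),X_m(t))=\langle\nabla^2_{vv}\Xi(0,u_m)\eta_m(s),\eta_m(t)\rangle=\langle\Sigma_0\eta_m(s),\eta_m(t)\rangle+O(r_n\cdot r_n^2)$. Independence across $m$ makes the covariances add, so the total covariance error is also $O(M_nr_n^3)$.

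\emph{Third-moment term.} $X_m-\E X_m=\langle W_0-\E W_0,\eta_m(\cdot)\rangle$, so $\|X_m-\E X_m\|_\infty\le r_n\|W_0-\E W_0\|$. Under $P_{\theta,m}$, $\E\|W_0-\E W_0\|^3$ is bounded uniformly: exponential moments of $W_0$ exist under the tilted law $\gamma e^{\langle W_0^\star,u_m\rangle-\Gamma(u_m)}$ as long as $\|u_m\|<\rho/2$. This uses $D_\Xi$ together with a Hölder bound and finiteness of $\Gamma$ near zero. The third-moment term is therefore $O(M_nr_n^3)$.

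Combining the three terms gives $|F(\theta)-F^{\mathrm G}(\theta)|\lesssim M_nr_n^3$. Dividing by $n$ and using $r_n^2\lesssim n/M_n$ yields $O(r_n)$. The hypothesis $M_n=\omega(n)$ ensures $r_n\to0$, which is what justifies the Taylor expansions inside the radius $\rho$.

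The main obstacle is the uniform third-moment and third-derivative control: showing that the constants in the expansions of $\Xi$ and $\Lambda$, and the cubic moments of $W_0$ under the tilted laws, are bounded uniformly over the ball of radius $\rho/2$. I would get this from the analyticity of cumulant generating functions inside the open convex domain. Concretely, use Cauchy-type estimates, or directly bound $\E_\gamma\bigl[\|W_0\|^k e^{\langle W_0^\star,u\rangle}\bigr]$ via $\|w\|^k\le C_k\sum_\pm e^{\pm\delta\langle w,e_i\rangle}$ over coordinate directions $e_i$ and a suitable small $\delta>0$, which stays within $D_\Xi$. A secondary point is that the process $X_m$ must be continuous on $T_n$ for Theorem~\ref{thm:fe_comparison} to apply; this follows from continuity of $\eta_m$ and of $\Lambda$ on the ball.
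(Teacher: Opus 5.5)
Your proposal is correct and follows essentially the same route as the paper. You apply Theorem~\ref{thm:fe_comparison} and Taylor-expand $\Lambda$ and $\Xi$, using the centering condition $\nabla_v\Xi(0,0)=\nabla\Lambda(0)$ to cancel the first-order mean terms. You then bound the blockwise mean, covariance, and centered third-moment errors by $O(r_n^3)$ through uniform exponential moments under the tilted laws, and conclude from $M_nr_n^3/n\lesssim r_n$. The only difference is cosmetic. The paper writes the third derivatives as tilted cumulant tensors and bounds them by tilted third moments, via a uniform-tilting lemma based on the same sign-vector exponential trick you describe, whereas you appeal directly to smoothness of the cumulant generating functions on a compact sub-ball of the domain; both work.
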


\begin{proof}
See Appendix~\ref{app:proof_exp_free_energy}.
\end{proof}

\parhead{Posterior universality}
We next derive the corresponding universality result for posterior mean
squared error and posterior variance. In the present product setting, the
Gaussian location counterpart admits the exact linear perturbation of
Lemma~\ref{lem:linear_perturb_G}, while the exponential-family likelihood
admits a corresponding non-Gaussian perturbation. These constructions allow
Theorem~\ref{thm:Phi_comparison} to be applied to the free energy comparison in
Theorem~\ref{thm:exp_free_energy_univ}.

We first specialize the Gaussian perturbation
\eqref{eq:Y_beta}--\eqref{eq:Y_tilde} to the product Gaussian location family
with representation~\eqref{eq:Y_product} and local moments
\eqref{eq:Y_local_moments}. In the homogeneous exponential-family setting,
the covariance operator of $Y$ is independent of $\theta$ and is determined by
$\Sigma_0$ in~\eqref{eq:expf_gaussian_operators}.
Let $H_0$ be a symmetric $d\times d$ matrix satisfying
$\operatorname{span}(H_0)\subseteq\operatorname{span}(\Sigma_0)$, and set
\begin{align}\label{eq:posterior_product_operators}
\Sigma\coloneqq\bigoplus_{m=1}^{M_n}\Sigma_0, 
\qquad
H\coloneqq\bigoplus_{m=1}^{M_n}H_0 .
\end{align}
These global operators depend on $n$ through the number of local components;
this dependence is suppressed in the notation. By construction,
$\operatorname{span}(H)\subseteq\operatorname{span}(\Sigma)$, so
Lemma~\ref{lem:linear_perturb_G} applies with $H_\theta=H$. The resulting test
function in~\eqref{eq:phi_theta_lin} is
\begin{align}\label{eq:exp_phi_theta}
\phi_\theta(s,t)
=
a\langle H[\eta(\theta)-\eta(t)],\eta(\theta)-\eta(t)\rangle_{\cH}
+\langle H\eta(t),\eta(t)\rangle_{\cH}
-\langle H\eta(s),\eta(t)\rangle_{\cH}.
\end{align}
Under this convention, $a=-1$ and $a=0$ generate the $H$-weighted versions of
the posterior mean squared error~\eqref{eq:mse} and posterior
variance~\eqref{eq:pv}, respectively.

For the exponential-family likelihood process, we construct the corresponding
linear perturbation through the sufficient statistics. Let
\[
W
\coloneqq
\bigl(W_0(\omega_1),\ldots,W_0(\omega_{M_n})\bigr)\in\cH .
\]
Define a perturbation $(X_\beta)_{\beta\in\cB}$ of $X$, in the sense of
Section~\ref{sec:comparison}, by
\begin{align}\label{eq:exp_X_beta_pert}
X_\beta(t)
\coloneqq
X(t)+\beta\widetilde X(t),
\qquad t\in T_n,
\end{align}
where $a=-1$ and $a=0$ select the MSE and variance perturbations, respectively,
and
\begin{align}\label{eq:exp_X_tilde}
\widetilde X(t)
\coloneqq
\left\langle
H\Sigma^+
\bigl(W-\E_\theta [W]\bigr),
\eta(t)
\right\rangle
+
a\,\langle H[\eta(\theta)-\eta(t)],\eta(\theta)-\eta(t)\rangle_{\cH}.
\end{align}
This is the exponential-family analogue of~\eqref{eq:Y_tilde}. Since
$\dot X_\beta=\widetilde X$, Lemma~\ref{lem:deri_F} and~\eqref{eq:psi_X}
yield the induced test function
\begin{align}\label{eq:exp_psi_theta0}
\psi_{\theta,0}(s,t)
=
\E_\theta[\widetilde X(t)]
+
\cov_\theta(X(t),\widetilde X(t))
-
\cov_\theta(X(s),\widetilde X(t)).
\end{align}
Let the block covariance operator of the sufficient statistics under the local
laws $P_{\theta,m}$ be
\begin{align}\label{eq:exp_Sigma_W_theta}
\Sigma_W(\theta)
\coloneqq
\bigoplus_{m=1}^{M_n}
\nabla_{vv}^2\Xi(0,\eta_m(\theta)).
\end{align}
The Gaussian and non-Gaussian test functions then satisfy
\begin{align}\label{eq:exp_psi_remainder}
\psi_{\theta,0}(s,t)
=
\phi_\theta(s,t)
+
\left\langle
H\Sigma^+
\bigl(\Sigma_W(\theta)-\Sigma\bigr)
\eta(t),
\eta(t)-\eta(s)
\right\rangle .
\end{align}

\begin{theorem}[Posterior universality for exponential families]\label{thm:exp_posterior_univ}
Work under the assumptions of Theorem~\ref{thm:exp_free_energy_univ}, and let
$H$ and $\phi_\theta$ be specified by
\eqref{eq:posterior_product_operators}--\eqref{eq:exp_phi_theta}. Let $\Phi_n$
and $\Phi_n^{\mathrm G}$ be the normalized posterior functionals in
\eqref{eq:normalized_posterior_observables}, where
$\Phi^{\mathrm G}(\theta,\beta)$ is generated by the Gaussian perturbation
\eqref{eq:Y_beta}--\eqref{eq:Y_tilde} with the product operators in
\eqref{eq:posterior_product_operators}. Then, for every fixed sufficiently
small $\eps>0$, uniformly over $\theta\in\Theta_n$,
\begin{align*}
\Phi_n^{\mathrm G}(\theta,-\eps)
-
C\left(\eps+\frac{r_n}{\eps}\right)
\le\,
\Phi_n(\theta)
\,\le
\Phi_n^{\mathrm G}(\theta,\eps)
+
C\left(\eps+\frac{r_n}{\eps}\right),
\end{align*}
where $C<\infty$ is independent of $n$, $\theta$, and $\eps$.
\end{theorem}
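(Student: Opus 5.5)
The plan is to apply Theorem~\ref{thm:Phi_comparison} with the exponential-family perturbation \eqref{eq:exp_X_beta_pert}--\eqref{eq:exp_X_tilde} on the non-Gaussian side and the Gaussian linear perturbation \eqref{eq:Y_beta}--\eqref{eq:Y_tilde} (with $H_\theta=H$, $B_\theta=\Sigma^+H$) on the Gaussian side. After dividing by $n$, we need
\[
\Delta_1(\theta,\pm\eps)/n=O(\eps+r_n)
\quad\text{and}\quad
\Delta_2(\theta,\pm\eps)/n=O(r_n+\eps^2),
\]
uniformly in $\theta$. Then $\Delta_1/n+\Delta_2/(n\eps)\lesssim \eps+r_n+r_n/\eps+\eps\lesssim \eps+r_n/\eps$ for $\eps$ bounded, as claimed.

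\textbf{Hypotheses (a)--(c).} We first check the hypotheses of Theorem~\ref{thm:Phi_comparison}.
\begin{itemize}
\item Independence (a) holds because $X_{\beta,m}(t)=X_m(t)+\beta\widetilde X_m(t)$ depends only on $\omega_m$, since $H\Sigma^+$ is block diagonal. Joint Gaussianity of $(Y_\beta,\widetilde Y)$ is clear.
\item Integrability (b) follows from exponential moments of $W_0$ near the origin (Assumption~\ref{assump:expfam_regular}) and compactness of the index set.
\item Convexity (c) is automatic: $\beta\mapsto\log\int e^{X+\beta\widetilde X}\dd\pi$ is a log-Laplace transform in $\beta$, hence convex pathwise, and expectation preserves this.
\end{itemize}

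\textbf{Bounding $\Delta_1$.} We bound the terms of \eqref{eq:posterior_comparison_delta1} one at a time.
\begin{itemize}
\item By Lemma~\ref{lem:linear_perturb_G}, $\|\phi_\theta-\psi^{\mathrm G}_{\theta,\beta}\|_\infty\le|\beta|\sup_{s,t}|\langle H\Sigma^+H(\eta(t)-\eta(s)),\eta(t)\rangle|\lesssim \eps M_nr_n^2\lesssim\eps n$.
\item By \eqref{eq:exp_psi_remainder}, $\|\phi_\theta-\psi_{\theta,0}\|_\infty\le\sum_m\|H_0\Sigma_0^+\|\,\|\nabla^2_{vv}\Xi(0,\eta_m(\theta))-\Sigma_0\|\,2r_n^2$. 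Smoothness of $\Xi$ on the $\rho$-ball gives $\|\nabla^2_{vv}\Xi(0,u)-\Sigma_0\|\lesssim\|u\|\le r_n$ once $r_n<\rho/2$ (true since $M_n=\omega(n)$ forces $r_n\to0$). This term is therefore $O(M_nr_n^3)=O(nr_n)$.
\item For the cubic sum, $X_m-\E X_m=\langle W_0(\omega_m)-\E W_0,\eta_m(t)\rangle$, so $\E\|X_m-\E X_m\|_\infty^3\le r_n^3\,\E_{P_{\theta,m}}\|W_0-\E W_0\|^3$. The last expectation is bounded uniformly for $\|\eta_m(\theta)\|<\rho/2$ by the exponential-moment domain of $\Xi$. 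Likewise $\E\|\dot X_{0,m}-\E\dot X_{0,m}\|^3\lesssim r_n^3$. Each summand is $O(r_n^3)$, giving $O(M_nr_n^3)=O(nr_n)$.
\end{itemize}

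\textbf{Bounding $\Delta_2$.} The $\beta=0$ terms are exactly those bounded in the proof of Theorem~\ref{thm:exp_free_energy_univ}, so they are $O(nr_n)$. For the $\beta$ terms we expand:
\begin{itemize}
\item Means: $\E X_\beta-\E Y_\beta=(\E X-\E Y)+\beta(\E\widetilde X-\E\widetilde Y)$, and the bracket vanishes identically since both centered linear parts have mean zero and the $a$-terms coincide.
\item Covariances: $\cov(X_\beta)-\cov(Y_\beta)=(\cov X-\cov Y)+2\beta[\text{cross}]+\beta^2[\cov\widetilde X-\cov\widetilde Y]$. Each bracket is a sum over $m$ of bilinear forms in $\eta_m(s),\eta_m(t)$ with coefficient matrices differing by $\nabla^2_{vv}\Xi(0,\eta_m(\theta))-\Sigma_0=O(r_n)$. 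Hence each bracket is $O(M_nr_n^3)=O(nr_n)$.
\item Third moments: the $\beta$ third-moment sum is $\lesssim(1+\eps)^3M_nr_n^3=O(nr_n)$.
\end{itemize}
Thus $\Delta_2/n=O(r_n)$, and the bound follows with $C$ independent of $n$, $\theta$, and $\eps$ for $\eps\le1$.

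\textbf{Main obstacle.} The delicate step is the mean discrepancy $\E X-\E Y$ inside $\Delta_2$, where the $O(nr_n)$ bound must come from a second-order Taylor cancellation. The comparison is between $\langle\E_\theta W_0,\eta_m(t)\rangle-\Lambda(\eta_m(t))$ and $\langle A_0\eta_m(\theta),\eta_m(t)\rangle-\tfrac12\langle K_0\eta_m(t),\eta_m(t)\rangle$. This needs $\E_{P_{\theta,m}}W_0=\nabla_v\Xi(0,u)-\ldots$ expanded as $\nabla_v\Xi(0,0)+A_0u+O(\|u\|^2)$, together with $\nabla\Lambda(0)=\nabla_v\Xi(0,0)$, which is the matching-mean condition of Assumption~\ref{assump:expfam_regular}. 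The constant (linear in $\eta_m(t)$) term must also cancel, or be absorbed as a $t$-independent shift. I expect this to be inherited from the proof of Theorem~\ref{thm:exp_free_energy_univ}, which I will reuse verbatim.
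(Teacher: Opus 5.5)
Your proposal is correct and follows essentially the same route as the paper. Both apply Theorem~\ref{thm:Phi_comparison} to the affine perturbations with $B_0=\Sigma_0^+H_0$ and show $\Delta_1(\theta,\beta)/n\lesssim|\beta|+r_n$ and $\Delta_2(\theta,\beta)/n\lesssim r_n$, using three ingredients:
\begin{itemize}
\item the blockwise $O(r_n^3)$ bounds on $\nabla^2_{vv}\Xi(0,\eta_m(\theta))-\Sigma_0$;
\item the exact cancellation of the perturbed mean discrepancy;
\item the unperturbed estimates from the proof of Theorem~\ref{thm:exp_free_energy_univ}.
\end{itemize}
The only cosmetic difference is how the Lipschitz bound on $\nabla^2_{vv}\Xi(0,\cdot)$ and the uniform third moments are obtained. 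The paper makes them explicit through tilted cumulant tensors and Lemma~\ref{lem:exp_tilt_control}, whereas you appeal to smoothness of $\Xi$ on a compact sub-ball. Also, no constant shift needs absorbing in the mean term, since $\Lambda(0)=0$.
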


\begin{proof}[Proof sketch]
Apply Theorem~\ref{thm:Phi_comparison} to the Gaussian perturbation
\eqref{eq:Y_beta}--\eqref{eq:Y_tilde}, specialized by
\eqref{eq:posterior_product_operators}, and the exponential-family perturbation
\eqref{eq:exp_X_beta_pert}--\eqref{eq:exp_X_tilde}. The estimates used in the
proof of Theorem~\ref{thm:exp_free_energy_univ} also control the perturbed free
energy gap, yielding the stated sandwich bounds. Details are given in
Appendix~\ref{app:proof_exp_posterior}.
\end{proof}

The operator $H$ specifies the direction
along which posterior
observable is measured; $H=\Id$ recovers the normalized posterior mean squared
error and posterior variance. The condition $M_n/n\to\infty$ holds automatically
in the classical local asymptotic normality regime, where the parameter
dimension is fixed and the number of observations tends to infinity.

\subsection{Regular Local Log-Likelihoods}
\label{subsec:regular_local_log_likelihoods}

The exponential-family setting above makes the universality mechanism
explicit, but the comparison requires only a local second-order representation
of the log-likelihood process and a compatible first-order representation of
the data-generating likelihood ratio. Imposing these conditions directly
yields a broader class of non-Gaussian models to which the same free energy and
posterior comparisons apply.

We retain the product structure
\eqref{eq:product_data}--\eqref{eq:product_feature_map}. In this subsection
we display the $n$-dependence of the local objects, since the local variables,
operators, and laws may scale with $n$. The additional regularity condition is
local: the $m$th term in the decomposition~\eqref{eq:X_product} is written
here as $X_m^{(n)}$ and admits the representation
\begin{equation}
\begin{aligned}\label{eq:regular_X_local}
X_m^{(n)}(t,\omega_m)
&=
\langle U_m^{(n)}(\omega_m),\eta_m(t)\rangle_{\cH_m}
\\
&\qquad\qquad
-\frac12
\langle
V_m^{(n)}(\omega_m)\eta_m(t),
\eta_m(t)
\rangle_{\cH_m}
+
R_m^{(n)}(t,\omega_m),
\end{aligned}
\end{equation}
Here $U_m^{(n)}$ is a random element of $\cH_m$,
$V_m^{(n)}$ is a random self-adjoint operator on $\cH_m$, and
$R_m^{(n)}$ is a remainder term. Let
$
P_0
\coloneqq
\bigotimes_{m=1}^{M_n}P_{0,m}^{(n)}
$
be a null product measure, independent of $\theta$. The local
data-generating laws are specified relative to $P_{0,m}^{(n)}$ by
\begin{align}\label{eq:regular_likelihood_ratio}
\log
\frac{\dd P_{\theta,m}^{(n)}}{\dd P_{0,m}^{(n)}}(\omega_m)
&=
\langle S_m^{(n)}(\omega_m),\eta_m(\theta)\rangle_{\cH_m}
+
\Delta_m^{(n)}(\theta,\omega_m),
\qquad \theta\in\Theta_n .
\end{align}
Here $S_m^{(n)}$ is a random element of $\cH_m$ and
$\Delta_m^{(n)}$ is a remainder term. Together with
$P_\theta=\bigotimes_{m=1}^{M_n}P_{\theta,m}^{(n)}$, this gives a
compatible local representation of the statistical model and the
data-generating family. In this formulation, regularity is imposed through the
moment bounds in Assumption~\ref{assump:regular_local_bounds}, which control
the local quantities appearing in
\eqref{eq:regular_X_local}--\eqref{eq:regular_likelihood_ratio}.

The scale used in these bounds is the feature radius $r_n$ in
\eqref{eq:feature_radius}. In the moment bounds below, $\E_0$ and
$\E_\theta$ denote expectation under $P_{0,m}^{(n)}$ and
$P_{\theta,m}^{(n)}$, respectively. We first collect the second moments,
\begin{align}\label{eq:regular_leading_moment}
\mathsf L_{m,n}
&\coloneqq
\E_0\|U_m^{(n)}\|_{\cH_m}^2
+
\E_0\|S_m^{(n)}\|_{\cH_m}^2 .
\end{align}
We next define the local error quantities. For $\theta\in\Theta_n$, write
$\Delta_m^{(n)}(\theta)=\Delta_m^{(n)}(\theta,\omega_m)$ and define
\begin{equation}\label{eq:regular_moment_U}
\cE^U_{m,n}(\theta)
\coloneqq
r_n\Bigl(
\E_0\|U_m^{(n)}\|_{\cH_m}^3
+
\E_\theta\|U_m^{(n)}\|_{\cH_m}^3
\Bigr),
\end{equation}
\begin{equation}\label{eq:regular_moment_S}
\cE^S_{m,n}(\theta)
\coloneqq
r_n\Bigl(
\E_0\|S_m^{(n)}\|_{\cH_m}^3
+
\E_\theta\|S_m^{(n)}\|_{\cH_m}^3
\Bigr),
\end{equation}
\begin{equation}\label{eq:regular_moment_V}
\begin{aligned}
\cE^V_{m,n}(\theta)
&\coloneqq
r_n\sqrt{\E_0\|V_m^{(n)}\|_{\mathrm{op}}^2}
+
r_n^4\Bigl(
\E_0\|V_m^{(n)}\|_{\mathrm{op}}^3
+
\E_\theta\|V_m^{(n)}\|_{\mathrm{op}}^3
\Bigr),
\end{aligned}
\end{equation}
\begin{equation}\label{eq:regular_moment_R}
\begin{aligned}
\cE^R_{m,n}(\theta)
&\coloneqq
\sup_{t\in T_n}
\Biggl\{
r_n^{-2}|\E_0R_m^{(n)}(t)|
+
r_n^{-1}\sqrt{\E_0|R_m^{(n)}(t)|^2}
\\
&\qquad\qquad{}
+
r_n^{-2}\E_0|R_m^{(n)}(t)|^3
\Biggr\}
+
r_n^{-2}\E_\theta\left[
\sup_{t\in T_n}|R_m^{(n)}(t)|^3
\right],
\end{aligned}
\end{equation}
\begin{equation}\label{eq:regular_moment_Delta}
\begin{aligned}
\cE^\Delta_{m,n}(\theta)
&\coloneqq
r_n^{-1}\sqrt{\E_0|\Delta_m^{(n)}(\theta)|^2}
+
r_n^{-2}\Bigl(
\E_0|\Delta_m^{(n)}(\theta)|^3
+
\E_\theta|\Delta_m^{(n)}(\theta)|^3
\Bigr).
\end{aligned}
\end{equation}

\begin{assumption}[local moments bounds]\label{assump:regular_local_bounds}
For the local representations
\eqref{eq:regular_X_local}--\eqref{eq:regular_likelihood_ratio}, let
$\delta_n(\theta)$ be a nonnegative deterministic quantity, indexed by
$n$ and $\theta\in\Theta_n$. The following bounds are assumed to hold
uniformly over $m\le M_n$ and $\theta\in\Theta_n$:
\[
\begin{aligned}
&\qquad\qquad\qquad\E_0[U_m^{(n)}]
=0,
\qquad
\mathsf L_{m,n}\lesssim1,
\\
&\cE^U_{m,n}(\theta)
+
\cE^S_{m,n}(\theta)
+
\cE^V_{m,n}(\theta)
+
\cE^R_{m,n}(\theta)
+
\cE^\Delta_{m,n}(\theta)
\lesssim
\delta_n(\theta).
\end{aligned}
\]
\end{assumption}

The five quantities in~\eqref{eq:regular_moment_U}--\eqref{eq:regular_moment_Delta}
are the local moment bounds on $U_m^{(n)},V_m^{(n)},R_m^{(n)},S_m^{(n)}$, and
$\Delta_m^{(n)}$. They collect the higher-order and remainder terms under the
two local laws $P_{0,m}^{(n)}$ and $P_{\theta,m}^{(n)}$ into a single scale
$\delta_n(\theta)$.
The exponential-family construction above is a special case: Taylor expansion
of the local log-partition functions identifies the corresponding
$U_m^{(n)},V_m^{(n)},R_m^{(n)},S_m^{(n)}$, and $\Delta_m^{(n)}$. In that
setting the local laws are homogeneous and the exponential-family regularity
ensures the required moments bounds.

The corresponding Gaussian location family is obtained by matching the local
first- and second-order structure under the null law. Since the spaces
$\cH_m$ need not be finite-dimensional, the local operators are defined
through bilinear forms. Assumption~\ref{assump:regular_local_bounds} ensures
that the following forms are bounded on $\cH_m\times\cH_m$ for fixed $n$; hence, by the
Riesz representation theorem, they define unique bounded operators
$A_m^{(n)},\Sigma_m^{(n)},K_m^{(n)}$ through
\begin{align}\label{eq:regular_operator_forms}
\langle A_m^{(n)} h,g\rangle_{\cH_m}
&\coloneqq
\E_0\bigl[
\langle S_m^{(n)},h\rangle_{\cH_m}
\langle U_m^{(n)},g\rangle_{\cH_m}
\bigr],
\\
\langle \Sigma_m^{(n)} h,g\rangle_{\cH_m}
&\coloneqq
\E_0\bigl[
\langle U_m^{(n)},h\rangle_{\cH_m}
\langle U_m^{(n)},g\rangle_{\cH_m}
\bigr],
\\
\langle K_m^{(n)} h,g\rangle_{\cH_m}
&\coloneqq
\E_0\bigl[
\langle V_m^{(n)} h,g\rangle_{\cH_m}
\bigr].
\label{eq:regular_operator_K}
\end{align}
Then $\Sigma_m^{(n)}$ is nonnegative and self-adjoint, while $K_m^{(n)}$ is
self-adjoint. Substituting these local operators into the product Gaussian
location construction in~\eqref{eq:gaussian_product_operators}--\eqref{eq:Y_local_moments}
gives the associated Gaussian comparison process.

\begin{theorem}[Free energy universality for regular local log-likelihoods]
\label{thm:regular_free_energy_univ}
Consider the product structure
\eqref{eq:product_data}--\eqref{eq:product_feature_map}. Let the local
likelihood processes and product data-generating laws be given by
\eqref{eq:regular_X_local}--\eqref{eq:regular_likelihood_ratio}, and let $Y$
denote the associated Gaussian location counterpart determined by the local
operators in~\eqref{eq:regular_operator_forms}--\eqref{eq:regular_operator_K}.
Suppose that
Assumption~\ref{assump:regular_local_bounds} holds and
$r_n\lesssim\sqrt{n/M_n}$. Then, for every $\theta\in\Theta_n$,
\[
\bigl|
F_n(\theta)-F_n^{\mathrm G}(\theta)
\bigr|
\lesssim
\delta_n(\theta)\{ 1+\delta_n(\theta) \}.
\]
\end{theorem}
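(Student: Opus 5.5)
We apply Theorem~\ref{thm:fe_comparison} directly to $X=\sum_{m}X_m^{(n)}$ under $P_\theta$ and to the Gaussian location process $Y$ built from \eqref{eq:regular_operator_forms}--\eqref{eq:regular_operator_K}. Dividing by $n$ and using $M_n r_n^2\lesssim n$, it suffices to bound, uniformly in $t,s\in T_n$, each of
\[
\sum_m\bigl|\E_\theta X_m(t)-\E Y_m(t)\bigr|,\qquad
\sum_m\bigl|\cov_\theta(X_m(s),X_m(t))-\cov(Y_m(s),Y_m(t))\bigr|,\qquad
\sum_m\E_\theta\|X_m-\E_\theta X_m\|_\infty^3
\]
by $M_n r_n^2\,\delta_n(\theta)\{1+\delta_n(\theta)\}$. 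Independence across $m$ under $P_\theta$ holds by the product structure, so the theorem applies. The core device is a change of measure from $P_{\theta,m}$ to $P_{0,m}$. Write $L_m=\dd P_{\theta,m}/\dd P_{0,m}=\exp\{\langle S_m,\eta_m(\theta)\rangle+\Delta_m\}$ and $\ell_m=\langle S_m,\eta_m(\theta)\rangle+\Delta_m$, which is of size $O(r_n)$ in the relevant moments.

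\textbf{Means.} Write $\E_\theta[f]=\E_0[f]+\E_0[f\,\ell_m]+\E_0[f(L_m-1-\ell_m)]$ for $f=X_m(t)$. Since $\E_0U_m=0$, the first term is $-\tfrac12\langle K_m\eta_m(t),\eta_m(t)\rangle+\E_0R_m(t)$. The second term contains $\E_0[\langle U_m,\eta_m(t)\rangle\langle S_m,\eta_m(\theta)\rangle]=\langle A_m\eta_m(\theta),\eta_m(t)\rangle$, which together with the $K_m$ term reproduces $\E Y_m(t)$. The remaining terms are errors:
\begin{itemize}
\item $\E_0[\langle U_m,\eta_m(t)\rangle\Delta_m]\le r_n\sqrt{\mathsf L}\sqrt{\E_0\Delta_m^2}\lesssim r_n^2\delta_n$.
\item The $V_m$ terms, $r_n^2\cdot r_n\sqrt{\E_0\|V\|^2}\sqrt{\E_0\ell_m^2}\lesssim r_n^2\delta_n$.
\item The $R_m$ terms, which are $\lesssim r_n^2\cE^R$.
\end{itemize}

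\textbf{Taylor remainder.} The remainder $L_m-1-\ell_m$ is handled by $|e^x-1-x|\le \tfrac12x^2(1+e^x)$, which splits it into a $P_0$-part and a $P_\theta$-part. Hölder with third moments then gives bounds like $\E_0|f|\ell_m^2+\E_\theta|f|\ell_m^2\le(\E|f|^3)^{1/3}(\E|\ell_m|^3)^{2/3}$. These are $O(r_n\cdot r_n^2)$ times third-moment quantities that the $r_n$-weighted definitions $\cE^U,\cE^S,\cE^\Delta$ convert into $r_n^2\delta_n$. Cross terms such as $\E|\Delta|^3/r_n^2$ may appear squared, and this is where the $\delta_n^2$ contribution arises.

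\textbf{Covariances.} The same expansion is applied to products $X_m(s)X_m(t)$. At leading order, $\E_0[\langle U_m,\eta_m(s)\rangle\langle U_m,\eta_m(t)\rangle]=\langle\Sigma_m\eta_m(s),\eta_m(t)\rangle$. The corrections are:
\begin{itemize}
\item The tilt correction, $O(r_n^3)$ by third moments.
\item $V$ and $R$ cross terms, controlled by $r_n\sqrt{\E_0\|V\|^2}$ and $r_n^{-1}\sqrt{\E_0R^2}$.
\item The product of means, which is $O(r_n^2\cdot r_n^2)$ and hence negligible.
\end{itemize}

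\textbf{Third moments.} We use $\E_\theta\|X_m-\E_\theta X_m\|_\infty^3\lesssim\E_\theta\sup_t|X_m(t)|^3$ and the triangle inequality, giving
\[
r_n^3\E_\theta\|U_m\|^3+r_n^6\E_\theta\|V_m\|^3+\E_\theta\sup_t|R_m|^3\lesssim r_n^2(\cE^U+\cE^V+\cE^R).
\]
This matches the weights built into \eqref{eq:regular_moment_U}--\eqref{eq:regular_moment_R}.

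\textbf{Main obstacle.} The hardest step is the bookkeeping in the change of measure. The second-order remainder of $L_m$ must be controlled using only the listed third moments under both $P_0$ and $P_\theta$, with no exponential moments. It must also be checked that every cross term carries exactly one factor whose $r_n$-weighted moment is in $\delta_n$, or two such factors, which yields $\delta_n^2$. My first attempt would be the inequality $|e^x-1-x|\le x^2(1+e^x)/2$ combined with three-factor Hölder, splitting $\ell_m$ into its $S$ and $\Delta$ parts. Summing the three contributions over $m\le M_n$ and dividing by $n$ completes the proof.
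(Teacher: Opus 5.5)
Your route is the paper's: apply Theorem~\ref{thm:fe_comparison} and reduce to blockwise mean, covariance and third-moment bounds of size $r_n^2\delta_n(\theta)\{1+\delta_n(\theta)\}$. You then move $P_\theta$-expectations to $P_{0,m}^{(n)}$ by a change of measure whose remainder is split into a $P_0$-part and a $P_\theta$-part. Your inequality $|e^x-1-x|\le\tfrac12x^2(1+e^x)$ is the content of Lemma~\ref{lem:regular_change_of_measure}. The mean and third-moment steps go through as you sketch them.

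The covariance step, however, does not close as written. Applying ``the same expansion'' to $f=X_m(s)X_m(t)$ leaves the remainder $\E_0|f|\ell_m^2+\E_\theta|f|\ell_m^2$, which you never bound. It is a fourth-order product; for example, with $U_m(t)=\langle U_m,\eta_m(t)\rangle$ it contains $\E_\bullet|U_m(s)U_m(t)|\ell_m^2$. Assumption~\ref{assump:regular_local_bounds} controls only third moments, so no H\"older split bounds it. For quadratic $h$ you must stop at first order. Using $|e^x-1|\le|x|(1+e^x)$, one gets $|\E_\theta h-\E_0 h|\le\E_0|h\ell_m|+\E_\theta|h\ell_m|$. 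For $h=U_m(s)U_m(t)$, H\"older with exponents $(3,3,3)$ then gives $r_n^2\delta_n(\theta)$. This is not $O(r_n^3)$; that rate is only the exponential-family scale $\delta_n=r_n$.

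The rest of your covariance bookkeeping needs the same care.

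\emph{Product of means.} The product $\E_\theta[U_m(s)]\,\E_\theta[U_m(t)]$ is not negligible merely because it is $O(r_n^4)$. The per-block target is $r_n^2\delta_n(\theta)$, and $r_n^2\lesssim\delta_n(\theta)$ is not a hypothesis. The paper first bounds $|\E_\theta U_m(t)|\lesssim\E_0|U_m(t)\ell_m|+\E_\theta|U_m(t)\ell_m|$. It then applies $ab\le\tfrac23a^{3/2}+\tfrac13b^3$ and $(\E|U_m(t)\ell_m|)^{3/2}\le\tfrac12\E|U_m(t)|^3+\tfrac12\E|\ell_m|^3$, so everything reduces to third moments.

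\emph{The $V$ and $R$ cross terms.} These live under $P_\theta$, not $P_0$. In particular, with $\tilde R_m=-\tfrac12V_m+R_m$, you need $\var_\theta(\tilde R_m(t))\lesssim r_n^2\{\delta_n+\delta_n^2\}$. Jensen from $\E_\theta|\tilde R_m(t)|^3\lesssim r_n^2\delta_n$ only yields $(r_n^2\delta_n)^{2/3}$, which is too weak. You need the first-order change of measure applied to $h=\tilde R_m(t)^2$, starting from $\E_0|\tilde R_m(t)|^2\lesssim r_n^2\delta_n^2$.

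That term, together with $P_0$ cross terms in the mean such as $\E_0[V_m(t)\Delta_m(\theta)]$ and $\E_0[R_m(t)\Delta_m(\theta)]$, is where the $\delta_n^2$ in the final bound actually comes from. It does not come from squared third moments.
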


\begin{proof}
The proof is given in Appendix~\ref{app:proof_regular_free_energy}.
\end{proof}

To formulate the corresponding comparison of posterior mean squared error and
posterior variance in prescribed feature directions, choose self-adjoint local
operators
$H_m^{(n)}:\cH_m\to\cH_m$, independent of $\theta$, satisfying the product
version of the range condition in~\eqref{eq:H_Sigma},
\[
\operatorname{Range}(H_m^{(n)})
\subseteq
\operatorname{Range}(\Sigma_m^{(n)}),
\qquad m\le M_n .
\]
The Douglas factorization theorem then yields bounded operators
$B_m^{(n)}:\cH_m\to\cH_m$ such that
$H_m^{(n)}=\Sigma_m^{(n)}B_m^{(n)}$. Suppressing the superscript $(n)$ for
the resulting global operators, define
\begin{align}\label{eq:regular_posterior_product_operators}
\Sigma
\coloneqq
\bigoplus_{m=1}^{M_n}\Sigma_m^{(n)},
\qquad
H
\coloneqq
\bigoplus_{m=1}^{M_n}H_m^{(n)},
\qquad
B
\coloneqq
\bigoplus_{m=1}^{M_n}B_m^{(n)} .
\end{align}
The test function $\phi_\theta$ in~\eqref{eq:phi_theta_lin}, with
$H_\theta=H$, generates the posterior observables: $a=-1$
gives the posterior mean squared error, whereas $a=0$ gives the posterior
variance.

\begin{theorem}[Posterior universality for regular local log-likelihoods]
\label{thm:regular_posterior_comparison}
Consider the setting of Theorem~\ref{thm:regular_free_energy_univ}, with the
product operators in~\eqref{eq:regular_posterior_product_operators} and the
test function in~\eqref{eq:phi_theta_lin} with $H_\theta=H$.
Let $\Phi_n$ and $\Phi_n^{\mathrm G}$ denote the normalized posterior
functionals in~\eqref{eq:normalized_posterior_observables}, where
$\Phi^{\mathrm G}(\theta,\beta)$ in~\eqref{eq:Phi_G_beta} is generated by the
Gaussian perturbation
\eqref{eq:Y_beta}--\eqref{eq:Y_tilde} with the product operators in
\eqref{eq:regular_posterior_product_operators}.
Set
\[
\mathfrak h_n
\coloneqq
\max_{m\le M_n}
\left(
\|H_m^{(n)}\|_{\mathrm{op}}
+
\|B_m^{(n)}\|_{\mathrm{op}}
\right),
\qquad
\widetilde\delta_n(\theta)
\coloneqq
\delta_n(\theta)\{1+\delta_n(\theta)\}.
\]
If $\mathfrak h_n=O(1)$, then, for every fixed sufficiently small $\eps>0$ and
every $\theta\in\Theta_n$,
\begin{align*}
\Phi_n^{\mathrm G}(\theta,-\eps)
-
C\left(\eps+\frac{\widetilde\delta_n(\theta)}{\eps}\right)
\le\,
\Phi_n(\theta)
\,\le
\Phi_n^{\mathrm G}(\theta,\eps)
+
C\left(\eps+\frac{\widetilde\delta_n(\theta)}{\eps}\right),
\end{align*}
where $C<\infty$ is independent of $n$, $\theta$, and $\eps$.
\end{theorem}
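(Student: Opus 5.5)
The proof applies Theorem~\ref{thm:Phi_comparison} to two linear perturbations: the Gaussian perturbation \eqref{eq:Y_beta}--\eqref{eq:Y_tilde} with the product operators \eqref{eq:regular_posterior_product_operators}, and a non-Gaussian perturbation built from the local scores. Mirroring \eqref{eq:exp_X_tilde}, set $X_\beta=X+\beta\widetilde X$ with
\[
\widetilde X(t)=\sum_{m=1}^{M_n}\Bigl\{\langle B_m^{(n)*}\bigl(U_m^{(n)}-\E_\theta U_m^{(n)}\bigr),\eta_m(t)\rangle_{\cH_m}+a\,\langle H_m^{(n)}[\eta_m(\theta)-\eta_m(t)],\eta_m(\theta)-\eta_m(t)\rangle_{\cH_m}\Bigr\}.
\]
With this choice, $X_{\beta,m}$ and $\dot X_{\beta,m}=\widetilde X_m$ are functions of $\omega_m$ alone, so hypothesis (a) holds. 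Hypothesis (b) follows from $\mathsf L_{m,n}\lesssim1$, $\mathfrak h_n=O(1)$ and boundedness of $\eta$. Hypothesis (c) holds because $\beta\mapsto\log\int e^{X+\beta\widetilde X}\dd\pi$ is convex pathwise, being a log-moment generating function, and convexity is preserved by taking expectations.

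Once the hypotheses are checked, it remains to show that, after dividing by $n$, $\Delta_1(\theta,\pm\eps)\lesssim\eps+\widetilde\delta_n(\theta)$ and $\Delta_2(\theta,\pm\eps)\lesssim\widetilde\delta_n(\theta)$. Dividing the sandwich of Theorem~\ref{thm:Phi_comparison} by $n$ then gives the claimed bound $C(\eps+\widetilde\delta_n/\eps)$, where I absorb $\widetilde\delta_n$ into $\widetilde\delta_n/\eps$ for $\eps\le1$.

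\textbf{Bounding $\Delta_2$.} The unperturbed terms in $\Delta_2$ are exactly the three quantities controlled in the proof of Theorem~\ref{thm:regular_free_energy_univ}, so they are $O(n\widetilde\delta_n)$. For the perturbed terms I will rerun that proof with $U_m$ replaced by $U_m+\beta B_m^*(U_m-\E_\theta U_m)$ and $V_m$ replaced by $V_m+2a\beta H_m$, adding the corresponding linear and constant terms in $\eta_m(t)$ to the mean. These are local quantities of the same type, and $\|B_m\|_{\mathrm{op}}\le\mathfrak h_n$. The corresponding Gaussian side $Y_\beta$ has covariance kernel generated by $(\Id+\beta B)^*\Sigma(\Id+\beta B)$ on the $\xi$-part, and it matches the second moments computed under $P_0$. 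The mean and covariance discrepancies are again transported from $P_0$ to $P_\theta$ through \eqref{eq:regular_likelihood_ratio}, and they are controlled by $\cE^S,\cE^\Delta,\cE^R,\cE^V$ exactly as before, up to factors $(1+|\beta|\mathfrak h_n)^3$. Third moments scale as $r_n^3\E\|U_m\|^3$ per block, and $M_nr_n^3\E\|U_m\|^3\lesssim n\,\cE^U$ by the scaling $r_n^2\lesssim n/M_n$.

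\textbf{Bounding $\Delta_1$.} The last term of $\Delta_1$ is bounded by Hölder's inequality: $\sum_m(\E\|X_m-\E X_m\|^3)^{2/3}(\E\|\widetilde X_m-\E\widetilde X_m\|^3)^{1/3}\lesssim M_n r_n^3(\cdots)\lesssim n\delta_n$. For $\|\phi_\theta-\psi^{\mathrm G}_{\theta,\beta}\|_\infty$, Lemma~\ref{lem:linear_perturb_G} gives directly
\[
\|\phi_\theta-\psi^{\mathrm G}_{\theta,\beta}\|_\infty\le|\beta|\,\|B^*\Sigma B\|_{\mathrm{op}}\cdot 4M_nr_n^2\lesssim n\eps .
\]
This is the source of the $C\eps$ term.

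\textbf{Main obstacle.} The hardest step is $\|\phi_\theta-\psi_{\theta,0}\|_\infty$, where $P_\theta$ differs from the null law $P_0$. Computing from \eqref{eq:psi_X}, $\E_\theta\widetilde X(t)$ reproduces the $a$-term exactly. The covariance part gives $\langle B^*\cov_\theta(U,U)\eta(t),\eta(t)-\eta(s)\rangle$ plus cross terms from $V_m$ and $R_m$. Under $P_0$, $B^*\Sigma=H$ (self-adjointness), which produces exactly $\phi_\theta$. So the error is $\sum_m\langle B_m^*(\cov_\theta U_m-\cov_0U_m)\eta_m(t),\eta_m(t)-\eta_m(s)\rangle$ plus the $V$- and $R$-cross covariances. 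I plan to bound the change of measure by writing $\E_\theta[g]-\E_0[g]=\E_0[g(e^{L_m}-1)]$, with $L_m$ the log-likelihood ratio in \eqref{eq:regular_likelihood_ratio}, and then using $|e^x-1-x|\le |x|^2(e^x+1)$ with $\|\langle S_m,\eta_m(\theta)\rangle\|\lesssim r_n\|S_m\|$. Each block then contributes $O(r_n^3)$ times the third moments appearing in $\cE^U,\cE^S,\cE^\Delta$, together with $O(r_n\sqrt{\E_0 V^2}\,r_n^2)$ and $O(r_n\sqrt{\E_0R^2})$ for the cross terms. Summed over $m$ and using $M_nr_n^2\lesssim n$, this gives $O(n\widetilde\delta_n)$. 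The quadratic factor $\delta_n^2$ in $\widetilde\delta_n$ arises from products of two remainder terms, exactly as in the free-energy proof.
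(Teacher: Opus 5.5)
Your overall architecture is the paper's. Your perturbation $\langle B_m^{(n)*}(U_m^{(n)}-\E_\theta U_m^{(n)}),\eta_m(t)\rangle$ is exactly the paper's $\zeta_m(t)=\langle U_m-\E_\theta U_m,B_m\eta_m(t)\rangle$. Hypotheses (a)--(c) are verified the same way, $\Delta_1,\Delta_2$ are split into the same pieces, and $\|\phi_\theta-\psi^{\mathrm G}_{\theta,\beta}\|_\infty\lesssim|\beta|M_nr_n^2$ gives the $C\eps$ term.

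\textbf{The gap: the change-of-measure step.} It does not close as you describe it in your ``main obstacle''. What must be transported from $P_0$ to $P_\theta$ there are covariances, i.e.\ expectations of \emph{quadratic} quantities $g=U_m(s)U_m^B(t)$ or $g=\widetilde R_m(s)U_m^B(t)$, where $U_m^B(t)=\langle U_m,B_m\eta_m(t)\rangle$ and $\widetilde R_m=-\tfrac12V_m+R_m$.
\begin{itemize}
\item Using $|e^x-1-x|\le x^2(e^x+1)$ leaves the remainder $\E_0[|g|\cL_m(\theta)^2]+\E_\theta[|g|\cL_m(\theta)^2]$. This contains fourth-order mixed moments such as $r_n^4\E_\bullet[\|U_m\|^2\|S_m\|^2]$.
\item Assumption~\ref{assump:regular_local_bounds} controls only third moments of $U_m,S_m,\Delta_m$ (and $P_0$ second moments of $V_m,R_m$). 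So this remainder need not even be finite, and your accounting ``$O(r_n^3)$ times third moments'' does not follow.
\item The repair is to stop at first order for quadratic $g$: $|\E_\theta g-\E_0g|\lesssim\E_0|g\cL_m|+\E_\theta|g\cL_m|$ (Lemma~\ref{lem:regular_change_of_measure}, from $e^{s\cL}\le se^{\cL}+1-s$). Then H\"older with exponents $(3,3,3)$ gives $r_n^2\delta_n(\theta)$ per block.
\item The second-order expansion is needed only for linear $g$ in the mean comparison. There $|g|\cL_m^2$ is genuinely third order, and the linear term $\E_0[U_m\cL_m]$ produces $A_m$.
\end{itemize}
You also omit the product of means in $\cov_\theta(A,C)=\E_\theta[AC]-\E_\theta A\,\E_\theta C$. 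Since $\E_0A=0$, one has $|\E_\theta A|\lesssim\E_0|A\cL_m|+\E_\theta|A\cL_m|$. Young's inequality with exponents $(3/2,3)$ plus Jensen then reduce $|\E_\theta A\,\E_\theta C|$ to third moments, again $\lesssim r_n^2\delta_n(\theta)$.

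\textbf{Rerunning the free-energy proof for $\Delta_2$.} Theorem~\ref{thm:regular_free_energy_univ} does not apply verbatim here, for three reasons:
\begin{itemize}
\item $U_m+\beta B_m^*(U_m-\E_\theta U_m)$ is not $P_0$-centered.
\item The quadratic replacement is $V_m-2a\beta H_m$, not $V_m+2a\beta H_m$.
\item The operator matched to the new score, $(\Id+\beta B_m^*)A_m$, is not the linear mean coefficient of $Y_\beta$. The mismatch is compensated only approximately by the deterministic term $-\beta\langle B_m^*\E_\theta U_m,\eta_m(t)\rangle$.
\end{itemize}
The paper sidesteps this. Because $\zeta_m$ and its Gaussian analogue are centered and the deterministic parts coincide, $\E_\theta[X_\beta]-\E_{Q_\theta}[Y_\beta]=\E_\theta[X]-\E_{Q_\theta}[Y]$ exactly. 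By bilinearity, $\cov_\theta(X_\beta)-\cov_{Q_\theta}(Y_\beta)$ equals the unperturbed discrepancy plus $\beta$ and $\beta^2$ cross terms. These are controlled by the same two directional estimates ($X_m$ against $\zeta_m$, and $\zeta_m$ against $\zeta_m$) that you already need for $\|\psi_{\theta,0}-\phi_\theta\|_\infty$.

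\textbf{Hypothesis (b).} It requires $\E_\theta\|U_m\|<\infty$. This comes from the $P_\theta$ third moment in $\cE^U_{m,n}(\theta)$, not from $\mathsf L_{m,n}$, which is a $P_0$ quantity.
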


\begin{proof}
The proof is given in Appendix~\ref{app:proof_regular_posterior}.
\end{proof}

Theorems~\ref{thm:regular_free_energy_univ}
and~\ref{thm:regular_posterior_comparison} contain the exponential-family
results in Theorems~\ref{thm:exp_free_energy_univ}
and~\ref{thm:exp_posterior_univ} as the special case
$\delta_n(\theta)=r_n$. The regular local formulation additionally permits
heterogeneous local components, Hilbert spaces $\cH_m$ that need not be
finite-dimensional, and weaker regularity conditions expressed directly in
terms of local moments and remainders. Most importantly, the local submodels
themselves may vary with $n$.

The sandwich bound in Theorem~\ref{thm:regular_posterior_comparison} controls
$\Phi_n(\theta)$ by the perturbed Gaussian functionals
$\Phi_n^{\mathrm G}(\theta,\pm\eps)$. Recovering the unperturbed Gaussian
observable therefore requires an asymptotic analysis in which $n\to\infty$
and then $\eps\downarrow0$. This passage is governed by the limiting perturbed
Gaussian free energy: one-sided derivatives yield bounds on subsequential
limits, while differentiability at the origin identifies a unique limit. The
following corollary makes this conclusion precise after averaging over
$\theta$.

\begin{cor}\label{cor:regular_posterior_limit}
Suppose the assumptions of Theorem~\ref{thm:regular_posterior_comparison}
hold. Let $(\nu_n)$ be probability measures on $\Theta_n$ satisfying
\[
\int_{\Theta_n}\widetilde\delta_n(\theta)\,\nu_n(\dd\theta)\to0.
\]
Assume that, for $\beta$ in a neighborhood of zero, the averaged Gaussian free
energies
\[
\mathcal F_n^{\mathrm G}(\beta)
\coloneqq
\int_{\Theta_n}F_n^{\mathrm G}(\theta,\beta)\,\nu_n(\dd\theta)
\]
converge pointwise to a finite limit $\mathcal F^{\mathrm G}(\beta)$. Then
\[
\partial_-\mathcal F^{\mathrm G}(0)
\le
\liminf_{n\to\infty}
\int_{\Theta_n}\Phi_n(\theta)\,\nu_n(\dd\theta)
\le
\limsup_{n\to\infty}
\int_{\Theta_n}\Phi_n(\theta)\,\nu_n(\dd\theta)
\le
\partial_+\mathcal F^{\mathrm G}(0).
\]
In particular, if $\mathcal F^{\mathrm G}$ is differentiable at the origin, then
\[
\lim_{n\to\infty}\int_{\Theta_n}\Phi_n(\theta)\,\nu_n(\dd\theta)
=
\lim_{n\to\infty}\int_{\Theta_n}\Phi_n^{\mathrm G}(\theta)\,\nu_n(\dd\theta)
=
(\mathcal F^{\mathrm G})'(0).
\]
\end{cor}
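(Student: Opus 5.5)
Integrate the sandwich bound of Theorem~\ref{thm:regular_posterior_comparison} against $\nu_n$, and then replace the perturbed Gaussian observables $\Phi_n^{\mathrm G}(\theta,\pm\eps)$ by difference quotients of the perturbed Gaussian free energy using convexity. Fix a small $\eps>0$. Integrating the two-sided bound over $\theta$ gives
\[
\int\Phi_n^{\mathrm G}(\theta,-\eps)\,\nu_n(\dd\theta)-C\Bigl(\eps+\tfrac{1}{\eps}\int\widetilde\delta_n\,\dd\nu_n\Bigr)
\le\int\Phi_n\,\dd\nu_n
\le\int\Phi_n^{\mathrm G}(\theta,\eps)\,\nu_n(\dd\theta)+C\Bigl(\eps+\tfrac{1}{\eps}\int\widetilde\delta_n\,\dd\nu_n\Bigr).
\]
Since $C$ does not depend on $\theta$ and $\int\widetilde\delta_n\,\dd\nu_n\to0$, the error terms tend to $C\eps$ as $n\to\infty$ with $\eps$ fixed. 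Measurability of $\theta\mapsto\Phi_n(\theta)$ and $\Phi_n^{\mathrm G}(\theta,\pm\eps)$ is assumed implicitly, as in the statement.

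Next, relate $\Phi_n^{\mathrm G}(\theta,\beta)$ to $\partial_\beta F_n^{\mathrm G}(\theta,\beta)$. By Lemma~\ref{lem:linear_perturb_G} and Lemma~\ref{lem:deri_FG},
\[
\partial_\beta F^{\mathrm G}(\theta,\beta)=\Phi^{\mathrm G}(\theta,\beta)+\beta\iint\langle B^*\Sigma B[\eta(t)-\eta(s)],\eta(t)\rangle\,\E[\pi_{Y_\beta}^{\otimes2}].
\]
The correction term is at most $|\beta|\,\|B\|_{\mathrm{op}}^2\|\Sigma\|_{\mathrm{op}}\cdot 2M_nr_n^2$. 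After normalization by $n$ it is $O(|\beta|)$, using $\mathfrak h_n=O(1)$, $r_n^2\lesssim n/M_n$, and $\|\Sigma_m^{(n)}\|_{\mathrm{op}}\le\E_0\|U_m\|^2\lesssim1$. The expression is quadratic in the $\eta$'s with $\Sigma$-weight; after averaging over the posterior it is a posterior variance, hence nonnegative. So, uniformly in $\theta$ and for $|\beta|\le\eps$, we get $|\partial_\beta F_n^{\mathrm G}(\theta,\beta)-\Phi_n^{\mathrm G}(\theta,\beta)|\le C'\eps$. Convexity of $\beta\mapsto F^{\mathrm G}(\theta,\beta)$ holds because the second derivative is a posterior variance of $\widetilde Y$ plus a nonnegative term, a standard Gaussian computation; the hypothesis of Theorem~\ref{thm:Phi_comparison} already assumes it. Using convexity, for $0<\eps<\eps'$,
\[
\partial_\beta F_n^{\mathrm G}(\theta,\eps)\le \frac{F_n^{\mathrm G}(\theta,\eps')-F_n^{\mathrm G}(\theta,\eps)}{\eps'-\eps}.
\]
A symmetric bound holds at $-\eps$. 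Integrating over $\nu_n$ and letting $n\to\infty$ with pointwise convergence $\mathcal F_n^{\mathrm G}\to\mathcal F^{\mathrm G}$, we obtain
\[
\limsup_n\int\Phi_n\,\dd\nu_n\le\frac{\mathcal F^{\mathrm G}(\eps')-\mathcal F^{\mathrm G}(\eps)}{\eps'-\eps}+(C+C')\eps .
\]
Here $\mathcal F^{\mathrm G}$ is convex as a pointwise limit of convex functions. Take $\eps'=2\eps$ and let $\eps\downarrow0$. The difference quotient of a convex function over $[\eps,2\eps]$ decreases to $\partial_+\mathcal F^{\mathrm G}(0)$, which gives the upper bound. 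The lower bound follows in the same way from the interval $[-2\eps,-\eps]$ and $\partial_-\mathcal F^{\mathrm G}(0)$.

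For the differentiable case the two one-sided derivatives coincide, so $\int\Phi_n\,\dd\nu_n$ converges to $(\mathcal F^{\mathrm G})'(0)$. For the Gaussian observable itself, apply the same convexity argument to $\Phi_n^{\mathrm G}(\theta)=\partial_\beta F_n^{\mathrm G}(\theta,0)$; here the correction vanishes at $\beta=0$. It gives
\[
\frac{\mathcal F_n^{\mathrm G}(0)-\mathcal F_n^{\mathrm G}(-\eps)}{\eps}\le\int\Phi_n^{\mathrm G}(\theta)\,\dd\nu_n\le\frac{\mathcal F_n^{\mathrm G}(\eps)-\mathcal F_n^{\mathrm G}(0)}{\eps},
\]
and letting $n\to\infty$ and then $\eps\to0$ yields the same limit. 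Exchanging $\partial_\beta$ with $\int\dd\nu_n$ is not needed, since only finite differences of $F_n^{\mathrm G}$ are integrated.

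\textbf{Main obstacle.} The delicate step is the uniform $O(\eps)$ control of the gap between $\Phi_n^{\mathrm G}(\theta,\beta)$ and $\partial_\beta F_n^{\mathrm G}(\theta,\beta)$, that is, bounding the normalized $B^*\Sigma B$ correction by a constant independent of $n$ and $\theta$. I would handle it through the block structure, bounding each local term by $2\|B_m\|_{\mathrm{op}}^2\|\Sigma_m\|_{\mathrm{op}}r_n^2$. Then I would sum over $m\le M_n$ and use $M_nr_n^2/n\lesssim1$.
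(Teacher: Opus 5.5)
Your proof is correct. It reaches the conclusion by a different and more elementary route than the paper.

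\textbf{Shared starting point.} Both arguments average the sandwich of Theorem~\ref{thm:regular_posterior_comparison} over $\nu_n$. Both then use the identity $\partial_\beta F^{\mathrm G}(\theta,\beta)=\Phi^{\mathrm G}(\theta,\beta)+\beta\,V_\theta(\beta)$ from Lemma~\ref{lem:linear_perturb_G}. Here $V_\theta(\beta)$ denotes the $B^*\Sigma B$-weighted posterior variance under $\E_{Q_\theta}[\pi_{Y_\beta}^{\otimes2}]$.

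\textbf{The paper's route.} The paper exchanges $\partial_\beta$ with the $\nu_n$-integral, dominating $|\partial_\beta F_n^{\mathrm G}|$ by $n^{-1}\|\psi^{\mathrm G}_{\theta,\beta}\|_\infty\le C$. It then shows $(\mathcal F_n^{\mathrm G})'(\beta)\to(\mathcal F^{\mathrm G})'(\beta)$ at every differentiability point of the convex limit. Finally it selects $\eps_k\downarrow0$ with $\mathcal F^{\mathrm G}$ differentiable at $\pm\eps_k$, using that a convex function has at most countably many kinks.

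\textbf{Your route.} You apply the chord inequality for each fixed $\theta$ before averaging, so only finite differences of $F_n^{\mathrm G}$ are ever integrated. This needs pointwise convergence of $\mathcal F_n^{\mathrm G}$ only at finitely many points ($0,\pm\eps,\pm2\eps$). It also needs only the elementary fact that convex chord slopes over $[\eps,2\eps]$ and $[-2\eps,-\eps]$ converge to the one-sided derivatives at $0$. There is no interchange of differentiation and integration and no selection of differentiability points.

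\textbf{What each buys.} The paper's route gives a slightly stronger by-product: convergence of $(\mathcal F_n^{\mathrm G})'$ itself, and the $O(|\beta|)$ identification of $\int\Phi_n^{\mathrm G}(\theta,\beta)\,\nu_n(\dd\theta)$ with $(\mathcal F_n^{\mathrm G})'(\beta)$ for all small $\beta$.

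\textbf{A further simplification.} Since $B^*\Sigma B$ is nonnegative, $V_\theta(\beta)\ge0$, as you observe. Hence $\Phi^{\mathrm G}(\theta,\eps)\le\partial_\beta F^{\mathrm G}(\theta,\eps)$ and $\Phi^{\mathrm G}(\theta,-\eps)\ge\partial_\beta F^{\mathrm G}(\theta,-\eps)$ hold exactly. These are precisely the directions you use. So the uniform $O(\eps)$ bound you flag as the main obstacle is not needed in your argument at all.
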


\begin{proof}
The proof is given in
Appendix~\ref{app:proof_regular_posterior_limit}.
\end{proof}

Corollary~\ref{cor:regular_posterior_limit} therefore reduces the asymptotic
analysis of posterior observables to the existence of a limit
for the perturbed Gaussian free energy. The application below illustrates how
this limiting free energy can be identified in a concrete high-dimensional
model.

\section{Application: Sparse Bernoulli Hypergraphs}
\label{sec:app_sparse_bernoulli}

We illustrate the preceding theory with a sparse Bernoulli hypergraph model
whose $n$-dependent local channel fits the regular local log-likelihood
framework of Section~\ref{subsec:regular_local_log_likelihoods}. We recover the
free energy universality previously established by a different argument
in~\cite{zou:2026}, and further obtain universality of the posterior mean
squared error and posterior variance. In the special case $p=2$, $d=1$, and
$J_\alpha=1$, we then combine this comparison with existing Gaussian free
energy limits~\cite{guionnet2025estimating} to analyze the limiting posterior
observables.

We first specify the tensor feature map used by the observation model. Fix
integers $p\ge2$ and $d\ge1$, and let
$\Theta_n=T_n=[-1,1]^{d\times n}$. Thus
$\theta=(\theta_1,\ldots,\theta_n)\in\Theta_n$ has coordinates
$\theta_i\in[-1,1]^d$. To each multi-index
$\alpha=(\alpha_1,\ldots,\alpha_p)\in[n]^p$, we associate the local feature
\begin{equation}\label{eq:sparse_tensor_feature_map}
\eta_\alpha(u)
=
n^{\frac{1-p}{2}}\,
u_{\alpha_1}\otimes\cdots\otimes u_{\alpha_p}
\in\bbR^{d^p},
\qquad u\in\Theta_n.
\end{equation}
Collecting these local tensors gives the product feature map
$\eta(u)=(\eta_\alpha(u))_{\alpha\in[n]^p}$, with $M_n=n^p$ local components.
The normalization in~\eqref{eq:sparse_tensor_feature_map} is chosen so that the
feature radius is on the canonical scale introduced in
\eqref{eq:feature_radius}. We index local observations by the same
multi-indices $\alpha\in[n]^p$. For each $\alpha$, let
$J_\alpha\in\bbR^{d^p}$ be a deterministic coupling vector and write
$z_\alpha(u)=\langle J_\alpha,\eta_\alpha(u)\rangle$. Let
$s_n=M_n/n=n^{p-1}$ and let $\de_n\in(0,s_n)$ denote the average-degree
parameter. Given the signal $\theta\in\Theta_n$, the observations
$G=(G_\alpha)_{\alpha\in[n]^p}$ are conditionally independent with
\begin{equation}\label{eq:bernoulli_hypergraph_data}
G_\alpha
\sim
\operatorname{Bernoulli}
\left(
\frac{\de_n}{s_n}
+
\sqrt{\frac{\de_n}{s_n}\left(1-\frac{\de_n}{s_n}\right)}
\,\sqrt{\lambda_\star}\,z_\alpha(\theta)
\right),
\qquad \alpha\in[n]^p .
\end{equation}
Here $\lambda_\star>0$ is the signal-to-noise ratio of the data-generating
channel. The statistical model may use a different signal-to-noise ratio
$\lambda>0$. With $b_n=\sqrt{(s_n-\de_n)/\de_n}$, its log-likelihood, written
relative to the baseline product measure under which
$G_\alpha\sim\operatorname{Bernoulli}(\de_n/s_n)$ independently over
$\alpha\in[n]^p$, is
\begin{equation}\label{eq:bernoulli_local_likelihood}
L_n(t,G)
\coloneqq
\sum_{\alpha\in[n]^p}
\left[
G_\alpha\log(1+b_n\sqrt{\lambda}\,z_\alpha(t))
+
(1-G_\alpha)\log\left(1-\frac{\sqrt{\lambda}\,z_\alpha(t)}{b_n}\right)
\right].
\end{equation}
where $t\in T_n$ is the candidate parameter. 

The Gaussian comparison model has the same feature map. Its data-generating
law is given by independent scalar Gaussian observations
\begin{equation}\label{eq:bernoulli_gaussian_observation}
W_\alpha
\sim
\normal(\sqrt{\lambda_\star}\,z_\alpha(\theta),1),
\qquad \alpha\in[n]^p,
\end{equation}
while the corresponding statistical model is
$\{\normal(\sqrt{\lambda}\,z_\alpha(t),1):t\in T_n\}$ for each local coordinate
$\alpha$. The scalar case $d=1$ recovers the sparse Bernoulli hypergraph model
considered in~\cite{zou:2026}, written here in the notation of the present
paper.

\parhead{Universality via the presented framework}
We now formulate this model in the framework of the preceding sections. The
local Hilbert space is $\cH_\alpha=\bbR^{d^p}$, and the tensor feature map
\eqref{eq:sparse_tensor_feature_map} gives the product feature structure of
\eqref{eq:product_feature_map}. The likelihood process in
Section~\ref{sec:general_frame} is $X(t)=L_n(t,G)$, or equivalently
$X(t)=\sum_{\alpha\in[n]^p}X_\alpha(t,G_\alpha)$, where
\[
X_\alpha(t,G_\alpha)
=
G_\alpha\log(1+b_n\sqrt{\lambda}\,z_\alpha(t))
+
(1-G_\alpha)\log\left(1-\frac{\sqrt{\lambda}\,z_\alpha(t)}{b_n}\right).
\]
Expanding the log-likelihood \eqref{eq:bernoulli_local_likelihood} in powers
of $z_\alpha(t)$ gives the regular local representation
\eqref{eq:regular_X_local}. The first two local coefficients are
\begin{equation}\label{eq:bernoulli_UV}
U_\alpha^{(n)}(G_\alpha)
=
\sqrt{\lambda}
\left(b_nG_\alpha-\frac{1-G_\alpha}{b_n}\right)J_\alpha,
\qquad
V_\alpha^{(n)}(G_\alpha)
=
\lambda
\left(b_n^2G_\alpha+\frac{1-G_\alpha}{b_n^2}\right)
J_\alpha J_\alpha^\top .
\end{equation}
The data-generating likelihood ratio admits the analogous expansion in the
true local feature $z_\alpha(\theta)$. The explicit remainder and the
associated $S_\alpha^{(n)},\Delta_\alpha^{(n)}$ are recorded in
Appendix~\ref{app:sparse_bernoulli_details}.

For the Gaussian comparison, the statistical model and data-generating process
specified around~\eqref{eq:bernoulli_gaussian_observation} yield the explicit
log-likelihood process
\begin{equation}\label{eq:bernoulli_gaussian_process}
Y(t)
=
\sum_{\alpha\in[n]^p}
\left\{
\sqrt{\lambda}\,W_\alpha z_\alpha(t)
-
\frac{\lambda}{2}z_\alpha(t)^2
\right\}.
\end{equation}
Using the product Gaussian construction
\eqref{eq:gaussian_product_operators}--\eqref{eq:Y_local_moments}, one
verifies from Definition~\ref{def:gaussian_loc} that this is the product
Gaussian location family with local operators
\begin{equation}\label{eq:bernoulli_gaussian_operators}
A_\alpha
=
\sqrt{\lambda\lambda_\star}\,J_\alpha J_\alpha^\top,
\qquad
\Sigma_\alpha
=
K_\alpha
=
\lambda
J_\alpha J_\alpha^\top .
\end{equation}
The coefficients $U_\alpha^{(n)},V_\alpha^{(n)}$ above depend on $n$ through
$b_n$, whereas the Gaussian operators in
\eqref{eq:bernoulli_gaussian_operators} are written without an additional
superscript. To see why the local expansion selects this Gaussian comparison,
note that
\[
\E_0[U_\alpha^{(n)}(U_\alpha^{(n)})^\top]
=
\Sigma_\alpha,
\qquad
\E_0[V_\alpha^{(n)}]
=
K_\alpha,
\]
so the linear and quadratic coefficients in the Bernoulli likelihood match the
covariance and quadratic operators of the Gaussian process above. The complete
verification of the local expansion and the moment bounds is given in
Appendix~\ref{app:sparse_bernoulli_details}.

\begin{theorem}[Universality for sparse Bernoulli hypergraphs]
\label{thm:sparse_bernoulli_univ}
Consider the sparse Bernoulli hypergraph model
\eqref{eq:bernoulli_hypergraph_data}--\eqref{eq:bernoulli_local_likelihood}
and its Gaussian counterpart~\eqref{eq:bernoulli_gaussian_process}. Assume
that $\sup_{\alpha\in[n]^p}\|J_\alpha\|=O(1)$, that
$\de_n\to\infty$ and $s_n-\de_n\to\infty$, and that the Bernoulli
channel is well defined uniformly over $\Theta_n$. Then, for any prior $\pi$ on $T_n$,
\[
\sup_{\theta\in\Theta_n}
\bigl|
F_n(\theta)-F_n^{\mathrm G}(\theta)
\bigr|
\lesssim
\delta_n,
\qquad
\delta_n
\coloneqq
\frac1{\sqrt{\de_n}}
+
\frac1{\sqrt{s_n-\de_n}}.
\]
\end{theorem}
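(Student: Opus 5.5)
The plan is to deduce the statement from Theorem~\ref{thm:regular_free_energy_univ} by checking Assumption~\ref{assump:regular_local_bounds} with $\delta_n(\theta)\equiv\delta_n$. Since $\delta_n\to0$, the conclusion $\delta_n(1+\delta_n)\lesssim\delta_n$ then follows. The scaling condition is immediate: $\|\eta_\alpha(u)\|\le n^{(1-p)/2}$, so $r_n\le n^{(1-p)/2}=s_n^{-1/2}=\sqrt{n/M_n}$. For the model side, take $U_\alpha^{(n)},V_\alpha^{(n)}$ as in~\eqref{eq:bernoulli_UV}. Set $R_\alpha^{(n)}(t)=X_\alpha(t,G_\alpha)-\langle U_\alpha,\eta_\alpha(t)\rangle+\tfrac12\langle V_\alpha\eta_\alpha(t),\eta_\alpha(t)\rangle$. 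This is the third-and-higher order Taylor remainder of $x\mapsto G\log(1+b_n\sqrt\lambda x)+(1-G)\log(1-\sqrt\lambda x/b_n)$ at $x=z_\alpha(t)$.

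For the data side, take $P_{0,\alpha}=\operatorname{Bernoulli}(q_n)$ with $q_n=\de_n/s_n$. The exact likelihood ratio is $G\log(1+\sqrt{\lambda_\star}\,b_n z)+(1-G)\log(1-\sqrt{\lambda_\star}z/b_n)$, because $\sqrt{q_n(1-q_n)}/q_n=b_n$ and $\sqrt{q_n(1-q_n)}/(1-q_n)=1/b_n$. I would therefore set $S_\alpha=\sqrt{\lambda_\star}(b_nG-(1-G)/b_n)J_\alpha$ and let $\Delta_\alpha$ be the second-order remainder. One then checks $\E_0U_\alpha=0$ and $\E_0 U_\alpha U_\alpha^\top=\lambda J_\alpha J_\alpha^\top$, with the analogous identities for $S_\alpha$ and $V_\alpha$. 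This recovers the operators~\eqref{eq:bernoulli_gaussian_operators} and gives $\mathsf L_{\alpha,n}\lesssim1$.

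Next I would bound the moments, using the basic scalings $\E_0|b_nG-(1-G)/b_n|^k\asymp q_nb_n^k+b_n^{-k}$ with $b_n^2=(1-q_n)/q_n$. The third moment is then $\lesssim b_n+b_n^{-1}$. Multiplied by $r_n\le s_n^{-1/2}$, this gives $\cE^U,\cE^S\lesssim s_n^{-1/2}(b_n+b_n^{-1})=(\de_n)^{-1/2}(1-q_n)^{1/2}s_n^{-1/2}\cdot\ldots$, that is, $\lesssim \de_n^{-1/2}+(s_n-\de_n)^{-1/2}$, since $r_nb_n=\sqrt{(s_n-\de_n)/(s_n\de_n)}\le\de_n^{-1/2}$ and $r_n/b_n\le(s_n-\de_n)^{-1/2}$. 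The $\E_\theta$ moments are handled the same way. The reason is that under the well-defined-channel hypothesis, $P_\theta(G=1)/q_n$ and $P_\theta(G=0)/(1-q_n)$ are bounded, given $b_n|z|\lesssim b_nr_n\to0$ and $|z|/b_n\to0$. For $V$, $\sqrt{\E_0\|V\|^2}\asymp\sqrt{q_nb_n^4+b_n^{-4}}\asymp b_n+b_n^{-2}$ up to constants, so $r_n\sqrt{\E_0\|V\|^2}\lesssim\delta_n$. Similarly $r_n^4\E\|V\|^3\lesssim r_n^4(q_nb_n^6+b_n^{-6})\lesssim\delta_n$.

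The main obstacle is the remainders $R$ and $\Delta$, which need uniform-in-$t$ control including the term $r_n^{-2}\E_\theta\sup_t|R|^3$. The key input is that $b_nr_n\to0$ and $r_n/b_n\to0$ (this is where $\de_n\to\infty$ and $s_n-\de_n\to\infty$ enter). Consequently both log-arguments stay in $[1/2,3/2]$ uniformly over $t\in T_n$, and the Taylor bound gives $|R|\lesssim G\,b_n^3|z|^3+(1-G)b_n^{-3}|z|^3$, with the analogous bound $|\Delta|\lesssim Gb_n^2z^2+(1-G)b_n^{-2}z^2$. Since $\sup_t|z_\alpha(t)|\lesssim r_n$, these bounds are deterministic in $t$, so the supremum is harmless. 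Each term then reduces to powers such as $r_n^{-2}\cdot r_n^9(q_nb_n^9+b_n^{-9})$ or $r_n^{-1}r_n^2\sqrt{q_nb_n^4+b_n^{-4}}$. These are bounded by powers of $r_nb_n\le\de_n^{-1/2}$ and $r_n/b_n\le(s_n-\de_n)^{-1/2}$, each at least first order. I expect the worst term to be $r_n^{-1}\sqrt{\E_0\Delta^2}\asymp r_n(b_n+b_n^{-2})$, which is still $\lesssim\delta_n$. Once all five quantities are $\lesssim\delta_n$ uniformly in $\alpha$ and $\theta$, Theorem~\ref{thm:regular_free_energy_univ} gives the claimed bound, uniformly in $\theta$ and for any prior.
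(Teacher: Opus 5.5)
Your route is the paper's: verify Assumption~\ref{assump:regular_local_bounds} with $\delta_n(\theta)\equiv\delta_n$ by two-point moment computations for $c_{\alpha,n}=b_nG_\alpha-b_n^{-1}(1-G_\alpha)$, then invoke Theorem~\ref{thm:regular_free_energy_univ}. Two of your auxiliary steps are sound. First, the ratios $P_\theta(G_\alpha=1)/q_n=1+\sqrt{\lambda_\star}\,b_nz_\alpha(\theta)$ and $P_\theta(G_\alpha=0)/(1-q_n)=1-\sqrt{\lambda_\star}\,z_\alpha(\theta)/b_n$ are bounded, which gives $\E_\theta|c_{\alpha,n}|^k\lesssim\E_0|c_{\alpha,n}|^k$. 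Second, $r_nb_n\lesssim\de_n^{-1/2}$ and $r_n/b_n\lesssim(s_n-\de_n)^{-1/2}$ control the Taylor denominators. (Note that $r_n\le d^{p/2}s_n^{-1/2}$ rather than $s_n^{-1/2}$; this is harmless.)

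There is, however, a genuine error in the moments of order $k\ge4$. You write $\E_0|c_{\alpha,n}|^k\asymp q_nb_n^k+b_n^{-k}$, which drops the weight $P_0(G_\alpha=0)=1-q_n$ on the second term. From this you assert
\[
r_n\sqrt{\E_0\|V_\alpha\|_{\mathrm{op}}^2}\lesssim r_n(b_n+b_n^{-2})\lesssim\delta_n,
\qquad
r_n^4\E\|V_\alpha\|_{\mathrm{op}}^3\lesssim r_n^4(q_nb_n^6+b_n^{-6})\lesssim\delta_n,
\]
and you make analogous claims for $R_\alpha$ and $\Delta_\alpha$, including your ``worst term'' $r_n(b_n+b_n^{-2})$.

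These bounds fail when $q_n=\de_n/s_n\to1$. The hypotheses allow this regime: only $s_n-\de_n\to\infty$ is required, and then $b_n\to0$. For example, take $s_n-\de_n=s_n^{1/4}$. Then $b_n\asymp s_n^{-3/8}$ and $r_n\asymp s_n^{-1/2}$, and the channel stays well defined because $r_n/b_n\to0$. In this case $r_nb_n^{-2}\asymp r_n^4b_n^{-6}\asymp s_n^{1/4}\to\infty$, whereas $\delta_n\asymp s_n^{-1/8}$. Your third-moment bound $\E_0|c_{\alpha,n}|^3\le b_n+b_n^{-1}$ is correct precisely because there you kept the weight.

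The repair is to use $q_nb_n^2=1-q_n$. This gives the exact identity, symmetric under $b_n\leftrightarrow b_n^{-1}$,
\[
\E_0|c_{\alpha,n}|^k=(1-q_n)\,b_n^{k-2}+q_n\,b_n^{-(k-2)}\le(b_n+b_n^{-1})^{k-2},\qquad k\ge2.
\]
Hence $r_n^{k-2}\E_\bullet|c_{\alpha,n}|^k\lesssim(r_nb_n+r_n/b_n)^{k-2}\lesssim\delta_n^{k-2}$ for $\bullet\in\{0,\theta\}$, which is the paper's estimate~\eqref{eq:sparse_binary_moment}. With this estimate, your bounds for $V_\alpha$, $R_\alpha$, and $\Delta_\alpha$ go through as you intended. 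For instance, your pointwise bound $|\Delta_\alpha|\lesssim c_{\alpha,n}^2z_\alpha(\theta)^2$ now gives $r_n^{-1}\sqrt{\E_0\Delta_\alpha^2}\lesssim r_n\sqrt{\E_0c_{\alpha,n}^4}\lesssim\delta_n$.
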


\begin{proof}
The proof is given in Appendix~\ref{app:sparse_bernoulli_details}.
\end{proof}

The scalar case $d=1$ and $J_\alpha=1$ is contained
in~\cite{zou:2026}, where the free energy comparison is proved by a
generic chaining argument and sub-gamma control of the local quadratic
coefficient $V_\alpha^{(n)}$. The present proof uses only the moment
bounds in Assumption~\ref{assump:regular_local_bounds}, here amounting
to second and third moments of $V_\alpha^{(n)}$, and achieves the
same rate. 
The posterior
comparison below is new to~\cite{zou:2026}.

We now make the Gaussian perturbation explicit. First write the Gaussian
process~\eqref{eq:bernoulli_gaussian_process} in the form of
Definition~\ref{def:gaussian_loc}. With the local operators in
\eqref{eq:bernoulli_gaussian_operators},
\begin{equation}\label{eq:sparse_gaussian_location_moments}
\begin{aligned}
\E_{\theta}[Y(t)]
&=
\sum_{\alpha\in[n]^p}
\left\{
\langle A_\alpha\eta_\alpha(\theta),\eta_\alpha(t)\rangle
-
\frac12
\langle K_\alpha\eta_\alpha(t),\eta_\alpha(t)\rangle
\right\},
\\
\cov_{\theta}(Y(s),Y(t))
&=
\sum_{\alpha\in[n]^p}
\langle \Sigma_\alpha\eta_\alpha(s),\eta_\alpha(t)\rangle .
\end{aligned}
\end{equation}
The product Gaussian linear perturbation
\eqref{eq:Y_beta}--\eqref{eq:Y_tilde} uses local matrix directions
$H_\alpha\in\bbR^{d^p\times d^p}$ satisfying the componentwise range condition
in~\eqref{eq:H_Sigma}. Since
$\Sigma_\alpha=\lambda J_\alpha J_\alpha^\top$, this condition restricts the
local direction to the one-dimensional span of $J_\alpha$. Up to a scalar
multiple, the corresponding matrix direction is $J_\alpha J_\alpha^\top$; we
therefore take
\begin{equation}\label{eq:sparse_H_direction}
H_\alpha
=
J_\alpha J_\alpha^\top,
\qquad
H
=
\bigoplus_{\alpha\in[n]^p}H_\alpha .
\end{equation}
For $a\in\{-1,0\}$, corresponding respectively to posterior mean squared error
and posterior variance, define
\begin{equation}\label{eq:sparse_perturbed_gaussian_operators}
\begin{aligned}
A_\alpha^{(\beta)}
&\coloneqq
\bigl(\sqrt{\lambda\lambda_\star}-2\beta a\bigr)H_\alpha,
\quad
\Sigma_\alpha^{(\beta)}
\coloneqq
\frac{(\lambda+\beta)^2}{\lambda}
H_\alpha,
\quad
K_\alpha^{(\beta)}
\coloneqq
\bigl(\lambda-2\beta a\bigr)H_\alpha.
\end{aligned}
\end{equation}
The perturbed Gaussian process $Y_\beta$ generated by
\eqref{eq:Y_beta}--\eqref{eq:Y_tilde} and~\eqref{eq:sparse_H_direction} is
equivalently specified by
\begin{equation}\label{eq:sparse_gaussian_perturbed_moments}
\begin{aligned}
\E_{\theta}[Y_\beta(t)]
&=
\sum_{\alpha\in[n]^p}
\Bigl\{
\langle A_\alpha^{(\beta)}\eta_\alpha(\theta),\eta_\alpha(t)\rangle
\\
&\qquad\qquad
-
\frac12
\langle K_\alpha^{(\beta)}\eta_\alpha(t),\eta_\alpha(t)\rangle
+
\beta a
\langle H_\alpha\eta_\alpha(\theta),\eta_\alpha(\theta)\rangle
\Bigr\},
\\
\cov_{\theta}(Y_\beta(s),Y_\beta(t))
&=
\sum_{\alpha\in[n]^p}
\langle \Sigma_\alpha^{(\beta)}\eta_\alpha(s),\eta_\alpha(t)\rangle .
\end{aligned}
\end{equation}
At $\beta=0$, the operators in~\eqref{eq:sparse_perturbed_gaussian_operators}
reduce to those in~\eqref{eq:bernoulli_gaussian_operators}, and
\eqref{eq:sparse_gaussian_perturbed_moments} recovers
\eqref{eq:sparse_gaussian_location_moments}.

\begin{remark}
	For $\beta$ in a small
neighborhood of zero, the perturbation has the following scalar-channel
interpretation. Let
\[
\mu_\beta
\coloneqq
\sqrt{\lambda-2a\beta},
\qquad
\mu_{\star,\beta}
\coloneqq
\frac{\sqrt{\lambda\lambda_\star}-2a\beta}{\sqrt{\lambda-2a\beta}},
\qquad
\sigma_{\star,\beta}^2
\coloneqq
\frac{\lambda+2\beta+\beta^2/\lambda}{\lambda-2a\beta}.
\]
The data-generating law is
$W_{\alpha,\beta}\sim
\normal(\mu_{\star,\beta}z_\alpha(\theta),\sigma_{\star,\beta}^2)$,
independently over $\alpha$, and the statistical model is
$\{\normal(\mu_\beta z_\alpha(t),1):t\in T_n\}$. Its log-likelihood process
agrees with $Y_\beta(t)$ up to the additive term
$\beta a\sum_\alpha\langle H_\alpha\eta_\alpha(\theta),
\eta_\alpha(\theta)\rangle$, which is independent of $t$.
\end{remark}

Let $Y_\beta^{\mathrm{mse}}$ and $Y_\beta^{\mathrm{var}}$ denote the Gaussian
perturbations above corresponding to $a=-1$ and $a=0$, respectively. Writing
$\widehat\eta_X=\int_{T_n}\eta(t)\pi_X(\dd t)$, define
\begin{equation}\label{eq:sparse_directional_observables}
\begin{aligned}
\cR_n(\theta)
&\coloneqq
\frac1n\E_{\theta}
\left[\|\eta(\theta)-\widehat\eta_X\|_H^2\right],
\\
\cV_n(\theta)
&\coloneqq
\frac1n\E_{\theta}
\left[
\int_{T_n}
\|\eta(t)-\widehat\eta_X\|_H^2\,\pi_X(\dd t)
\right].
\end{aligned}
\end{equation}
For either $Z=Y_\beta^{\mathrm{mse}}$ or $Z=Y_\beta^{\mathrm{var}}$, set
$\widehat\eta_Z=\int_{T_n}\eta(t)\pi_Z(\dd t)$. The corresponding Gaussian
observables are
\begin{equation}\label{eq:sparse_directional_observables_G}
\begin{aligned}
\cR_n^{\mathrm G}(\theta,\beta)
&\coloneqq
\frac1n\E_{\theta}
\left[\|\eta(\theta)-\widehat\eta_{Y_\beta^{\mathrm{mse}}}\|_H^2\right],
\\
\cV_n^{\mathrm G}(\theta,\beta)
&\coloneqq
\frac1n\E_{\theta}
\left[
\int_{T_n}
\|\eta(t)-\widehat\eta_{Y_\beta^{\mathrm{var}}}\|_H^2\,
\pi_{Y_\beta^{\mathrm{var}}}(\dd t)
\right].
\end{aligned}
\end{equation}

\begin{theorem}[Posterior universality for sparse Bernoulli hypergraphs]
\label{thm:sparse_bernoulli_posterior_univ}
Work in the setting of Theorem~\ref{thm:sparse_bernoulli_univ}, and let
$\cR_n,\cV_n,\cR_n^{\mathrm G}$, and $\cV_n^{\mathrm G}$ be defined by
\eqref{eq:sparse_directional_observables}--\eqref{eq:sparse_directional_observables_G}.
There exists a constant $C<\infty$,
independent of $n$, $\theta$, and $\eps$, such that, for every sufficiently
small fixed $\eps>0$, uniformly over $\theta\in\Theta_n$,
\begin{align*}
\cR_n^{\mathrm G}(\theta,\eps)
-C\left(\eps+\frac{\delta_n}{\eps}\right)
&\le
\cR_n(\theta)
\le
\cR_n^{\mathrm G}(\theta,-\eps)
+C\left(\eps+\frac{\delta_n}{\eps}\right),
\\
\cV_n^{\mathrm G}(\theta,-\eps)
-C\left(\eps+\frac{\delta_n}{\eps}\right)
&\le
\cV_n(\theta)
\le
\cV_n^{\mathrm G}(\theta,\eps)
+C\left(\eps+\frac{\delta_n}{\eps}\right).
\end{align*}
\end{theorem}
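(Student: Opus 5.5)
The plan is to obtain this as a direct specialization of Theorem~\ref{thm:regular_posterior_comparison}, with $H_\alpha=J_\alpha J_\alpha^\top$ from~\eqref{eq:sparse_H_direction}, and then to translate its sandwich into the sign conventions of $\cR_n$ and $\cV_n$.

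\textbf{Step 1: verify the hypotheses of Theorem~\ref{thm:regular_posterior_comparison}.} The proof of Theorem~\ref{thm:sparse_bernoulli_univ} in Appendix~\ref{app:sparse_bernoulli_details} already checks Assumption~\ref{assump:regular_local_bounds} with $\delta_n(\theta)\lesssim\delta_n$ uniformly in $\theta$. Since $\delta_n\to0$, this gives $\widetilde\delta_n(\theta)=\delta_n(1+\delta_n)\lesssim\delta_n$. Next, the range condition holds because $\Sigma_\alpha=\lambda J_\alpha J_\alpha^\top$ and $H_\alpha=J_\alpha J_\alpha^\top$. We may take $B_\alpha=\lambda^{-1}\Id$ and check that $\Sigma_\alpha B_\alpha=H_\alpha$. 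Then $\|H_\alpha\|_{\mathrm{op}}=\|J_\alpha\|^2=O(1)$ and $\|B_\alpha\|_{\mathrm{op}}=\lambda^{-1}$, so $\mathfrak h_n=O(1)$. One subtlety: where $J_\alpha=0$ both $\Sigma_\alpha$ and $H_\alpha$ vanish, so any $B_\alpha$ works there. I will also check that the perturbed Gaussian process generated by~\eqref{eq:Y_beta}--\eqref{eq:Y_tilde} with this $B$ has exactly the moments~\eqref{eq:sparse_gaussian_perturbed_moments}. The mean is a direct expansion: the quadratic term $a\beta\langle H(\eta(\theta)-\eta(t)),\eta(\theta)-\eta(t)\rangle$ produces the shifts $-2a\beta H_\alpha$ in $A$ and $K$ and the constant $\beta a\langle H\eta(\theta),\eta(\theta)\rangle$. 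The covariance is $(1+\beta/\lambda)^2\Sigma_\alpha=\frac{(\lambda+\beta)^2}{\lambda}H_\alpha$. Hence $\Phi^{\mathrm G}(\theta,\beta)$ in Theorem~\ref{thm:regular_posterior_comparison} coincides with the Gaussian functionals built from $Y_\beta^{\mathrm{mse}}$ and $Y_\beta^{\mathrm{var}}$.

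\textbf{Step 2: identify the observables and fix signs.} By Lemma~\ref{lem:linear_perturb_G} and the display following it, with $\phi_\theta$ from~\eqref{eq:phi_theta_lin}, the case $a=0$ gives $\Phi_n=\cV_n$ and $\Phi_n^{\mathrm G}(\theta,\beta)=\cV_n^{\mathrm G}(\theta,\beta)$. The variance inequalities are then exactly the conclusion of Theorem~\ref{thm:regular_posterior_comparison}, after replacing $\widetilde\delta_n/\eps$ by $C\delta_n/\eps$.

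For $a=-1$, the same identity gives $\phi_\theta$-integrals equal to $-\|\eta(\theta)-\widehat\eta\|_H^2$, so $\Phi_n=-\cR_n$ and $\Phi_n^{\mathrm G}(\theta,\beta)=-\cR_n^{\mathrm G}(\theta,\beta)$. This uses the fact that~\eqref{eq:Phi_G_beta} integrates the fixed $\phi_\theta$ against $\pi_{Y_\beta}^{\otimes2}$, which by the same algebra yields the $H$-weighted MSE under $Y_\beta^{\mathrm{mse}}$. Multiplying the sandwich by $-1$ reverses it, turning $\Phi^{\mathrm G}(\theta,-\eps)-\cdots\le\Phi\le\Phi^{\mathrm G}(\theta,\eps)+\cdots$ into $\cR^{\mathrm G}_n(\theta,\eps)-\cdots\le\cR_n\le\cR_n^{\mathrm G}(\theta,-\eps)+\cdots$. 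This is precisely the swapped $\pm\eps$ in the statement.

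\textbf{Step 3: uniformity.} The constant $C$ from Theorem~\ref{thm:regular_posterior_comparison} depends only on the implicit constants in Assumption~\ref{assump:regular_local_bounds}, on $\mathfrak h_n$, and on the ratio $r_n^2M_n/n$. Here $r_n\le n^{(1-p)/2}$ with $M_n=n^p$, so $r_n^2M_n/n\le1$. All of these bounds are uniform in $\theta$ because $\Theta_n=[-1,1]^{d\times n}$ and the channel is assumed well defined uniformly.

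\textbf{Main obstacle.} The step I expect to be hardest is confirming that the constant in Theorem~\ref{thm:regular_posterior_comparison} is genuinely uniform in $\theta$, and that the local bounds from the Bernoulli appendix remain valid uniformly. A further point is the convexity hypothesis (c) of Theorem~\ref{thm:Phi_comparison} for the non-Gaussian perturbation, which is inherited inside that theorem's proof. If the dependence of $C$ on $\theta$ is not explicit, I would rerun that proof with $\sup_\theta\delta_n(\theta)\lesssim\delta_n$ substituted throughout.
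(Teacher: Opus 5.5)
Your proposal is correct and follows essentially the same route as the paper. Both set $B_\alpha=\lambda^{-1}\Id$ so that $H_\alpha=\Sigma_\alpha B_\alpha$ and $\mathfrak h_n=O(1)$, identify $\Phi_n=-\cR_n$ for $a=-1$ (hence the swapped $\pm\eps$) and $\Phi_n=\cV_n$ for $a=0$, apply Theorem~\ref{thm:regular_posterior_comparison}, and absorb $\widetilde\delta_n\lesssim\delta_n$. One minor correction: $r_n\le d^{p/2}n^{(1-p)/2}$ rather than $n^{(1-p)/2}$, which is harmless since $d$ and $p$ are fixed.
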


\begin{proof}
The proof is given in
Appendix~\ref{app:proof_sparse_bernoulli_posterior_univ}.
\end{proof}

\parhead{Limiting analysis}
We now consider the limiting behavior of the posterior observables when $p=2$,
$d=1$, and $J_\alpha=1$ for all $\alpha\in[n]^2$, a setting for which Gaussian
free energy limits are available for product priors~\cite{guionnet2025estimating}. At
dimension $n$, write $\pi_n$ for the prior $\pi$ used throughout to define the
posterior, and assume $\pi_n=\pi_0^{\otimes n}$. We average the true parameter
under $\nu_n=\nu_0^{\otimes n}$, where the probability measures $\pi_0$ and
$\nu_0$ on $[-1,1]$ need not coincide. 

Let $z=(z_{ij})_{i,j\le n}$ have independent standard Gaussian entries. Write
\[
A_\beta=\sqrt{\lambda\lambda_\star}-2a\beta,
\qquad
K_\beta=\lambda-2a\beta,
\qquad
\Sigma_\beta=\lambda+2\beta+\frac{\beta^2}{\lambda}.
\]
Then the perturbed Gaussian free energy can be written as
\begin{equation}\label{eq:sparse_perturbed_gaussian_free_energy}
\begin{aligned}
F_n^{\mathrm G}(\theta,\beta)
=&
\frac{\beta a}{n^2}
\left(\sum_{i=1}^n\theta_i^2\right)^2
+
\frac1n
\E
\log\int_{T_n}
\exp\{g_{n,\beta}(u;\theta,z)\}\,\pi_n(\dd u),
\end{aligned}
\end{equation}
where $\E$ denotes expectation with respect to $z$ and
\begin{equation}\label{eq:sparse_gaussian_hamiltonian}
\begin{aligned}
g_{n,\beta}(u;\theta,z)
=&
\frac{\sqrt{\Sigma_\beta}}{\sqrt n}
\sum_{i,j=1}^n z_{ij}u_i u_j
+
\frac{A_\beta}{n}
\sum_{i,j=1}^n\theta_i\theta_j u_i u_j
-
\frac{K_\beta}{2n}
\sum_{i,j=1}^n u_i^2u_j^2 .
\end{aligned}
\end{equation}
For $a\in\{0,-1\}$, define the product-prior averaged log-partition function
\begin{equation}\label{eq:sparse_averaged_logpartition}
\begin{aligned}
\mathcal G_{n,a}^{\nu}(\beta)
&\coloneqq
\int_{\Theta_n}
\frac1n
\E
\log\int_{T_n}
\exp\{g_{n,\beta}^{(a)}(u;\theta,z)\}\,\pi_n(\dd u)\,
\nu_n(\dd\theta).
\end{aligned}
\end{equation}
Here $g_{n,\beta}^{(a)}$ denotes the Hamiltonian in
\eqref{eq:sparse_gaussian_hamiltonian} with the indicated value of $a$. The
quantity in \eqref{eq:sparse_averaged_logpartition} is precisely the type of
Gaussian free energy studied in~\cite{guionnet2025estimating}. For any
$\Sigma_\beta>0$ and $A_\beta,K_\beta\in\bbR$, the results therein yield its
large-$n$ limit under the product measures $\pi_n$ and $\nu_n$; in the
terminology of~\cite{guionnet2025estimating}, these three parameters are the temperature
parameters of the model. We denote this pointwise limit by
$\mathcal G_a^\nu(\beta)$.

\begin{cor}[Posterior observable limits]\label{cor:sparse_posterior_limits}
Assume the setting of Theorem~\ref{thm:sparse_bernoulli_posterior_univ} with
$p=2$, $d=1$, and $J_\alpha=1$ for all $\alpha\in[n]^2$. Suppose
$\delta_n\to0$, and let $\pi_n=\pi_0^{\otimes n}$ and
$\nu_n=\nu_0^{\otimes n}$ for probability measures $\pi_0$ and $\nu_0$ on
$[-1,1]$, not necessarily equal. Take $\pi_n$ as the candidate prior defining
the posterior observables. Then
\[
\partial_-\mathcal G_0^{\nu}(0)
\le
\liminf_{n\to\infty}
\int_{\Theta_n}\cV_n(\theta)\,\nu_n(\dd\theta)
\le
\limsup_{n\to\infty}
\int_{\Theta_n}\cV_n(\theta)\,\nu_n(\dd\theta)
\le
\partial_+\mathcal G_0^{\nu}(0).
\]
Moreover, define $q^\nu\coloneqq
\bigl(\int_{[-1,1]}x^2\,\nu_0(\dd x)\bigr)^2$ and
$\mathcal M^\nu(\beta)\coloneqq\beta q^\nu-\mathcal G_{-1}^\nu(\beta)$.
Then
\begin{equation*}
\begin{aligned}
\partial_+\mathcal M^\nu(0)
\le
\liminf_{n\to\infty}
\int_{\Theta_n}\cR_n(\theta)\,\nu_n(\dd\theta)
\le
\limsup_{n\to\infty}
\int_{\Theta_n}\cR_n(\theta)\,\nu_n(\dd\theta)
\le
\partial_-\mathcal M^\nu(0).
\end{aligned}
\end{equation*}
\end{cor}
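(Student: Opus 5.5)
The plan is to average the sandwich bounds of Theorem~\ref{thm:sparse_bernoulli_posterior_univ} over $\nu_n$ and then pass to the limit $n\to\infty$ followed by $\eps\downarrow0$, in the same way as in Corollary~\ref{cor:regular_posterior_limit}. The Gaussian observables at $\pm\eps$ will be replaced by difference quotients of the perturbed Gaussian free energy, which is possible by convexity.

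\textbf{Step 1: identify the Gaussian observables with free-energy derivatives.} By Lemma~\ref{lem:linear_perturb_G}, $\partial_\beta F_n^{\mathrm G}(\theta,\beta)$ is the integral of $\psi^{\mathrm G}_{\theta,\beta}=\phi_\theta+\beta\langle B^*\Sigma B[\eta(t)-\eta(s)],\eta(t)\rangle$ against $\E[\pi_{Y_\beta}^{\otimes2}]$, normalized by $1/n$. Here $\Sigma_\alpha=\lambda H_\alpha$ with $H_\alpha=1$, and $B_\alpha=1/\lambda$ on the span. The correction term is therefore $\beta\lambda^{-1}$ times the posterior variance under $Y_\beta$. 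Since $r_n\asymp n^{-1/2}$, this term is bounded by $C|\beta|$ after normalization.

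For $a=0$ this gives $\cV_n^{\mathrm G}(\theta,\beta)=\partial_\beta F_n^{\mathrm G}(\theta,\beta)+O(\beta)$.

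For $a=-1$ the derivative equals $-\cR_n^{\mathrm G}(\theta,\beta)+O(\beta)$. There is also the explicit term $\beta a n^{-2}(\sum_i\theta_i^2)^2$ in \eqref{eq:sparse_perturbed_gaussian_free_energy}, whose derivative is accounted for inside $\psi^{\mathrm G}$ through the $a$-term of $\phi_\theta$.

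The next task is to rewrite these identities through $\mathcal G^\nu_{n,a}$. Averaging \eqref{eq:sparse_perturbed_gaussian_free_energy} over $\nu_n$ gives
\[
\int F_n^{\mathrm G}(\theta,\beta)\,\nu_n(\dd\theta)=\beta a\,q_n+\mathcal G^\nu_{n,a}(\beta),
\qquad q_n=\int n^{-2}\Bigl(\sum_i\theta_i^2\Bigr)^2\nu_n(\dd\theta).
\]
Under the product measure $\nu_0^{\otimes n}$, the law of large numbers gives $q_n\to q^\nu$.

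\textbf{Step 2: convexity sandwich.} The function $\beta\mapsto F^{\mathrm G}_n(\theta,\beta)$ is convex. This follows from hypothesis (c) as verified in the proof of Theorem~\ref{thm:sparse_bernoulli_posterior_univ}; it can also be seen by expressing the second derivative as a posterior variance plus a nonnegative Gaussian term. Convexity yields
\[
\partial_\beta F^{\mathrm G}_n(\theta,\eps)\ge \frac{F^{\mathrm G}_n(\theta,\eps)-F^{\mathrm G}_n(\theta,0)}{\eps}
\]
and the symmetric inequality at $-\eps$, and these survive averaging over $\nu_n$.

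For the variance, combining Step 1 with Theorem~\ref{thm:sparse_bernoulli_posterior_univ} gives
\[
\int\cV_n\,\dd\nu_n\le \frac{\mathcal G^\nu_{n,0}(2\eps)-\mathcal G^\nu_{n,0}(\eps)}{\eps}+C\Bigl(\eps+\frac{\delta_n}{\eps}\Bigr).
\]
To get this, bound $\partial_\beta$ at $\eps$ by the forward difference quotient on $[\eps,2\eps]$. Alternatively, keep the simpler form $\partial_\beta F^{\mathrm G}_n(\eps)$ and bound it via convexity by the quotient on $[\eps,2\eps]$.

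Now let $n\to\infty$, using $\delta_n\to0$ and the pointwise convergence $\mathcal G^\nu_{n,0}\to\mathcal G^\nu_0$ from~\cite{guionnet2025estimating}. This bounds the $\limsup$ by $[\mathcal G^\nu_0(2\eps)-\mathcal G^\nu_0(\eps)]/\eps+C\eps$. Since $\mathcal G^\nu_0$ is convex as a pointwise limit of convex functions, letting $\eps\downarrow0$ gives $\partial_+\mathcal G_0^\nu(0)$. The lower bound is symmetric.

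For the MSE, the same argument applies with $\mathcal M^\nu(\beta)=\beta q^\nu-\mathcal G^\nu_{-1}(\beta)$, which is the limit of $-\int F_n^{\mathrm G}\,\dd\nu_n$. Because $\mathcal M^\nu$ is concave, its one-sided derivatives swap roles; this explains the reversed $\pm\eps$ in Theorem~\ref{thm:sparse_bernoulli_posterior_univ} and the order $\partial_+\mathcal M^\nu(0)\le\partial_-\mathcal M^\nu(0)$ in the statement.

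\textbf{Main obstacle.} The delicate point is the passage from the derivative of $F^{\mathrm G}_n$ at $\pm\eps$ to limits. It requires convexity of the averaged Gaussian free energy, so that the limit of difference quotients is controlled. It also requires that the $O(\beta)$ correction in $\psi^{\mathrm G}$ and the $q_n\to q^\nu$ term be uniform. Both should follow from boundedness of $\eta$ on $[-1,1]^n$, but the sign bookkeeping for $a=-1$ needs care.
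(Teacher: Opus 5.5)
Your argument is correct. It uses the same ingredients as the paper: the sandwich of Theorem~\ref{thm:sparse_bernoulli_posterior_univ}, the derivative identity of Lemma~\ref{lem:linear_perturb_G} with an $O(\beta)$ correction, convexity in $\beta$, the averaged free energies $\mathcal G^\nu_{n,0}$ and $\mathcal G^\nu_{n,-1}(\beta)-\beta q_n^\nu$, and a sign flip for $a=-1$. What differs is how you pass to the limit.

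The paper cites Corollary~\ref{cor:regular_posterior_limit}. Its proof first interchanges $\partial_\beta$ with the $\nu_n$-average, using a uniform bound on $\partial_\beta F_n^{\mathrm G}$. It then picks points $\pm\eps_k\downarrow0$ where the limit $\mathcal F^{\mathrm G}$ is differentiable and shows $(\mathcal F_n^{\mathrm G})'(\pm\eps_k)\to(\mathcal F^{\mathrm G})'(\pm\eps_k)$ by secant bounds. Finally it uses that derivatives at such points converge to the one-sided derivatives at $0$.

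You instead apply the secant inequalities pointwise in $\theta$ on shifted intervals, before averaging:
\[
\partial_\beta F_n^{\mathrm G}(\theta,\eps)\le\frac{F_n^{\mathrm G}(\theta,2\eps)-F_n^{\mathrm G}(\theta,\eps)}{\eps},
\qquad
\partial_\beta F_n^{\mathrm G}(\theta,-\eps)\ge\frac{F_n^{\mathrm G}(\theta,-\eps)-F_n^{\mathrm G}(\theta,-2\eps)}{\eps}.
\]
Only function values are then averaged and sent to the limit. You finish with the elementary fact that $(g(2\eps)-g(\eps))/\eps\to\partial_+g(0)$ for convex (or, with signs reversed, concave) $g$.

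Your route is shorter and self-contained. It needs no differentiation under the $\nu_n$-integral and no search for differentiability points. The paper's route is modular: one corollary covers all regular local models. It also gives the extra conclusion $\int\Phi_n^{\mathrm G}\,\dd\nu_n\to(\mathcal F^{\mathrm G})'(0)$ in the differentiable case, which this statement does not need.

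One small slip. The first secant inequality you display in Step~2, $\partial_\beta F_n^{\mathrm G}(\theta,\eps)\ge[F_n^{\mathrm G}(\theta,\eps)-F_n^{\mathrm G}(\theta,0)]/\eps$, points the wrong way for the upper bound, and you do not actually use it. The shifted-interval inequalities above are what the argument needs. With them, the $a=-1$ case works as you indicate, applied to $\mathcal M_n^\nu(\beta)=\beta q_n^\nu-\mathcal G^\nu_{n,-1}(\beta)$: the upper bound uses $-\eps,-2\eps$ and tends to $\partial_-\mathcal M^\nu(0)$, and the lower bound uses $\eps,2\eps$ and tends to $\partial_+\mathcal M^\nu(0)$.
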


\begin{proof}
The proof is given in Appendix~\ref{app:proof_sparse_posterior_limits}.
\end{proof}

\begin{remark}
The Gaussian free energy limits of~\cite{guionnet2025estimating} cover both
$\lambda\ne\lambda_\star$ and the matched case $\lambda=\lambda_\star$. In the matched setting, combining our framework with known
finite-rank tensor limits~\cite{chen:2022_finite-rank,rossetti:2025,rossetti2025fundamental} extends the analysis to fixed $p\ge1$ and finite
$d$. The averaged posterior mean squared error and variance then both equal the
MMSE, yielding MMSE universality.
\end{remark}

\begin{remark}
Proposition~2.4 of~\cite{guionnet2025estimating} proves free energy
universality for smooth $n$-independent channels. The regular local
log-likelihood results of Section~\ref{subsec:regular_local_log_likelihoods}
recover this conclusion and extend it to a broader class of channels. Their large-deviation
argument yields universality of rate functions for certain order parameters,
but not of posterior observables.
\end{remark}

\bibliographystyle{imsart-number}
\bibliography{long_names,library,universality,more_refs,more_refs1,more_refs2}

\begin{thebibliography}{63}

\bibitem{albiac:2006banach}
\begin{bbook}[author]
\bauthor{\bsnm{Albiac},~\bfnm{Fernando}\binits{F.}} \AND
  \bauthor{\bsnm{Kalton},~\bfnm{Nigel~J.}\binits{N.~J.}}
(\byear{2006}).
\btitle{Topics in Banach Space Theory}.
\bseries{Graduate Texts in Mathematics}
\bvolume{233}.
\bpublisher{Springer}.
\end{bbook}
\endbibitem

\bibitem{baik:2005}
\begin{barticle}[author]
\bauthor{\bsnm{Baik},~\bfnm{Jinho}\binits{J.}},
  \bauthor{\bsnm{Arous},~\bfnm{G\'erard~Ben}\binits{G.~B.}} \AND
  \bauthor{\bsnm{P\'{e}ch\'{e}},~\bfnm{Sandrine}\binits{S.}}
(\byear{2005}).
\btitle{Phase transition of the largest eigenvalue for nonnull complex sample
  covariance matrices}.
\bjournal{Annals of Probability}
\bvolume{33}
\bpages{1643--1697}.
\end{barticle}
\endbibitem

\bibitem{barbier:2016a}
\begin{binproceedings}[author]
\bauthor{\bsnm{Barbier},~\bfnm{Jean}\binits{J.}},
  \bauthor{\bsnm{Dia},~\bfnm{Mohamad}\binits{M.}},
  \bauthor{\bsnm{Macris},~\bfnm{Nicolas}\binits{N.}},
  \bauthor{\bsnm{Krzakala},~\bfnm{Florent}\binits{F.}},
  \bauthor{\bsnm{Lesieur},~\bfnm{Thibault}\binits{T.}} \AND
  \bauthor{\bsnm{Zdeborov\'a},~\bfnm{Lenka}\binits{L.}}
(\byear{2016}).
\btitle{Mutual information for symmetric rank-one matrix estimation: {A} proof
  of the replica formula}.
In \bbooktitle{Advances in Neural Information Processing Systems (NIPS)}
\bvolume{29}
\bpages{424--432}.
\end{binproceedings}
\endbibitem

\bibitem{barbier:2022}
\begin{binproceedings}[author]
\bauthor{\bsnm{Barbier},~\bfnm{Jean}\binits{J.}},
  \bauthor{\bsnm{Hou},~\bfnm{Tianqi}\binits{T.}},
  \bauthor{\bsnm{Mondelli},~\bfnm{Marco}\binits{M.}} \AND
  \bauthor{\bsnm{S\'aenz},~\bfnm{Manuel}\binits{M.}}
(\byear{2022}).
\btitle{The price of ignorance: how much does it cost to forget noise structure
  in low-rank matrix estimation?}
In \bbooktitle{Advances in Neural Information Processing Systems}
\bvolume{35}.
\end{binproceedings}
\endbibitem

\bibitem{barbier:2019}
\begin{barticle}[author]
\bauthor{\bsnm{Barbier},~\bfnm{Jean}\binits{J.}},
  \bauthor{\bsnm{Krzakala},~\bfnm{Florent}\binits{F.}},
  \bauthor{\bsnm{Macris},~\bfnm{Nicolas}\binits{N.}},
  \bauthor{\bsnm{Miolane},~\bfnm{L\'eo}\binits{L.}} \AND
  \bauthor{\bsnm{Zdeborov\'a},~\bfnm{Lenka}\binits{L.}}
(\byear{2019}).
\btitle{Optimal Errors and Phase Transitions in High-Dimensional Generalized
  Linear Models}.
\bjournal{Proceedings of the National Academy of Sciences}
\bvolume{116}
\bpages{5451-5460}.
\end{barticle}
\endbibitem

\bibitem{barbier:2022strong}
\begin{barticle}[author]
\bauthor{\bsnm{Barbier},~\bfnm{Jean}\binits{J.}} \AND
  \bauthor{\bsnm{Panchenko},~\bfnm{Dmitry}\binits{D.}}
(\byear{2022}).
\btitle{Strong replica symmetry in high-dimensional optimal {B}ayesian
  inference}.
\bjournal{Communications in Mathematical Physics}
\bvolume{393}
\bpages{1199--1239}.
\bdoi{10.1007/s00220-022-04387-w}
\end{barticle}
\endbibitem

\bibitem{barbier:2022logconcave}
\begin{barticle}[author]
\bauthor{\bsnm{Barbier},~\bfnm{Jean}\binits{J.}},
  \bauthor{\bsnm{Panchenko},~\bfnm{Dmitry}\binits{D.}} \AND
  \bauthor{\bsnm{S\'aenz},~\bfnm{Manuel}\binits{M.}}
(\byear{2022}).
\btitle{Strong replica symmetry for high-dimensional disordered log-concave
  {G}ibbs measures}.
\bjournal{Information and Inference: A Journal of the IMA}
\bvolume{11}
\bpages{1079--1108}.
\bdoi{10.1093/imaiai/iaab027}
\end{barticle}
\endbibitem

\bibitem{behne:2022}
\begin{binproceedings}[author]
\bauthor{\bsnm{Behne},~\bfnm{Joshua~K.}\binits{J.~K.}} \AND
  \bauthor{\bsnm{Reeves},~\bfnm{Galen}\binits{G.}}
(\byear{2022}).
\btitle{Fundamental limits for rank-one matrix estimation with groupwise
  heteroskedasticity}.
In \bbooktitle{Proceedings of The 25th International Conference on Artificial
  Intelligence and Statistics}.
\end{binproceedings}
\endbibitem

\bibitem{bogachev:1998gaussian}
\begin{bbook}[author]
\bauthor{\bsnm{Bogachev},~\bfnm{Vladimir~Igorevich}\binits{V.~I.}}
(\byear{1998}).
\btitle{Gaussian measures}
\bvolume{62}.
\bpublisher{American Mathematical Soc.}
\end{bbook}
\endbibitem

\bibitem{carmona:2006}
\begin{binproceedings}[author]
\bauthor{\bsnm{Carmona},~\bfnm{Philippe}\binits{P.}} \AND
  \bauthor{\bsnm{Hu},~\bfnm{Yueyun}\binits{Y.}}
(\byear{2006}).
\btitle{Universality in Sherrington--Kirkpatrick's spin glass model}.
In \bbooktitle{Annales de l'Institut Henri Poincare (B) Probability and
  Statistics}
\bvolume{42}
\bpages{215--222}.
\bpublisher{Elsevier}.
\end{binproceedings}
\endbibitem

\bibitem{castillo:2013}
\begin{barticle}[author]
\bauthor{\bsnm{Castillo},~\bfnm{Isma{\"e}l}\binits{I.}} \AND
  \bauthor{\bsnm{Nickl},~\bfnm{Richard}\binits{R.}}
(\byear{2013}).
\btitle{Nonparametric {B}ernstein--von {M}ises theorems in {G}aussian white
  noise}.
\bjournal{Annals of Statistics}
\bvolume{41}
\bpages{1999--2028}.
\end{barticle}
\endbibitem

\bibitem{castillo:2014}
\begin{barticle}[author]
\bauthor{\bsnm{Castillo},~\bfnm{Isma{\"e}l}\binits{I.}} \AND
  \bauthor{\bsnm{Nickl},~\bfnm{Richard}\binits{R.}}
(\byear{2014}).
\btitle{On the {B}ernstein--von {M}ises phenomenon for nonparametric {B}ayes
  procedures}.
\bjournal{Annals of Statistics}
\bvolume{42}
\bpages{1941--1969}.
\end{barticle}
\endbibitem

\bibitem{chatterjee:2006}
\begin{barticle}[author]
\bauthor{\bsnm{Chatterjee},~\bfnm{Sourav}\binits{S.}}
(\byear{2006}).
\btitle{A Generalization of the Lindeberg Principle}.
\bjournal{The Annals of Probability}
\bvolume{34}
\bpages{2061--2076}.
\end{barticle}
\endbibitem

\bibitem{chatterjee-meckes:2008}
\begin{barticle}[author]
\bauthor{\bsnm{Chatterjee},~\bfnm{Sourav}\binits{S.}} \AND
  \bauthor{\bsnm{Meckes},~\bfnm{Elizabeth}\binits{E.}}
(\byear{2008}).
\btitle{Multivariate normal approximation using exchangeable pairs}.
\bjournal{ALEA Latin American Journal of Probability and Mathematical
  Statistics}
\bvolume{4}
\bpages{257--283}.
\end{barticle}
\endbibitem

\bibitem{chen:2022_finite-rank}
\begin{barticle}[author]
\bauthor{\bsnm{Chen},~\bfnm{Hongbin}\binits{H.}},
  \bauthor{\bsnm{Mourrat},~\bfnm{Jean-Christophe}\binits{J.-C.}} \AND
  \bauthor{\bsnm{Xia},~\bfnm{Jiaming}\binits{J.}}
(\byear{2022}).
\btitle{Statistical inference of finite-rank tensors}.
\bjournal{Annales Henri Lebesgue}
\bvolume{5}
\bpages{1161--1189}.
\end{barticle}
\endbibitem

\bibitem{chen:2019spin}
\begin{barticle}[author]
\bauthor{\bsnm{Chen},~\bfnm{Yu-Ting}\binits{Y.-T.}}
(\byear{2019}).
\btitle{Universality of {G}hirlanda--{G}uerra identities and spin distributions
  in mixed {$p$}-spin models}.
\bjournal{Annales de l'Institut Henri Poincar\'e, Probabilit\'es et
  Statistiques}
\bvolume{55}
\bpages{528--550}.
\bdoi{10.1214/18-AIHP890}
\end{barticle}
\endbibitem

\bibitem{chernozhukov:2013}
\begin{barticle}[author]
\bauthor{\bsnm{Chernozhukov},~\bfnm{Victor}\binits{V.}},
  \bauthor{\bsnm{Chetverikov},~\bfnm{Denis}\binits{D.}} \AND
  \bauthor{\bsnm{Kato},~\bfnm{Kengo}\binits{K.}}
(\byear{2013}).
\btitle{Gaussian approximations and multiplier bootstrap for maxima of sums of
  high-dimensional random vectors}.
\bjournal{Annals of Statistics}
\bvolume{41}
\bpages{2786--2819}.
\end{barticle}
\endbibitem

\bibitem{chernozhukov:2017}
\begin{barticle}[author]
\bauthor{\bsnm{Chernozhukov},~\bfnm{Victor}\binits{V.}},
  \bauthor{\bsnm{Chetverikov},~\bfnm{Denis}\binits{D.}} \AND
  \bauthor{\bsnm{Kato},~\bfnm{Kengo}\binits{K.}}
(\byear{2017}).
\btitle{Central limit theorems and bootstrap in high dimensions}.
\bjournal{Annals of Probability}
\bvolume{45}
\bpages{2309--2352}.
\end{barticle}
\endbibitem

\bibitem{deshpande:2015a}
\begin{bmisc}[author]
\bauthor{\bsnm{Deshpande},~\bfnm{Yash}\binits{Y.}},
  \bauthor{\bsnm{Abbe},~\bfnm{Emmanuel}\binits{E.}} \AND
  \bauthor{\bsnm{Montanari},~\bfnm{Andrea}\binits{A.}}
(\byear{2015}).
\btitle{Asymptotic Mutual Information for the Two-Groups Stochastic Block
  Model}.
\bnote{[Online]. {A}vailable \url{https://arxiv.org/abs/1507.08685}}.
\end{bmisc}
\endbibitem

\bibitem{dobriban:2018}
\begin{barticle}[author]
\bauthor{\bsnm{Dobriban},~\bfnm{Edgar}\binits{E.}} \AND
  \bauthor{\bsnm{Wager},~\bfnm{Stefan}\binits{S.}}
(\byear{2018}).
\btitle{High-dimensional asymptotics of prediction: Ridge regression and
  classification}.
\bjournal{Annals of Statistics}
\bvolume{46}
\bpages{247--279}.
\end{barticle}
\endbibitem

\bibitem{erdos-yau-yin:2012}
\begin{barticle}[author]
\bauthor{\bsnm{Erd{\H{o}}s},~\bfnm{L{\'a}szl{\'o}}\binits{L.}},
  \bauthor{\bsnm{Yau},~\bfnm{Horng-Tzer}\binits{H.-T.}} \AND
  \bauthor{\bsnm{Yin},~\bfnm{Jun}\binits{J.}}
(\byear{2012}).
\btitle{Bulk universality for generalized {W}igner matrices}.
\bjournal{Probability Theory and Related Fields}
\bvolume{154}
\bpages{341--407}.
\bdoi{10.1007/s00440-011-0390-3}
\end{barticle}
\endbibitem

\bibitem{ghosal:2000convergence}
\begin{barticle}[author]
\bauthor{\bsnm{Ghosal},~\bfnm{Subhashis}\binits{S.}},
  \bauthor{\bsnm{Ghosh},~\bfnm{Jayanta~K}\binits{J.~K.}} \AND
  \bauthor{\bsnm{Van Der~Vaart},~\bfnm{Aad~W}\binits{A.~W.}}
(\byear{2000}).
\btitle{Convergence rates of posterior distributions}.
\bjournal{Annals of Statistics}
\bpages{500--531}.
\end{barticle}
\endbibitem

\bibitem{ghosal:2017}
\begin{bbook}[author]
\bauthor{\bsnm{Ghosal},~\bfnm{Subhashis}\binits{S.}} \AND
  \bauthor{\bparticle{van~der} \bsnm{Vaart},~\bfnm{Aad~W.}\binits{A.~W.}}
(\byear{2017}).
\btitle{Fundamentals of Nonparametric {B}ayesian Inference}.
\bpublisher{Cambridge University Press}.
\end{bbook}
\endbibitem

\bibitem{goldt:2022}
\begin{binproceedings}[author]
\bauthor{\bsnm{Goldt},~\bfnm{Sebastian}\binits{S.}},
  \bauthor{\bsnm{Loureiro},~\bfnm{Bruno}\binits{B.}},
  \bauthor{\bsnm{Reeves},~\bfnm{Galen}\binits{G.}},
  \bauthor{\bsnm{Krzakala},~\bfnm{Florent}\binits{F.}},
  \bauthor{\bsnm{M{\'e}zard},~\bfnm{Marc}\binits{M.}} \AND
  \bauthor{\bsnm{Zdeborov{\'a}},~\bfnm{Lenka}\binits{L.}}
(\byear{2022}).
\btitle{The {G}aussian equivalence of generative models for learning with
  shallow neural networks}.
In \bbooktitle{Mathematical and Scientific Machine Learning (MSML)}.
\bseries{Proceedings of Machine Learning Research}
\bvolume{145}
\bpages{426--471}.
\end{binproceedings}
\endbibitem

\bibitem{guerra:2003}
\begin{barticle}[author]
\bauthor{\bsnm{Guerra},~\bfnm{Francesco}\binits{F.}}
(\byear{2003}).
\btitle{Broken Replica Symmetry Bounds in the Mean Field Spin Glass Model}.
\bjournal{Communications in Mathematical Physics}
\bvolume{233}
\bpages{1--12}.
\end{barticle}
\endbibitem

\bibitem{guionnet2025estimating}
\begin{barticle}[author]
\bauthor{\bsnm{Guionnet},~\bfnm{Alice}\binits{A.}},
  \bauthor{\bsnm{Ko},~\bfnm{Justin}\binits{J.}},
  \bauthor{\bsnm{Krzakala},~\bfnm{Florent}\binits{F.}} \AND
  \bauthor{\bsnm{Zdeborov{\'a}},~\bfnm{Lenka}\binits{L.}}
(\byear{2025}).
\btitle{Estimating Rank-One Matrices with Mismatched Prior and Noise:
  Universality and Large Deviations}.
\bjournal{Communications in Mathematical Physics}
\bvolume{406}
\bpages{9}.
\end{barticle}
\endbibitem

\bibitem{guionnet:2025}
\begin{barticle}[author]
\bauthor{\bsnm{Guionnet},~\bfnm{Alice}\binits{A.}},
  \bauthor{\bsnm{Ko},~\bfnm{Justin}\binits{J.}},
  \bauthor{\bsnm{Krzakala},~\bfnm{Florent}\binits{F.}} \AND
  \bauthor{\bsnm{Zdeborov{\'a}},~\bfnm{Lenka}\binits{L.}}
(\byear{2025}).
\btitle{Low-rank matrix estimation with inhomogeneous noise}.
\bjournal{Information and Inference: A Journal of the IMA}
\bvolume{14}
\bpages{iaaf010}.
\end{barticle}
\endbibitem

\bibitem{hairer:2025spde}
\begin{bmisc}[author]
\bauthor{\bsnm{Hairer},~\bfnm{Martin}\binits{M.}}
(\byear{2025}).
\btitle{An Introduction to Stochastic PDEs}.
\bnote{Lecture notes}.
\end{bmisc}
\endbibitem

\bibitem{han:2023}
\begin{barticle}[author]
\bauthor{\bsnm{Han},~\bfnm{Qiyang}\binits{Q.}} \AND
  \bauthor{\bsnm{Shen},~\bfnm{Yandi}\binits{Y.}}
(\byear{2023}).
\btitle{Universality of regularized regression estimators in high dimensions}.
\bjournal{The Annals of Statistics}
\bvolume{51}
\bpages{1799--1823}.
\end{barticle}
\endbibitem

\bibitem{hastie:2022}
\begin{barticle}[author]
\bauthor{\bsnm{Hastie},~\bfnm{Trevor}\binits{T.}},
  \bauthor{\bsnm{Montanari},~\bfnm{Andrea}\binits{A.}},
  \bauthor{\bsnm{Rosset},~\bfnm{Saharon}\binits{S.}} \AND
  \bauthor{\bsnm{Tibshirani},~\bfnm{Ryan~J.}\binits{R.~J.}}
(\byear{2022}).
\btitle{Surprises in High-Dimensional Ridgeless Least Squares Interpolation}.
\bjournal{The Annals of Statistics}
\bvolume{50}
\bpages{949--986}.
\end{barticle}
\endbibitem

\bibitem{hu:2023}
\begin{barticle}[author]
\bauthor{\bsnm{Hu},~\bfnm{Hong}\binits{H.}} \AND
  \bauthor{\bsnm{Lu},~\bfnm{Yue~M.}\binits{Y.~M.}}
(\byear{2023}).
\btitle{Universality laws for high-dimensional learning with random features}.
\bjournal{IEEE Transactions on Information Theory}
\bvolume{69}
\bpages{1932--1964}.
\end{barticle}
\endbibitem

\bibitem{johnstone:2001a}
\begin{barticle}[author]
\bauthor{\bsnm{Johnstone},~\bfnm{Iain~M.}\binits{I.~M.}}
(\byear{2001}).
\btitle{On the distribution of the largest eigenvalue in principal components
  analysis}.
\bjournal{Annals of Statistics}
\bvolume{29}
\bpages{295--327}.
\end{barticle}
\endbibitem

\bibitem{kleijn:2012}
\begin{barticle}[author]
\bauthor{\bsnm{Kleijn},~\bfnm{Bas J.~K.}\binits{B.~J.~K.}} \AND
  \bauthor{\bparticle{van~der} \bsnm{Vaart},~\bfnm{Aad~W.}\binits{A.~W.}}
(\byear{2012}).
\btitle{The {B}ernstein-{V}on-{M}ises theorem under misspecification}.
\bjournal{Electronic Journal of Statistics}
\bvolume{6}
\bpages{354--381}.
\end{barticle}
\endbibitem

\bibitem{korada:2010}
\begin{barticle}[author]
\bauthor{\bsnm{Korada},~\bfnm{Satish~Babu}\binits{S.~B.}} \AND
  \bauthor{\bsnm{Macris},~\bfnm{Nicolas}\binits{N.}}
(\byear{2010}).
\btitle{Tight Bounds on the Capicty of Binary Input Random {CDMA} Systems}.
\bjournal{IEEE Transactions on Information Theory}
\bvolume{56}
\bpages{5590-5613}.
\end{barticle}
\endbibitem

\bibitem{korada:2011}
\begin{barticle}[author]
\bauthor{\bsnm{Korada},~\bfnm{Satish~Babu}\binits{S.~B.}} \AND
  \bauthor{\bsnm{Montanari},~\bfnm{Andrea}\binits{A.}}
(\byear{2011}).
\btitle{Applications of the {L}indeberg Principle in Communications and
  Statistical Learning}.
\bjournal{IEEE Transactions on Information Theory}
\bvolume{57}
\bpages{2011}.
\end{barticle}
\endbibitem

\bibitem{kreyszig1991introductory}
\begin{bbook}[author]
\bauthor{\bsnm{Kreyszig},~\bfnm{Erwin}\binits{E.}}
(\byear{1991}).
\btitle{Introductory functional analysis with applications}.
\bpublisher{John Wiley \& Sons}.
\end{bbook}
\endbibitem

\bibitem{krzakala:2016}
\begin{binproceedings}[author]
\bauthor{\bsnm{Krzakala},~\bfnm{Florent}\binits{F.}},
  \bauthor{\bsnm{Xu},~\bfnm{Jiaming}\binits{J.}} \AND
  \bauthor{\bsnm{Zdeborov\'{a}},~\bfnm{Lenka}\binits{L.}}
(\byear{2016}).
\btitle{Mutual Information in Rank-One Matrix Estimation}
In \bbooktitle{Proceedings of the IEEE Information Theory Workshop (ITW)}.
\end{binproceedings}
\endbibitem

\bibitem{lecam:1986}
\begin{bbook}[author]
\bauthor{\bsnm{Le~Cam},~\bfnm{Lucien}\binits{L.}}
(\byear{1986}).
\btitle{Asymptotic Methods in Statistical Decision Theory}.
\bpublisher{Springer}.
\end{bbook}
\endbibitem

\bibitem{lecam:2000}
\begin{bbook}[author]
\bauthor{\bsnm{Le~Cam},~\bfnm{Lucien}\binits{L.}} \AND
  \bauthor{\bsnm{Yang},~\bfnm{Grace~Lo}\binits{G.~L.}}
(\byear{2000}).
\btitle{Asymptotics in Statistics: Some Basic Concepts}.
\bpublisher{Springer-Verlag}, \baddress{New York}.
\end{bbook}
\endbibitem

\bibitem{lehmann:1998}
\begin{bbook}[author]
\bauthor{\bsnm{Lehmann},~\bfnm{E.~L.}\binits{E.~L.}} \AND
  \bauthor{\bsnm{Casella},~\bfnm{George}\binits{G.}}
(\byear{1998}).
\btitle{Theory of Point Estimation},
\bedition{Second} ed.
\bpublisher{Springer}.
\end{bbook}
\endbibitem

\bibitem{lelarge:2018}
\begin{barticle}[author]
\bauthor{\bsnm{Lelarge},~\bfnm{Marc}\binits{M.}} \AND
  \bauthor{\bsnm{Miolane},~\bfnm{L\'eo}\binits{L.}}
(\byear{2019}).
\btitle{Fundamental limits of symmetric low-rank matrix estimation}.
\bjournal{Probability Theory and Related Fields}
\bvolume{173}
\bpages{859--929}.
\bdoi{10.1007/s00440-018-0845-x}
\end{barticle}
\endbibitem

\bibitem{lesieur:2017a}
\begin{bmisc}[author]
\bauthor{\bsnm{Lesieur},~\bfnm{Thibault}\binits{T.}},
  \bauthor{\bsnm{Miolane},~\bfnm{L/'eo}\binits{L.}},
  \bauthor{\bsnm{Lelarge},~\bfnm{Marc}\binits{M.}},
  \bauthor{\bsnm{Krzakala},~\bfnm{Florent}\binits{F.}} \AND
  \bauthor{\bsnm{Zdeborov\'a},~\bfnm{Lenka}\binits{L.}}
(\byear{2017}).
\btitle{Statistical and computational phase transitions in spiked tensor
  estimation}.
\bnote{[Online]. {A}vailable \url{https://arxiv.org/abs/1701.08010}}.
\end{bmisc}
\endbibitem

\bibitem{luneau:2021}
\begin{barticle}[author]
\bauthor{\bsnm{Luneau},~\bfnm{Cl{\'e}ment}\binits{C.}},
  \bauthor{\bsnm{Barbier},~\bfnm{Jean}\binits{J.}} \AND
  \bauthor{\bsnm{Macris},~\bfnm{Nicolas}\binits{N.}}
(\byear{2021}).
\btitle{Mutual information for low-rank even-order symmetric tensor
  estimation}.
\bjournal{Information and Inference: A Journal of the IMA}
\bvolume{10}
\bpages{1167--1207}.
\end{barticle}
\endbibitem

\bibitem{mergny:2024}
\begin{barticle}[author]
\bauthor{\bsnm{Mergny},~\bfnm{Pierre}\binits{P.}},
  \bauthor{\bsnm{Ko},~\bfnm{Justin}\binits{J.}},
  \bauthor{\bsnm{Krzakala},~\bfnm{Florent}\binits{F.}} \AND
  \bauthor{\bsnm{Zdeborov{\'a}},~\bfnm{Lenka}\binits{L.}}
(\byear{2024}).
\btitle{Fundamental limits of non-linear low-rank matrix estimation}.
\bjournal{arXiv preprint arXiv:2403.04234}.
\end{barticle}
\endbibitem

\bibitem{mezard:2009}
\begin{bbook}[author]
\bauthor{\bsnm{M\'{e}zard},~\bfnm{Mark}\binits{M.}} \AND
  \bauthor{\bsnm{Montanari},~\bfnm{Andrea}\binits{A.}}
(\byear{2009}).
\btitle{Information, physics, and computation}.
\bpublisher{Oxford University Press}.
\end{bbook}
\endbibitem

\bibitem{miller:2021}
\begin{barticle}[author]
\bauthor{\bsnm{Miller},~\bfnm{Jeffrey~W.}\binits{J.~W.}}
(\byear{2021}).
\btitle{Asymptotic normality, concentration, and coverage of generalized
  posteriors}.
\bjournal{Journal of Machine Learning Research}
\bvolume{22}
\bpages{1--53}.
\end{barticle}
\endbibitem

\bibitem{miolane:2017a}
\begin{bmisc}[author]
\bauthor{\bsnm{Miolane},~\bfnm{L\'eo}\binits{L.}}
(\byear{2017}).
\btitle{Fundamental limits of low-rank matrix estimation: the non-symmetric
  case}.
\bnote{[Online]. {A}vailable \url{https://arxiv.org/abs/1702.00473}}.
\end{bmisc}
\endbibitem

\bibitem{montanari:2022}
\begin{binproceedings}[author]
\bauthor{\bsnm{Montanari},~\bfnm{Andrea}\binits{A.}} \AND
  \bauthor{\bsnm{Saeed},~\bfnm{Basil~N.}\binits{B.~N.}}
(\byear{2022}).
\btitle{Universality of Empirical Risk Minimization}.
In \bbooktitle{Proceedings of the Thirty-Fifth Conference on Learning Theory}.
\bseries{Proceedings of Machine Learning Research}
\bvolume{178}
\bpages{4310--4312}.
\bpublisher{PMLR}.
\end{binproceedings}
\endbibitem

\bibitem{mourrat:2020}
\begin{barticle}[author]
\bauthor{\bsnm{Mourrat},~\bfnm{Jean-Christophe}\binits{J.-C.}}
(\byear{2020}).
\btitle{Hamilton-{J}acobi equations for finite-rank matrix inference}.
\bjournal{The Annals of Applied Probability}
\bvolume{30}
\bpages{2234-2260}.
\end{barticle}
\endbibitem

\bibitem{panchenko:2013book}
\begin{bbook}[author]
\bauthor{\bsnm{Panchenko},~\bfnm{Dmitry}\binits{D.}}
(\byear{2013}).
\btitle{The {S}herrington--{K}irkpatrick Model}.
\bseries{Springer Monographs in Mathematics}.
\bpublisher{Springer}, \baddress{New York}.
\bdoi{10.1007/978-1-4614-6289-7}
\end{bbook}
\endbibitem

\bibitem{perry:2018}
\begin{barticle}[author]
\bauthor{\bsnm{Perry},~\bfnm{Amelia}\binits{A.}},
  \bauthor{\bsnm{Wein},~\bfnm{Alexander~S.}\binits{A.~S.}},
  \bauthor{\bsnm{Bandeira},~\bfnm{Afonso~S.}\binits{A.~S.}} \AND
  \bauthor{\bsnm{Moitra},~\bfnm{Ankur}\binits{A.}}
(\byear{2018}).
\btitle{Optimality and sub-optimality of {PCA} {I}: {S}piked random matrix
  models}.
\bjournal{Annals of Statistics}
\bvolume{46}
\bpages{2416--2451}.
\end{barticle}
\endbibitem

\bibitem{reeves:2020}
\begin{barticle}[author]
\bauthor{\bsnm{Reeves},~\bfnm{Galen}\binits{G.}}
(\byear{2020}).
\btitle{Information-Theoretic Limits for the Matrix Tensor Product}.
\bjournal{IEEE Journal on Selected Areas in Information Theory}
\bvolume{1}
\bpages{777--798}.
\end{barticle}
\endbibitem

\bibitem{reeves:2019ab}
\begin{bmisc}[author]
\bauthor{\bsnm{Reeves},~\bfnm{Galen}\binits{G.}},
  \bauthor{\bsnm{Mayya},~\bfnm{Vaishakhi}\binits{V.}} \AND
  \bauthor{\bsnm{Volfovsky},~\bfnm{Alexander}\binits{A.}}
(\byear{2019}).
\btitle{The Geometry of Community Detection via the MMSE Matrix}.
\bnote{[Online]. {A}vailable \url{https://arxiv.org/pdf/1907.02496.pdf}}.
\end{bmisc}
\endbibitem

\bibitem{reinert-rollin:2009}
\begin{barticle}[author]
\bauthor{\bsnm{Reinert},~\bfnm{Gesine}\binits{G.}} \AND
  \bauthor{\bsnm{R{\"o}llin},~\bfnm{Adrian}\binits{A.}}
(\byear{2009}).
\btitle{Multivariate normal approximation with {Stein}'s method of exchangeable
  pairs under a general linearity condition}.
\bjournal{The Annals of Probability}
\bvolume{37}
\bpages{2150--2173}.
\bdoi{10.1214/09-AOP467}
\end{barticle}
\endbibitem

\bibitem{rollin:2013}
\begin{barticle}[author]
\bauthor{\bsnm{R\"{o}llin},~\bfnm{Adrian}\binits{A.}}
(\byear{2013}).
\btitle{Stein's method in high dimensions with applications}.
\bjournal{Annales de l'IHP Probabilit{\'e}s et statistiques}
\bvolume{49}
\bpages{529--549}.
\end{barticle}
\endbibitem

\bibitem{rossetti:2025}
\begin{barticle}[author]
\bauthor{\bsnm{Rossetti},~\bfnm{Riccardo}\binits{R.}} \AND
  \bauthor{\bsnm{Reeves},~\bfnm{Galen}\binits{G.}}
(\byear{2025}).
\btitle{Statistical Limits for Finite-Rank Tensor Estimation}.
\bjournal{arXiv preprint arXiv:2506.06749}.
\end{barticle}
\endbibitem

\bibitem{rossetti2025fundamental}
\begin{binproceedings}[author]
\bauthor{\bsnm{Rossetti},~\bfnm{Riccardo}\binits{R.}} \AND
  \bauthor{\bsnm{Reeves},~\bfnm{Galen}\binits{G.}}
(\byear{2025}).
\btitle{Fundamental Limits for High-Dimensional Factor Regression Models}.
In \bbooktitle{2025 IEEE International Symposium on Information Theory (ISIT)}
\bpages{1--6}.
\bpublisher{IEEE}.
\end{binproceedings}
\endbibitem

\bibitem{talagrand:2003}
\begin{bbook}[author]
\bauthor{\bsnm{Talagrand},~\bfnm{Michel}\binits{M.}}
(\byear{2003}).
\btitle{Spin Glasses: {A} Challenge for Mathematicians}.
\bpublisher{Springer-Verlag}.
\end{bbook}
\endbibitem

\bibitem{talagrand:2011}
\begin{bbook}[author]
\bauthor{\bsnm{Talagrand},~\bfnm{Michel}\binits{M.}}
(\byear{2011}).
\btitle{Mean Field Models for Spin Glasses, Volumn I: Basic Examples}.
\bpublisher{Springer}, \baddress{Berlin, Heidelberg}.
\end{bbook}
\endbibitem

\bibitem{tao:2011}
\begin{barticle}[author]
\bauthor{\bsnm{Tao},~\bfnm{Terence}\binits{T.}} \AND
  \bauthor{\bsnm{Vu},~\bfnm{Van}\binits{V.}}
(\byear{2011}).
\btitle{Random matrices: Universality of local eigenvalue statistics}.
\bjournal{Acta Math}
\bvolume{206}
\bpages{127--204}.
\end{barticle}
\endbibitem

\bibitem{kadane:1986}
\begin{barticle}[author]
\bauthor{\bsnm{Tierney},~\bfnm{Luke}\binits{L.}} \AND
  \bauthor{\bsnm{Kadane},~\bfnm{Joseph~B.}\binits{J.~B.}}
(\byear{1986}).
\btitle{Accurate Approximations for Posterior Moments and Marginal Densities}.
\bjournal{Journal of the American Statistical Association}
\bvolume{81}
\bpages{82--86}.
\end{barticle}
\endbibitem

\bibitem{vaart:1998}
\begin{bbook}[author]
\bauthor{\bparticle{van~der} \bsnm{Vaart},~\bfnm{A.~W.}\binits{A.~W.}}
(\byear{1998}).
\btitle{Asymptotic Statistics}.
\bpublisher{Cambridge University Press}.
\end{bbook}
\endbibitem

\bibitem{zou:2026}
\begin{binproceedings}[author]
\bauthor{\bsnm{Zou},~\bfnm{Wenxuan}\binits{W.}} \AND
  \bauthor{\bsnm{Reeves},~\bfnm{Galen}\binits{G.}}
(\byear{2026}).
\btitle{Free Energy Universality in Tensor Estimation via Generic Chaining}.
In \bbooktitle{2026 IEEE International Symposium on Information Theory (ISIT)
  Workshop}.
\bnote{To appear; arXiv:2605.30636}.
\end{binproceedings}
\endbibitem

\end{thebibliography}

\begin{appendix}
\section{Proofs for Section~\ref{sec:comparison}}
\label{app:proofs_section3}

This appendix proves the perturbation identities and comparison results
stated in Section~\ref{sec:comparison}. We proceed in two stages.
Section~\ref{app:proof_prelim} first develops the calculus of Fr\'echet
derivatives on $\cX$ and proves a two-element approximate
integration-by-parts identity for a generic Fr\'echet-differentiable
functional. It then introduces the log-partition functional $f$, establishes
its smoothness properties, and records the exact two-element Gaussian
integration-by-parts identity needed below; the latter identity is specific
to $f$. Sections~\ref{app:proof_deri_FG}--\ref{app:proof_fe_comparison} then
apply these tools to establish, in turn,
Lemma~\ref{lem:deri_FG}, Lemma~\ref{lem:deri_F}, and
Lemma~\ref{lem:linear_perturb_G}, followed by
Theorem~\ref{thm:fe_comparison}.

\subsection{Preliminary Lemmas}
\label{app:proof_prelim}

Throughout, $\cX=C(T;\bbR)$ denotes the real Banach space of continuous
functions on the compact metric space $T$ of
Section~\ref{sec:general_frame}, equipped with the supremum norm
$\|x\|_\infty=\sup_{t\in T}|x(t)|$. Since $T$ is metrizable, $\cX$ is
separable by~\cite[Theorem~4.1.3]{albiac:2006banach}. By the
Riesz--Markov--Kakutani representation
theorem, the topological dual $\cX^*$ is identified with the space of finite
signed regular Borel measures on $T$, with duality pairing
$\langle x,\nu\rangle=\int_T x(t)\,\nu(\dd t)$ for $x\in\cX$, $\nu\in\cX^*$.

We use the following notation for Fr\'echet derivatives. Let $V$ and $W$ be
separable Banach spaces, and let $G:W\to V$ be $k$ times Fr\'echet
differentiable at $x\in W$. Then $D^kG(x)$ is a bounded $k$-linear map from
$W^k$ to $V$, with operator norm
\begin{equation}\label{eq:frechet_derivative_operator_norm}
\|D^k G(x)\|_{\mathrm{op}}
\coloneqq
\sup_{\substack{\|h_1\|_W\le1,\ldots,\|h_k\|_W\le1}}
\bigl\|D^k G(x)[h_1,\ldots,h_k]\bigr\|_V.
\end{equation}
We use the convention that $D^0$ is the identity operator, so that
$D^0G=G$. When $W=\cX$ and $V=\bbR$, so that $G:\cX\to\bbR$, the derivative
$DG(x)$ at each $x\in\cX$ is an element of $\cX^*$, and is therefore itself a
finite signed regular Borel measure on $T$; we write
\[
DG(x)[h]
=
\int_T h(t)\,(DG(x))(\dd t),
\qquad h\in\cX,
\]
for the corresponding integral representation of the directional derivative.
Background on Gaussian measures on $\cX$, including the Cameron--Martin space
and the abstract Gaussian integration-by-parts formula, is recorded in
Appendix~\ref{app:gaussian_measures_banach}.

For a bounded bilinear form $B$ on $\cX\times\cX$ and a covariance kernel
$K_{12}$ on $T\times T$, we write $\langle B,K_{12}\rangle$ for the
contraction of $B$ with $K_{12}$, defined by
$\langle B,K_{12}\rangle\coloneqq\E[B(V,W)]$ for any centered random elements
$V,W$ of $\cX$ with $\cov(V(s),W(t))=K_{12}(s,t)$.

The following two-element approximate integration-by-parts identity applies
to a generic Fr\'echet-differentiable functional and will be used for the
non-Gaussian terms below.

\begin{lemma}[Two-element approximate Gaussian integration by parts]
\label{lem:approx_ibp_two}
Let $X_1,X_2$ be random elements of $\cX$ with means $m_1,m_2\in\cX$,
cross-covariance kernel $K_{12}(s,t)=\cov(X_1(s),X_2(t))$, and
$\E\|X_1\|_\infty^3,\E\|X_2\|_\infty^3<\infty$. Let $F:\cX\to\bbR$ be
three-times continuously Fr\'echet differentiable with
$M_3(F)\coloneqq\sup_{z\in\cX}\|\sD^3F(z)\|_{\mathrm{op}}<\infty$. Then
\[
\E\bigl[\sD F(X_1)[X_2-m_2]\bigr]
=
\bigl\langle\E[\sD^2F(X_1)],K_{12}\bigr\rangle
+
R,
\]
where
$|R|\le\tfrac32M_3(F)\bigl(\E\|X_1-m_1\|_\infty^3\bigr)^{2/3}\bigl(\E\|X_2-m_2\|_\infty^3\bigr)^{1/3}$.
\end{lemma}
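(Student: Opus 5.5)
The plan is a second-order Taylor expansion of $\sD F$ around the deterministic point $m_1$. The first-order term vanishes in expectation, the second-order term produces the contraction with $K_{12}$ evaluated at $m_1$, and the third-derivative bound $M_3(F)$ controls two errors: the Taylor remainder, and the cost of replacing $\sD^2F(m_1)$ by $\E[\sD^2F(X_1)]$. Write $V_1=X_1-m_1$ and $V_2=X_2-m_2$. Since $\cX$ is separable and the third moments are finite, both are Bochner integrable with mean zero.

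\emph{Step 1 (Taylor expansion).} For each $\omega$, Taylor's theorem with integral remainder, applied to $u\mapsto \sD F(m_1+uV_1)[V_2]$ on $[0,1]$, gives
\[
\sD F(X_1)[V_2]=\sD F(m_1)[V_2]+\sD^2F(m_1)[V_1,V_2]+\rho,\qquad |\rho|\le \tfrac12 M_3(F)\|V_1\|_\infty^2\|V_2\|_\infty .
\]
Because $\sD F(m_1)\in\cX^*$ is a bounded linear functional, it commutes with the Bochner expectation, so $\E\,\sD F(m_1)[V_2]=0$. By H\"older's inequality with exponents $(3/2,3)$,
\[
\E|\rho|\le \tfrac12 M_3(F)\bigl(\E\|V_1\|_\infty^3\bigr)^{2/3}\bigl(\E\|V_2\|_\infty^3\bigr)^{1/3}.
\]

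\emph{Step 2 (the contraction is well defined).} I need $\E[B(V_1,V_2)]=\langle B,K_{12}\rangle$ for any bounded bilinear form $B$, and that this value depends only on $K_{12}$. This is the step I expect to be the main technical obstacle, because a bounded bilinear form on $C(T)\times C(T)$ need not be given by integration against a kernel.

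First, for fixed $\epsilon$ I would take a finite partition of unity $\{\varphi_i\}$ subordinate to an $\epsilon$-net $\{t_i\}$ of $T$. Define $P_\epsilon x=\sum_i x(t_i)\varphi_i$. Then $\|P_\epsilon\|\le1$, and $P_\epsilon x\to x$ uniformly as $\epsilon\to0$ for each fixed continuous $x$.

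Second, $B(P_\epsilon V,P_\epsilon W)=\sum_{i,j}V(t_i)W(t_j)\,B(\varphi_i,\varphi_j)$. Its expectation is therefore $\sum_{i,j}K_{12}(t_i,t_j)B(\varphi_i,\varphi_j)$, which depends only on $K_{12}$.

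Third, letting $\epsilon\to0$, the bound $|B(P_\epsilon V,P_\epsilon W)|\le\|B\|\,\|V\|_\infty\|W\|_\infty$ is integrable, so dominated convergence shows that $\E[B(V,W)]$ is the limit of these kernel expressions. Hence it depends only on $K_{12}$.

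In particular, if $(\widetilde V_1,\widetilde V_2)$ is an independent copy of $(V_1,V_2)$, then $\E[B(\widetilde V_1,\widetilde V_2)]=\langle B,K_{12}\rangle$. Applying this with the random form $B=\sD^2F(X_1)$ and using Fubini,
\[
\bigl\langle \E[\sD^2F(X_1)],K_{12}\bigr\rangle=\E\bigl[\sD^2F(X_1)[\widetilde V_1,\widetilde V_2]\bigr],
\]
where the expectation of the random form is understood through this contraction.

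\emph{Step 3 (replacing $m_1$ by $X_1$ in the second derivative).} By Step 2, $\E\,\sD^2F(m_1)[V_1,V_2]=\langle \sD^2F(m_1),K_{12}\rangle$. Using the independent copy,
\[
\bigl|\langle \E\sD^2F(X_1)-\sD^2F(m_1),K_{12}\rangle\bigr|
\le M_3(F)\,\E\|V_1\|_\infty\;\E\bigl[\|\widetilde V_1\|_\infty\|\widetilde V_2\|_\infty\bigr],
\]
since $\|\sD^2F(X_1)-\sD^2F(m_1)\|_{\mathrm{op}}\le M_3(F)\|V_1\|_\infty$ by the mean value inequality. H\"older's inequality bounds the factor $\E\|V_1\|_\infty$ by $(\E\|V_1\|_\infty^3)^{1/3}$. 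It also bounds $\E[\|\widetilde V_1\|_\infty\|\widetilde V_2\|_\infty]$ by $(\E\|V_1\|_\infty^3)^{1/3}(\E\|V_2\|_\infty^3)^{1/3}$. Together this error is at most $M_3(F)(\E\|V_1\|_\infty^3)^{2/3}(\E\|V_2\|_\infty^3)^{1/3}$.

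Combining Steps 1 and 3, the total remainder $R$ is bounded by $(\tfrac12+1)M_3(F)(\E\|V_1\|_\infty^3)^{2/3}(\E\|V_2\|_\infty^3)^{1/3}$, which is the claimed constant $\tfrac32$.
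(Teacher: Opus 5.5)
Your proof is correct and is essentially the paper's argument. The paper also Taylor-expands $\sD F$ and uses an independent copy of $X_1$ to produce the contraction. It simply merges your two steps into one interpolation $Z_{r,s}=m_1+rs\bar X_1+(1-r)\bar X_1'$, which runs from $X_1'$ straight to $m_1+s\bar X_1$ rather than passing through $m_1$. This gives exactly your two error terms, $\tfrac12\E[\|\bar X_1\|_\infty^2\|\bar X_2\|_\infty]$ and $\E\|\bar X_1'\|_\infty\,\E[\|\bar X_1\|_\infty\|\bar X_2\|_\infty]$, and the same constant $\tfrac32$.

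Your Step~2 partition-of-unity argument shows that $\E[B(V,W)]$ depends only on $K_{12}$. That is a genuine addition: the paper takes this well-definedness for granted in its definition of the contraction.
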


\begin{proof}
Write $\bar X_1\coloneqq X_1-m_1$, $\bar X_2\coloneqq X_2-m_2$. Let $X_1'$ be
an independent copy of $X_1$, independent of $(X_1,X_2)$, write
$\bar X_1'\coloneqq X_1'-m_1$, and set
$Z_{r,s}\coloneqq m_1+rs\bar X_1+(1-r)\bar X_1'$ for $r,s\in[0,1]$, so that
$Z_{1,s}=m_1+s\bar X_1$
and $Z_{0,s}=X_1'$ for every $s\in[0,1]$.

Before applying the fundamental theorem of calculus, we record the required
integrability. The bound on $D^3F$ and the mean-value inequality give, for
every $x\in\cX$,
\begin{align*}
\|\sD^2F(x)\|_{\mathrm{op}}
&\le
\|\sD^2F(m_1)\|_{\mathrm{op}}
+M_3(F)\|x-m_1\|_\infty,\\
\|\sD F(x)\|_{\mathrm{op}}
&\le
\|\sD F(m_1)\|_{\mathrm{op}}
+\|\sD^2F(m_1)\|_{\mathrm{op}}\|x-m_1\|_\infty
+\frac12M_3(F)\|x-m_1\|_\infty^2.
\end{align*}
Together with the assumed third moments, H\"older's inequality shows that
$\E\|\sD^2F(X_1)\|_{\mathrm{op}}<\infty$ and
$\E|\sD F(X_1)[\bar X_2]|<\infty$. The same bounds justify all subsequent
exchanges of expectation with the $r$- and $s$-integrals.

Since $F$ is twice continuously Fr\'echet differentiable and
$\bar X_2$ does not depend on $z$,
\[
\sD F(X_1)[\bar X_2]-\sD F(m_1)[\bar X_2]
=
\int_0^1\sD^2F(Z_{1,s})[\bar X_1,\bar X_2]\,\dd s.
\]
Taking expectations and using $\E[\bar X_2]=0$, so that
$\sD F(m_1)[\E\bar X_2]=0$,
\[
\E\bigl[\sD F(X_1)[\bar X_2]\bigr]
=
\int_0^1
\E\bigl[\sD^2F(Z_{1,s})[\bar X_1,\bar X_2]\bigr]\,\dd s.
\]
Differentiating $r\mapsto\sD^2F(Z_{r,s})[\bar X_1,\bar X_2]$ and integrating
from $r=0$ to
$r=1$, using $Z_{0,s}=X_1'$,
\[
\sD^2F(Z_{1,s})[\bar X_1,\bar X_2]
=
\sD^2F(X_1')[\bar X_1,\bar X_2]
+
\int_0^1
\sD^3F(Z_{r,s})[s\bar X_1-\bar X_1',\bar X_1,\bar X_2]\,\dd r,
\]
and the first term on the right does not depend on $s$. Since $X_1'$ is
independent of $(\bar X_1,\bar X_2)$ and has the same law as $X_1$,
\[
\E\bigl[\sD^2F(X_1')[\bar X_1,\bar X_2]\bigr]
=
\E\bigl[\langle\sD^2F(X_1'),K_{12}\rangle\bigr]
=
\bigl\langle\E[\sD^2F(X_1)],K_{12}\bigr\rangle.
\]
Combining the displays above establishes the asserted identity with
\[
R
=
\int_0^1\int_0^1
\E\bigl[\sD^3F(Z_{r,s})[s\bar X_1-\bar X_1',\bar X_1,\bar X_2]\bigr]\,\dd r\,\dd s.
\]

For the bound, $\|\sD^3F(z)\|_{\mathrm{op}}\le M_3(F)$ for every $z$ gives
\begin{align*}
|R|
&\le
M_3(F)
\int_0^1\int_0^1
\E\bigl[\bigl(s\|\bar X_1\|_\infty+\|\bar X_1'\|_\infty\bigr)\|\bar X_1\|_\infty\|\bar X_2\|_\infty\bigr]\,\dd r\,\dd s
\\
&=
M_3(F)
\left(
\frac12\E\bigl[\|\bar X_1\|_\infty^2\|\bar X_2\|_\infty\bigr]
+
\E\bigl[\|\bar X_1'\|_\infty\bigr]\,\E\bigl[\|\bar X_1\|_\infty\|\bar X_2\|_\infty\bigr]
\right),
\end{align*}
using independence of $X_1'$ from $(\bar X_1,\bar X_2)$. By H\"older's
inequality, $\E[\|\bar X_1\|_\infty^2\|\bar X_2\|_\infty]\le(\E\|\bar X_1\|_\infty^3)^{2/3}(\E\|\bar X_2\|_\infty^3)^{1/3}$,
and, by Cauchy--Schwarz followed by the power-mean inequality,
$\E[\|\bar X_1\|_\infty\|\bar X_2\|_\infty]\le(\E\|\bar X_1\|_\infty^2)^{1/2}(\E\|\bar X_2\|_\infty^2)^{1/2}
\le(\E\|\bar X_1\|_\infty^3)^{1/3}(\E\|\bar X_2\|_\infty^3)^{1/3}$,
while $\E[\|\bar X_1'\|_\infty]\le(\E\|\bar X_1\|_\infty^3)^{1/3}$ since $X_1'$
has the same law as $X_1$; both terms above are therefore bounded by
$(\E\|\bar X_1\|_\infty^3)^{2/3}(\E\|\bar X_2\|_\infty^3)^{1/3}$, so
\begin{align*}
|R|
&\le
\Bigl(\frac12+1\Bigr)M_3(F)
\bigl(\E\|X_1-m_1\|_\infty^3\bigr)^{2/3}
\bigl(\E\|X_2-m_2\|_\infty^3\bigr)^{1/3}
\\
&=
\frac32M_3(F)
\bigl(\E\|X_1-m_1\|_\infty^3\bigr)^{2/3}
\bigl(\E\|X_2-m_2\|_\infty^3\bigr)^{1/3}.
\end{align*}
\end{proof}

Having established the generic approximate identity, we now introduce the
log-partition functional $f:\cX\to\bbR$ defined by
\begin{equation}\label{eq:log_partition_functional}
f(x)
\coloneqq
\log\int_T e^{x(t)}\pi(\dd t),
\qquad x\in\cX.
\end{equation}
Its associated Gibbs measure is
$\pi_x(\dd t)=e^{x(t)}\pi(\dd t)/\int_T e^{x(s)}\pi(\dd s)$.

\begin{lemma}[Smoothness of the log-partition functional]
\label{lem:logpartition_smooth}
The functional $f$ defined in~\eqref{eq:log_partition_functional} is
infinitely Fr\'echet differentiable on $\cX$, and for every integer $k\ge1$,
$x\in\cX$, and $h_1,\ldots,h_k\in\cX$,
\[
D^kf(x)[h_1,\ldots,h_k]
=
\kappa_{\pi_x}(h_1,\ldots,h_k),
\]
the $k$-th joint cumulant of $h_1,\ldots,h_k$, viewed as random variables on
$T$, under $\pi_x$. In particular, $Df(x)[h]=\int_T h(t)\,\pi_x(\dd t)$ and
\[
\|Df(x)\|_{\mathrm{op}}\le1,
\qquad
\|D^2f(x)\|_{\mathrm{op}}\le1,
\qquad
\|D^3f(x)\|_{\mathrm{op}}\le2,
\qquad x\in\cX.
\]
\end{lemma}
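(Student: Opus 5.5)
The plan is to reduce everything to the exponential function composed with a bounded linear map, and then read off cumulants from a cumulant generating function. Fix $x\in\cX$ and write $Z(x)=\int_T e^{x(t)}\pi(\dd t)$. The map $E:\cX\to\bbR$, $E(x)=\int_T e^{x(t)}\pi(\dd t)$, is the composition of the pointwise exponential $x\mapsto e^x$ (a map $\cX\to\cX$) with integration against $\pi$, which is a bounded linear functional of norm $1$.

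First I show that $x\mapsto e^x$ is infinitely Fr\'echet differentiable on $C(T)$, with
\[
D^k(e^{\cdot})(x)[h_1,\ldots,h_k]=e^x h_1\cdots h_k .
\]
The remainder in the Taylor expansion of $e^{x+h}$ is controlled pointwise by $e^{\|x\|_\infty+\|h\|_\infty}\|h\|_\infty^{k+1}/(k+1)!$, uniformly in $t$. This gives the Fr\'echet property in the sup norm, and continuity of each derivative follows in the same way. Composing with the bounded linear functional and then with $\log$, which is smooth on $(0,\infty)$, shows by the chain rule that $f=\log\circ E$ is $C^\infty$.

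Second, I identify the derivatives with cumulants. For fixed directions, set $g(u_1,\ldots,u_k)=f(x+\sum_i u_ih_i)$. Then
\[
g(u)-f(x)=\log\int_T \exp\Bigl\{\sum_i u_ih_i(t)\Bigr\}\,\pi_x(\dd t),
\]
which is the joint cumulant generating function of $(h_1,\ldots,h_k)$ under $\pi_x$. Its mixed partial $\partial_{u_1}\cdots\partial_{u_k}$ at $0$ is therefore $\kappa_{\pi_x}(h_1,\ldots,h_k)$. By multilinearity, this mixed partial also equals $D^kf(x)[h_1,\ldots,h_k]$, which is the standard relation between Fr\'echet derivatives and directional derivatives for $C^k$ maps. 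In particular $Df(x)[h]=\int h\,\dd\pi_x$.

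Finally, the norm bounds. Take $\|h_i\|_\infty\le1$. Then $|\kappa_1|=|\E h|\le1$. For the second cumulant, $|\cov(h_1,h_2)|\le\sqrt{\var h_1\var h_2}$, and $\var h\le 1$ because $h$ takes values in $[-1,1]$. The main obstacle is the constant $2$ in the third-order bound. Write $\bar h_i=h_i-\E h_i$, so that $\kappa_3=\E[\bar h_1\bar h_2\bar h_3]$, and $|\bar h_i|\le2$. The crude estimate gives $8$, so a sharper argument is needed. I would bound
\[
|\E[\bar h_1\bar h_2\bar h_3]|\le \|\bar h_3\|_\infty\,\E|\bar h_1\bar h_2|\le 2\sqrt{\var h_1\var h_2}\le 2 .
\]
If a tighter constant were required I would refine this using $\var h\le 1-(\E h)^2$, but the stated bound of $2$ needs only the estimate above.
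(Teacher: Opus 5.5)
Your proposal is correct and follows the paper's proof essentially step for step. You obtain smoothness of $f=\log\circ L\circ\operatorname{Exp}$ from uniform pointwise exponential estimates in the sup norm; the paper phrases this by showing that each candidate derivative $x\mapsto e^x h_1\cdots h_k$ is Fr\'echet differentiable in operator norm, which is the same estimate. You then identify $D^kf(x)[h_1,\ldots,h_k]$ as the mixed partial at the origin of the joint cumulant generating function $u\mapsto f(x+\sum_i u_ih_i)-f(x)$ under $\pi_x$, and use the same bound $|\E_{\pi_x}[\bar h_1\bar h_2\bar h_3]|\le\|\bar h_3\|_\infty\sqrt{\var_{\pi_x}(h_1)\var_{\pi_x}(h_2)}\le 2$ for the third derivative.
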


\begin{proof}
\emph{Step 1: Fr\'echet smoothness.}
Consider the pointwise exponential map
$\operatorname{Exp}:\cX\to\cX$, $\operatorname{Exp}(x)=e^x$. Repeated
pointwise differentiation suggests that its $k$-th derivative should be
multiplication by $e^x$ and the $k$ directions. Accordingly, for $k\ge0$,
define the candidate family
\[
A_k(x)[h_1,\ldots,h_k]
\coloneqq
e^x\prod_{i=1}^k h_i,
\]
with $A_0=\operatorname{Exp}$. For $k\ge1$, each $A_k(x)$ is a bounded
$k$-linear map with
$\|A_k(x)\|_{\mathrm{op}}\le e^{\|x\|_\infty}$.
Moreover, the scalar bound
$|e^v-1-v|\le \frac12e^{|v|}v^2$ gives
\[
\bigl\|A_k(x+u)-A_k(x)-A_{k+1}(x)[u,\cdot]\bigr\|_{\mathrm{op}}
\le
\frac12e^{\|x\|_\infty+\|u\|_\infty}\|u\|_\infty^2.
\]
Thus, $A_k$ is Fr\'echet differentiable with
$DA_k(x)[u]=A_{k+1}(x)[u,\cdot]$. Starting from
$A_0=\operatorname{Exp}$, induction shows that
$D^k\operatorname{Exp}=A_k$ for every $k$. Hence,
$\operatorname{Exp}$ is infinitely Fr\'echet differentiable. Since
$L(g)\coloneqq\int_Tg(t)\,\pi(\dd t)$ is continuous and linear,
$L(e^x)>0$, and the scalar logarithm is smooth on $(0,\infty)$, the
composition $f=\log\circ L\circ\operatorname{Exp}$ is infinitely Fr\'echet
differentiable.

\emph{Step 2: Identification of the derivatives.}
Fix $x,h_1,\ldots,h_k\in\cX$ and define
$\Lambda:\bbR^k\to\bbR$ by
$\Lambda(\lambda)=f(x+\sum_{i=1}^k\lambda_i h_i)-f(x)$. Then
\[
\Lambda(\lambda)
=
\log\E_{t\sim\pi_x}
\exp\Bigl\{\textstyle\sum_{i=1}^k\lambda_i h_i(t)\Bigr\}.
\]
Since the $h_i$ are bounded, the displayed expression is finite and
real-analytic on $\bbR^k$; it is the joint cumulant generating function of
$(h_1(t),\ldots,h_k(t))_{t\sim\pi_x}$. Hence, its mixed derivative at the
origin is the corresponding joint cumulant. The chain rule applied to the
left-hand side therefore gives
$D^kf(x)[h_1,\ldots,h_k]=\kappa_{\pi_x}(h_1,\ldots,h_k)$.

\emph{Step 3: Operator-norm bounds.}
For the stated bounds, let $h_1,h_2,h_3\in\cX$ with $\|h_i\|_\infty\le1$, and
write $\bar h_i\coloneqq h_i-\int_T h_i\,\dd\pi_x$ for the centered version.
Since $|\int_T h_i\,\dd\pi_x|\le\|h_i\|_\infty\le1$, we have
$\|\bar h_i\|_\infty\le2\|h_i\|_\infty\le2$, and $\bar h_i$ has
$\pi_x$-variance $\var_{\pi_x}(h_i)\le1$, since $h_i$ takes values in an
interval of length at most $2$. First, $|Df(x)[h_1]|=|\int_T h_1\,\dd\pi_x|\le1$.
Next, by Cauchy--Schwarz,
\[
|D^2f(x)[h_1,h_2]|
=
\Bigl|\int_T\bar h_1\bar h_2\,\dd\pi_x\Bigr|
\le
\sqrt{\var_{\pi_x}(h_1)\var_{\pi_x}(h_2)}
\le1.
\]
Finally, applying Cauchy--Schwarz once more,
\begin{align*}
|D^3f(x)[h_1,h_2,h_3]|
&=
\Bigl|\int_T\bar h_1\bar h_2\bar h_3\,\dd\pi_x\Bigr|
\le
\|\bar h_3\|_\infty
\int_T|\bar h_1\bar h_2|\,\dd\pi_x
\\
&\le
\|\bar h_3\|_\infty
\sqrt{\var_{\pi_x}(h_1)\var_{\pi_x}(h_2)}
\le2.
\end{align*}
Taking the supremum over $\|h_i\|_\infty\le1$ gives the three stated operator
norm bounds.
\end{proof}

The exact two-element Gaussian integration-by-parts identity needed below is
available for the log-partition functional $f$. It is proved as
Lemma~\ref{lem:exact_gibp_two_logpartition} in
Appendix~\ref{app:gibp_separable_banach}.

\begin{lemma}[Exact two-element Gaussian integration by parts for the
log-partition functional]
\label{lem:exact_gibp_two}
Let $Y_1,Y_2$ be jointly Gaussian elements of $\cX$ with means
$m_1,m_2\in\cX$ and cross-covariance kernel
$K_{12}(s,t)\coloneqq\cov(Y_1(s),Y_2(t))$. For the log-partition functional
$f$ defined in~\eqref{eq:log_partition_functional},
\[
\E\bigl[\sD f(Y_1)[Y_2]\bigr]
=
\E[\sD f(Y_1)][m_2]
+
\bigl\langle\E[\sD^2f(Y_1)],K_{12}\bigr\rangle.
\]
\end{lemma}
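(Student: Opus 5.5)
The plan is to reduce the identity to a finite-dimensional Gaussian integration by parts, applied pointwise in $t$ under the Gibbs measure, and then to justify the exchanges of integrals.

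\emph{Reduction.} First write $Y_2 = m_2 + \bar Y_2$ with $\bar Y_2$ centered. By linearity of $\sD f(Y_1)$, the term $\E[\sD f(Y_1)[m_2]]$ equals $\E[\sD f(Y_1)][m_2]$. This uses $\|\sD f\|_{\mathrm{op}}\le1$ (Lemma~\ref{lem:logpartition_smooth}), so the Bochner/weak expectation of $\sD f(Y_1)$ in $\cX^*$ is well defined. It therefore suffices to show
\[
\E\bigl[\sD f(Y_1)[\bar Y_2]\bigr]=\bigl\langle \E[\sD^2 f(Y_1)],K_{12}\bigr\rangle .
\]

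\emph{Pointwise computation.} By Lemma~\ref{lem:logpartition_smooth},
\[
\sD f(Y_1)[\bar Y_2]=\int_T \bar Y_2(t)\,\pi_{Y_1}(\dd t).
\]
I would use Fubini, justified by Fernique's theorem, which gives $\E\|\bar Y_2\|_\infty<\infty$ and $\E e^{c\|Y_1\|_\infty}<\infty$. Writing $\pi_{Y_1}(\dd t)=e^{Y_1(t)-f(Y_1)}\pi(\dd t)$, the left side becomes
\[
\int_T \E\bigl[\bar Y_2(t)\,e^{Y_1(t)-f(Y_1)}\bigr]\pi(\dd t).
\]
For fixed $t$, regress $\bar Y_2(t)$ on the Gaussian process $Y_1$. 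The cleanest route is the abstract Gaussian integration-by-parts formula on $\cX$ recorded in the appendix. Equivalently, approximate by finite-dimensional marginals $Y_1(s_1),\dots,Y_1(s_N)$ on a dense net and use scalar Gaussian IBP: $\E[\bar Y_2(t)G(Y_1)]=\E[\sD G(Y_1)[K_{12}(\cdot,t)]]$ for smooth $G$ with controlled growth. Here $K_{12}(\cdot,t)\in\cX$ lies in the Cameron--Martin-type direction produced by the cross-covariance.

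Apply this with $G(x)=e^{x(t)-f(x)}$, for which
\[
\sD G(x)[h]=G(x)\Bigl(h(t)-\int h\,\dd\pi_x\Bigr).
\]
Integrating over $t$ against $\pi$ then gives
\[
\E\Bigl[\int_T K_{12}(t,t)\,\pi_{Y_1}(\dd t)-\iint K_{12}(s,t)\,\pi_{Y_1}(\dd s)\,\pi_{Y_1}(\dd t)\Bigr],
\]
which is exactly $\E[\sD^2 f(Y_1)]$ contracted with $K_{12}$, because $\sD^2 f(x)[h,g]=\mathrm{Cov}_{\pi_x}(h,g)$. Matching with the definition $\langle B,K_{12}\rangle=\E[B(V,W)]$ is then a bookkeeping step: the bilinear form $\sD^2 f(x)$ is represented by the signed measure $\pi_x(\dd t)\delta_t(\dd s)-\pi_x\otimes\pi_x$ on $T^2$.

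\emph{Main obstacle.} The hard part is the rigorous justification of the Gaussian IBP for the functional $G$ in infinite dimensions. Two issues arise: $G$ is unbounded through $e^{x(t)}$, and $K_{12}(\cdot,t)$ need not lie in the Cameron--Martin space of $Y_1$ alone when $(Y_1,Y_2)$ is degenerate. I would handle this by working with the joint Gaussian $(Y_1,Y_2)$ on $\cX\times\cX$ and using the Cameron--Martin element generated by the linear functional $\bar Y_2(t)$; that element is $(K_{11}(\cdot,t)\text{-type},K_{12}(\cdot,t))$, whose first component is the relevant direction. If that proves delicate, I would fall back on finite-dimensional approximation: take a $1/N$-net $\{s_i\}$ of $T$ and piecewise-linear interpolations $Y_1^N\to Y_1$ in $\cX$ almost surely, apply finite-dimensional Stein identities, and pass to the limit by dominated convergence. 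The domination comes from $\|\sD f\|,\|\sD^2 f\|\le1$ together with Fernique moments of $\|Y_i\|_\infty$.
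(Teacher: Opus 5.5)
Your argument is correct, and it takes a shorter route than the paper's. Both proofs rely on the same infinite-dimensional input. That input is the abstract integration-by-parts theorem applied to the law of $(Y_1,Y_2)$ on $\cX\times\cX$, with Cameron--Martin vector $\bigl(K_{12}(\cdot,t),K_{22}(\cdot,t)\bigr)$ and associated element $x_2(t)-m_2(t)$. Where the two proofs differ is the choice of test functional and how the random direction is reassembled.

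\emph{The paper's route.} It uses $\sD f(x_1)[h]$ for a \emph{fixed} $h\in\cX$, which gives only a fixed-direction identity. To recover $\E\bigl[\sD f(Y_1)[Y_2-m_2]\bigr]$ it then has to:
\begin{enumerate}
\item expand $Y_2-m_2$ in an orthonormal basis of $L^2(T,\pi)$ chosen inside $\cX$;
\item prove that $\sD f(x)$ and $\sD^2 f(x)$ extend boundedly to $L^2(\pi)$;
\item run two dominated-convergence limits, the second using an independent copy $(Y_1',Y_2')$ to identify $\lim_n\sum_{j\le n}\sD^2 f(x)[\psi_j,\cK_{12}\psi_j]$.
\end{enumerate}

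\emph{Your route.} You take as test functional the Gibbs density at a point, $G_t(x)=e^{x(t)-f(x)}$. Then $\sD f(x)[\bar Y_2]=\int_T \bar Y_2(t)\,G_t(x)\,\pi(\dd t)$, so the random direction enters only through the scalar evaluations $\bar Y_2(t)$. One integration by parts per $t$ plus two Fubini exchanges then finishes the proof. The exchanges are dominated by $\|\bar Y_2\|_\infty$ and by $2\|K_{12}\|_\infty e^{2\|Y_1\|_\infty}$, both integrable by Fernique. The hypotheses of the abstract theorem follow from $|G_t(x)|\le e^{2\|x\|_\infty}$ and $|\sD G_t(x)[h]|\le 2e^{2\|x\|_\infty}\|h\|_\infty$.

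Both arguments use the same special feature of $f$: $\sD f(x)$ has the bounded density $e^{x(\cdot)-f(x)}$ with respect to the fixed measure $\pi$. The paper uses this only through $L^2(\pi)$ bounds. You use the density directly, which makes the basis expansion and the $L^2(\pi)$-extension lemma unnecessary.

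Three small corrections:
\begin{itemize}
\item The Cameron--Martin vector generated by the functional $x_2(t)-m_2(t)$ is $\bigl(K_{12}(\cdot,t),K_{22}(\cdot,t)\bigr)$, not ``$(K_{11}\text{-type},K_{12})$''.
\item Your stated reason for passing to the product space is off. $K_{12}(\cdot,t)$ always lies in the Cameron--Martin space of $Y_1$ alone: Cauchy--Schwarz bounds its norm by $K_{22}(t,t)^{1/2}$. What the product space actually avoids is that, for $Y_1$ alone, the associated element is the projection of $\bar Y_2(t)$ onto the Gaussian space of $Y_1$ rather than $\bar Y_2(t)$ itself. Using it would require an extra independence step.
\item Your finite-dimensional fallback would need partition-of-unity interpolation rather than piecewise-linear interpolation on a general compact metric space $T$. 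It is not needed, however.
\end{itemize}
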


\begin{proof}
This is Lemma~\ref{lem:exact_gibp_two_logpartition} in
Appendix~\ref{app:gibp_separable_banach}.
\end{proof}

For fixed $x\in\cX$ and a covariance kernel $K_{12}$ on $T\times T$, taking
centered $(V,W)$ with cross-covariance $K_{12}$ and using
$D^2f(x)[h,h']=\cov_{\pi_x}(h,h')$ from Lemma~\ref{lem:logpartition_smooth},
\begin{equation}\label{eq:contraction_identity}
\begin{aligned}
\langle D^2f(x),K_{12}\rangle
&=
\E\bigl[D^2f(x)[V,W]\bigr]
\\
&=
\int_T K_{12}(t,t)\,\pi_x(\dd t)
-
\iint_{T^2}K_{12}(s,t)\,\pi_x(\dd s)\,\pi_x(\dd t),
\end{aligned}
\end{equation}
since $\pi_x$ is a fixed (non-random) measure and Fubini's theorem allows
$\E$ to be exchanged with the $\pi_x$-integrals. This identity, applied with
$x$ replaced by a random element and an outer expectation taken, is used
repeatedly below.

\subsection{Proof of Lemma~\ref{lem:deri_FG}}
\label{app:proof_deri_FG}

\begin{proof}
Recall that $f$ is the log-partition functional defined
in~\eqref{eq:log_partition_functional}. Fix $\beta\in\cB$. Under the assumed
joint Gaussianity and integrability conditions, we proceed as follows. By
Lemma~\ref{lem:logpartition_smooth},
$\|\sD f\|_{\mathrm{op}}\le1$, so $f$ is $1$-Lipschitz. Hence, for every
nonzero $u$ such that $\beta+u\in\cB$, the mean-value inequality gives
\[
\left|
\frac{f(Y_{\beta+u})-f(Y_\beta)}{u}
\right|
\le
\left\|
\frac{Y_{\beta+u}-Y_\beta}{u}
\right\|_\infty
\le
\sup_{r\in\cB}\|\dot Y_r\|_\infty.
\]
The right-hand side is integrable by assumption. Moreover, setting
$\Delta_u\coloneqq(Y_{\beta+u}-Y_\beta)/u$, differentiability of
$\beta\mapsto Y_\beta$ in $\cX$ gives
$\Delta_u\to\dot Y_\beta$ almost surely. Fr\'echet differentiability of $f$
then yields
\[
\frac{f(Y_{\beta+u})-f(Y_\beta)}{u}
=
\sD f(Y_\beta)[\Delta_u]+o(1)
\longrightarrow
\sD f(Y_\beta)[\dot Y_\beta]
\qquad Q_\theta\text{-almost surely}.
\]
The dominated convergence theorem therefore gives
\[
\partial_\beta F^{\mathrm G}(\theta,\beta)
=
\partial_\beta\E_{Q_\theta}[f(Y_\beta)]
=
\E_{Q_\theta}\bigl[\sD f(Y_\beta)[\dot Y_\beta]\bigr].
\]
By the assumed joint Gaussianity of $(Y_\beta,\dot Y_\beta)$,
Lemma~\ref{lem:exact_gibp_two}, applied with $Y_1=Y_\beta$ and
$Y_2=\dot Y_\beta$, gives
\[
\E_{Q_\theta}\bigl[\sD f(Y_\beta)[\dot Y_\beta]\bigr]
=
\E_{Q_\theta}[\sD f(Y_\beta)]\bigl[\E_{Q_\theta}\dot Y_\beta\bigr]
+
\bigl\langle\E_{Q_\theta}[\sD^2f(Y_\beta)],K_{12}\bigr\rangle,
\]
where $K_{12}(s,t)\coloneqq\cov_{Q_\theta}(Y_\beta(s),\dot Y_\beta(t))$.

Since $\sD f(x)[h]=\int_T h(t)\,\pi_x(\dd t)$ by
Lemma~\ref{lem:logpartition_smooth}, Fubini's theorem gives, for the first
term,
\[
\E_{Q_\theta}[\sD f(Y_\beta)]\bigl[\E_{Q_\theta}\dot Y_\beta\bigr]
=
\int_T
\E_{Q_\theta}[\dot Y_\beta(t)]
\,\E_{Q_\theta}[\pi_{Y_\beta}](\dd t).
\]
For the second term, identity~\eqref{eq:contraction_identity}, applied with
$x=Y_\beta$ and $K_{12}$ as above, and taking the outer expectation over
$Y_\beta$, gives
\begin{align*}
\bigl\langle\E_{Q_\theta}[\sD^2f(Y_\beta)],K_{12}\bigr\rangle
&=
\int_T
\cov_{Q_\theta}(Y_\beta(t),\dot Y_\beta(t))
\,\E_{Q_\theta}[\pi_{Y_\beta}](\dd t)
\\
&\quad-
\iint_{T^2}
\cov_{Q_\theta}(Y_\beta(s),\dot Y_\beta(t))
\,\E_{Q_\theta}[\pi_{Y_\beta}^{\otimes2}](\dd s,\dd t).
\end{align*}
Combining the two displays, and marginalizing the first integral over an
auxiliary $s\in T$,
\begin{align*}
\partial_\beta F^{\mathrm G}(\theta,\beta)
&=
\int_T
\Bigl(
\E_{Q_\theta}[\dot Y_\beta(t)]
+
\cov_{Q_\theta}(Y_\beta(t),\dot Y_\beta(t))
\Bigr)
\,\E_{Q_\theta}[\pi_{Y_\beta}](\dd t)
\\
&\quad-
\iint_{T^2}
\cov_{Q_\theta}(Y_\beta(s),\dot Y_\beta(t))
\,\E_{Q_\theta}[\pi_{Y_\beta}^{\otimes2}](\dd s,\dd t)
\\
&=
\iint_{T^2}\psi_{\theta,\beta}^{\mathrm G}(s,t)\,\E_{Q_\theta}[\pi_{Y_\beta}^{\otimes2}](\dd s,\dd t),
\end{align*}
by the definition of $\psi_{\theta,\beta}^{\mathrm G}$ in~\eqref{eq:psi_Y}.
\end{proof}

\subsection{Proof of Lemma~\ref{lem:deri_F}}
\label{app:proof_deri_F}

\begin{proof}
Recall that $f$ is defined in~\eqref{eq:log_partition_functional}. Fix
$\beta\in\cB$. Since $\|\sD f\|_{\mathrm{op}}\le1$, for every nonzero $u$
such that $\beta+u\in\cB$,
\[
\left|
\frac{f(X_{\beta+u})-f(X_\beta)}{u}
\right|
\le
\sup_{r\in\cB}\|\dot X_r\|_\infty.
\]
The right-hand side is integrable by assumption.
Setting $\Delta_u\coloneqq(X_{\beta+u}-X_\beta)/u$, we have
$\Delta_u\to\dot X_\beta$ almost surely, and Fr\'echet differentiability of
$f$ gives
\[
\frac{f(X_{\beta+u})-f(X_\beta)}{u}
=
\sD f(X_\beta)[\Delta_u]+o(1)
\longrightarrow
\sD f(X_\beta)[\dot X_\beta]
\qquad P_\theta\text{-almost surely}.
\]
The dominated convergence theorem, followed by
$\dot X_\beta=\sum_{m=1}^M\dot X_{\beta,m}$ and linearity, gives
\[
\partial_\beta F(\theta,\beta)
=
\E_{P_\theta}\bigl[\sD f(X_\beta)[\dot X_\beta]\bigr]
=
\sum_{m=1}^M
\E_{P_\theta}\bigl[\sD f(X_\beta)[\dot X_{\beta,m}]\bigr].
\]

Fix $m\in[M]$ and write
$\dot X_{\beta,m}=\E_{P_\theta}[\dot X_{\beta,m}]+(\dot X_{\beta,m}-\E_{P_\theta}[\dot X_{\beta,m}])$.
By linearity,
\begin{align*}
\E_{P_\theta}\bigl[\sD f(X_\beta)[\dot X_{\beta,m}]\bigr]
&=
\E_{P_\theta}\bigl[\sD f(X_\beta)\bigr]\bigl[\E_{P_\theta}[\dot X_{\beta,m}]\bigr]\\
&\qquad +
\E_{P_\theta}\bigl[\sD f(X_\beta)\bigl[\dot X_{\beta,m}-\E_{P_\theta}[\dot X_{\beta,m}]\bigr]\bigr].
\end{align*}
Since $\sD f(X_\beta)[h]=\int_T h(t)\,\pi_{X_\beta}(\dd t)$, the first term
equals 
\[
\int_T\E_{P_\theta}[\dot X_{\beta,m}(t)]\,\E_{P_\theta}[\pi_{X_\beta}](\dd t).
\]
For the second term, write $X_\beta=X_{\beta,m}+X_\beta^{(-m)}$, where
$X_\beta^{(-m)}\coloneqq\sum_{m'\ne m}X_{\beta,m'}$ is, by the independence
assumption, independent of the pair $(X_{\beta,m},\dot X_{\beta,m})$. Fix a
realization $x^{(-m)}$ of $X_\beta^{(-m)}$ and set
$F_{x^{(-m)}}(z)\coloneqq f(x^{(-m)}+z)$, so that
$\sD^kF_{x^{(-m)}}(z)=\sD^kf(x^{(-m)}+z)$ for every $k$. Hence,
\[
M_3(F_{x^{(-m)}})
\coloneqq
\sup_{z\in\cX}\|\sD^3F_{x^{(-m)}}(z)\|_{\mathrm{op}}
\le2
\]
by Lemma~\ref{lem:logpartition_smooth}, uniformly in $x^{(-m)}$.
The assumed finiteness of
$\E_{P_\theta}\|X_{\beta,m}\|_\infty^3$,
$\E_{P_\theta}\|\dot X_{\beta,m}\|_\infty^3$ ensures that the hypotheses of
Lemma~\ref{lem:approx_ibp_two} are satisfied.
Conditionally on $X_\beta^{(-m)}=x^{(-m)}$, applying
Lemma~\ref{lem:approx_ibp_two} with $F=F_{x^{(-m)}}$, $X_1=X_{\beta,m}$, and
$X_2=\dot X_{\beta,m}$ gives
\begin{align*}
\E_{P_\theta}\bigl[\sD f(X_\beta)\bigl[\dot X_{\beta,m}-\E_{P_\theta}[\dot X_{\beta,m}]\bigr]
\bigm| X_\beta^{(-m)}=x^{(-m)}\bigr]
&=
\bigl\langle\E_{P_\theta}[\sD^2f(X_\beta)\mid x^{(-m)}],K_m\bigr\rangle
\\
&\quad+
R_m(x^{(-m)}),
\end{align*}
where $K_m(s,t)\coloneqq\cov_{P_\theta}(X_{\beta,m}(s),\dot X_{\beta,m}(t))$
and
\begin{align*}
|R_m(x^{(-m)})|
&\le
\frac32M_3(F_{x^{(-m)}})
\Bigl(\E_{P_\theta}\|X_{\beta,m}-\E_{P_\theta}[X_{\beta,m}]\|_\infty^3\Bigr)^{2/3}
\\
&\quad\times
\Bigl(\E_{P_\theta}\|\dot X_{\beta,m}-\E_{P_\theta}[\dot X_{\beta,m}]\|_\infty^3\Bigr)^{1/3}.
\end{align*}
Since $M_3(F_{x^{(-m)}})\le2$ uniformly in $x^{(-m)}$, this bound persists
after averaging over $X_\beta^{(-m)}$. Define the resulting unconditional
remainder by
\[
R_{m,\beta}
\coloneqq
\E_{P_\theta}\bigl[R_m(X_\beta^{(-m)})\bigr].
\]
Then $|R_{m,\beta}|\le
\E_{P_\theta}[|R_m(X_\beta^{(-m)})|]$ gives precisely the asserted bound on
$R_{m,\beta}$. Moreover, after replacing $x^{(-m)}$ by
$X_\beta^{(-m)}$, we have
\[
\sD^2F_{X_\beta^{(-m)}}(X_{\beta,m})
=
\sD^2f(X_\beta).
\]
Therefore, since $K_m$ is deterministic, averaging the conditional main
term over $X_\beta^{(-m)}$ and using the tower property gives
\[
\E_{P_\theta}\!\left[
\left\langle
\E_{P_\theta}\!\left[\sD^2f(X_\beta)\mid X_\beta^{(-m)}\right],
K_m
\right\rangle
\right]
=
\left\langle\E_{P_\theta}[\sD^2f(X_\beta)],K_m\right\rangle.
\]

Summing over $m$, the pairs $\{(X_{\beta,m},\dot X_{\beta,m})\}_{m=1}^M$
being independent gives, for $m'\ne m''$,
\[
\cov_{P_\theta}\bigl(X_{\beta,m'}(s),\dot X_{\beta,m''}(t)\bigr)=0,
\]
so
\[
\sum_{m=1}^M K_m(s,t)
=
\sum_{m=1}^M\cov_{P_\theta}(X_{\beta,m}(s),\dot X_{\beta,m}(t))
=
\cov_{P_\theta}(X_\beta(s),\dot X_\beta(t)),
\]
and hence, by bilinearity of the contraction,
\[
\sum_{m=1}^M
\bigl\langle\E_{P_\theta}[\sD^2f(X_\beta)],K_m\bigr\rangle
=
\bigl\langle\E_{P_\theta}[\sD^2f(X_\beta)],\cov_{P_\theta}(X_\beta,\dot X_\beta)\bigr\rangle.
\]
By identity~\eqref{eq:contraction_identity}, applied with $x=X_\beta$ and
$K_{12}=\cov_{P_\theta}(X_\beta,\dot X_\beta)$, and taking the outer
expectation over $X_\beta$,
\begin{align*}
\E_{P_\theta}\Bigl[\bigl\langle\sD^2f(X_\beta),\cov_{P_\theta}(X_\beta,\dot X_\beta)\bigr\rangle\Bigr]
&=
\int_T\cov_{P_\theta}(X_\beta(t),\dot X_\beta(t))\,\E_{P_\theta}[\pi_{X_\beta}](\dd t)
\\
&\quad-
\iint_{T^2}\cov_{P_\theta}(X_\beta(s),\dot X_\beta(t))\,\E_{P_\theta}[\pi_{X_\beta}^{\otimes2}](\dd s,\dd t).
\end{align*}
Combining this with the mean term derived above and the definition of
$\psi_{\theta,\beta}$ in~\eqref{eq:psi_X},
\begin{align*}
&\int_T\Bigl(\E_{P_\theta}[\dot X_\beta(t)]+\cov_{P_\theta}(X_\beta(t),\dot X_\beta(t))\Bigr)\,\E_{P_\theta}[\pi_{X_\beta}](\dd t)
\\
&\qquad-
\iint_{T^2}\cov_{P_\theta}(X_\beta(s),\dot X_\beta(t))\,\E_{P_\theta}[\pi_{X_\beta}^{\otimes2}](\dd s,\dd t)
=
\iint_{T^2}\psi_{\theta,\beta}(s,t)\,\E_{P_\theta}[\pi_{X_\beta}^{\otimes2}](\dd s,\dd t),
\end{align*}
Together
with $\sum_{m=1}^M R_{m,\beta}$, this establishes the asserted identity.
\end{proof}

\subsection{Proof of Lemma~\ref{lem:linear_perturb_G}}
\label{app:proof_linear_perturb_G}

\begin{proof}
Since $Y_\beta$ is affine in $\beta$, we have
$\dot Y_\beta=\widetilde Y$ for every $\beta\in\cB$. Moreover, $Y_\beta$ and
$\widetilde Y$ are both affine functions of the same Gaussian element $\xi$,
so $(Y_\beta,\widetilde Y)$ is jointly Gaussian for every $\beta$, and
Fernique's theorem gives
$\E_{Q_\theta}\|\widetilde Y\|_\infty<\infty$. Thus the hypotheses of
Lemma~\ref{lem:deri_FG} are satisfied. Differentiating and using
$H_\theta=\Sigma_\theta B_\theta$ gives
\begin{align*}
\E_{Q_\theta}[\widetilde Y(t)]
&=
a\langle H_\theta[\eta(\theta)-\eta(t)],
\eta(\theta)-\eta(t)\rangle,
\\
\cov_{Q_\theta}(Y_\beta(s),\widetilde Y(t))
&=
\langle H_\theta\eta(s),\eta(t)\rangle
+
\beta\langle B_\theta^*\Sigma_\theta B_\theta\eta(s),\eta(t)\rangle.
\end{align*}
Substitution into~\eqref{eq:psi_Y} gives the stated formula for
$\psi_{\theta,\beta}^{\mathrm G}$. The first assertion then follows from
\eqref{eq:gaussian_free_energy_derivative}. At $\beta=0$, we have $Y_0=Y$ and
$\psi_{\theta,0}^{\mathrm G}=\phi_\theta$, which gives the final display.
\end{proof}

\subsection{Proof of Theorem~\ref{thm:fe_comparison}}
\label{app:proof_fe_comparison}

\begin{proof}
Fix $\theta\in\Theta$. If
\[
\sum_{m=1}^M
\E_\theta\|X_m-\E_\theta[X_m]\|_\infty^3
=\infty,
\]
then the asserted bound is immediate. We may therefore assume that this sum
is finite. Since $M<\infty$,
\[
\E_\theta\|X-\E_\theta[X]\|_\infty^3
\le
M^2\sum_{m=1}^M
\E_\theta\|X_m-\E_\theta[X_m]\|_\infty^3
<\infty,
\]
and hence $\E_\theta\|X\|_\infty^3<\infty$. In particular,
$|f(X)|\le\|X\|_\infty$ implies that $F(\theta)$ is finite. The Gaussian
free energy $F^{\mathrm G}(\theta)$ is finite by Fernique's theorem.

Since the free energies depend only on the marginal
laws of $X$ under $P_\theta$ and of $Y$ under $Q_\theta$, we may realize $X$
and $Y$ independently on a common product probability space and write $\E_\theta$
for expectation on this space. Set
\[
m_X\coloneqq\E_\theta[X],
\qquad
m_Y\coloneqq\E_\theta[Y],
\qquad
\Sigma_X\coloneqq\cov_\theta(X),
\qquad
\Sigma_Y\coloneqq\cov_\theta(Y),
\]
and
\[
\bar X\coloneqq X-m_X=\sum_{m=1}^M\bar X_m,
\qquad
\bar X_m\coloneqq X_m-\E_\theta[X_m],
\qquad
\bar Y\coloneqq Y-m_Y.
\]

\emph{Guerra interpolation and Newton--Leibniz.}
For $t\in[0,1]$, define
\[
\mu_t\coloneqq t m_X+(1-t)m_Y,
\qquad
Z_t\coloneqq
\mu_t+\sqrt t\,\bar X+\sqrt{1-t}\,\bar Y,
\]
and set $g(t)\coloneqq\E_\theta[f(Z_t)]$. Then $Z_1=X$ and $Z_0=Y$, so
$g(1)=F(\theta)$ and $g(0)=F^{\mathrm G}(\theta)$. We will justify and apply
the Newton--Leibniz identity
\begin{equation}
F(\theta)-F^{\mathrm G}(\theta)
=
g(1)-g(0)
=
\int_0^1g'(t)\,\dd t.
\label{eq:thm4_newton_leibniz}
\end{equation}
For $t\in(0,1)$,
\[
\dot Z_t
=
m_X-m_Y
+\frac{1}{2\sqrt t}\bar X
-\frac{1}{2\sqrt{1-t}}\bar Y.
\]
More precisely, for every $I=[a,b]\subset(0,1)$,
\[
\sup_{t\in I}\|\dot Z_t\|_\infty
\le
\|m_X-m_Y\|_\infty
+
\frac{\|\bar X\|_\infty}{2\sqrt a}
+
\frac{\|\bar Y\|_\infty}{2\sqrt{1-b}}
\eqqcolon H_I.
\]
The preceding bound $\E_\theta\|X\|_\infty^3<\infty$ and H\"older's inequality
give $\E_\theta\|\bar X\|_\infty<\infty$. Moreover, since $\bar Y$ is a Gaussian
element of the separable Banach space
$\cX$, Fernique's theorem~\cite[Theorem~4.13]{hairer:2025spde}; see also
\cite[Theorem~2.8.5]{bogachev:1998gaussian}, gives
$\E_\theta\exp\{\alpha\|\bar Y\|_\infty^2\}<\infty$ for some $\alpha>0$, and hence
$\E_\theta\|\bar Y\|_\infty<\infty$. Thus $\E_\theta[H_I]<\infty$. Dominated convergence,
together with $\|\sD f\|_{\mathrm{op}}\le1$ from
Lemma~\ref{lem:logpartition_smooth}, therefore gives
\begin{align}
g'(t)
&=
\E_\theta\left[\sD f(Z_t)[m_X-m_Y]\right]
\notag\\
&\quad+
\frac{1}{2\sqrt t}\E_\theta\left[\sD f(Z_t)[\bar X]\right]
-
\frac{1}{2\sqrt{1-t}}\E_\theta\left[\sD f(Z_t)[\bar Y]\right].
\label{eq:thm4_direct_derivative}
\end{align}

\emph{Gaussian fluctuation term.}
Fix $t\in(0,1)$. Conditionally on $X$, the pair
$(Z_t,\bar Y)$ is jointly Gaussian: its second component is centered, and
its cross-covariance kernel is
\[
\cov_\theta\bigl(Z_t(s),\bar Y(r)\mid X\bigr)
=
\sqrt{1-t}\,\Sigma_Y(s,r).
\]
Lemma~\ref{lem:exact_gibp_two}, applied conditionally with $Y_1=Z_t$ and
$Y_2=\bar Y$, therefore gives
\[
\E_\theta\left[\sD f(Z_t)[\bar Y]\mid X\right]
=
\sqrt{1-t}\,
\left\langle
\E_\theta\left[\sD^2f(Z_t)\mid X\right],
\Sigma_Y
\right\rangle.
\]
Taking the outer expectation and dividing by $2\sqrt{1-t}$ gives
\begin{equation}
\frac{1}{2\sqrt{1-t}}
\E_\theta\left[\sD f(Z_t)[\bar Y]\right]
=
\frac12
\left\langle
\E_\theta[\sD^2f(Z_t)],
\Sigma_Y
\right\rangle.
\label{eq:thm4_direct_gaussian}
\end{equation}

\emph{Non-Gaussian fluctuation term.}
For $m\in[M]$, write
$\bar X=\bar X_m+\bar X^{(-m)}$, where
$\bar X^{(-m)}\coloneqq\sum_{m'\ne m}\bar X_{m'}$, and define
\[
B_{t,m}
\coloneqq
\mu_t+\sqrt t\,\bar X^{(-m)}+\sqrt{1-t}\,\bar Y.
\]
The random element $B_{t,m}$ is independent of $\bar X_m$. For a fixed
realization $b$ of $B_{t,m}$, let
$F_{t,b}(z)\coloneqq f(b+\sqrt t\,z)$. Then
\[
\sD^kF_{t,b}(z)
=
t^{k/2}\sD^kf(b+\sqrt t\,z),
\]
and Lemma~\ref{lem:logpartition_smooth} gives
\[
M_3(F_{t,b})
\coloneqq
\sup_{z\in\cX}\|\sD^3F_{t,b}(z)\|_{\mathrm{op}}
\le
2t^{3/2},
\]
uniformly in $b$. Conditionally on $B_{t,m}=b$, apply
Lemma~\ref{lem:approx_ibp_two} with $F=F_{t,b}$ and
$X_1=X_2=\bar X_m$. Since $\bar X_m$ is centered with covariance
$\Sigma_m\coloneqq\cov_\theta(X_m)$, this gives
\begin{align}
&\sqrt t\,
\E_\theta\left[
\sD f(Z_t)[\bar X_m]
\mid B_{t,m}=b
\right]
\notag\\
&\qquad=
t\left\langle
\E_\theta\left[\sD^2f(Z_t)\mid B_{t,m}=b\right],
\Sigma_m
\right\rangle
+\rho_{t,m}(b),
\label{eq:thm4_direct_nongaussian_conditional}
\end{align}
where
\[
|\rho_{t,m}(b)|
\le
\frac32M_3(F_{t,b})\,\E_\theta\|\bar X_m\|_\infty^3
\le
3t^{3/2}\E_\theta\|\bar X_m\|_\infty^3.
\]
Define
\[
R_{t,m}
\coloneqq
\frac{1}{2t}\E_\theta[\rho_{t,m}(B_{t,m})].
\]
Averaging \eqref{eq:thm4_direct_nongaussian_conditional} over $B_{t,m}$ and
dividing by $2t$ yields
\[
\frac{1}{2\sqrt t}
\E_\theta\left[\sD f(Z_t)[\bar X_m]\right]
=
\frac12
\left\langle
\E_\theta[\sD^2f(Z_t)],
\Sigma_m
\right\rangle
+
R_{t,m},
\]
with
\[
|R_{t,m}|
\le
\frac32\sqrt t\,\E_\theta\|\bar X_m\|_\infty^3.
\]
Because $X_1,\ldots,X_M$ are independent,
$\sum_{m=1}^M\Sigma_m=\Sigma_X$. Summing over $m$ therefore gives
\begin{equation}
\frac{1}{2\sqrt t}
\E_\theta\left[\sD f(Z_t)[\bar X]\right]
=
\frac12
\left\langle
\E_\theta[\sD^2f(Z_t)],
\Sigma_X
\right\rangle
+
R_t,
\label{eq:thm4_direct_nongaussian}
\end{equation}
where $R_t\coloneqq\sum_{m=1}^MR_{t,m}$ satisfies
\begin{equation}
|R_t|
\le
\frac32\sqrt t
\sum_{m=1}^M
\E_\theta\|X_m-\E_\theta[X_m]\|_\infty^3.
\label{eq:thm4_direct_remainder}
\end{equation}

\emph{Combining and integrating.}
Substituting \eqref{eq:thm4_direct_gaussian} and
\eqref{eq:thm4_direct_nongaussian} into
\eqref{eq:thm4_direct_derivative} gives
\begin{equation}
\begin{aligned}
g'(t)
&=
\E_\theta\left[\sD f(Z_t)[m_X-m_Y]\right]
\\
&\quad+
\frac12
\left\langle
\E_\theta[\sD^2f(Z_t)],
\Sigma_X
\right\rangle
-\frac12
\left\langle
\E_\theta[\sD^2f(Z_t)],
\Sigma_Y
\right\rangle
+
R_t.
\end{aligned}
\label{eq:thm4_direct_ibp}
\end{equation}
By Lemma~\ref{lem:logpartition_smooth},
\[
\left|
\E_\theta\left[\sD f(Z_t)[m_X-m_Y]\right]
\right|
\le
\|m_X-m_Y\|_\infty.
\]
Applying \eqref{eq:contraction_identity} separately to the covariance
kernels $\Sigma_X$ and $\Sigma_Y$ gives
\begin{align*}
&\frac12
\left\langle\E_\theta[\sD^2f(Z_t)],\Sigma_X\right\rangle
-
\frac12
\left\langle\E_\theta[\sD^2f(Z_t)],\Sigma_Y\right\rangle
\\
&\qquad=
\frac12\int_T
\bigl[\Sigma_X(s,s)-\Sigma_Y(s,s)\bigr]
\,\E_\theta[\pi_{Z_t}](\dd s)
\\
&\qquad\quad-
\frac12\iint_{T^2}
\bigl[\Sigma_X(s,r)-\Sigma_Y(s,r)\bigr]
\,\E_\theta[\pi_{Z_t}^{\otimes2}](\dd s,\dd r).
\end{align*}
Both $\E_\theta[\pi_{Z_t}]$ and
$\E_\theta[\pi_{Z_t}^{\otimes2}]$ are probability measures. Therefore,
\[
\left|
\frac12
\left\langle\E_\theta[\sD^2f(Z_t)],\Sigma_X\right\rangle
-
\frac12
\left\langle\E_\theta[\sD^2f(Z_t)],\Sigma_Y\right\rangle
\right|
\le
\|\Sigma_X-\Sigma_Y\|_\infty.
\]
Together with \eqref{eq:thm4_direct_remainder}, this yields the integrable
bound
\[
|g'(t)|
\le
\|m_X-m_Y\|_\infty
+
\|\Sigma_X-\Sigma_Y\|_\infty
+
\frac32\sqrt t
\sum_{m=1}^M
\E_\theta\|X_m-\E_\theta[X_m]\|_\infty^3.
\]

It remains to justify the endpoint passage in
\eqref{eq:thm4_newton_leibniz}. For $s,t\in[0,1]$,
\[
Z_t-Z_s
=
(t-s)(m_X-m_Y)
+
(\sqrt t-\sqrt s)\bar X
+
\bigl(\sqrt{1-t}-\sqrt{1-s}\bigr)\bar Y.
\]
Consequently,
\begin{align*}
\E_\theta\|Z_t-Z_s\|_\infty
&\le
|t-s|\,\|m_X-m_Y\|_\infty
+
|\sqrt t-\sqrt s|\,\E_\theta\|\bar X\|_\infty
\\
&\quad+
\bigl|\sqrt{1-t}-\sqrt{1-s}\bigr|\,
\E_\theta\|\bar Y\|_\infty.
\end{align*}
Here
\[
\E_\theta\|\bar X\|_\infty
\le
2\E_\theta\|X\|_\infty
\le
2\bigl(\E_\theta\|X\|_\infty^3\bigr)^{1/3}
<\infty,
\]
whereas Fernique's theorem~\cite[Theorem~4.13]{hairer:2025spde}; see also
\cite[Theorem~2.8.5]{bogachev:1998gaussian}, gives
$\E_\theta\|\bar Y\|_\infty<\infty$. Hence the displayed upper bound tends
to zero as $t\to s$, so $t\mapsto Z_t$ is continuous from $[0,1]$ into
$L^1(\Omega;\cX)$, where
$\|U\|_{L^1(\Omega;\cX)}\coloneqq\E_\theta\|U\|_\infty$. Since $f$ is
$1$-Lipschitz,
\[
|g(t)-g(s)|
\le
\E_\theta\|Z_t-Z_s\|_\infty,
\]
so $g$ is continuous on $[0,1]$. For every
$I\subset(0,1)$ compact, the same dominated-continuity argument shows that
$g'$ is continuous on $I$, and hence $g\in C^1(I)$. Therefore, for every
$\varepsilon\in(0,1/2)$, Newton--Leibniz on
$[\varepsilon,1-\varepsilon]$ gives
\[
g(1-\varepsilon)-g(\varepsilon)
=
\int_\varepsilon^{1-\varepsilon}g'(t)\,\dd t.
\]
Letting $\varepsilon\downarrow0$, using the endpoint continuity of $g$ and
the integrable bound on $g'$, proves
\eqref{eq:thm4_newton_leibniz}. Consequently,
\begin{align*}
\bigl|F(\theta)-F^{\mathrm G}(\theta)\bigr|
&\le
\|m_X-m_Y\|_\infty
+
\|\Sigma_X-\Sigma_Y\|_\infty
\\
&\quad+
\frac32
\left(\int_0^1\sqrt t\,\dd t\right)
\sum_{m=1}^M
\E_\theta\|X_m-\E_\theta[X_m]\|_\infty^3
\\
&=
\|m_X-m_Y\|_\infty
+
\|\Sigma_X-\Sigma_Y\|_\infty
+
\sum_{m=1}^M
\E_\theta\|X_m-\E_\theta[X_m]\|_\infty^3.
\end{align*}
This is the asserted bound, with coefficients $1$, $1$, and $1$ for the mean,
covariance, and third-moment terms, respectively.
\end{proof}

\subsection{Proof of Theorem~\ref{thm:Phi_comparison}}
\label{app:proof_Phi_comparison}

\begin{proof}
The bound is trivial when $\Delta_1(\theta,\eps)$ or
$\Delta_2(\theta,\eps)$ is infinite. Otherwise, finiteness of the third
moments in $\Delta_1$ and $\Delta_2$, together with hypotheses~(a) and~(b),
ensures that the conditions of Lemmas~\ref{lem:deri_FG},
\ref{lem:deri_F}, and Theorem~\ref{thm:fe_comparison} are met. For every
$\beta\in\cB$, Lemma~\ref{lem:deri_FG} and~\eqref{eq:Phi_G_beta} give
\begin{equation}\label{eq:proof_PhiG_derivative_error}
\bigl|\partial_\beta F^{\mathrm G}(\theta,\beta)
-\Phi^{\mathrm G}(\theta,\beta)\bigr|
\le
\|\psi_{\theta,\beta}^{\mathrm G}-\phi_\theta\|_\infty.
\end{equation}
At $\beta=0$, Lemma~\ref{lem:deri_F} and the definition of $\Phi(\theta)$ give
\begin{equation}\label{eq:proof_Phi_derivative_error}
\bigl|\Phi(\theta)-\partial_\beta F(\theta,0)\bigr|
\le \rho_\theta,
\end{equation}
where
\[
\rho_\theta
\coloneqq
\|\phi_\theta-\psi_{\theta,0}\|_\infty
+3\sum_{m=1}^M
\Bigl(\E_\theta\|X_m-\E_\theta[X_m]\|_\infty^3\Bigr)^{2/3}
\Bigl(
\E_\theta\|\dot X_{0,m}-\E_\theta[\dot X_{0,m}]\|_\infty^3
\Bigr)^{1/3}.
\]
Applying Theorem~\ref{thm:fe_comparison} at $0$ and at $\beta$ yields
\begin{equation}\label{eq:proof_perturbed_fe_errors}
\bigl|F(\theta,0)-F^{\mathrm G}(\theta,0)\bigr|
+
\bigl|F(\theta,\beta)-F^{\mathrm G}(\theta,\beta)\bigr|
\le \Delta_2(\theta,\beta).
\end{equation}

For the upper bound, convexity and
\eqref{eq:proof_PhiG_derivative_error}--\eqref{eq:proof_perturbed_fe_errors}
give
\begin{align*}
\Phi(\theta)
&\le \partial_\beta F(\theta,0)+\rho_\theta \\
&\le \frac{F(\theta,\eps)-F(\theta,0)}{\eps}+\rho_\theta \\
&\le \frac{F^{\mathrm G}(\theta,\eps)-F^{\mathrm G}(\theta,0)}{\eps}
+\rho_\theta+\frac{\Delta_2(\theta,\eps)}{\eps} \\
&\le \partial_\beta F^{\mathrm G}(\theta,\eps)
+\rho_\theta+\frac{\Delta_2(\theta,\eps)}{\eps} \\
&\le \Phi^{\mathrm G}(\theta,\eps)
+\Delta_1(\theta,\eps)
+\frac{\Delta_2(\theta,\eps)}{\eps}.
\end{align*}
The last line uses~\eqref{eq:posterior_comparison_delta1}. The lower bound
follows analogously.
\end{proof}

\section{Proofs for Section~\ref{subsec:exp_family}}
\label{app:proofs_exp_family}

The results proved here are special cases of the regular local comparison
results in Appendix~\ref{app:proofs_regular_local}. We nevertheless give a
separate proof to keep the exponential-family argument self-contained. We
first record a uniform exponential-tilting estimate. In
Subsection~\ref{app:proof_exp_free_energy}, derivatives of the cumulant
generating functions then give the mean, covariance, and third-moment bounds
needed for Theorem~\ref{thm:fe_comparison}. The proof of
Theorem~\ref{thm:exp_posterior_univ} in
Subsection~\ref{app:proof_exp_posterior} reuses these bounds and establishes
the additional directional estimates required by
Theorem~\ref{thm:Phi_comparison}.

\begin{lemma}[Uniform control under exponential tilting]
\label{lem:exp_tilt_control}
Let $Z$ be an $\bbR^q$-valued random variable with cumulant generating
function $L(t)=\log\E e^{\langle t,Z\rangle}$. Let $K\subset\bbR^q$ be
compact, and suppose that $L$ is finite on an open neighborhood of $K$. For
$v\in K$, define
\[
\frac{\dd P_v}{\dd P}
=
\exp\{\langle v,Z\rangle-L(v)\}.
\]
Then the cumulant generating function under $P_v$ is
\begin{equation}
L_v(s)=L(v+s)-L(v).
\label{eq:exp_tilt_cgf}
\end{equation}
Moreover, for some $a>0$,
\begin{equation}
\sup_{v\in K}\E_{P_v}e^{a\|Z\|_2}<\infty.
\label{eq:exp_tilt_uniform_integrability}
\end{equation}
Consequently, all moments and cumulants of any fixed order are bounded
uniformly on $K$.
\end{lemma}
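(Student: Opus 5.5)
The tilted cumulant generating function \eqref{eq:exp_tilt_cgf} follows by direct computation. For $s$ with $L(v+s)<\infty$,
\[
\E_{P_v}e^{\langle s,Z\rangle}
=
\E\bigl[e^{\langle v+s,Z\rangle-L(v)}\bigr]
=
e^{L(v+s)-L(v)},
\]
so $L_v(s)=L(v+s)-L(v)$.

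For the uniform integrability \eqref{eq:exp_tilt_uniform_integrability}, I will exploit compactness. Let $U$ be the open neighborhood of $K$ on which $L$ is finite. Since $K$ is compact and $U^c$ is closed, there is $r>0$ such that the closed $r$-neighborhood $K_r\coloneqq\{v+s:v\in K,\ \|s\|_2\le r\}$ lies in $U$. The set $K_r$ is compact, and $L$ is convex and finite on $U$, so it is continuous there; hence $\sup_{K_r}|L|<\infty$.

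Next I reduce the Euclidean norm to finitely many linear functionals. Take $a=r/\sqrt q$ and use $\|Z\|_2\le\|Z\|_1$ together with
\[
e^{a\|Z\|_1}
=
\prod_{j=1}^q e^{a|Z_j|}
\le
\prod_{j=1}^q\bigl(e^{aZ_j}+e^{-aZ_j}\bigr)
=
\sum_{\sigma\in\{\pm1\}^q}e^{\langle a\sigma,Z\rangle}.
\]
Each vector $a\sigma$ has Euclidean norm $a\sqrt q=r$. Therefore
\[
\E_{P_v}e^{a\|Z\|_2}
\le
\sum_{\sigma\in\{\pm1\}^q}\exp\{L(v+a\sigma)-L(v)\}
\le
2^q\exp\Bigl\{2\sup_{K_r}|L|\Bigr\},
\]
uniformly in $v\in K$.

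For the consequence, I will bound moments via $\|Z\|_2^k\le C_{k,a}e^{a\|Z\|_2}$, which gives uniform bounds on moments of every fixed order. Cumulants are polynomials in moments, so they are bounded uniformly on $K$ as well. Alternatively, $L_v$ is analytic on the ball of radius $r$ with $L_v$ uniformly bounded there, and Cauchy estimates then bound its derivatives at $0$.

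The only real subtlety is securing a margin $r$ that is uniform over $K$. This comes from the positive distance between the compact set $K$ and the closed complement of the open set $U$, combined with continuity of the convex function $L$ on $U$.
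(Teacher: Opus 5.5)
Your proof is correct and follows essentially the same route as the paper: you compute the tilted CGF directly, dominate $e^{a\|Z\|_2}$ by $\sum_{\sigma\in\{\pm1\}^q}e^{a\langle\sigma,Z\rangle}$, use compactness and continuity of $L$ to bound $L(v+a\sigma)-L(v)$ uniformly on $K$, and pass to moments via $r^k\le k!a^{-k}e^{ar}$ and to cumulants as polynomials in moments. Your explicit margin $r$ (from the distance between $K$ and $U^c$) and your convexity argument for continuity of $L$ on $U$ spell out what the paper leaves implicit in ``choose $a$ sufficiently small'' and ``continuity of $L$''; the Cauchy-estimate aside would need a complex neighborhood rather than a real ball, but your main moment-based argument does not depend on it.
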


\begin{proof}
Identity~\eqref{eq:exp_tilt_cgf} follows directly from the definition of
$P_v$. Since $L$ is finite on an open neighborhood of the compact set $K$,
we may choose $a>0$ sufficiently small that $L(v+a\sigma)<\infty$ for every
$v\in K$ and $\sigma\in\{-1,1\}^q$.

For $z\in\bbR^q$, choose $\sigma_z\in\{-1,1\}^q$ coordinatewise so that
$\langle\sigma_z,z\rangle=\|z\|_1$. Since $\|z\|_2\le\|z\|_1$ and every
term in the following sum is nonnegative,
\[
e^{a\|z\|_2}
\le
e^{a\langle\sigma_z,z\rangle}
\le
\sum_{\sigma\in\{-1,1\}^q}e^{a\langle\sigma,z\rangle}
.
\]
Taking expectations under $P_v$ and applying~\eqref{eq:exp_tilt_cgf} with
$s=a\sigma$ yield
\[
\E_{P_v}e^{a\|Z\|_2}
\le
\sum_{\sigma\in\{-1,1\}^q}
\exp\{L(v+a\sigma)-L(v)\}.
\]
For each of the finitely many sign vectors $\sigma$, continuity of $L$ and
compactness of $K$ imply that $v\mapsto L(v+a\sigma)-L(v)$ is bounded on
$K$. Consequently,
\[
\sup_{v\in K}\E_{P_v}e^{a\|Z\|_2}
\le
\sum_{\sigma\in\{-1,1\}^q}
\sup_{v\in K}\exp\{L(v+a\sigma)-L(v)\}
<\infty,
\]
which proves~\eqref{eq:exp_tilt_uniform_integrability}.

Finally, for every integer $k\ge1$, the exponential series gives
$r^k\le k!a^{-k}e^{ar}$ for $r\ge0$. Moreover, for any
$i_1,\ldots,i_k\in\{1,\ldots,q\}$,
$|Z_{i_1}\cdots Z_{i_k}|\le\|Z\|_2^k$. Hence,
\[
\sup_{v\in K}\bigl|\E_{P_v}[Z_{i_1}\cdots Z_{i_k}]\bigr|
\le
\sup_{v\in K}\E_{P_v}\|Z\|_2^k
\le
\frac{k!}{a^k}\sup_{v\in K}\E_{P_v}e^{a\|Z\|_2}
<\infty.
\]
Thus all mixed moments of any fixed order are uniformly bounded on $K$.
The corresponding cumulant bounds follow from the moment--cumulant formula,
since each cumulant of fixed order is a finite polynomial in moments of no
greater order.
\end{proof}

We shall use two families of exponential tilts. For $\|u\|<\rho$, define the
model-side tilt $P_u^\Lambda$ of $\lambda$ and the data-generating tilt $P_u$
of $\gamma$ by
\begin{equation}
\begin{aligned}
\frac{\dd P_u^\Lambda}{\dd\lambda}
&=
\exp\{\langle W_0,u\rangle-\Lambda(u)\},
&
\frac{\dd P_u}{\dd\gamma}
&=
\exp\{\langle W_0^\star,u\rangle-\Gamma(u)\}.
\end{aligned}
\label{eq:exp_tilted_laws}
\end{equation}
Under $P_u^\Lambda$, the cumulant generating function of $W_0$ is
$s\mapsto\Lambda(u+s)-\Lambda(u)$. Since $\Gamma(u)=\Xi(0,u)$, the joint
cumulant generating function of $(W_0,W_0^\star)$ under $P_u$ is
\begin{equation}
(v,s)\longmapsto \Xi(v,u+s)-\Xi(0,u).
\label{eq:exp_joint_tilt_cgf}
\end{equation}
Fix $0<\rho_0<\rho$, with $\rho$ as in
Assumption~\ref{assump:expfam_regular}. Apply
Lemma~\ref{lem:exp_tilt_control} first to $W_0$ under the model-side tilts
$P_u^\Lambda$, and then to $(W_0,W_0^\star)$ under the data-generating tilts
$P_u$. It follows that, for some $C<\infty$,
\begin{align}
\sup_{\|u\|\le\rho_0}
\left\{
\E_{P_u^\Lambda}\|W_0\|^3
+\E_{P_u}\|W_0\|^3
+\E_{P_u}\|W_0^\star\|^3
\right\}
\le C.
\label{eq:exp_uniform_third_moments}
\end{align}

\subsection{Proof of Theorem~\ref{thm:exp_free_energy_univ}}
\label{app:proof_exp_free_energy}

We apply Theorem~\ref{thm:fe_comparison} to the exponential-family process
$X$ in~\eqref{eq:X_product} and its Gaussian counterpart $Y$
in~\eqref{eq:Y_product}. The proof reduces to bounding the
mean discrepancy, the covariance discrepancy, and the local centered third
moments. We consider these three quantities in turn.

\emph{Mean discrepancy.}
Fix $\theta\in\Theta_n$. For each $m\le M_n$, equations~\eqref{eq:expf_X},
\eqref{eq:expf_P}, and
\eqref{eq:exp_tilted_laws} give
\begin{equation}
\begin{aligned}
\E_\theta[X_m(t)]
&=
\left\langle
\nabla_v\Xi\bigl(0,\eta_m(\theta)\bigr),\eta_m(t)
\right\rangle
-\Lambda\bigl(\eta_m(t)\bigr),
\\
\E_{Q_\theta}[Y_m(t)]
&=
\left\langle A_0\eta_m(\theta),\eta_m(t)\right\rangle
-\frac12\left\langle K_0\eta_m(t),\eta_m(t)\right\rangle.
\end{aligned}
\label{eq:exp_block_means}
\end{equation}
Taylor's theorem with integral remainder, together with the operator
definitions in~\eqref{eq:expf_gaussian_operators}, gives
\begin{align*}
&\Lambda\bigl(\eta_m(t)\bigr)
-\left\langle\nabla\Lambda(0),\eta_m(t)\right\rangle
-\frac12\left\langle K_0\eta_m(t),\eta_m(t)\right\rangle
\\
&\qquad=
\int_0^1\frac{(1-s)^2}{2}
\nabla^3\Lambda\bigl(s\eta_m(t)\bigr)
[\eta_m(t),\eta_m(t),\eta_m(t)]\,\dd s,
\end{align*}
and
\begin{align*}
&\left\langle
\nabla_v\Xi\bigl(0,\eta_m(\theta)\bigr)
-\nabla_v\Xi(0,0)-A_0\eta_m(\theta),
\eta_m(t)
\right\rangle
\\
&\qquad=
\int_0^1(1-s)
\nabla_{vuu}^3\Xi\bigl(0,s\eta_m(\theta)\bigr)
[\eta_m(t),\eta_m(\theta),\eta_m(\theta)]\,\dd s.
\end{align*}
The baseline mean condition in Assumption~\ref{assump:expfam_regular} is
$\nabla_v\Xi(0,0)=\nabla\Lambda(0)$. Using this identity to cancel the
first-order terms in the difference of the two block means
in~\eqref{eq:exp_block_means} yields
\begin{equation}
\begin{aligned}
\E_\theta[X_m(t)]-\E_{Q_\theta}[Y_m(t)]
&=
\int_0^1(1-s)
\nabla_{vuu}^3\Xi\bigl(0,s\eta_m(\theta)\bigr)
[\eta_m(t),\eta_m(\theta),\eta_m(\theta)]\,\dd s
\\
&\quad-
\int_0^1\frac{(1-s)^2}{2}
\nabla^3\Lambda\bigl(s\eta_m(t)\bigr)
[\eta_m(t),\eta_m(t),\eta_m(t)]\,\dd s.
\end{aligned}
\label{eq:exp_local_mean_remainder}
\end{equation}

The third derivatives in~\eqref{eq:exp_local_mean_remainder} are cumulant
tensors under the tilted laws in~\eqref{eq:exp_tilted_laws}. Writing $W_{0,i}$
and $W_{0,i}^\star$ for the coordinates of $W_0$ and $W_0^\star$, respectively,
their entries are
\begin{equation}
\begin{aligned}
\nabla^3\Lambda(u)_{ijk}
&=
\E_{P_u^\Lambda}
\bigl(W_{0,i}-\E_{P_u^\Lambda}W_{0,i}\bigr)
\bigl(W_{0,j}-\E_{P_u^\Lambda}W_{0,j}\bigr)
\bigl(W_{0,k}-\E_{P_u^\Lambda}W_{0,k}\bigr),
\\
\nabla_{vuu}^3\Xi(0,u)_{ijk}
&=
\E_{P_u}
\bigl(W_{0,i}-\E_{P_u}W_{0,i}\bigr)
\bigl(W_{0,j}^\star-\E_{P_u}W_{0,j}^\star\bigr)
\bigl(W_{0,k}^\star-\E_{P_u}W_{0,k}^\star\bigr).
\end{aligned}
\label{eq:exp_mean_cumulant_tensors}
\end{equation}
For fixed $u$, set
$\overline W_0^\Lambda=W_0-\E_{P_u^\Lambda}W_0$,
$\overline W_0=W_0-\E_{P_u}W_0$, and
$\overline W_0^\star=W_0^\star-\E_{P_u}W_0^\star$.
Define the random vector $Z^\Lambda(u)\in\bbR^{d^3}$ by
\[
Z_{ijk}^\Lambda(u)
\coloneqq
\overline W_{0,i}^\Lambda
\overline W_{0,j}^\Lambda
\overline W_{0,k}^\Lambda,
\qquad 1\le i,j,k\le d.
\]
The first identity in~\eqref{eq:exp_mean_cumulant_tensors} states that the
coordinate array of $\nabla^3\Lambda(u)$ is $\E_{P_u^\Lambda}Z^\Lambda(u)$.
Since the Euclidean norm is convex, Jensen's inequality gives
$\|\E_{P_u^\Lambda}Z^\Lambda(u)\|_2
\le \E_{P_u^\Lambda}\|Z^\Lambda(u)\|_2$; equivalently,
\begin{align*}
\left(
\sum_{1\le i,j,k\le d}
\left|\nabla^3\Lambda(u)_{ijk}\right|^2
\right)^{1/2}
&\le
\E_{P_u^\Lambda}
\left(
\sum_{1\le i,j,k\le d}
\left|\overline W_{0,i}^\Lambda
\overline W_{0,j}^\Lambda
\overline W_{0,k}^\Lambda\right|^2
\right)^{1/2}
\\
&=
\E_{P_u^\Lambda}
\left(\sum_{i=1}^d|\overline W_{0,i}^\Lambda|^2\right)^{3/2}
=
\E_{P_u^\Lambda}\|\overline W_0^\Lambda\|^3.
\end{align*}
Similarly, the second identity in~\eqref{eq:exp_mean_cumulant_tensors} and
H\"older's inequality yield
\begin{align*}
\left(
\sum_{1\le i,j,k\le d}
\left|\nabla_{vuu}^3\Xi(0,u)_{ijk}\right|^2
\right)^{1/2}
&\le
\E_{P_u}\bigl[\|\overline W_0\|
\|\overline W_0^\star\|^2\bigr]
\\
&\le
\left(\E_{P_u}\|\overline W_0\|^3\right)^{1/3}
\left(\E_{P_u}\|\overline W_0^\star\|^3\right)^{2/3}.
\end{align*}
Using $\E\|W-\E W\|^3\le 8\E\|W\|^3$ and Young's inequality in these
two estimates, and then squaring, gives
\begin{equation}
\begin{aligned}
\sum_{1\le i,j,k\le d}
\left|\nabla^3\Lambda(u)_{ijk}\right|^2
&\lesssim
\left(\E_{P_u^\Lambda}\|W_0\|^3\right)^2,
\\
\sum_{1\le i,j,k\le d}
\left|\nabla_{vuu}^3\Xi(0,u)_{ijk}\right|^2
&\lesssim
\left\{
\E_{P_u}\|W_0\|^3+
\E_{P_u}\|W_0^\star\|^3
\right\}^2.
\end{aligned}
\label{eq:exp_mean_cumulant_bounds}
\end{equation}

Combining~\eqref{eq:exp_local_mean_remainder} and
\eqref{eq:exp_mean_cumulant_bounds} with Cauchy--Schwarz and
\eqref{eq:feature_radius} gives, uniformly in $m\le M_n$ and $t\in T_n$,
\begin{equation}
\left|\E_\theta[X_m(t)]-\E_{Q_\theta}[Y_m(t)]\right|
\lesssim
r_n^3
\sup_{\|u\|\le r_n}
\left\{
\E_{P_u^\Lambda}\|W_0\|^3
+\E_{P_u}\|W_0\|^3
+\E_{P_u}\|W_0^\star\|^3
\right\}.
\label{eq:exp_local_mean_bound}
\end{equation}
The product decompositions~\eqref{eq:X_product} and~\eqref{eq:Y_product} and
linearity of expectation give
\[
\E_\theta[X(t)]-\E_{Q_\theta}[Y(t)]
=
\sum_{m=1}^{M_n}
\left\{\E_\theta[X_m(t)]-\E_{Q_\theta}[Y_m(t)]\right\}.
\]
Taking the supremum over $t\in T_n$, applying the triangle inequality, and
using~\eqref{eq:exp_local_mean_bound} therefore yield
\begin{equation}
\bigl\|\E_\theta[X]-\E_{Q_\theta}[Y]\bigr\|_\infty
\lesssim
M_n r_n^3
\sup_{\|u\|\le r_n}
\left\{
\E_{P_u^\Lambda}\|W_0\|^3
+\E_{P_u}\|W_0\|^3
+\E_{P_u}\|W_0^\star\|^3
\right\}.
\label{eq:exp_mean_bound}
\end{equation}

\emph{Covariance discrepancy.}
The deterministic log-partition term in~\eqref{eq:expf_X} has zero
covariance. Thus, by~\eqref{eq:exp_tilted_laws} and
\eqref{eq:Y_local_covariance},
\begin{equation}
\begin{aligned}
\cov_\theta\bigl(X_m(s),X_m(t)\bigr)
&=
\nabla_{vv}^2\Xi\bigl(0,\eta_m(\theta)\bigr)
[\eta_m(s),\eta_m(t)],
\\
\cov_{Q_\theta}\bigl(Y_m(s),Y_m(t)\bigr)
&=
\nabla_{vv}^2\Xi(0,0)[\eta_m(s),\eta_m(t)].
\end{aligned}
\label{eq:exp_local_covariances}
\end{equation}
The operator-valued fundamental theorem of calculus gives
\begin{equation}
\nabla_{vv}^2\Xi(0,u)-\nabla_{vv}^2\Xi(0,0)
=
\int_0^1\nabla_{vvu}^3\Xi(0,\tau u)[u],\dd\tau
\label{eq:exp_cov_cumulant_expansion}
\end{equation}
along the line $\tau\mapsto\tau u$. Subtracting the two identities
in~\eqref{eq:exp_local_covariances} and applying
\eqref{eq:exp_cov_cumulant_expansion} with $u=\eta_m(\theta)$, gives
\begin{equation}
\begin{aligned}
&\cov_\theta\bigl(X_m(s),X_m(t)\bigr)
-\cov_{Q_\theta}\bigl(Y_m(s),Y_m(t)\bigr)
\\
&\qquad=
\int_0^1
\nabla_{vvu}^3\Xi\bigl(0,\tau\eta_m(\theta)\bigr)
[\eta_m(s),\eta_m(t),\eta_m(\theta)]\,\dd\tau.
\end{aligned}
\label{eq:exp_local_cov_remainder}
\end{equation}

The derivative in~\eqref{eq:exp_local_cov_remainder} is a tilted cumulant
tensor with entries
\begin{equation}
\nabla_{vvu}^3\Xi(0,u)_{ijk}
=
\E_{P_u}
\bigl(W_{0,i}-\E_{P_u}W_{0,i}\bigr)
\bigl(W_{0,j}-\E_{P_u}W_{0,j}\bigr)
\bigl(W_{0,k}^\star-\E_{P_u}W_{0,k}^\star\bigr).
\label{eq:exp_cov_cumulant_tensor}
\end{equation}
For fixed $u$, write
$\overline W_0=W_0-\E_{P_u}W_0$ and
$\overline W_0^\star=W_0^\star-\E_{P_u}W_0^\star$, and define
$Z^\Xi(u)\in\bbR^{d^3}$ by
\[
Z_{ijk}^\Xi(u)
\coloneqq
\overline W_{0,i}\overline W_{0,j}\overline W_{0,k}^\star,
\qquad 1\le i,j,k\le d.
\]
By~\eqref{eq:exp_cov_cumulant_tensor}, the coordinate array of
$\nabla_{vvu}^3\Xi(0,u)$ is $\E_{P_u}Z^\Xi(u)$. Jensen's and H\"older's
inequalities therefore give
\begin{align*}
\left(
\sum_{1\le i,j,k\le d}
\left|\nabla_{vvu}^3\Xi(0,u)_{ijk}\right|^2
\right)^{1/2}
&\le
\E_{P_u}\bigl[\|\overline W_0\|^2\|\overline W_0^\star\|\bigr]
\\
&\le
\left(\E_{P_u}\|\overline W_0\|^3\right)^{2/3}
\left(\E_{P_u}\|\overline W_0^\star\|^3\right)^{1/3}.
\end{align*}
Using $\E\|W-\E W\|^3\le8\E\|W\|^3$, Young's inequality, and then
squaring gives
\begin{equation}
\sum_{1\le i,j,k\le d}
\left|\nabla_{vvu}^3\Xi(0,u)_{ijk}\right|^2
\lesssim
\left\{
\E_{P_u}\|W_0\|^3+
\E_{P_u}\|W_0^\star\|^3
\right\}^2.
\label{eq:exp_cov_cumulant_bound}
\end{equation}
Combining~\eqref{eq:exp_local_cov_remainder} and
\eqref{eq:exp_cov_cumulant_bound} with Cauchy--Schwarz and
\eqref{eq:feature_radius} gives
\begin{equation}
\begin{aligned}
\left|
\cov_\theta\bigl(X_m(s),X_m(t)\bigr)
-\cov_{Q_\theta}\bigl(Y_m(s),Y_m(t)\bigr)
\right|
\\
\lesssim
r_n^3
\sup_{\|u\|\le r_n}
\left\{
\E_{P_u}\|W_0\|^3+
\E_{P_u}\|W_0^\star\|^3
\right\}.
\label{eq:exp_local_cov_bound}
\end{aligned}
\end{equation}

The product structure~\eqref{eq:product_data}--\eqref{eq:X_product}
makes the $X_m$ independent under $P_\theta$, while
\eqref{eq:Y_product} makes the $Y_m$ independent under $Q_\theta$. Thus the
two global covariance kernels are the sums of their local covariance kernels.
Taking the supremum over $s,t\in T_n$, applying the triangle inequality, and
using~\eqref{eq:exp_local_cov_bound} yield
\begin{equation}
\bigl\|\cov_\theta(X)-\cov_{Q_\theta}(Y)\bigr\|_\infty
\lesssim
M_n r_n^3
\sup_{\|u\|\le r_n}
\left\{
\E_{P_u}\|W_0\|^3+
\E_{P_u}\|W_0^\star\|^3
\right\}.
\label{eq:exp_cov_bound}
\end{equation}

\emph{Centered third moments.}
By~\eqref{eq:product_data} and~\eqref{eq:expf_P}, the marginal corresponding
to block $m$ is $P_{\theta,m}=P_{\eta_m(\theta)}$. Centering the local process
therefore gives
\[
X_m(t)-\E_\theta X_m(t)
=
\left\langle
W_0-\E_{P_{\eta_m(\theta)}}W_0,\eta_m(t)
\right\rangle,
\]
and hence, by~\eqref{eq:feature_radius},
\[
\|X_m-\E_\theta X_m\|_\infty
\le
r_n\left\|W_0-\E_{P_{\eta_m(\theta)}}W_0\right\|.
\]
The elementary inequality $\|x-y\|^3\le4(\|x\|^3+\|y\|^3)$, followed by
Jensen's inequality, gives
\[
\E_{P_u}\left\|W_0-\E_{P_u}W_0\right\|^3
\le
4\left\{\E_{P_u}\|W_0\|^3+\|\E_{P_u}W_0\|^3\right\}
\le 8\E_{P_u}\|W_0\|^3.
\]
Consequently, for every $m\le M_n$,
\begin{equation}
\E_\theta\bigl\|X_m-\E_\theta X_m\bigr\|_\infty^3
\le
8r_n^3\E_{P_{\eta_m(\theta)}}\|W_0\|^3.
\label{eq:exp_local_third_moment_bound}
\end{equation}
Summing~\eqref{eq:exp_local_third_moment_bound} over $m$ and using
$\|\eta_m(\theta)\|\le r_n$ gives
\begin{equation}
\sum_{m=1}^{M_n}
\E_\theta
\bigl\|X_m-\E_\theta X_m\bigr\|_\infty^3
\lesssim
M_n r_n^3
\sup_{\|u\|\le r_n}\E_{P_u}\|W_0\|^3.
\label{eq:exp_third_moment_bound}
\end{equation}

\emph{Conclusion.}
The assumptions $M_n=\omega(n)$ and
$r_n\lesssim\sqrt{n/M_n}$ imply $r_n\to0$. Fix $0<\rho_0<\rho$ as in
\eqref{eq:exp_uniform_third_moments}. For all sufficiently large $n$, the
ball $\{u:\|u\|\le r_n\}$ is contained in $\{u:\|u\|\le\rho_0\}$.
Consequently, the uniform moment bound
\eqref{eq:exp_uniform_third_moments}, substituted into
\eqref{eq:exp_mean_bound}, \eqref{eq:exp_cov_bound}, and
\eqref{eq:exp_third_moment_bound}, gives
\[
\begin{aligned}
&\bigl\|\E_\theta X-\E_{Q_\theta}Y\bigr\|_\infty
+\bigl\|\cov_\theta(X)-\cov_{Q_\theta}(Y)\bigr\|_\infty
\\
&\qquad+
\sum_{m=1}^{M_n}
\E_\theta\bigl\|X_m-\E_\theta X_m\bigr\|_\infty^3
\lesssim M_n r_n^3,
\end{aligned}
\]
uniformly over $\theta\in\Theta_n$. Theorem~\ref{thm:fe_comparison} therefore
implies $|F(\theta)-F^{\mathrm G}(\theta)|\lesssim M_nr_n^3$. Finally, by
the normalization in~\eqref{eq:normalized_free_energies} and the assumption
$M_nr_n^2\lesssim n$,
\[
\sup_{\theta\in\Theta_n}
\bigl|F_n(\theta)-F_n^{\mathrm G}(\theta)\bigr|
\lesssim
\frac{M_n r_n^3}{n}
\lesssim r_n,
\]
which proves the theorem.

\subsection{Proof of Theorem~\ref{thm:exp_posterior_univ}}
\label{app:proof_exp_posterior}

\begin{proof}
We first identify the local non-Gaussian and Gaussian perturbations and verify
the structural conditions of Theorem~\ref{thm:Phi_comparison}. We next derive
the blockwise covariance and moment estimates used to control $\Delta_1$ and
$\Delta_2$. Substitution of these estimates into
Theorem~\ref{thm:Phi_comparison} then completes the proof.

Set $B_0\coloneqq\Sigma_0^+H_0$. The span condition gives
$\Sigma_0B_0=H_0$, and $B_0$ is a fixed bounded operator on $\bbR^d$.
Recalling~\eqref{eq:exp_X_tilde} and~\eqref{eq:Y_tilde}, the corresponding
local perturbations are
\begin{equation}
\begin{aligned}
\widetilde X_m(t)
&=
\left\langle
W_0(\omega_m)-\E_{P_{\eta_m(\theta)}}[W_0],
B_0\eta_m(t)
\right\rangle
\\
&\qquad +a\left\langle
H_0[\eta_m(\theta)-\eta_m(t)],
\eta_m(\theta)-\eta_m(t)
\right\rangle,
\\
\widetilde Y_m(t)
&=
\left\langle\xi_m,B_0\eta_m(t)\right\rangle
+a\left\langle
H_0[\eta_m(\theta)-\eta_m(t)],
\eta_m(\theta)-\eta_m(t)
\right\rangle.
\end{aligned}
\label{eq:exp_local_perturbations}
\end{equation}
Thus the deterministic quadratic terms coincide, while the random linear
terms are centered under $P_\theta$ and $Q_\theta$, respectively.

We verify the hypotheses of Theorem~\ref{thm:Phi_comparison}.
For fixed $\theta$, the pairs
$(X_{\beta,m},\dot X_{\beta,m})=(X_{\beta,m},\widetilde X_m)$ are independent
over $m$, and $(Y_\beta,\dot Y_\beta)$ is jointly Gaussian because both
are affine functions of the same Gaussian element, giving~(a).
Since $\dot X_\beta=\widetilde X$ and $\dot Y_\beta=\widetilde Y$ are
independent of $\beta$, the uniform moment
bound~\eqref{eq:exp_uniform_third_moments} and Fernique's theorem
give~(b).
Finally, $X_\beta$ and $Y_\beta$ are affine in $\beta$, so the perturbed
free energies are convex, giving~(c). It remains to prove,
uniformly for $|\beta|\le\eps_0$, with $\eps_0>0$ sufficiently small, that
\begin{equation}
\frac1n\Delta_1(\theta,\beta)
\le C(|\beta|+r_n),
\qquad
\frac1n\Delta_2(\theta,\beta)
\le Cr_n.
\label{eq:exp_posterior_Delta_targets}
\end{equation}

\emph{Blockwise directional covariance bounds.}
By~\eqref{eq:exp_tilted_laws} and~\eqref{eq:exp_local_perturbations}, the
non-Gaussian block covariances are
\begin{equation}
\begin{aligned}
\cov_\theta\bigl(X_m(s),\widetilde X_m(t)\bigr)
&=
\nabla_{vv}^2\Xi\bigl(0,\eta_m(\theta)\bigr)
[\eta_m(s),B_0\eta_m(t)],
\\
\cov_\theta\bigl(\widetilde X_m(s),\widetilde X_m(t)\bigr)
&=
\nabla_{vv}^2\Xi\bigl(0,\eta_m(\theta)\bigr)
[B_0\eta_m(s),B_0\eta_m(t)].
\end{aligned}
\label{eq:exp_nongaussian_directional_covariances}
\end{equation}
For the Gaussian blocks, the equivalent representation
\eqref{eq:gaussian_location_xi_representation} from
Definition~\ref{def:gaussian_loc} gives the corresponding covariances
directly. The random parts of $Y_m(s)$ and
$\widetilde Y_m(t)$ are, respectively,
$\langle\xi_m,\eta_m(s)\rangle$ and
$\langle\xi_m,B_0\eta_m(t)\rangle$, where
$\xi_m\sim\normal(0,\Sigma_0)$. Since the remaining terms are
deterministic,
\begin{equation}
\begin{aligned}
\cov_{Q_\theta}\bigl(Y_m(s),\widetilde Y_m(t)\bigr)
&=
\E_{Q_\theta}\left[
\langle\xi_m,\eta_m(s)\rangle
\langle\xi_m,B_0\eta_m(t)\rangle
\right]
=\langle\Sigma_0\eta_m(s),B_0\eta_m(t)\rangle,
\\
\cov_{Q_\theta}\bigl(\widetilde Y_m(s),\widetilde Y_m(t)\bigr)
&=
\E_{Q_\theta}\left[
\langle\xi_m,B_0\eta_m(s)\rangle
\langle\xi_m,B_0\eta_m(t)\rangle
\right]
=\langle\Sigma_0B_0\eta_m(s),B_0\eta_m(t)\rangle.
\end{aligned}
\label{eq:exp_gaussian_directional_covariances}
\end{equation}
Here $\Sigma_0=\nabla_{vv}^2\Xi(0,0)$ by
\eqref{eq:expf_gaussian_operators}. Subtracting
\eqref{eq:exp_gaussian_directional_covariances} from
\eqref{eq:exp_nongaussian_directional_covariances} yields
\begin{equation}
\begin{aligned}
&\left|
\cov_\theta\bigl(X_m(s),\widetilde X_m(t)\bigr)
-\cov_{Q_\theta}\bigl(Y_m(s),\widetilde Y_m(t)\bigr)
\right|
\\
&\qquad=
\left|
\left\{\nabla_{vv}^2\Xi\bigl(0,\eta_m(\theta)\bigr)-\Sigma_0\right\}
[\eta_m(s),B_0\eta_m(t)]
\right|
\lesssim r_n^3,
\end{aligned}
\label{eq:exp_directional_cov_X}
\end{equation}
and
\begin{equation}
\begin{aligned}
&\left|
\cov_\theta\bigl(\widetilde X_m(s),\widetilde X_m(t)\bigr)
-\cov_{Q_\theta}\bigl(\widetilde Y_m(s),\widetilde Y_m(t)\bigr)
\right|
\\
&\qquad=
\left|
\left\{\nabla_{vv}^2\Xi\bigl(0,\eta_m(\theta)\bigr)-\Sigma_0\right\}
[B_0\eta_m(s),B_0\eta_m(t)]
\right|
\lesssim r_n^3.
\end{aligned}
\label{eq:exp_directional_cov_tilde}
\end{equation}
To justify the final bounds in \eqref{eq:exp_directional_cov_X} and
\eqref{eq:exp_directional_cov_tilde}, apply the cumulant expansion
\eqref{eq:exp_cov_cumulant_expansion} with $u=\eta_m(\theta)$.  For arbitrary
$h_1,h_2\in\cH_0$, this gives
\[
\begin{aligned}
&\left\{
\nabla_{vv}^2\Xi(0,u)-\nabla_{vv}^2\Xi(0,0)
\right\}[h_1,h_2]
\\
&\qquad=
\int_0^1
\nabla_{vvu}^3\Xi(0,\tau u)[h_1,h_2,u],\dd\tau.
\end{aligned}
\]
Cauchy--Schwarz for the trilinear form therefore bounds its absolute value by
\[
\|h_1\|\,\|h_2\|\,\|u\|
\sup_{0\le\tau\le1}
\left(
\sum_{1\le i,j,k\le d}
\left|\nabla_{vvu}^3\Xi(0,\tau u)_{ijk}\right|^2
\right)^{1/2}.
\]
For \eqref{eq:exp_directional_cov_X}, take
$(h_1,h_2)=(\eta_m(s),B_0\eta_m(t))$; for
\eqref{eq:exp_directional_cov_tilde}, take
$(h_1,h_2)=(B_0\eta_m(s),B_0\eta_m(t))$.  Since
$\|u\|,\|\eta_m(s)\|,\|\eta_m(t)\|\le r_n$ by
\eqref{eq:feature_radius} and $\|B_0\|_{\mathrm{op}}<\infty$, the tensor
bound \eqref{eq:exp_cov_cumulant_bound} shows that both quantities are bounded
by
\[
r_n^3
\sup_{\|u\|\le r_n}
\left\{
\E_{P_u}[\|W_0\|^3]+\E_{P_u}[\|W_0^\star\|^3]
\right\}
\lesssim r_n^3,
\]
where the final inequality follows from
\eqref{eq:exp_uniform_third_moments}.

The same uniform moment control applies to the random part of the perturbation.
Indeed, \eqref{eq:exp_local_perturbations} gives
\[
\widetilde X_m(t)-\E_\theta[\widetilde X_m(t)]
=
\left\langle
W_0(\omega_m)-\E_{P_{\eta_m(\theta)}}[W_0],
B_0\eta_m(t)
\right\rangle.
\]
The argument leading to \eqref{eq:exp_local_third_moment_bound}, together with
$\|B_0\|_{\mathrm{op}}<\infty$ and
\eqref{eq:exp_uniform_third_moments}, therefore yields
\begin{equation}
\E_\theta\left[
\bigl\|\widetilde X_m-\E_\theta[\widetilde X_m]\bigr\|_\infty^3
\right]
\lesssim r_n^3.
\label{eq:exp_perturbation_third_moment}
\end{equation}

\emph{Control of $\Delta_1$.}
We bound the three contributions in \eqref{eq:posterior_comparison_delta1}
separately. Lemma~\ref{lem:linear_perturb_G} gives
\[
\psi_{\theta,\beta}^{\mathrm G}(s,t)-\phi_\theta(s,t)
=
\beta\left\langle
B^*\Sigma B[\eta(t)-\eta(s)],\eta(t)
\right\rangle.
\]
The product structure, the feature bound \eqref{eq:feature_radius}, and the
fixed operator bounds on $B_0$ and $\Sigma_0$ give
\begin{equation}
\bigl\|\psi_{\theta,\beta}^{\mathrm G}-\phi_\theta\bigr\|_\infty
\lesssim |\beta|M_nr_n^2.
\label{eq:exp_Delta1_G}
\end{equation}

At $\beta=0$, Lemma~\ref{lem:linear_perturb_G} also gives
$\psi_{\theta,0}^{\mathrm G}=\phi_\theta$. Moreover,
\eqref{eq:exp_local_perturbations} shows that
$\E_\theta[\widetilde X_m(t)]=\E_{Q_\theta}[\widetilde Y_m(t)]$.
Comparing the non-Gaussian test function in
\eqref{eq:exp_psi_theta0} with the Gaussian test function at $\beta=0$ from
Lemma~\ref{lem:linear_perturb_G}, and then using independence over $m$, gives
\begin{align*}
\psi_{\theta,0}(s,t)-\phi_\theta(s,t)
&=
\sum_{m=1}^{M_n}
\Bigl\{
\cov_\theta\bigl(X_m(t),\widetilde X_m(t)\bigr)
-\cov_{Q_\theta}\bigl(Y_m(t),\widetilde Y_m(t)\bigr)
\Bigr\}
\\
&\quad-
\sum_{m=1}^{M_n}
\Bigl\{
\cov_\theta\bigl(X_m(s),\widetilde X_m(t)\bigr)
-\cov_{Q_\theta}\bigl(Y_m(s),\widetilde Y_m(t)\bigr)
\Bigr\}.
\end{align*}
Equation~\eqref{eq:exp_directional_cov_X} now implies
\begin{equation}
\bigl\|\psi_{\theta,0}-\phi_\theta\bigr\|_\infty
\lesssim M_nr_n^3.
\label{eq:exp_Delta1_X}
\end{equation}

Finally, \eqref{eq:exp_local_third_moment_bound} and
\eqref{eq:exp_perturbation_third_moment} imply
\begin{equation}
\begin{aligned}
&\sum_{m=1}^{M_n}
\left(
\E_\theta\left[\|X_m-\E_\theta[X_m]\|_\infty^3\right]
\right)^{2/3}
\left(
\E_\theta\left[
\|\dot X_{0,m}-\E_\theta[\dot X_{0,m}]\|_\infty^3
\right]
\right)^{1/3}
\\
&\qquad\lesssim M_nr_n^3,
\end{aligned}
\label{eq:exp_Delta1_third_moment}
\end{equation}
because $\dot X_{0,m}=\widetilde X_m$. Substitution of these three estimates
\eqref{eq:exp_Delta1_G}, \eqref{eq:exp_Delta1_X}, and
\eqref{eq:exp_Delta1_third_moment} into
\eqref{eq:posterior_comparison_delta1} gives
\begin{equation}
\Delta_1(\theta,\beta)
\lesssim |\beta|M_nr_n^2+M_nr_n^3.
\label{eq:exp_Delta1_bound}
\end{equation}

\emph{Control of $\Delta_2$.}
The unperturbed mean, covariance, and centered third-moment contributions to
\eqref{eq:posterior_comparison_delta2} are bounded by $M_nr_n^3$ through
\eqref{eq:exp_mean_bound}, \eqref{eq:exp_cov_bound}, and
\eqref{eq:exp_third_moment_bound}. It remains to establish the same order for
their perturbed counterparts.

Since the random linear terms in \eqref{eq:exp_local_perturbations} are
centered and the deterministic quadratic terms agree,
\[
\E_\theta[\widetilde X]=\E_{Q_\theta}[\widetilde Y].
\]
Consequently,
\begin{equation}
\E_\theta[X_\beta]-\E_{Q_\theta}[Y_\beta]
=
\E_\theta[X]-\E_{Q_\theta}[Y],
\label{eq:exp_perturbed_mean_difference}
\end{equation}
so the perturbed mean discrepancy is controlled by
\eqref{eq:exp_mean_bound}.

For the covariance, bilinearity and the product structure give
\begin{equation}
\begin{aligned}
&\cov_\theta(X_\beta)-\cov_{Q_\theta}(Y_\beta)
\\
&\quad=
\cov_\theta(X)-\cov_{Q_\theta}(Y)
+\beta\sum_{m=1}^{M_n}
\Bigl\{
\cov_\theta(X_m,\widetilde X_m)
-\cov_{Q_\theta}(Y_m,\widetilde Y_m)
\Bigr\}
\\
&\qquad+
\beta\sum_{m=1}^{M_n}
\Bigl\{
\cov_\theta(\widetilde X_m,X_m)
-\cov_{Q_\theta}(\widetilde Y_m,Y_m)
\Bigr\}
\\
&\qquad+
\beta^2\sum_{m=1}^{M_n}
\Bigl\{
\cov_\theta(\widetilde X_m)
-\cov_{Q_\theta}(\widetilde Y_m)
\Bigr\}.
\end{aligned}
\label{eq:exp_perturbed_covariance_expansion}
\end{equation}
Each covariance in \eqref{eq:exp_perturbed_covariance_expansion} denotes its
kernel on $T_n\times T_n$. By symmetry, the two linear sums are controlled by
\eqref{eq:exp_directional_cov_X}, while the quadratic sum is controlled by
\eqref{eq:exp_directional_cov_tilde}. Together with
\eqref{eq:exp_cov_bound}, this yields, uniformly for $|\beta|\le\eps_0$,
\begin{equation}
\bigl\|
\cov_\theta(X_\beta)-\cov_{Q_\theta}(Y_\beta)
\bigr\|_\infty
\lesssim M_nr_n^3.
\label{eq:exp_perturbed_covariance_bound}
\end{equation}

Finally,
\begin{equation}
X_{\beta,m}-\E_\theta[X_{\beta,m}]
=
(X_m-\E_\theta[X_m])
+\beta(\widetilde X_m-\E_\theta[\widetilde X_m]).
\label{eq:exp_perturbed_centered_block}
\end{equation}
The inequality $\|x+y\|^3\le4(\|x\|^3+\|y\|^3)$, together with
\eqref{eq:exp_perturbed_centered_block},
\eqref{eq:exp_local_third_moment_bound}, and
\eqref{eq:exp_perturbation_third_moment}, gives
\begin{equation}
\sum_{m=1}^{M_n}
\E_\theta\left[
\|X_{\beta,m}-\E_\theta[X_{\beta,m}]\|_\infty^3
\right]
\lesssim M_nr_n^3.
\label{eq:exp_perturbed_third_moment_bound}
\end{equation}
uniformly for $|\beta|\le\eps_0$. Substituting
\eqref{eq:exp_perturbed_mean_difference}--
\eqref{eq:exp_perturbed_third_moment_bound} into
\eqref{eq:posterior_comparison_delta2} gives
\begin{equation}
\Delta_2(\theta,\beta)\lesssim M_nr_n^3.
\label{eq:exp_Delta2_bound}
\end{equation}

\emph{Conclusion.}
Since $M_nr_n^2\lesssim n$, equations
\eqref{eq:exp_Delta1_bound} and \eqref{eq:exp_Delta2_bound} imply the targets
in \eqref{eq:exp_posterior_Delta_targets}.  Apply
Theorem~\ref{thm:Phi_comparison} with $\beta=\pm\eps$ and divide by $n$ to
obtain
\begin{align*}
\Phi_n^{\mathrm G}(\theta,-\eps)
-C\left(\eps+r_n+\frac{r_n}{\eps}\right)
&\le \Phi_n(\theta)
\\
&\le
\Phi_n^{\mathrm G}(\theta,\eps)
+C\left(\eps+r_n+\frac{r_n}{\eps}\right).
\end{align*}
For $0<\eps<1$, the term $r_n$ is absorbed by $r_n/\eps$, yielding the stated
bounds.
\end{proof}

\section{Proofs for Section~\ref{subsec:regular_local_log_likelihoods}}
\label{app:proofs_regular_local}

Throughout this section, we omit the superscript $(n)$ from the
local random elements $U_m^{(n)},V_m^{(n)},R_m^{(n)},S_m^{(n)},\Delta_m^{(n)}$,
and use $\E_0$ and $\E_\theta$ to denote expectation under
$P_0=\bigotimes_{m=1}^{M_n}P_{0,m}^{(n)}$ and
$P_\theta=\bigotimes_{m=1}^{M_n}P_{\theta,m}^{(n)}$, respectively. For
$m\le M_n$, $t\in T_n$, and $\theta\in\Theta_n$, define
\[
U_m(t)\coloneqq\langle U_m,\eta_m(t)\rangle_{\cH_m},
\qquad
V_m(t)\coloneqq\langle V_m\eta_m(t),\eta_m(t)\rangle_{\cH_m},
\]
\[
S_m(\theta)\coloneqq\langle S_m,\eta_m(\theta)\rangle_{\cH_m},
\]
so that, by~\eqref{eq:regular_X_local} and~\eqref{eq:regular_likelihood_ratio},
\[
X_m(t)=U_m(t)-\tfrac12V_m(t)+R_m(t),
\qquad
\cL_m(\theta)
\coloneqq
\log\frac{\dd P_{\theta,m}^{(n)}}{\dd P_{0,m}^{(n)}}
=
S_m(\theta)+\Delta_m(\theta).
\]
Because $P_0$ and $P_\theta$ factor over $m$, and each of
$U_m(t),V_m(t),R_m(t)$ depends only on the $m$th coordinate $\omega_m$, every
integrable function $h$ of that coordinate satisfies the local
change-of-measure identity
$\E_\theta[h]=\E_0[h\,e^{\cL_m(\theta)}]$.

\begin{lemma}[Local change of measure]\label{lem:regular_change_of_measure}
Let $Q\ll Q_0$ be probability measures such that
$0<\dd Q/\dd Q_0<\infty$ holds $Q_0$-almost surely, and set
$\cL\coloneqq\log(\dd Q/\dd Q_0)$. Let $h$ be a random variable integrable
under both $Q_0$ and $Q$. Then
\begin{align}
\bigl|\E_Q[h]-\E_{Q_0}[h]\bigr|
&\lesssim
\E_{Q_0}[|h\cL|]+\E_Q[|h\cL|],
\label{eq:regular_cm_first}
\\
\bigl|\E_Q[h]-\E_{Q_0}[h]-\E_{Q_0}[h\cL]\bigr|
&\lesssim
\E_{Q_0}[|h\cL^2|]+\E_Q[|h\cL^2|].
\label{eq:regular_cm_second}
\end{align}
\end{lemma}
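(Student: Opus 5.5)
The plan is to write $\E_Q[h]=\E_{Q_0}[h e^{\cL}]$ and control $e^{\cL}-1$ and $e^{\cL}-1-\cL$ pointwise by elementary scalar inequalities. The bounds must involve only $|\cL|$ and $|\cL|^2$, with no exponential weight. To achieve this, we split according to the sign of $\cL$: on $\{\cL\le0\}$ the factor $e^{\cL}\le1$ is harmless, while on $\{\cL>0\}$ we absorb $e^{\cL}$ into the $Q$-expectation, since $e^{\cL}\,\dd Q_0=\dd Q$.

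For \eqref{eq:regular_cm_first}, we use the scalar inequality $|e^{x}-1|\le |x|\max(1,e^{x})\le |x|(1+e^{x})$, which follows from the mean value theorem. Multiplying by $|h|$ and integrating under $Q_0$ gives
\[
|\E_Q[h]-\E_{Q_0}[h]|
\le
\E_{Q_0}\bigl[|h|\,|e^{\cL}-1|\bigr]
\le
\E_{Q_0}[|h\cL|]+\E_{Q_0}[|h\cL|e^{\cL}].
\]
The last term equals $\E_Q[|h\cL|]$.

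For \eqref{eq:regular_cm_second}, we use $|e^{x}-1-x|\le \tfrac12 x^2\max(1,e^{x})\le \tfrac12x^2(1+e^{x})$, which follows from Taylor's theorem with Lagrange remainder $e^{\xi}x^2/2$, where $\xi$ lies between $0$ and $x$. The identity
\[
\E_Q[h]-\E_{Q_0}[h]-\E_{Q_0}[h\cL]=\E_{Q_0}\bigl[h(e^{\cL}-1-\cL)\bigr]
\]
then yields the bound $\tfrac12\E_{Q_0}[|h|\cL^2]+\tfrac12\E_Q[|h|\cL^2]$.

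The only step needing care is integrability. The left-hand sides are well defined because $h$ is integrable under both measures. If $\E_{Q_0}[|h\cL|]$ is infinite, the first bound is trivial, and the same holds for the second. When the right-hand sides are finite, $h\cL\in L^1(Q_0)$, so $\E_{Q_0}[h\cL]$ is defined and the splitting of expectations is legitimate. The change of measure $\E_{Q_0}[g e^{\cL}]=\E_Q[g]$ for nonnegative $g$ holds by the definition of the Radon--Nikodym derivative together with $0<\dd Q/\dd Q_0<\infty$ almost surely. I expect no real obstacle here; the implicit constants are $1$ and $\tfrac12$.
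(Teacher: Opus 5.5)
Your proposal is correct and follows essentially the paper's route: write $\E_Q[h]=\E_{Q_0}[he^{\cL}]$, bound the exponential Taylor remainder pointwise by a combination of $1$ and $e^{\cL}$, and absorb the $e^{\cL}$ factor into $\E_Q$. The paper instead uses integral-form remainders with the convexity bound $e^{s\cL}\le se^{\cL}+(1-s)$, which needs a Fubini step, whereas your Lagrange-form bound $e^{\xi}\le\max(1,e^{x})\le 1+e^{x}$ is purely pointwise. One small point to state explicitly: in the second inequality, finiteness of the right-hand side gives $h\cL\in L^1(Q_0)$ only through $|\cL|\le 1+\cL^2$ together with $h\in L^1(Q_0)$, as the paper notes.
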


\begin{proof}
For each inequality, it suffices to consider the nontrivial case in which its
right-hand side is finite; otherwise the bound is immediate. In particular,
for~\eqref{eq:regular_cm_second}, finiteness of the right-hand side and
$|\cL|\le1+|\cL|^2$ imply
$\E_{Q_0}|h\cL|<\infty$, so the linear term
$\E_{Q_0}[h\cL]$ is well defined.

By the integral Taylor expansions of the exponential function,
\[
e^x=1+x\int_0^1e^{sx}\,\dd s,
\qquad
e^x=1+x+x^2\int_0^1(1-s)e^{sx}\,\dd s,
\]
applied with $x=\cL$ inside $\E_Q[h]=\E_{Q_0}[h\,e^{\cL}]$,
\[
\E_Q[h]-\E_{Q_0}[h]
=
\E_{Q_0}\Bigl[h\cL\int_0^1e^{s\cL}\,\dd s\Bigr],
\]
\[
\E_Q[h]-\E_{Q_0}[h]-\E_{Q_0}[h\cL]
=
\E_{Q_0}\Bigl[h\cL^2\int_0^1(1-s)e^{s\cL}\,\dd s\Bigr].
\]
For each fixed $s\in[0,1]$, Young's inequality $a^sb^{1-s}\le sa+(1-s)b$, for
$a,b\ge0$, applied with $a=e^{\cL}$ and $b=1$, gives
\[
e^{s\cL}\le s\,e^{\cL}+(1-s).
\]
Multiplying by $|h\cL|$ and integrating over $s\in[0,1]$,
\[
|h\cL|\int_0^1e^{s\cL}\,\dd s
\le
\tfrac12|h\cL|e^{\cL}+\tfrac12|h\cL|.
\]
In the nontrivial case of~\eqref{eq:regular_cm_first}, the right-hand side is
$Q_0$-integrable, so Fubini's theorem permits exchanging the $s$-integral and
the expectation in the first Taylor remainder.
Taking $\E_{Q_0}$ on both sides and using
$\E_{Q_0}[|h\cL|e^{\cL}]=\E_Q[|h\cL|]$, which holds since $e^{\cL}=\dd Q/\dd Q_0$,
\[
\bigl|\E_Q[h]-\E_{Q_0}[h]\bigr|
\le
\E_{Q_0}\Bigl[|h\cL|\int_0^1e^{s\cL}\,\dd s\Bigr]
\le
\tfrac12\E_Q[|h\cL|]+\tfrac12\E_{Q_0}[|h\cL|],
\]
which proves~\eqref{eq:regular_cm_first}. The same inequality, multiplied
instead by $(1-s)|h\cL^2|$ and integrated over $s\in[0,1]$, gives
\[
(1-s)|h\cL^2|e^{s\cL}
\le
s(1-s)|h\cL^2|e^{\cL}+(1-s)^2|h\cL^2|,
\]
In the nontrivial case of~\eqref{eq:regular_cm_second}, this bound likewise
provides the absolute integrability required for the corresponding exchange.
Hence, using $\int_0^1s(1-s)\,\dd s=\tfrac16$ and
$\int_0^1(1-s)^2\,\dd s=\tfrac13$,
\[
\bigl|\E_Q[h]-\E_{Q_0}[h]-\E_{Q_0}[h\cL]\bigr|
\le
\E_{Q_0}\Bigl[|h\cL^2|\int_0^1(1-s)e^{s\cL}\,\dd s\Bigr]
\le
\tfrac16\E_Q[|h\cL^2|]+\tfrac13\E_{Q_0}[|h\cL^2|],
\]
which proves~\eqref{eq:regular_cm_second}.
\end{proof}

\begin{lemma}[Local moment scale]\label{lem:regular_local_moment_scale}
Under Assumption~\ref{assump:regular_local_bounds}, uniformly over
$m\le M_n$, $\theta\in\Theta_n$, and $t\in T_n$,
\begin{align}
\E_0|U_m(t)|^2
\lesssim
r_n^2,
\qquad
\E_0|U_m(t)|^3+\E_\theta|U_m(t)|^3
\lesssim
r_n^2\delta_n(\theta),
\label{eq:lemma_scale_U}
\\
\E_0|S_m(\theta)|^2
\lesssim
r_n^2,
\qquad
\E_0|S_m(\theta)|^3+\E_\theta|S_m(\theta)|^3
\lesssim
r_n^2\delta_n(\theta),
\label{eq:lemma_scale_S}
\\
\sqrt{\E_0|V_m(t)|^2}
\lesssim
r_n\delta_n(\theta),
\qquad
\E_0|V_m(t)|^3+\E_\theta|V_m(t)|^3
\lesssim
r_n^2\delta_n(\theta),
\label{eq:lemma_scale_V}
\\
|\E_0[R_m(t)]|
\lesssim
r_n^2\delta_n(\theta),
\qquad
\sqrt{\E_0|R_m(t)|^2}
\lesssim
r_n\delta_n(\theta),
\label{eq:lemma_scale_R_mean}
\\
\E_0|R_m(t)|^3
+
\E_\theta\left[\sup_{t\in T_n}|R_m(t)|^3\right]
\lesssim
r_n^2\delta_n(\theta),
\label{eq:lemma_scale_R}
\\
\sqrt{\E_0|\Delta_m(\theta)|^2}
\lesssim
r_n\delta_n(\theta),
\qquad
\E_0|\Delta_m(\theta)|^3+\E_\theta|\Delta_m(\theta)|^3
\lesssim
r_n^2\delta_n(\theta).
\label{eq:lemma_scale_Delta}
\end{align}
\end{lemma}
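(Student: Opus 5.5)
This lemma is a bookkeeping translation of Assumption~\ref{assump:regular_local_bounds} into bounds on the scalar processes $U_m(t),S_m(\theta),V_m(t)$. The only tools needed are the feature bound $\|\eta_m(u)\|_{\cH_m}\le r_n$ from \eqref{eq:feature_radius} and Cauchy--Schwarz in $\cH_m$. The plan is to treat each line separately, pulling out the appropriate power of $r_n$ and matching it against the normalization in \eqref{eq:regular_moment_U}--\eqref{eq:regular_moment_Delta}.

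\textbf{Linear terms $U$ and $S$.} We have $|U_m(t)|\le r_n\|U_m\|_{\cH_m}$ and $|S_m(\theta)|\le r_n\|S_m\|_{\cH_m}$. Hence $\E_0|U_m(t)|^2\le r_n^2\mathsf L_{m,n}\lesssim r_n^2$, and similarly for $S$. For the third moments,
\[
\E_0|U_m(t)|^3+\E_\theta|U_m(t)|^3\le r_n^3\bigl(\E_0\|U_m\|^3+\E_\theta\|U_m\|^3\bigr)=r_n^2\,\cE^U_{m,n}(\theta)\lesssim r_n^2\delta_n(\theta).
\]
The identical computation with $\cE^S_{m,n}$ gives \eqref{eq:lemma_scale_S}.

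\textbf{Quadratic term $V$.} Here $|V_m(t)|\le\|V_m\|_{\mathrm{op}}r_n^2$. Therefore
\[
\sqrt{\E_0|V_m(t)|^2}\le r_n^2\sqrt{\E_0\|V_m\|_{\mathrm{op}}^2}=r_n\cdot r_n\sqrt{\E_0\|V_m\|^2_{\mathrm{op}}}\le r_n\,\cE^V_{m,n}(\theta),
\]
and
\[
\E_0|V_m(t)|^3+\E_\theta|V_m(t)|^3\le r_n^6(\cdots)=r_n^2\cdot r_n^4(\cdots)\le r_n^2\cE^V_{m,n}(\theta).
\]
These exponents are exactly why $\cE^V$ carries the factors $r_n$ and $r_n^4$.

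\textbf{Remainders $R$ and $\Delta$.} These bounds are immediate from the definitions. Each summand of $\cE^R_{m,n}(\theta)$ is nonnegative, so for every fixed $t$ it is bounded by the whole expression:
\[
|\E_0R_m(t)|\le r_n^2\cE^R,\qquad \sqrt{\E_0|R_m(t)|^2}\le r_n\cE^R,\qquad \E_0|R_m(t)|^3\le r_n^2\cE^R,\qquad \E_\theta\sup_t|R_m(t)|^3\le r_n^2\cE^R.
\]
The same argument applied to the terms of $\cE^\Delta_{m,n}(\theta)$ gives \eqref{eq:lemma_scale_Delta}.

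Uniformity over $m\le M_n$, $\theta\in\Theta_n$ and $t\in T_n$ follows because the assumption is uniform and the feature bound holds for all $u\in\Theta_n$. I do not expect a genuine obstacle. The only care points are checking the $r_n$ exponents against each normalization, and noting that nonnegativity of every summand lets a single term be bounded by the full $\cE$ quantity.
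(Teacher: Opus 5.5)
Your proposal is correct and follows essentially the same route as the paper: the pointwise bounds $|U_m(t)|\le r_n\|U_m\|_{\cH_m}$, $|S_m(\theta)|\le r_n\|S_m\|_{\cH_m}$ and $|V_m(t)|\le r_n^2\|V_m\|_{\mathrm{op}}$ from Cauchy--Schwarz and the feature radius, combined with $\mathsf L_{m,n}\lesssim1$ and a term-by-term reading of the normalized quantities $\cE^U,\cE^S,\cE^V,\cE^R,\cE^\Delta$. Your exponent checks (e.g.\ $r_n^6=r_n^2\cdot r_n^4$ for the cubic $V$ terms) and your use of nonnegativity of each summand for the remainder bounds are exactly what the paper's proof relies on.
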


\begin{proof}
By Cauchy--Schwarz, the operator-norm bound, and~\eqref{eq:feature_radius},
\[
|U_m(t)|\le r_n\|U_m\|_{\cH_m},
\qquad
|V_m(t)|\le r_n^2\|V_m\|_{\mathrm{op}},
\qquad
|S_m(\theta)|\le r_n\|S_m\|_{\cH_m}.
\]
The second-moment bounds for $U_m(t)$ and $S_m(\theta)$ follow immediately
from $\mathsf L_{m,n}\lesssim1$ in~\eqref{eq:regular_leading_moment}. Their
third-moment bounds, as well as those for $V_m(t)$ and $\Delta_m(\theta)$,
follow from the corresponding third-moment terms in
\eqref{eq:regular_moment_U}--\eqref{eq:regular_moment_Delta} and the displayed
pointwise bounds. The $P_0$ third moment of $R_m(t)$ and its $P_\theta$
sup-norm third moment both follow directly from
\eqref{eq:regular_moment_R}. This proves all the third-moment claims in the
lemma.

Finally, the mean bound for $R_m(t)$ and the square-root bounds for
$V_m(t)$, $R_m(t)$, and $\Delta_m(\theta)$ follow directly from the
corresponding mean and square-root terms in
\eqref{eq:regular_moment_V}--\eqref{eq:regular_moment_Delta}. Squaring the
latter bounds also gives
\[
\E_0|V_m(t)|^2
+
\E_0|R_m(t)|^2
+
\E_0|\Delta_m(\theta)|^2
\lesssim
r_n^2\delta_n(\theta)^2.
\]
\end{proof}

\subsection{Proof of Theorem~\ref{thm:regular_free_energy_univ}}
\label{app:proof_regular_free_energy}

We apply Theorem~\ref{thm:fe_comparison} with
$X=\sum_{m=1}^{M_n}X_m$ and its Gaussian counterpart
$Y=\sum_{m=1}^{M_n}Y_m$.
Assumption~\ref{assump:regular_local_bounds} and the feature-radius
bound~\eqref{eq:feature_radius} imply
$\E_\theta\|X_m\|_\infty^3<\infty$ for every $m\le M_n$, so
$F(\theta)$ is finite; $F^{\mathrm G}(\theta)$ is finite by Fernique's
theorem.
The next three lemmas provide blockwise bounds for
the mean, covariance, and third-moment terms in the comparison theorem. Their
combination yields Theorem~\ref{thm:regular_free_energy_univ}.

\begin{lemma}[Mean comparison]\label{lem:regular_mean_comparison}
Work under the assumptions of Theorem~\ref{thm:regular_free_energy_univ}.
For every $m\le M_n$ and $\theta\in\Theta_n$,
\begin{equation}
\sup_{t\in T_n}\bigl|\E_\theta[X_m(t)]-\E_{Q_\theta}[Y_m(t)]\bigr|
\lesssim
r_n^2\{\delta_n(\theta)+\delta_n(\theta)^2\}.
\label{eq:regular_mean_comparison_bound}
\end{equation}
\end{lemma}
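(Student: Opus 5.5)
We compare the two block means term by term. On the Gaussian side, by \eqref{eq:Y_local_moments} and \eqref{eq:regular_operator_forms}--\eqref{eq:regular_operator_K},
\[
\E_{Q_\theta}[Y_m(t)]
=
\E_0\bigl[S_m(\theta)U_m(t)\bigr]-\tfrac12\E_0[V_m(t)],
\]
since $\langle A_m\eta_m(\theta),\eta_m(t)\rangle=\E_0[\langle S_m,\eta_m(\theta)\rangle\langle U_m,\eta_m(t)\rangle]$ and $\langle K_m\eta_m(t),\eta_m(t)\rangle=\E_0[V_m(t)]$. On the non-Gaussian side we have $\E_\theta[X_m(t)]=\E_\theta[U_m(t)]-\tfrac12\E_\theta[V_m(t)]+\E_\theta[R_m(t)]$. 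The plan is therefore to show that each of the three terms is matched, up to $r_n^2\{\delta_n+\delta_n^2\}$, by its Gaussian counterpart.

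\emph{Linear term.} We apply the second-order change of measure \eqref{eq:regular_cm_second} of Lemma~\ref{lem:regular_change_of_measure} with $h=U_m(t)$ and $\cL=\cL_m(\theta)=S_m(\theta)+\Delta_m(\theta)$. Since $\E_0[U_m]=0$, this gives
\[
\E_\theta[U_m(t)]=\E_0[U_m(t)S_m(\theta)]+\E_0[U_m(t)\Delta_m(\theta)]+O\bigl(\E_0|U_m\cL^2|+\E_\theta|U_m\cL^2|\bigr).
\]
The first term is exactly the Gaussian mean term. The cross term is bounded by Cauchy--Schwarz: $\sqrt{\E_0 U_m^2}\sqrt{\E_0\Delta_m^2}\lesssim r_n\cdot r_n\delta_n$, using Lemma~\ref{lem:regular_local_moment_scale}. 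For the remainder we use $\cL^2\le 2S_m^2+2\Delta_m^2$ together with H\"older with exponents $(3,3/2)$: for instance $\E|U_mS_m^2|\le(\E|U_m|^3)^{1/3}(\E|S_m|^3)^{2/3}\lesssim r_n^2\delta_n$ under both $P_0$ and $P_\theta$, and likewise with $\Delta_m$ in place of $S_m$. All third-moment inputs come from \eqref{eq:lemma_scale_U}, \eqref{eq:lemma_scale_S}, and \eqref{eq:lemma_scale_Delta}.

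\emph{Quadratic term.} For $V_m$ we use the first-order estimate \eqref{eq:regular_cm_first}:
\[
|\E_\theta V_m(t)-\E_0V_m(t)|\lesssim\E_0|V_m\cL|+\E_\theta|V_m\cL|.
\]
Under $P_0$, Cauchy--Schwarz gives $\sqrt{\E_0V_m^2}\,\sqrt{\E_0\cL^2}\lesssim r_n\delta_n\cdot r_n(1+\delta_n)$. Under $P_\theta$ we use H\"older with third moments, which gives $(r_n^2\delta_n)^{1/3}(r_n^2\delta_n)^{2/3}=r_n^2\delta_n$. The $\delta_n^2$ contribution in the final bound arises here, through $\E_0\cL^2\lesssim r_n^2+r_n^2\delta_n^2$.

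\emph{Remainder term.} We treat $R_m$ in the same way as $V_m$. We bound $|\E_0R_m|\lesssim r_n^2\delta_n$, and use \eqref{eq:regular_cm_first} with $P_0$ second moments and $P_\theta$ third moments (the latter available from the sup-norm bound \eqref{eq:lemma_scale_R}). This yields $O(r_n^2\delta_n(1+\delta_n))$. All estimates are uniform in $t$, so taking the supremum over $t$ completes the argument.

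\emph{Main obstacle.} The delicate step is justifying the integrability hypotheses of Lemma~\ref{lem:regular_change_of_measure} and keeping every product of moments at order $r_n^2$, rather than, say, $r_n^{4/3}$. This requires pairing each $P_\theta$ moment only with other $P_\theta$ third moments, which are all of order $r_n^2\delta_n$, and pairing $P_0$ moments via Cauchy--Schwarz at second order.
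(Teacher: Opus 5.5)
Your linear-term argument matches the paper's, but the quadratic and remainder terms contain a genuine gap. For $h=V_m(t)$ and $h=R_m(t)$ you use only the first-order bound \eqref{eq:regular_cm_first}. That leaves the $P_\theta$-side terms $\E_\theta|V_m(t)\cL_m(\theta)|$ and $\E_\theta|R_m(t)\cL_m(\theta)|$, each a product of only \emph{two} small factors. Under $P_\theta$ the assumptions control only third moments, each of order $r_n^2\delta_n$. H\"older therefore gives
$(\E_\theta|V_m(t)|^3)^{1/3}(\E_\theta|\cL_m(\theta)|^3)^{1/3}\lesssim (r_n^2\delta_n)^{2/3}$,
not $r_n^2\delta_n$.

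Your exponent pair $(1/3,2/3)$ would need $\E_\theta|\cL_m(\theta)|^{3/2}\lesssim r_n^2\delta_n$. This is false in general. Since $\cL_m(\theta)\approx S_m(\theta)$ has size $r_n$, one has $\E_\theta|\cL_m(\theta)|^{3/2}\asymp r_n^{3/2}$; in the exponential-family case, for instance, $r_n^2\delta_n=r_n^3$. This is exactly the $r_n^{4/3}$-type loss you warn about in your last paragraph. Pairing third moments closes at order $r_n^2\delta_n$ only on products of \emph{three} factors.

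The fix, which is what the paper does, is to apply the second-order expansion \eqref{eq:regular_cm_second} to $V_m(t)$ and $R_m(t)$ as well, not only to $U_m(t)$.
\begin{itemize}
\item The linear corrections $\E_0[V_m(t)\cL_m(\theta)]$ and $\E_0[R_m(t)\cL_m(\theta)]$ then sit under $P_0$. There Cauchy--Schwarz with $\sqrt{\E_0|V_m(t)|^2},\sqrt{\E_0|R_m(t)|^2}\lesssim r_n\delta_n$ and $\sqrt{\E_0|\cL_m(\theta)|^2}\lesssim r_n(1+\delta_n)$ gives $r_n^2(\delta_n+\delta_n^2)$.
\item The remainders $\E_\bullet|V_m(t)\cL_m(\theta)^2|$ and $\E_\bullet|R_m(t)\cL_m(\theta)^2|$ are three-factor products. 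Young's inequality bounds them by third moments, of order $r_n^2\delta_n$ under both $P_0$ and $P_\theta$.
\end{itemize}
Alternatively, you can keep your first-order route by applying \eqref{eq:regular_cm_first} once more, with the nonnegative $h=|V_m(t)\cL_m(\theta)|$ (and likewise for $R_m$). This converts the offending two-factor $P_\theta$ term into $\E_0|V_m(t)\cL_m(\theta)|$ plus three-factor terms $\E_\bullet|V_m(t)|\cL_m(\theta)^2$, all of which are of the right order.
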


\begin{proof}
By~\eqref{eq:Y_local_moments}, $\E_{Q_\theta}[Y_m(t)]=\langle A_m\eta_m(\theta),\eta_m(t)\rangle
-\tfrac12\langle K_m\eta_m(t),\eta_m(t)\rangle$,
so that
\begin{align}
\E_\theta[X_m(t)]-\E_{Q_\theta}[Y_m(t)]
&=
\bigl\{\E_\theta[U_m(t)]-\langle A_m\eta_m(\theta),\eta_m(t)\rangle\bigr\}
\notag
\\
&\quad
-\tfrac12
\bigl\{\E_\theta[V_m(t)]-\langle K_m\eta_m(t),\eta_m(t)\rangle\bigr\}
+
\E_\theta[R_m(t)].
\label{eq:mean_comparison_decomposition}
\end{align}
Apply Lemma~\ref{lem:regular_change_of_measure} to the local measures
$P_{0,m}^{(n)}$ and $P_{\theta,m}^{(n)}$, with $h=U_m(t)$ and
$\cL=\cL_m(\theta)$. Since $\E_0[U_m(t)]=0$,
\[
\E_\theta[U_m(t)]
=
\E_0[U_m(t)\cL_m(\theta)]
+
O\bigl(\E_0|U_m(t)\cL_m(\theta)^2|+\E_\theta|U_m(t)\cL_m(\theta)^2|\bigr).
\]
By~\eqref{eq:regular_likelihood_ratio}, $\cL_m(\theta)=S_m(\theta)+\Delta_m(\theta)$,
and, by~\eqref{eq:regular_operator_forms},
$\E_0[U_m(t)S_m(\theta)]=\langle A_m\eta_m(\theta),\eta_m(t)\rangle$, so
\begin{align}
\E_\theta[U_m(t)]-\langle A_m\eta_m(\theta),\eta_m(t)\rangle
&=
\E_0[U_m(t)\Delta_m(\theta)]
\notag
\\
&\quad
+
O\bigl(\E_0|U_m(t)\cL_m(\theta)^2|+\E_\theta|U_m(t)\cL_m(\theta)^2|\bigr).
\label{eq:mean_comparison_U}
\end{align}
Applying the same expansion with $h=V_m(t)$ and $h=R_m(t)$, and using
$\E_0[V_m(t)]=\langle K_m\eta_m(t),\eta_m(t)\rangle$
from~\eqref{eq:regular_operator_K}, gives
\begin{align}
\E_\theta[V_m(t)]-\langle K_m\eta_m(t),\eta_m(t)\rangle
&=
\E_0[V_m(t)\cL_m(\theta)]
\notag
\\
&\quad
+
O\bigl(\E_0|V_m(t)\cL_m(\theta)^2|+\E_\theta|V_m(t)\cL_m(\theta)^2|\bigr),
\label{eq:mean_comparison_V}
\\
\E_\theta[R_m(t)]
&=
\E_0[R_m(t)]
+
\E_0[R_m(t)\cL_m(\theta)]
\notag
\\
&\quad
+
O\bigl(\E_0|R_m(t)\cL_m(\theta)^2|+\E_\theta|R_m(t)\cL_m(\theta)^2|\bigr).
\label{eq:mean_comparison_R}
\end{align}
We bound the terms in
\eqref{eq:mean_comparison_U}--\eqref{eq:mean_comparison_R} uniformly over
$t\in T_n$, using $\cL_m(\theta)=S_m(\theta)+\Delta_m(\theta)$ and
Lemma~\ref{lem:regular_local_moment_scale}.

\emph{Second-order terms.} By Cauchy--Schwarz and
\eqref{eq:lemma_scale_U} and~\eqref{eq:lemma_scale_Delta},
\[
\bigl|\E_0[U_m(t)\Delta_m(\theta)]\bigr|
\le
\sqrt{\E_0|U_m(t)|^2}\,\sqrt{\E_0|\Delta_m(\theta)|^2}
\lesssim
r_n\times r_n\delta_n(\theta)
=
r_n^2\delta_n(\theta).
\]
Similarly, splitting $\cL_m(\theta)=S_m(\theta)+\Delta_m(\theta)$ and using
\eqref{eq:lemma_scale_S}, \eqref{eq:lemma_scale_Delta},
\eqref{eq:lemma_scale_V}, and~\eqref{eq:lemma_scale_R_mean},
\begin{align*}
\bigl|\E_0[V_m(t)\cL_m(\theta)]\bigr|
&\lesssim
\sqrt{\E_0|V_m(t)|^2}\Bigl(\sqrt{\E_0|S_m(\theta)|^2}+\sqrt{\E_0|\Delta_m(\theta)|^2}\Bigr)
\\
&\lesssim
r_n\delta_n(\theta)\bigl(r_n+r_n\delta_n(\theta)\bigr)
\lesssim 
r_n^2\{\delta_n(\theta)+\delta_n(\theta)^2\}
,
\end{align*}
\begin{align*}
\bigl|\E_0[R_m(t)\cL_m(\theta)]\bigr|
&\lesssim
\sqrt{\E_0|R_m(t)|^2}\Bigl(\sqrt{\E_0|S_m(\theta)|^2}+\sqrt{\E_0|\Delta_m(\theta)|^2}\Bigr)
\\
&\lesssim
r_n\delta_n(\theta)\bigl(r_n+r_n\delta_n(\theta)\bigr)
\lesssim 
r_n^2\{\delta_n(\theta)+\delta_n(\theta)^2\}
.
\end{align*}

\emph{Third-order terms.} Let
$Z(t)\in\{U_m(t),V_m(t),R_m(t)\}$. For
$\bullet\in\{0,\theta\}$, Young's inequality gives
\[
\E_\bullet|Z(t)\cL_m(\theta)^2|
\le
\tfrac13\E_\bullet|Z(t)|^3+\tfrac23\E_\bullet|\cL_m(\theta)|^3.
\]
Equations~\eqref{eq:lemma_scale_U}, \eqref{eq:lemma_scale_V}, and
\eqref{eq:lemma_scale_R} bound the first term by
$r_n^2\delta_n(\theta)$. Moreover,
$|\cL_m(\theta)|^3\lesssim|S_m(\theta)|^3+|\Delta_m(\theta)|^3$, so
\eqref{eq:lemma_scale_S} and~\eqref{eq:lemma_scale_Delta} give the same bound
for the second term. Hence,
\[
\E_\bullet|Z(t)\cL_m(\theta)^2|
\lesssim
r_n^2\delta_n(\theta),
\]
uniformly over all three choices of $Z$.

Finally, \eqref{eq:lemma_scale_R_mean} gives
$|\E_0[R_m(t)]|\lesssim r_n^2\delta_n(\theta)$. Substituting these estimates
into~\eqref{eq:mean_comparison_decomposition} yields
\[
\sup_{t\in T_n}\bigl|\E_\theta[X_m(t)]-\E_{Q_\theta}[Y_m(t)]\bigr|
\lesssim
r_n^2\{\delta_n(\theta)+\delta_n(\theta)^2\}
.
\]
\end{proof}

\begin{lemma}[Covariance comparison]\label{lem:regular_cov_comparison}
Work under the assumptions of Theorem~\ref{thm:regular_free_energy_univ}.
For every $m\le M_n$ and $\theta\in\Theta_n$,
\begin{equation}
\sup_{s,t\in T_n}
\bigl|\cov_\theta(X_m(s),X_m(t))-\cov_{Q_\theta}(Y_m(s),Y_m(t))\bigr|
\lesssim
r_n^2\{\delta_n(\theta)+\delta_n(\theta)^2\}
.
\label{eq:regular_cov_comparison_bound}
\end{equation}
\end{lemma}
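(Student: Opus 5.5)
The plan follows the template of Lemma~\ref{lem:regular_mean_comparison}: decompose the local covariance, move each second moment from $P_{\theta,m}^{(n)}$ to the null law $P_{0,m}^{(n)}$ via Lemma~\ref{lem:regular_change_of_measure}, and control the remainders with Lemma~\ref{lem:regular_local_moment_scale}. Since $\cov_{Q_\theta}(Y_m(s),Y_m(t))=\langle\Sigma_m\eta_m(s),\eta_m(t)\rangle=\E_0[U_m(s)U_m(t)]$ by~\eqref{eq:regular_operator_forms}, and $\E_0[U_m]=0$, it suffices to bound
\[
\cov_\theta(X_m(s),X_m(t))-\E_0[U_m(s)U_m(t)]
\]
uniformly in $s,t$. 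Write $X_m(t)=U_m(t)+D_m(t)$ with $D_m(t)\coloneqq-\tfrac12V_m(t)+R_m(t)$. Bilinearity then splits $\cov_\theta(X_m(s),X_m(t))$ into $\cov_\theta(U_m(s),U_m(t))$, two cross terms $\cov_\theta(U_m,D_m)$, and $\cov_\theta(D_m(s),D_m(t))$.

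For the leading term, I write $\cov_\theta(U_m(s),U_m(t))=\E_\theta[U_m(s)U_m(t)]-\E_\theta[U_m(s)]\E_\theta[U_m(t)]$. Inequality~\eqref{eq:regular_cm_first} with $h=U_m(s)U_m(t)$ and $\cL=\cL_m(\theta)$ gives an error $\E_\bullet|U_m(s)U_m(t)\cL_m(\theta)|$ for $\bullet\in\{0,\theta\}$. By H\"older with exponents $(3,3,3)$ this is at most $(\E_\bullet|U_m(s)|^3\E_\bullet|U_m(t)|^3\E_\bullet|\cL_m|^3)^{1/3}\lesssim r_n^2\delta_n(\theta)$, using \eqref{eq:lemma_scale_U}, \eqref{eq:lemma_scale_S}, and \eqref{eq:lemma_scale_Delta}. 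For the product of means, the proof of Lemma~\ref{lem:regular_mean_comparison} already shows $|\E_\theta U_m(t)|\le|\langle A_m\eta_m(\theta),\eta_m(t)\rangle|+O(r_n^2\{\delta_n+\delta_n^2\})$. The first piece is at most $\sqrt{\E_0|U_m(t)|^2\E_0|S_m(\theta)|^2}\lesssim r_n^2$, so $|\E_\theta U_m(t)|\lesssim r_n^2(1+\delta_n+\delta_n^2)$.

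The main obstacle is this product of means. It is of order $r_n^4(1+\delta_n)^2$, which I must dominate by $r_n^2\{\delta_n+\delta_n^2\}$. I plan to handle it as follows. First, $r_n\lesssim1$ in the regime of interest. Second, $r_n\lesssim\delta_n$ should follow from the assumption: Jensen gives $\E_0\|U_m\|^3\ge(\E_0\|U_m\|^2)^{3/2}$, and if $\mathsf L_{m,n}$ is nondegenerate then $\cE^U_{m,n}\gtrsim r_n$. If that lower bound fails, I expect $|\E_\theta U_m(t)|^2$ to be controllable more directly, namely by $\E_0|U_m(t)|^2\,\E_0|S_m(\theta)|^2$ combined with one factor absorbed into the third moments. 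I will check which route goes through.

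For the cross and quadratic terms, I bound $|\cov_\theta(Z,W)|\le\sqrt{\var_\theta Z\,\var_\theta W}\le\sqrt{\E_\theta Z^2\,\E_\theta W^2}$. I then transfer $\E_\theta Z^2$ to $\E_0 Z^2$ with~\eqref{eq:regular_cm_first}, the error being $\E_\bullet|Z^2\cL_m|\lesssim(\E_\bullet|Z|^3)^{2/3}(\E_\bullet|\cL_m|^3)^{1/3}\lesssim r_n^2\delta_n$. This gives $\E_\theta|U_m(t)|^2\lesssim r_n^2$ and $\E_\theta|D_m(t)|^2\lesssim r_n^2(\delta_n+\delta_n^2)$, using \eqref{eq:lemma_scale_V} and \eqref{eq:lemma_scale_R_mean}. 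The cross terms are then $\lesssim r_n^2\sqrt{\delta_n+\delta_n^2}$. This is weaker than the target, so I would instead expand $\E_\theta[U_m(s)D_m(t)]$ directly. By~\eqref{eq:regular_cm_first} it equals $\E_0[U_m(s)D_m(t)]$ up to $\E_\bullet|U_mD_m\cL_m|\lesssim r_n^2\delta_n$ by three-fold H\"older. Cauchy--Schwarz under $\E_0$ then gives $|\E_0[U_m(s)D_m(t)]|\le r_n\cdot r_n\delta_n$. The mean products are handled as above. The $D_mD_m$ term is $\lesssim r_n^2(\delta_n+\delta_n^2)$ directly. Summing all pieces yields~\eqref{eq:regular_cov_comparison_bound}.
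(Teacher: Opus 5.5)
Your decomposition $X_m=U_m+D_m$ matches the paper, as do the $(3,3,3)$ H\"older control of $\E_\theta[U_m(s)U_m(t)]-\E_0[U_m(s)U_m(t)]$, the direct change-of-measure expansion of $\E_\theta[U_m(s)D_m(t)]$ with Cauchy--Schwarz under $\E_0$, and the variance bound for the $D_mD_m$ term. The gap is the step you flag and leave open: the product of means $\E_\theta[U_m(s)]\,\E_\theta[U_m(t)]$. Since you say the cross-term mean products are ``handled as above'', the gap also covers $\E_\theta[U_m(s)]\,\E_\theta[D_m(t)]$.

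Routing $\E_\theta U_m(t)$ through the mean comparison isolates the main term $\langle A_m\eta_m(\theta),\eta_m(t)\rangle$, which you can only bound by $r_n^2$. You then need extra relations such as $r_n\lesssim1$ and $r_n\lesssim\delta_n(\theta)$, as you note, and neither is available. The theorem assumes only $r_n\lesssim\sqrt{n/M_n}$. Assumption~\ref{assump:regular_local_bounds} bounds $\mathsf L_{m,n}$ only from above, so your Jensen argument for $\cE^U_{m,n}\gtrsim r_n$ relies on a nondegeneracy condition that is not a hypothesis. Nothing prevents $\E_0\|U_m\|^2$ and $\E_0\|S_m\|^2$ from tending to zero; then $\delta_n(\theta)$ can be much smaller than $r_n$ while your bound stays of order $r_n^4$. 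Your fallback is left unspecified and, as sketched, still runs through the $A_m$ term.

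The missing idea is to avoid the second-order expansion entirely and pair the two factors asymmetrically. Because $\E_0U_m(t)=0$, the first-order inequality~\eqref{eq:regular_cm_first} with $h=U_m(t)$ gives $|\E_\theta U_m(t)|\lesssim\E_0|U_m(t)\cL_m(\theta)|+\E_\theta|U_m(t)\cL_m(\theta)|$. Bound the other factor crudely by $\E_\theta|U_m(s)|$ and apply Young's inequality $ab\le\tfrac23a^{3/2}+\tfrac13b^3$. Jensen and the pointwise bound $|U\cL|^{3/2}\le\tfrac12|U|^3+\tfrac12|\cL|^3$ turn $(\E_\bullet|U_m(t)\cL_m(\theta)|)^{3/2}$ into $\E_\bullet|U_m(t)|^3+\E_\bullet|\cL_m(\theta)|^3$. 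Likewise, $(\E_\theta|U_m(s)|)^3\le\E_\theta|U_m(s)|^3$.

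Every term is then a third moment, hence $\lesssim r_n^2\delta_n(\theta)$ by Lemma~\ref{lem:regular_local_moment_scale}. This needs no condition on $r_n$ and no lower bound on $\mathsf L_{m,n}$. Heuristically, the first factor is $O((r_n^2\delta_n)^{2/3})$ and the second is $O((r_n^2\delta_n)^{1/3})$, so the exponents add to one. The same pairing, with $\E_\theta|D_m(t)|$ in place of $\E_\theta|U_m(s)|$, closes the cross terms. This is how the paper's proof handles both mean products.
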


\begin{proof}
Write
$\tilde R_m(t)\coloneqq-\tfrac12V_m(t)+R_m(t)$. By
\eqref{eq:Y_local_covariance} and~\eqref{eq:regular_operator_forms},
\[
\cov_{Q_\theta}(Y_m(s),Y_m(t))
=
\langle\Sigma_m\eta_m(s),\eta_m(t)\rangle
=
\E_0[U_m(s)U_m(t)]
=
\cov_0(U_m(s),U_m(t)),
\]
where the last equality uses $\E_0[U_m]=0$. Since
$X_m=U_m+\tilde R_m$, it follows that
\begin{equation}
\begin{aligned}
&\cov_\theta(X_m(s),X_m(t))-\cov_{Q_\theta}(Y_m(s),Y_m(t))
\\
&\quad=
\bigl[\cov_\theta(U_m(s),U_m(t))-\cov_0(U_m(s),U_m(t))\bigr]
+
\cov_\theta(\tilde R_m(s),\tilde R_m(t))
\\
&\qquad\qquad
+
\cov_\theta(U_m(s),\tilde R_m(t))
+
\cov_\theta(U_m(t),\tilde R_m(s)).
\label{eq:cov_comparison_decomposition}
\end{aligned}
\end{equation}

\emph{First bracket.} For the first bracket
in~\eqref{eq:cov_comparison_decomposition}, write out the two covariances
directly. By definition,
\[
\cov_\theta(U_m(s),U_m(t))
=
\E_\theta[U_m(s)U_m(t)]-\E_\theta[U_m(s)]\E_\theta[U_m(t)],
\]
while, since $\E_0[U_m]=0$,
\[
\cov_0(U_m(s),U_m(t))
=
\E_0[U_m(s)U_m(t)].
\]
Subtracting the two,
\begin{align*}
&\cov_\theta(U_m(s),U_m(t))-\cov_0(U_m(s),U_m(t))
\\
&\quad=
\bigl\{\E_\theta[U_m(s)U_m(t)]-\E_0[U_m(s)U_m(t)]\bigr\}
-
\E_\theta[U_m(s)]\E_\theta[U_m(t)].
\end{align*}
Applying~\eqref{eq:regular_cm_first} to $h=U_m(s)U_m(t)$ bounds the first
term,
\begin{align*}
&\bigl|\E_\theta[U_m(s)U_m(t)]-\E_0[U_m(s)U_m(t)]\bigr|
\\
&\quad\lesssim
\E_0|U_m(s)U_m(t)\cL_m(\theta)|+\E_\theta|U_m(s)U_m(t)\cL_m(\theta)|,
\end{align*}
so that
\begin{align*}
&\bigl|\cov_\theta(U_m(s),U_m(t))-\cov_0(U_m(s),U_m(t))\bigr|
\\
&\quad\lesssim
\E_0|U_m(s)U_m(t)\cL_m(\theta)|
+\E_\theta|U_m(s)U_m(t)\cL_m(\theta)|
+\bigl|\E_\theta[U_m(s)]\E_\theta[U_m(t)]\bigr|.
\end{align*}
By H\"older's inequality with exponents $(3,3,3)$, applied to the three
distinct factors $U_m(s)$, $U_m(t)$, $\cL_m(\theta)$,
\begin{equation}
\begin{aligned}
\E_\bullet|U_m(s)U_m(t)\cL_m(\theta)|
&\le
\bigl(\E_\bullet|U_m(s)|^3\bigr)^{1/3}
\bigl(\E_\bullet|U_m(t)|^3\bigr)^{1/3}
\\
&\qquad\times
\bigl(\E_\bullet|\cL_m(\theta)|^3\bigr)^{1/3},
\qquad \bullet\in\{0,\theta\}.
\end{aligned}
\label{eq:cov_first_trilinear}
\end{equation}
By~\eqref{eq:lemma_scale_U} and, as in the mean comparison,
$\E_\bullet|\cL_m(\theta)|^3\lesssim r_n^2\delta_n(\theta)$, each factor on
the right is $\lesssim(r_n^2\delta_n(\theta))^{1/3}$, so the product is
$\lesssim r_n^2\delta_n(\theta)$.
It remains to bound $\E_\theta[U_m(s)]\E_\theta[U_m(t)]$. By~\eqref{eq:regular_cm_first} with
$h=U_m(t)$ and $\E_0[U_m]=0$,
\begin{equation}
|\E_\theta[U_m(t)]|
=
|\E_\theta[U_m(t)]-\E_0[U_m(t)]|
\lesssim
\E_0|U_m(t)\cL_m(\theta)|+\E_\theta|U_m(t)\cL_m(\theta)|.
\label{eq:cov_U_mean_cm}
\end{equation}
Multiplying by $\E_\theta|U_m(s)|$ and applying Young's inequality
$ab\le\tfrac23a^{3/2}+\tfrac13b^3$ (conjugate exponents $3/2,3$) to each of
the two resulting products,
\begin{align*}
\bigl|\E_\theta[U_m(t)]\E_\theta[U_m(s)]\bigr|
&\lesssim
\bigl(\E_0|U_m(t)\cL_m(\theta)|\bigr)^{3/2}
+
\bigl(\E_\theta|U_m(t)\cL_m(\theta)|\bigr)^{3/2}
\\
&\quad
+
\bigl(\E_\theta|U_m(s)|\bigr)^3.
\end{align*}
We convert each term $(\E_\bullet|U_m(t)\cL_m(\theta)|)^{3/2}$, for
$\bullet\in\{0,\theta\}$, into a sum of third moments in two steps. First, by
Jensen's inequality applied to the convex function $x\mapsto x^{3/2}$ on
$x\ge0$,
\[
\bigl(\E_\bullet|U_m(t)\cL_m(\theta)|\bigr)^{3/2}
\le
\E_\bullet\bigl[|U_m(t)\cL_m(\theta)|^{3/2}\bigr]
=
\E_\bullet\bigl[|U_m(t)|^{3/2}|\cL_m(\theta)|^{3/2}\bigr].
\]
Second, applying Young's inequality $ab\le\tfrac12a^2+\tfrac12b^2$ pointwise,
with $a=|U_m(t)|^{3/2}$ and $b=|\cL_m(\theta)|^{3/2}$, and then taking
$\E_\bullet$,
\[
\E_\bullet\bigl[|U_m(t)|^{3/2}|\cL_m(\theta)|^{3/2}\bigr]
\le
\tfrac12\E_\bullet|U_m(t)|^3+\tfrac12\E_\bullet|\cL_m(\theta)|^3.
\]
Combining the two steps,
\begin{equation}
\bigl(\E_\bullet|U_m(t)\cL_m(\theta)|\bigr)^{3/2}
\lesssim
\E_\bullet|U_m(t)|^3+\E_\bullet|\cL_m(\theta)|^3,
\label{eq:cov_mixed_three_halves}
\end{equation}
and, by Jensen's inequality, $(\E_\theta|U_m(s)|)^3\le\E_\theta|U_m(s)|^3$.
Every term on the right is
now a third moment of $U_m(t)$, $U_m(s)$, or $\cL_m(\theta)=S_m(\theta)+\Delta_m(\theta)$,
under $\E_0$ or $\E_\theta$, so~\eqref{eq:lemma_scale_U},
\eqref{eq:lemma_scale_S}, and~\eqref{eq:lemma_scale_Delta} give
\[
\bigl|\E_\theta[U_m(s)]\E_\theta[U_m(t)]\bigr|
\lesssim
r_n^2\delta_n(\theta).
\]
Combining this estimate with~\eqref{eq:cov_first_trilinear},
\begin{equation}
\bigl|\cov_\theta(U_m(s),U_m(t))-\cov_0(U_m(s),U_m(t))\bigr|
\lesssim
r_n^2\delta_n(\theta).
\label{eq:cov_bracket1}
\end{equation}
Throughout the remaining three brackets, recall
$\tilde R_m(t)\coloneqq-\tfrac12V_m(t)+R_m(t)$. By the triangle inequality
$|{-\tfrac12}V_m(t)+R_m(t)|^k\lesssim|V_m(t)|^k+|R_m(t)|^k$ and
Lemma~\ref{lem:regular_local_moment_scale},
\begin{equation}
\sqrt{\E_0|\tilde R_m(t)|^2}
\lesssim r_n\delta_n(\theta),
\qquad
\E_0|\tilde R_m(t)|^3+\E_\theta|\tilde R_m(t)|^3
\lesssim r_n^2\delta_n(\theta).
\label{eq:cov_remainder_moments}
\end{equation}

\emph{Second bracket.} For the second bracket
in~\eqref{eq:cov_comparison_decomposition}, we first control the second
moment of $\tilde R_m$ under $P_\theta$. Applying
\eqref{eq:regular_cm_first} with $h=\tilde R_m(t)^2$,
\[
\E_\theta|\tilde R_m(t)|^2
\le
\E_0|\tilde R_m(t)|^2+\E_0|\tilde R_m(t)^2\cL_m(\theta)|+\E_\theta|\tilde R_m(t)^2\cL_m(\theta)|.
\]
By Young's inequality $a^2b\le\tfrac23a^3+\tfrac13b^3$, as in the
third-order terms in the mean comparison, applied with $a=|\tilde R_m(t)|$
and $b=|\cL_m(\theta)|$,
\[
\E_\bullet|\tilde R_m(t)^2\cL_m(\theta)|
\le
\tfrac23\E_\bullet|\tilde R_m(t)|^3+\tfrac13\E_\bullet|\cL_m(\theta)|^3,
\qquad
\bullet\in\{0,\theta\},
\]
so that
\[
\E_\theta|\tilde R_m(t)|^2
\lesssim
r_n^2\{\delta_n(\theta)+\delta_n(\theta)^2\}
,
\]
and likewise with $t$ replaced by $s$. Hence, by Cauchy--Schwarz,
\begin{equation}
\bigl|\cov_\theta(\tilde R_m(s),\tilde R_m(t))\bigr|
\le
\sqrt{\var_\theta(\tilde R_m(s))\var_\theta(\tilde R_m(t))}
\lesssim
r_n^2\{\delta_n(\theta)+\delta_n(\theta)^2\}
.
\label{eq:cov_bracket2}
\end{equation}

\emph{Third and fourth brackets.} For the remaining two brackets
in~\eqref{eq:cov_comparison_decomposition}, write
\[
\cov_\theta(U_m(s),\tilde R_m(t))
=
\E_\theta[U_m(s)\tilde R_m(t)]-\E_\theta[U_m(s)]\E_\theta[\tilde R_m(t)].
\]
Applying~\eqref{eq:regular_cm_first} with
$h=U_m(s)\tilde R_m(t)$ gives
\begin{align*}
\bigl|\E_\theta[U_m(s)\tilde R_m(t)]\bigr|
&\le
\bigl|\E_0[U_m(s)\tilde R_m(t)]\bigr|
\\
&\quad
+
\E_0|U_m(s)\tilde R_m(t)\cL_m(\theta)|
+
\E_\theta|U_m(s)\tilde R_m(t)\cL_m(\theta)|
\\
&\lesssim
r_n^2\delta_n(\theta).
\end{align*}
Indeed, Cauchy--Schwarz,~\eqref{eq:lemma_scale_U}, and
\eqref{eq:cov_remainder_moments} give
\[
\bigl|\E_0[U_m(s)\tilde R_m(t)]\bigr|
\le
\sqrt{\E_0|U_m(s)|^2}\sqrt{\E_0|\tilde R_m(t)|^2}
\lesssim r_n^2\delta_n(\theta),
\]
while $|abc|\le\tfrac13(|a|^3+|b|^3+|c|^3)$ bounds each of the two
change-of-measure terms by the corresponding third moments of $U_m(s)$,
$\tilde R_m(t)$, and $\cL_m(\theta)$.
For the product of means, since $\E_0[U_m]=0$,
applying~\eqref{eq:regular_cm_first} with $h=U_m(s)$ gives
\[
|\E_\theta[U_m(s)]|
\lesssim
\E_0|U_m(s)\cL_m(\theta)|+\E_\theta|U_m(s)\cL_m(\theta)|,
\]
as recorded in~\eqref{eq:cov_U_mean_cm}. Multiplying by
$\E_\theta|\tilde R_m(t)|$ and applying Young's inequality
$ab\le\tfrac23a^{3/2}+\tfrac13b^3$ (conjugate exponents $3/2,3$),
\[
\bigl|\E_\theta[U_m(s)]\E_\theta[\tilde R_m(t)]\bigr|
\lesssim
\bigl(\E_0|U_m(s)\cL_m(\theta)|\bigr)^{3/2}+\bigl(\E_\theta|U_m(s)\cL_m(\theta)|\bigr)^{3/2}
+\bigl(\E_\theta|\tilde R_m(t)|\bigr)^3.
\]
The first two terms are $\lesssim r_n^2\delta_n(\theta)$ by
\eqref{eq:cov_mixed_three_halves}, \eqref{eq:lemma_scale_U},
\eqref{eq:lemma_scale_S}, and~\eqref{eq:lemma_scale_Delta}. Moreover,
Jensen's inequality and~\eqref{eq:cov_remainder_moments} give
$(\E_\theta|\tilde R_m(t)|)^3\le
\E_\theta|\tilde R_m(t)|^3\lesssim r_n^2\delta_n(\theta)$.
Combining these estimates, and exchanging $s$ and $t$ for the symmetric
bracket,
\begin{equation}
\bigl|\cov_\theta(U_m(s),\tilde R_m(t))\bigr|,\
\bigl|\cov_\theta(U_m(t),\tilde R_m(s))\bigr|
\ \lesssim\
r_n^2\delta_n(\theta).
\label{eq:cov_bracket34}
\end{equation}

Substituting~\eqref{eq:cov_bracket1}, \eqref{eq:cov_bracket2},
and~\eqref{eq:cov_bracket34} into~\eqref{eq:cov_comparison_decomposition} gives
\[
\sup_{s,t\in T_n}
\bigl|\cov_\theta(X_m(s),X_m(t))-\cov_{Q_\theta}(Y_m(s),Y_m(t))\bigr|
\lesssim
r_n^2\{\delta_n(\theta)+\delta_n(\theta)^2\}
.
\]
\end{proof}

\begin{lemma}[Third-moment bound]\label{lem:regular_third_moment}
Work under the assumptions of Theorem~\ref{thm:regular_free_energy_univ}.
For every $m\le M_n$ and $\theta\in\Theta_n$,
\begin{equation}
\E_\theta\|X_m-\E_\theta[X_m]\|_\infty^3
\lesssim
r_n^2\delta_n(\theta).
\label{eq:regular_third_moment_bound}
\end{equation}
\end{lemma}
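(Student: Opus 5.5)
The approach is to reduce the centered third moment to uncentered sup-norm third moments of the three pieces of $X_m=U_m-\tfrac12V_m+R_m$ under $P_\theta$, and then bound each piece by Lemma~\ref{lem:regular_local_moment_scale}.

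\emph{Step 1: remove the centering.} Write $\|x-\E_\theta x\|_\infty\le\|x\|_\infty+\|\E_\theta x\|_\infty$, where $\|\E_\theta x\|_\infty\le \E_\theta\|x\|_\infty$. Combined with $(a+b)^3\le4(a^3+b^3)$ and Jensen's inequality, this gives
\[
\E_\theta\|X_m-\E_\theta[X_m]\|_\infty^3\le 8\,\E_\theta\|X_m\|_\infty^3 .
\]
Applying the same inequality to the three-term decomposition then yields
\[
\E_\theta\|X_m\|_\infty^3\lesssim \E_\theta\sup_t|U_m(t)|^3+\E_\theta\sup_t|V_m(t)|^3+\E_\theta\sup_t|R_m(t)|^3 .
\]

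\emph{Step 2: bound each piece uniformly in $t$.} For the linear and quadratic terms, the pointwise bounds from the proof of Lemma~\ref{lem:regular_local_moment_scale} are already uniform in $t$: $\sup_t|U_m(t)|\le r_n\|U_m\|_{\cH_m}$ and $\sup_t|V_m(t)|\le r_n^2\|V_m\|_{\mathrm{op}}$. Consequently $\E_\theta\sup_t|U_m(t)|^3\le r_n^3\E_\theta\|U_m\|^3\le r_n^2\,\cE^U_{m,n}(\theta)\lesssim r_n^2\delta_n(\theta)$. Likewise $\E_\theta\sup_t|V_m(t)|^3\le r_n^6\E_\theta\|V_m\|_{\mathrm{op}}^3\le r_n^2\,\cE^V_{m,n}(\theta)\lesssim r_n^2\delta_n(\theta)$, using the $r_n^4$ term in \eqref{eq:regular_moment_V}. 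The remainder is the one piece that is not controlled deterministically in $t$ by a single random scalar. This is the step I expected to be the main obstacle, but Assumption~\ref{assump:regular_local_bounds} already contains the supremum inside the $P_\theta$-expectation: \eqref{eq:regular_moment_R}, recorded in \eqref{eq:lemma_scale_R}, gives $\E_\theta[\sup_t|R_m(t)|^3]\lesssim r_n^2\delta_n(\theta)$ directly. No chaining or change of measure is needed.

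\emph{Step 3: combine.} Adding the three bounds gives \eqref{eq:regular_third_moment_bound}. The implicit constants come only from the elementary inequalities and the constants in Assumption~\ref{assump:regular_local_bounds}, so the bound is uniform in $m$ and $\theta$.
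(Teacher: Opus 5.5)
Your proposal is correct and essentially the paper's proof: the same split $X_m=U_m-\tfrac12V_m+R_m$, the same pointwise bounds $\sup_t|U_m(t)|\le r_n\|U_m\|_{\cH_m}$ and $\sup_t|V_m(t)|\le r_n^2\|V_m\|_{\mathrm{op}}$, and the same third-moment terms of $\cE^U_{m,n}$, $\cE^V_{m,n}$, $\cE^R_{m,n}$. The only difference is cosmetic: you remove the centering once for $X_m$, whereas the paper centers $U_m$, $V_m$, and $R_m$ separately.
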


\begin{proof}
By Cauchy--Schwarz and~\eqref{eq:feature_radius}, pathwise,
\[
\sup_{t\in T_n}|U_m(t)-\E_\theta U_m(t)|\le r_n\|U_m-\E_\theta U_m\|_{\cH_m},
\]
\[
\sup_{t\in T_n}|V_m(t)-\E_\theta V_m(t)|\le r_n^2\|V_m-\E_\theta V_m\|_{\mathrm{op}}.
\]
Cubing, taking $\E_\theta$, and using $\|a-b\|^k\lesssim\|a\|^k+\|b\|^k$
together with Jensen's inequality for the centering term,
\begin{align*}
\E_\theta\left[\sup_{t\in T_n}
|U_m(t)-\E_\theta U_m(t)|^3\right]
&\lesssim r_n^3\E_\theta\|U_m\|_{\cH_m}^3,
\\
\E_\theta\left[\sup_{t\in T_n}
|V_m(t)-\E_\theta V_m(t)|^3\right]
&\lesssim r_n^6\E_\theta\|V_m\|_{\mathrm{op}}^3.
\end{align*}
By the third-moment terms of $\cE^U_{m,n}(\theta)$
in~\eqref{eq:regular_moment_U} and of $\cE^V_{m,n}(\theta)$
in~\eqref{eq:regular_moment_V}, $\E_\theta\|U_m\|_{\cH_m}^3\lesssim\delta_n(\theta)/r_n$
and $\E_\theta\|V_m\|_{\mathrm{op}}^3\lesssim\delta_n(\theta)/r_n^4$, so
\[
\E_\theta\left[\sup_{t\in T_n}
|U_m(t)-\E_\theta U_m(t)|^3\right]
\lesssim
r_n^2\delta_n(\theta),
\]
\[
\E_\theta\left[\sup_{t\in T_n}
|V_m(t)-\E_\theta V_m(t)|^3\right]
\lesssim
r_n^2\delta_n(\theta).
\]
For the remainder term, the triangle inequality gives
\[
\|R_m-\E_\theta R_m\|_\infty^3
\lesssim
\sup_{t\in T_n}|R_m(t)|^3
+
\Bigl(\sup_{t\in T_n}|\E_\theta R_m(t)|\Bigr)^3.
\]
Since
$\sup_t|\E_\theta R_m(t)|\le\E_\theta[\sup_t|R_m(t)|]$, Jensen's inequality
and~\eqref{eq:regular_moment_R} yield
\[
\E_\theta\|R_m-\E_\theta R_m\|_\infty^3
\lesssim
\E_\theta\Bigl[\sup_{t\in T_n}|R_m(t)|^3\Bigr]
\lesssim
r_n^2\delta_n(\theta).
\]
Combining the three
pieces via $\|a+b+c\|_\infty^3\lesssim\|a\|_\infty^3+\|b\|_\infty^3+\|c\|_\infty^3$,
\[
\E_\theta\|X_m-\E_\theta[X_m]\|_\infty^3
\lesssim
r_n^2\delta_n(\theta).
\]
\end{proof}

We now combine Lemmas~\ref{lem:regular_mean_comparison},
\ref{lem:regular_cov_comparison}, and~\ref{lem:regular_third_moment}.
The processes $X_1,\ldots,X_{M_n}$ are independent under $P_\theta$, and
$Y_1,\ldots,Y_{M_n}$ are independent under $Q_\theta$. Thus, the means and
covariances of the sums are obtained by summing their blockwise counterparts.
Consequently, the three lemmas and $M_nr_n^2\lesssim n$ give
\begin{align*}
&\bigl\|\E_\theta[X]-\E_{Q_\theta}[Y]\bigr\|_\infty
+
\bigl\|\cov_\theta(X)-\cov_{Q_\theta}(Y)\bigr\|_\infty
+
\sum_{m=1}^{M_n}\E_\theta\|X_m-\E_\theta[X_m]\|_\infty^3
\\
&\quad\lesssim
M_nr_n^2\{\delta_n(\theta)+\delta_n(\theta)^2\}
\lesssim
n\{\delta_n(\theta)+\delta_n(\theta)^2\}.
\end{align*}
By Theorem~\ref{thm:fe_comparison},
\[
|F(\theta)-F^{\mathrm G}(\theta)|
\lesssim
n\{\delta_n(\theta)+\delta_n(\theta)^2\}.
\]
Dividing by $n$ gives
\[
|F_n(\theta)-F_n^{\mathrm G}(\theta)|
\lesssim
\delta_n(\theta)+\delta_n(\theta)^2,
\]
as claimed.

\subsection{Proof of Theorem~\ref{thm:regular_posterior_comparison}}
\label{app:proof_regular_posterior}

The Gaussian perturbation is already specified in
Theorem~\ref{thm:regular_posterior_comparison}. We construct its non-Gaussian
counterpart and apply Theorem~\ref{thm:Phi_comparison}. Once the required
independence and convexity conditions are verified, it remains to control two
errors: $\Delta_1$ measures how accurately the perturbations generate the target
posterior functional, whereas $\Delta_2$ compares the free energies of the
unperturbed and perturbed processes.
We use the product operators in
\eqref{eq:regular_posterior_product_operators} and suppress the superscript
$(n)$ throughout this subsection.

\emph{Construction of the non-Gaussian perturbation.}
For $m\le M_n$, define
\begin{align*}
q_m(t)
&\coloneqq
a\langle H_m[\eta_m(\theta)-\eta_m(t)],
\eta_m(\theta)-\eta_m(t)\rangle_{\cH_m},
\\
\zeta_m(t)
&\coloneqq
\langle U_m-\E_\theta U_m,B_m\eta_m(t)\rangle_{\cH_m},
\\
\widetilde X_m(t)
&\coloneqq \zeta_m(t)+q_m(t).
\end{align*}
The non-Gaussian perturbation is
\begin{align}
X_{\beta,m}(t)
&\coloneqq X_m(t)+\beta\widetilde X_m(t),
\qquad
X_\beta(t)\coloneqq\sum_{m=1}^{M_n}X_{\beta,m}(t).
\label{eq:regular_X_beta_pert}
\end{align}
For the Gaussian perturbation~\eqref{eq:Y_beta}--\eqref{eq:Y_tilde}, write
\[
\zeta_m^{\mathrm G}(t)
\coloneqq\langle\xi_m,B_m\eta_m(t)\rangle_{\cH_m},
\qquad
\widetilde Y_m(t)=\zeta_m^{\mathrm G}(t)+q_m(t).
\]
Thus, the two perturbations have the same deterministic part, while $\zeta_m$
and $\zeta_m^{\mathrm G}$ are their respective centered linear parts. The
assumption $\mathfrak h_n=O(1)$ gives
$\|H_m\|_{\mathrm{op}}+\|B_m\|_{\mathrm{op}}\lesssim1$ uniformly in $m$.

We verify the hypotheses of
Theorem~\ref{thm:Phi_comparison}. For each fixed $\theta$, the pairs
$(X_{\beta,m},\dot X_{\beta,m})=(X_{\beta,m},\widetilde X_m)$ are
independent under $P_\theta$, and $(Y_\beta,\dot Y_\beta)$ is jointly
Gaussian because both are affine functions of the same Gaussian
element, giving~(a). Since $\dot X_\beta=\widetilde X$ and
$\dot Y_\beta=\widetilde Y$ are independent of $\beta$, the
deterministic bound on $q_m$ and the moment bounds on $\zeta_m$ and
$\zeta_m^{\mathrm G}$ (the latter via Fernique's theorem) give~(b).
Finally, $X_\beta$ and $Y_\beta$ are affine in $\beta$, so the
perturbed free energies are convex, giving~(c). It remains to prove, uniformly for
$|\beta|\le\eps$ with $\eps>0$ sufficiently small,
\begin{equation}
\frac1n\Delta_1(\theta,\beta)
\lesssim |\beta|+\delta_n(\theta),
\qquad
\frac1n\Delta_2(\theta,\beta)
\lesssim 
\widetilde\delta_n(\theta)
.
\label{eq:regular_posterior_Delta_targets}
\end{equation}

\emph{Auxiliary estimates for $\Delta_1$ and $\Delta_2$.}
We first record two blockwise estimates used to prove
\eqref{eq:regular_posterior_Delta_targets}. Uniformly over
$m\le M_n$ and $s,t\in T_n$,
\begin{align}
&\left|
\cov_\theta\bigl(X_m(s),\zeta_m(t)\bigr)
-
\cov_{Q_\theta}\bigl(Y_m(s),\zeta_m^{\mathrm G}(t)\bigr)
\right|
\lesssim r_n^2\delta_n(\theta),
\label{eq:regular_directional_cov_X}
\\
&\left|
\cov_\theta\bigl(\zeta_m(s),\zeta_m(t)\bigr)
-
\cov_{Q_\theta}\bigl(\zeta_m^{\mathrm G}(s),\zeta_m^{\mathrm G}(t)\bigr)
\right|
\lesssim r_n^2\delta_n(\theta).
\label{eq:regular_directional_cov_L}
\end{align}
To verify these bounds, set
$U_m^B(t)\coloneqq\langle U_m,B_m\eta_m(t)\rangle_{\cH_m}$. Since
$\|B_m\|_{\mathrm{op}}\lesssim1$, the pointwise second- and third-moment
bounds for $U_m^B(t)$ are the same as those for $U_m(t)$ in
Lemma~\ref{lem:regular_local_moment_scale}. Moreover, $\E_0U_m^B(t)=0$.
Since
$\zeta_m(t)=U_m^B(t)-\E_\theta U_m^B(t)$, invariance of covariance under
deterministic shifts gives
\begin{align}
\cov_\theta(X_m(s),\zeta_m(t))
&=
\cov_\theta(X_m(s),U_m^B(t))
\notag\\
&=
\E_\theta[X_m(s)U_m^B(t)]
-\E_\theta[X_m(s)]\E_\theta[U_m^B(t)],
\label{eq:regular_nongaussian_cov_XB}
\\
\cov_\theta(\zeta_m(s),\zeta_m(t))
&=
\cov_\theta(U_m^B(s),U_m^B(t))
\notag\\
&=
\E_\theta[U_m^B(s)U_m^B(t)]
-\E_\theta[U_m^B(s)]\E_\theta[U_m^B(t)].
\label{eq:regular_nongaussian_cov_BB}
\end{align}
Because $H_m=\Sigma_mB_m$, taking adjoints gives
$H_m^*=B_m^*\Sigma_m$; moreover, $H_m^*=H_m$ by self-adjointness.
Consequently,
\begin{align}
\cov_{Q_\theta}(Y_m(s),\zeta_m^{\mathrm G}(t))
&=
\E_{Q_\theta}\left[
\bigl(Y_m(s)-\E_{Q_\theta}Y_m(s)\bigr)\zeta_m^{\mathrm G}(t)
\right]
\notag\\
&=
\E_{Q_\theta}\left[
\langle\xi_m,\eta_m(s)\rangle_{\cH_m}
\langle\xi_m,B_m\eta_m(t)\rangle_{\cH_m}
\right]
\notag\\
&=
\langle\Sigma_m\eta_m(s),B_m\eta_m(t)\rangle_{\cH_m}
=
\langle B_m^*\Sigma_m\eta_m(s),\eta_m(t)\rangle_{\cH_m}
\notag\\
&=
\langle H_m^*\eta_m(s),\eta_m(t)\rangle_{\cH_m}
=
\langle H_m\eta_m(s),\eta_m(t)\rangle_{\cH_m}
\notag\\
&=
\E_0[U_m(s)U_m^B(t)],
\label{eq:regular_gaussian_cov_XB}
\\
\cov_{Q_\theta}(\zeta_m^{\mathrm G}(s),\zeta_m^{\mathrm G}(t))
&=
\langle\Sigma_mB_m\eta_m(s),B_m\eta_m(t)\rangle_{\cH_m}
\notag\\
&=
\E_0[U_m^B(s)U_m^B(t)].
\label{eq:regular_gaussian_cov_BB}
\end{align}

We now use the change-of-measure argument underlying
Lemma~\ref{lem:regular_cov_comparison}. For clarity, we record it in a
reusable form that applies to both~\eqref{eq:regular_directional_cov_X}
and~\eqref{eq:regular_directional_cov_L}. Let $A$ and $C$ be any two
variables of the form $U_m(v)$ or $U_m^B(v)$, $v\in T_n$. In particular,
$\E_0A=\E_0C=0$. Expanding the covariance and
applying~\eqref{eq:regular_cm_first} with $h=AC$ gives
\begin{align*}
&\left|\cov_\theta(A,C)-\E_0[AC]\right|
\\
&\quad\le
\E_0|AC\cL_m(\theta)|
+\E_\theta|AC\cL_m(\theta)|
+|\E_\theta A\,\E_\theta C|.
\end{align*}
For $\bullet\in\{0,\theta\}$, H\"older's inequality yields
\[
\E_\bullet|AC\cL_m(\theta)|
\le
(\E_\bullet|A|^3)^{1/3}
(\E_\bullet|C|^3)^{1/3}
(\E_\bullet|\cL_m(\theta)|^3)^{1/3}
\lesssim r_n^2\delta_n(\theta),
\]
where we used Lemma~\ref{lem:regular_local_moment_scale} and
$|\cL_m(\theta)|^3\lesssim|S_m(\theta)|^3+|\Delta_m(\theta)|^3$.
To control the product of means, apply~\eqref{eq:regular_cm_first} to $A$.
Since $\E_0A=0$, Young's inequality with conjugate exponents $3/2$ and $3$
gives
\begin{align*}
|\E_\theta A\,\E_\theta C|
&\lesssim
\sum_{\bullet\in\{0,\theta\}}
\left\{(\E_\bullet|A\cL_m(\theta)|)^{3/2}
+(\E_\theta|C|)^3\right\}
\\
&\lesssim
\sum_{\bullet\in\{0,\theta\}}
\bigl(\E_\bullet|A|^3+\E_\bullet|\cL_m(\theta)|^3
+\E_\theta|C|^3\bigr).
\end{align*}
Here the second inequality follows from Jensen's inequality and the pointwise
bound $|A\cL_m(\theta)|^{3/2}\le
\tfrac12|A|^3+\tfrac12|\cL_m(\theta)|^3$. Hence,
\[
|\E_\theta A\,\E_\theta C|
\lesssim
\sum_{\bullet\in\{0,\theta\}}
\bigl(\E_\bullet|A|^3+\E_\bullet|C|^3
+\E_\bullet|\cL_m(\theta)|^3\bigr)
\lesssim r_n^2\delta_n(\theta).
\]
We have therefore proved
\begin{equation}
\left|\cov_\theta(A,C)-\E_0[AC]\right|
\lesssim r_n^2\delta_n(\theta)
\label{eq:regular_directional_cov_generic}
\end{equation}
for all such $A$ and $C$.

Taking $A=U_m^B(s)$ and $C=U_m^B(t)$ in
\eqref{eq:regular_directional_cov_generic} gives
\[
\left|
\cov_\theta(U_m^B(s),U_m^B(t))
-\E_0[U_m^B(s)U_m^B(t)]
\right|
\lesssim r_n^2\delta_n(\theta).
\]
Equations~\eqref{eq:regular_nongaussian_cov_BB}
and~\eqref{eq:regular_gaussian_cov_BB} show that
\begin{align*}
\cov_\theta(\zeta_m(s),\zeta_m(t))
&=\cov_\theta(U_m^B(s),U_m^B(t)),
\\
\cov_{Q_\theta}(\zeta_m^{\mathrm G}(s),\zeta_m^{\mathrm G}(t))
&=\E_0[U_m^B(s)U_m^B(t)].
\end{align*}
Thus this application of~\eqref{eq:regular_directional_cov_generic} is
exactly~\eqref{eq:regular_directional_cov_L}.
For~\eqref{eq:regular_directional_cov_X}, write
$X_m=U_m+\widetilde R_m$, where
$\widetilde R_m=-\tfrac12V_m+R_m$. Since centering the second argument does
not change a covariance,
\begin{align*}
&\cov_\theta(X_m(s),\zeta_m(t))
-\cov_{Q_\theta}(Y_m(s),\zeta_m^{\mathrm G}(t))
\\
&\quad=
\cov_\theta(U_m(s),U_m^B(t))
-\E_0[U_m(s)U_m^B(t)]
+\cov_\theta(\widetilde R_m(s),U_m^B(t)).
\end{align*}
The first difference is bounded by
\eqref{eq:regular_directional_cov_generic}. For the last covariance,
\eqref{eq:regular_cm_first} gives
\begin{align*}
|\cov_\theta(\widetilde R_m(s),U_m^B(t))|
&\le
|\E_0[\widetilde R_m(s)U_m^B(t)]|
\\
&\quad+
\sum_{\bullet\in\{0,\theta\}}
\E_\bullet|\widetilde R_m(s)U_m^B(t)\cL_m(\theta)|
\\
&\quad+
|\E_\theta\widetilde R_m(s)\,\E_\theta U_m^B(t)|.
\end{align*}
By Cauchy--Schwarz and Lemma~\ref{lem:regular_local_moment_scale},
\[
|\E_0[\widetilde R_m(s)U_m^B(t)]|
\le
\sqrt{\E_0|\widetilde R_m(s)|^2}
\sqrt{\E_0|U_m^B(t)|^2}
\lesssim r_n^2\delta_n(\theta).
\]
The two triple products are bounded by H\"older's inequality and the
corresponding third moments. Finally, because $\E_0U_m^B(t)=0$,
\eqref{eq:regular_cm_first} and the Jensen--Young calculation used to prove
\eqref{eq:regular_directional_cov_generic} give
\begin{align*}
|\E_\theta\widetilde R_m(s)\,\E_\theta U_m^B(t)|
&\lesssim
\sum_{\bullet\in\{0,\theta\}}
\left\{(\E_\bullet|U_m^B(t)\cL_m(\theta)|)^{3/2}
+(\E_\theta|\widetilde R_m(s)|)^3\right\}
\\
&\lesssim
\sum_{\bullet\in\{0,\theta\}}
\bigl(\E_\bullet|U_m^B(t)|^3
+\E_\bullet|\cL_m(\theta)|^3
+\E_\theta|\widetilde R_m(s)|^3\bigr)
\\
&\lesssim r_n^2\delta_n(\theta).
\end{align*}
Thus every term is $\lesssim r_n^2\delta_n(\theta)$, which proves
\eqref{eq:regular_directional_cov_X}.

We also record the third-moment bound for the perturbation. Since $q_m$ is
deterministic and $\E_\theta\zeta_m=0$,
\[
\widetilde X_m-\E_\theta\widetilde X_m=\zeta_m,
\qquad
\|\zeta_m\|_\infty
\lesssim r_n\|U_m-\E_\theta U_m\|_{\cH_m}.
\]
Consequently, by~\eqref{eq:regular_moment_U} and Jensen's inequality,
\begin{equation}
\E_\theta
\|\widetilde X_m-\E_\theta\widetilde X_m\|_\infty^3
\lesssim r_n^2\delta_n(\theta).
\label{eq:regular_perturbation_third_moment}
\end{equation}

\emph{Perturbation approximation error $\Delta_1$.}
Lemma~\ref{lem:linear_perturb_G} gives
\[
\psi_{\theta,\beta}^{\mathrm G}(s,t)-\phi_\theta(s,t)
=
\beta\langle B^*\Sigma B[\eta(t)-\eta(s)],\eta(t)\rangle.
\]
By the product structure in~\eqref{eq:product_feature_map} and
\eqref{eq:regular_posterior_product_operators},
\begin{align*}
&\left|
\langle B^*\Sigma B[\eta(t)-\eta(s)],\eta(t)\rangle_{\cH}
\right|
\\
&\quad\le
\sum_{m=1}^{M_n}
\|B_m\|_{\mathrm{op}}^2
\|\Sigma_m\|_{\mathrm{op}}
\|\eta_m(t)-\eta_m(s)\|_{\cH_m}
\|\eta_m(t)\|_{\cH_m}.
\end{align*}
The definition of $\Sigma_m$ and $\mathsf L_{m,n}\lesssim1$ give
$\|\Sigma_m\|_{\mathrm{op}}\lesssim1$, while $\mathfrak h_n=O(1)$ gives
$\|B_m\|_{\mathrm{op}}\lesssim1$ uniformly in $m$. Finally,
\eqref{eq:feature_radius} implies
$\|\eta_m(t)-\eta_m(s)\|_{\cH_m}\le2r_n$ and
$\|\eta_m(t)\|_{\cH_m}\le r_n$. Consequently,
\begin{equation}
\|\psi_{\theta,\beta}^{\mathrm G}-\phi_\theta\|_\infty
\lesssim |\beta|M_nr_n^2.
\label{eq:regular_Delta1_G}
\end{equation}
For the non-Gaussian perturbation, $\E_\theta\widetilde X_m(t)=q_m(t)$, and
\eqref{eq:psi_X} yields
\begin{align}
\psi_{\theta,0}(s,t)-\phi_\theta(s,t)
&=
\sum_{m=1}^{M_n}
\Bigl[
\cov_\theta(X_m(t),\zeta_m(t))
-\langle H_m\eta_m(t),\eta_m(t)\rangle
\Bigr]
\\
&\quad-
\sum_{m=1}^{M_n}
\Bigl[
\cov_\theta(X_m(s),\zeta_m(t))
-\langle H_m\eta_m(s),\eta_m(t)\rangle
\Bigr].
\label{eq:regular_psi0_decomposition}
\end{align}
Equation~\eqref{eq:regular_gaussian_cov_XB} identifies each inner product in
\eqref{eq:regular_psi0_decomposition} with the corresponding Gaussian
covariance. Applying~\eqref{eq:regular_directional_cov_X} to each bracket
and summing over $m$ therefore gives
\begin{equation}
\|\psi_{\theta,0}-\phi_\theta\|_\infty
\lesssim M_nr_n^2\delta_n(\theta).
\label{eq:regular_Delta1_X}
\end{equation}
Finally, Lemma~\ref{lem:regular_third_moment},
\eqref{eq:regular_perturbation_third_moment}, and H\"older's inequality give
\begin{align}
&\sum_{m=1}^{M_n}
\Bigl(\E_\theta\|X_m-\E_\theta X_m\|_\infty^3\Bigr)^{2/3}
\Bigl(\E_\theta\|\dot X_{0,m}-\E_\theta\dot X_{0,m}\|_\infty^3\Bigr)^{1/3}
\lesssim
M_nr_n^2\delta_n(\theta).
\label{eq:regular_Delta1_mixed_moment}
\end{align}
Combining~\eqref{eq:regular_Delta1_G},
\eqref{eq:regular_Delta1_X}, and
\eqref{eq:regular_Delta1_mixed_moment}, and using $M_nr_n^2\lesssim n$,
proves the first bound in~\eqref{eq:regular_posterior_Delta_targets}.

\emph{Free energy comparison error $\Delta_2$.}
Summing~\eqref{eq:regular_mean_comparison_bound},
\eqref{eq:regular_cov_comparison_bound}, and
\eqref{eq:regular_third_moment_bound} over $m$, and using
$M_nr_n^2\lesssim n$, bounds the unperturbed mean, covariance, and
third-moment terms by $n\widetilde\delta_n(\theta)$. For the perturbed mean, the
centered linear terms satisfy
$\E_\theta\zeta_m=\E_{Q_\theta}\zeta_m^{\mathrm G}=0$, while the deterministic
terms $q_m$ are
identical. Hence,
\begin{equation}
\E_\theta[X_\beta]-\E_{Q_\theta}[Y_\beta]
=
\E_\theta[X]-\E_{Q_\theta}[Y].
\label{eq:regular_perturbed_mean_identity}
\end{equation}
Equations~\eqref{eq:regular_perturbed_mean_identity} and
\eqref{eq:regular_mean_comparison_bound} therefore give
\begin{equation}
\left\|\E_\theta[X_\beta]-\E_{Q_\theta}[Y_\beta]\right\|_\infty
\lesssim n
\widetilde\delta_n(\theta)
.
\label{eq:regular_perturbed_mean_bound}
\end{equation}
For the covariance, the deterministic terms again make no contribution, and
bilinearity gives
\begin{align}
&\cov_\theta(X_\beta)-\cov_{Q_\theta}(Y_\beta)
\\
&\quad=
\cov_\theta(X)-\cov_{Q_\theta}(Y)
+\beta\mathcal C_1+\beta^2\mathcal C_2,
\label{eq:regular_perturbed_cov_decomposition}
\end{align}
where
\begin{align*}
\mathcal C_1(s,t)
&\coloneqq
\sum_{m=1}^{M_n}
\Bigl\{
\cov_\theta(X_m(s),\zeta_m(t))
+\cov_\theta(\zeta_m(s),X_m(t))
\\
&\hspace{35mm}
-\cov_{Q_\theta}(Y_m(s),\zeta_m^{\mathrm G}(t))
-\cov_{Q_\theta}(\zeta_m^{\mathrm G}(s),Y_m(t))
\Bigr\},
\\
\mathcal C_2(s,t)
&\coloneqq
\sum_{m=1}^{M_n}
\Bigl\{
\cov_\theta(\zeta_m(s),\zeta_m(t))
-\cov_{Q_\theta}(\zeta_m^{\mathrm G}(s),\zeta_m^{\mathrm G}(t))
\Bigr\}.
\end{align*}
Equations~\eqref{eq:regular_directional_cov_X} and
\eqref{eq:regular_directional_cov_L} imply
\begin{equation}
\|\mathcal C_1\|_\infty+\|\mathcal C_2\|_\infty
\lesssim M_nr_n^2\delta_n(\theta).
\label{eq:regular_perturbed_cov_cross_bound}
\end{equation}
Combining~\eqref{eq:regular_perturbed_cov_decomposition},
\eqref{eq:regular_cov_comparison_bound}, and
\eqref{eq:regular_perturbed_cov_cross_bound}, and using
$M_nr_n^2\lesssim n$, gives, for $|\beta|$ sufficiently small,
\begin{equation}
\left\|\cov_\theta(X_\beta)-\cov_{Q_\theta}(Y_\beta)\right\|_\infty
\lesssim n
\widetilde\delta_n(\theta)
.
\label{eq:regular_perturbed_cov_bound}
\end{equation}
Finally,
\[
X_{\beta,m}-\E_\theta X_{\beta,m}
=
(X_m-\E_\theta X_m)+\beta\zeta_m.
\]
Lemma~\ref{lem:regular_third_moment} and
\eqref{eq:regular_perturbation_third_moment} therefore give
\begin{equation}
\sum_{m=1}^{M_n}
\E_\theta\|X_{\beta,m}-\E_\theta X_{\beta,m}\|_\infty^3
\lesssim M_nr_n^2\delta_n(\theta)
\lesssim n\delta_n(\theta).
\label{eq:regular_perturbed_third_moment_bound}
\end{equation}
Equations~\eqref{eq:regular_perturbed_mean_bound},
\eqref{eq:regular_perturbed_cov_bound}, and
\eqref{eq:regular_perturbed_third_moment_bound} prove the second bound in
\eqref{eq:regular_posterior_Delta_targets}.

\emph{Conclusion.}
Applying Theorem~\ref{thm:Phi_comparison} with $\beta=\pm\eps$ and dividing
by $n$ now gives
\[
\Phi_n^{\mathrm G}(\theta,-\eps)
-C\left(\eps+\delta_n(\theta)
+\frac{\widetilde\delta_n(\theta)}{\eps}\right)
\le
\Phi_n(\theta)
\le
\Phi_n^{\mathrm G}(\theta,\eps)
+C\left(\eps+\delta_n(\theta)
+\frac{\widetilde\delta_n(\theta)}{\eps}\right).
\]
Taking $\eps>0$ sufficiently small, we may assume that $\eps<1$. Since
$\delta_n(\theta)\le\widetilde\delta_n(\theta)$, the term
$\delta_n(\theta)$ is absorbed into
$\widetilde\delta_n(\theta)/\eps$, which yields the asserted bound.

\subsection{Proof of Corollary~\ref{cor:regular_posterior_limit}}
\label{app:proof_regular_posterior_limit}

\emph{Averaging the comparison bound.}
Set
\[
\overline\Phi_n
\coloneqq
\int_{\Theta_n}\Phi_n(\theta)\,\nu_n(\dd\theta),
\qquad
\overline{\widetilde\delta}_n
\coloneqq
\int_{\Theta_n}\widetilde\delta_n(\theta)\,\nu_n(\dd\theta).
\]
Averaging the sandwich bound in
Theorem~\ref{thm:regular_posterior_comparison} gives, for every fixed
sufficiently small $\eps>0$,
\begin{align}
\int_{\Theta_n}\Phi_n^{\mathrm G}(\theta,-\eps)\,\nu_n(\dd\theta)
-C\left(\eps+\frac{\overline{\widetilde\delta}_n}{\eps}\right)
&\le \overline\Phi_n,
\label{eq:proof_regular_limit_lower}
\\
\overline\Phi_n
&\le
\int_{\Theta_n}\Phi_n^{\mathrm G}(\theta,\eps)\,\nu_n(\dd\theta)
+C\left(\eps+\frac{\overline{\widetilde\delta}_n}{\eps}\right).
\label{eq:proof_regular_limit_upper}
\end{align}

\emph{Gaussian observables and free energy derivatives.}
By Lemma~\ref{lem:linear_perturb_G} and
\eqref{eq:proof_PhiG_derivative_error},
\eqref{eq:regular_Delta1_G} gives, uniformly in $\theta$ and for $\beta$ in
a sufficiently small neighborhood of zero,
\begin{equation}
\left|
\partial_\beta F_n^{\mathrm G}(\theta,\beta)
-\Phi_n^{\mathrm G}(\theta,\beta)
\right|
\le C|\beta|.
\label{eq:regular_limit_pointwise_derivative_error}
\end{equation}
The product structure in \eqref{eq:product_feature_map},
\eqref{eq:regular_posterior_product_operators} and
\eqref{eq:feature_radius}, bounds
$n^{-1}\|\phi_\theta\|_\infty$ and
$n^{-1}\|\psi_{\theta,\beta}^{\mathrm G}\|_\infty$ uniformly in $n$ and
$\theta$ on every fixed compact $\beta$-interval contained in this
neighborhood. Hence, by
\eqref{eq:gaussian_free_energy_derivative},
\[
\left|\partial_\beta F_n^{\mathrm G}(\theta,\beta)\right|
\le
n^{-1}\|\psi_{\theta,\beta}^{\mathrm G}\|_\infty
\le C.
\]
Because $\nu_n$ is a probability measure, this uniform bound provides an
integrable dominating function for the $\beta$-derivatives. Differentiation
may therefore be interchanged with the $\nu_n$-integral. Integrating
\eqref{eq:regular_limit_pointwise_derivative_error} yields
\begin{equation}
\left|
(\mathcal F_n^{\mathrm G})'(\beta)
-
\int_{\Theta_n}\Phi_n^{\mathrm G}(\theta,\beta)\,
\nu_n(\dd\theta)
\right|
\le C|\beta|,
\label{eq:proof_regular_limit_derivative}
\end{equation}
with equality at $\beta=0$.

\emph{Passage to the convex limit.}
For every $n$, $\mathcal F_n^{\mathrm G}$ is convex because it is the
$\nu_n$-average of the convex functions
$F_n^{\mathrm G}(\theta,\cdot)$. Its finite pointwise limit
$\mathcal F^{\mathrm G}$ is therefore convex. We next derive a consequence
of this pointwise convergence that will be used at $\beta=\pm\eps_k$ and, in
the differentiable case, at $\beta=0$.
Fix any $\beta$ in the open neighborhood on which
$\mathcal F_n^{\mathrm G}\to\mathcal F^{\mathrm G}$ pointwise, and suppose
that $\mathcal F^{\mathrm G}$ is differentiable at this $\beta$. For all
sufficiently small $h>0$, both $\beta-h$ and $\beta+h$ remain in that
neighborhood, and convexity gives
\begin{equation}
\frac{\mathcal F_n^{\mathrm G}(\beta)-
\mathcal F_n^{\mathrm G}(\beta-h)}{h}
\le (\mathcal F_n^{\mathrm G})'(\beta)
\le
\frac{\mathcal F_n^{\mathrm G}(\beta+h)-
\mathcal F_n^{\mathrm G}(\beta)}{h}.
\label{eq:regular_limit_secant_bounds}
\end{equation}
Letting first $n\to\infty$ in
\eqref{eq:regular_limit_secant_bounds} and then $h\downarrow0$ shows that
\begin{equation}
(\mathcal F_n^{\mathrm G})'(\beta)
\longrightarrow
(\mathcal F^{\mathrm G})'(\beta).
\label{eq:regular_limit_derivative_convergence}
\end{equation}

A finite convex function is differentiable except at most countably many
points. We may consequently choose $\eps_k\downarrow0$ such that
$\mathcal F^{\mathrm G}$ is differentiable at both $-\eps_k$ and $\eps_k$.
We first apply \eqref{eq:regular_limit_derivative_convergence} at these two
points. For fixed $k$, equations~\eqref{eq:proof_regular_limit_derivative}
and \eqref{eq:regular_limit_derivative_convergence}, with
$\beta=-\eps_k$ and $\beta=\eps_k$, imply
\begin{align*}
(\mathcal F^{\mathrm G})'(-\eps_k)-C\eps_k
&\le
\liminf_{n\to\infty}
\int_{\Theta_n}\Phi_n^{\mathrm G}(\theta,-\eps_k)\,
\nu_n(\dd\theta),
\\
\limsup_{n\to\infty}
\int_{\Theta_n}\Phi_n^{\mathrm G}(\theta,\eps_k)\,
\nu_n(\dd\theta)
&\le
(\mathcal F^{\mathrm G})'(\eps_k)+C\eps_k.
\end{align*}
Since $\overline{\widetilde\delta}_n\to0$, applying these two bounds in
\eqref{eq:proof_regular_limit_lower} and
\eqref{eq:proof_regular_limit_upper}, respectively, gives
\[
(\mathcal F^{\mathrm G})'(-\eps_k)-C\eps_k
\le
\liminf_{n\to\infty}\overline\Phi_n
\le
\limsup_{n\to\infty}\overline\Phi_n
\le
(\mathcal F^{\mathrm G})'(\eps_k)+C\eps_k.
\]
For a convex function, derivatives at differentiability points converging to
zero from the left and right converge to the corresponding one-sided
derivatives at zero. Letting $k\to\infty$ therefore proves
\[
\partial_-\mathcal F^{\mathrm G}(0)
\le
\liminf_{n\to\infty}\overline\Phi_n
\le
\limsup_{n\to\infty}\overline\Phi_n
\le
\partial_+\mathcal F^{\mathrm G}(0).
\]

\emph{Differentiability at the origin.}
If $\mathcal F^{\mathrm G}$ is differentiable at zero, these one-sided
derivatives agree, and hence $\overline\Phi_n\to
(\mathcal F^{\mathrm G})'(0)$. We now use
\eqref{eq:regular_limit_derivative_convergence} a second time, with
$\beta=0$. Together with \eqref{eq:proof_regular_limit_derivative} at
$\beta=0$, it gives
\[
\int_{\Theta_n}\Phi_n^{\mathrm G}(\theta)\,\nu_n(\dd\theta)
=
(\mathcal F_n^{\mathrm G})'(0)
\longrightarrow
(\mathcal F^{\mathrm G})'(0),
\]
which proves the final assertion.

\section{Proofs for Section~\ref{sec:app_sparse_bernoulli}}
\label{app:proofs_section5}

We prove the results stated in Section~\ref{sec:app_sparse_bernoulli}. The
proof of Theorem~\ref{thm:sparse_bernoulli_univ} proceeds by expanding the
local Bernoulli likelihood, identifying the moment-matched Gaussian model, and
verifying the local moment bounds required by
Theorem~\ref{thm:regular_free_energy_univ}. For
Theorem~\ref{thm:sparse_bernoulli_posterior_univ}, we verify the operator
conditions for the prescribed perturbation directions and apply
Theorem~\ref{thm:regular_posterior_comparison}. Finally,
Corollary~\ref{cor:sparse_posterior_limits} follows from the corresponding
limiting Gaussian free energies.

\subsection{Proof of Theorem~\ref{thm:sparse_bernoulli_univ}}
\label{app:sparse_bernoulli_details}

The Gaussian comparison process has already been specified explicitly in
\eqref{eq:bernoulli_gaussian_process}. To apply
Theorem~\ref{thm:regular_free_energy_univ}, it remains to show that this process
is precisely the Gaussian counterpart assigned to the Bernoulli likelihood by
the matching rules in~\eqref{eq:regular_operator_forms}--
\eqref{eq:regular_operator_K}, and to verify the required local moment bounds.
Throughout the proof, we suppress the superscript $(n)$ on the local
quantities. We first expand the Bernoulli likelihood in the regular local form and identify
the coefficients $U_\alpha,V_\alpha,R_\alpha,S_\alpha$, and $\Delta_\alpha$.
We then compute the null-law moments defining $A_\alpha,\Sigma_\alpha$, and
$K_\alpha$ and check that the resulting product Gaussian construction recovers
\eqref{eq:bernoulli_gaussian_process}. Finally, we use the two-point structure
of $G_\alpha$ to verify
Assumption~\ref{assump:regular_local_bounds}. For this purpose, write
\[
q_n\coloneqq\frac{\de_n}{s_n},
\qquad
\kappa_n\coloneqq q_n(1-q_n),
\qquad
c_{\alpha,n}\coloneqq
b_nG_\alpha-b_n^{-1}(1-G_\alpha).
\]
Thus $b_n=((1-q_n)/q_n)^{1/2}$ and, under $P_{0,\alpha}$,
$G_\alpha\sim\operatorname{Bernoulli}(q_n)$.

\emph{Local expansion and Gaussian matching.}
Let $z_\alpha(t)=\langle J_\alpha,\eta_\alpha(t)\rangle$. Taylor expansion of
the two logarithms in~\eqref{eq:bernoulli_local_likelihood} gives
\begin{equation}
X_\alpha(t,G_\alpha)
=
\langle U_\alpha(G_\alpha),\eta_\alpha(t)\rangle
-\frac12
\langle V_\alpha(G_\alpha)\eta_\alpha(t),\eta_\alpha(t)\rangle
+R_\alpha(t,G_\alpha),
\label{eq:sparse_local_expansion}
\end{equation}
where $U_\alpha,V_\alpha$ are given in~\eqref{eq:bernoulli_UV}. In particular,
\begin{equation}
U_\alpha=\sqrt\lambda\,c_{\alpha,n}J_\alpha,
\qquad
V_\alpha=\lambda c_{\alpha,n}^2 J_\alpha J_\alpha^\top.
\label{eq:sparse_UV_coefficients}
\end{equation}
For some $v_\alpha(t)$ between $0$ and $z_\alpha(t)$, the remainder is
\begin{equation}
R_\alpha(t,G_\alpha)
=
\frac{\lambda^{3/2}}3
\left\{
\frac{b_n^3G_\alpha}
{(1+b_n\sqrt\lambda\,v_\alpha(t))^3}
-
\frac{b_n^{-3}(1-G_\alpha)}
{(1-\sqrt\lambda\,v_\alpha(t)/b_n)^3}
\right\}
z_\alpha(t)^3.
\label{eq:sparse_bernoulli_remainder}
\end{equation}

We next place the data-generating law in the local form
\eqref{eq:regular_likelihood_ratio}. By
\eqref{eq:bernoulli_hypergraph_data}, its local likelihood ratio relative to
$P_{0,\alpha}$ is
\begin{align*}
\log\frac{\dd P_{\theta,\alpha}}{\dd P_{0,\alpha}}(G_\alpha)
&=
G_\alpha\log\bigl(1+b_n\sqrt{\lambda_\star}\,
z_\alpha(\theta)\bigr)
+
(1-G_\alpha)\log\left(1-
\frac{\sqrt{\lambda_\star}\,z_\alpha(\theta)}{b_n}\right).
\end{align*}
This is the same scalar expression as the local statistical likelihood, with
$t$ replaced by the true parameter $\theta$ and $\lambda$ replaced by
$\lambda_\star$. Applying the expansion
\eqref{eq:sparse_local_expansion} at $\eta_\alpha(\theta)$ therefore gives
\begin{equation}
\log\frac{\dd P_{\theta,\alpha}}{\dd P_{0,\alpha}}(G_\alpha)
=
\langle S_\alpha(G_\alpha),\eta_\alpha(\theta)\rangle
+\Delta_\alpha(\theta,G_\alpha),
\label{eq:sparse_data_expansion}
\end{equation}
The linear coefficient and the remaining quadratic and third-order terms are,
respectively,
\begin{align}
S_\alpha
&=\sqrt{\lambda_\star}\,c_{\alpha,n}J_\alpha,
\label{eq:sparse_S_def}
\\
\Delta_\alpha(\theta)
&=-\frac12
\langle V_\alpha^\star\eta_\alpha(\theta),
\eta_\alpha(\theta)\rangle
+R_\alpha^\star(\theta).
\label{eq:sparse_Delta_def}
\end{align}
Here $V_\alpha^\star$ and $R_\alpha^\star$ are the quantities in
\eqref{eq:sparse_UV_coefficients} and
\eqref{eq:sparse_bernoulli_remainder}, respectively, with $\lambda$ replaced
by $\lambda_\star$.

Under the reference law $P_{0,\alpha}$,
$G_\alpha\sim\operatorname{Bernoulli}(q_n)$. Since
$b_n^2=(1-q_n)/q_n$, direct calculation gives
\begin{align}
\E_0[c_{\alpha,n}]
&=q_nb_n-(1-q_n)b_n^{-1}
=\sqrt{q_n(1-q_n)}-\sqrt{q_n(1-q_n)}=0,
\label{eq:sparse_c_null_mean}
\\
\E_0[c_{\alpha,n}^2]
&=q_nb_n^2+(1-q_n)b_n^{-2}
=(1-q_n)+q_n=1.
\label{eq:sparse_c_null_second_moment}
\end{align}
Combining~\eqref{eq:sparse_c_null_mean}--
\eqref{eq:sparse_c_null_second_moment} with
\eqref{eq:sparse_UV_coefficients} and~\eqref{eq:sparse_S_def} yields
\begin{equation}
\begin{aligned}
\E_0[U_\alpha]&=0,
&
\E_0[U_\alpha U_\alpha^\top]
&=\lambda J_\alpha J_\alpha^\top,
\\
\E_0[V_\alpha]
&=\lambda J_\alpha J_\alpha^\top,
&
\E_0[S_\alpha U_\alpha^\top]
&=\sqrt{\lambda\lambda_\star}\,
J_\alpha J_\alpha^\top.
\end{aligned}
\label{eq:sparse_local_coefficient_moments}
\end{equation}

The moment identities in~\eqref{eq:sparse_local_coefficient_moments}
determine the three local operators in the regular framework. Indeed, for
arbitrary $h,g\in\bbR^{d^p}$, the definitions in
\eqref{eq:regular_operator_forms}--\eqref{eq:regular_operator_K} give
\begin{equation*}
\begin{aligned}
\langle A_\alpha h,g\rangle
&=\E_0[\langle S_\alpha,h\rangle\langle U_\alpha,g\rangle]
=\sqrt{\lambda\lambda_\star}
\langle J_\alpha,h\rangle\langle J_\alpha,g\rangle,
\\
\langle\Sigma_\alpha h,g\rangle
&=\E_0[\langle U_\alpha,h\rangle\langle U_\alpha,g\rangle]
=\lambda
\langle J_\alpha,h\rangle\langle J_\alpha,g\rangle,
\\
\langle K_\alpha h,g\rangle
&=\E_0[\langle V_\alpha h,g\rangle]
=\lambda
\langle J_\alpha,h\rangle\langle J_\alpha,g\rangle.
\end{aligned}
\end{equation*}
Since these identities hold for every $h$ and $g$, they identify the
operators as
\[
A_\alpha=\sqrt{\lambda\lambda_\star}J_\alpha J_\alpha^\top,
\qquad
\Sigma_\alpha=K_\alpha=\lambda J_\alpha J_\alpha^\top,
\]
in agreement with~\eqref{eq:bernoulli_gaussian_operators}. Inserting these
operators into the product Gaussian construction
\eqref{eq:gaussian_product_operators}--\eqref{eq:Y_local_moments} produces
exactly the explicit comparison process
\eqref{eq:bernoulli_gaussian_process}. This completes the Gaussian matching.

\emph{Reduction to a scalar moment bound.}
It remains to verify the local moment conditions in
Assumption~\ref{assump:regular_local_bounds}. The coefficients in
\eqref{eq:sparse_UV_coefficients}--\eqref{eq:sparse_Delta_def} are either
powers of $c_{\alpha,n}$ or, in the case of the Taylor remainders, are bounded
by such powers. It therefore suffices to establish a single moment estimate
for this scalar Bernoulli variable. Introduce the auxiliary quantity
\begin{equation}
x_n\coloneqq b_n+b_n^{-1}=\kappa_n^{-1/2}.
\label{eq:sparse_xn_def}
\end{equation}
For every integer $k\ge2$, the two possible values of $c_{\alpha,n}$ give
\begin{equation}
\E_0[|c_{\alpha,n}|^k]
=q_nb_n^k+(1-q_n)b_n^{-k}
=\sqrt{\kappa_n}
\bigl(b_n^{k-1}+b_n^{-(k-1)}\bigr)
\le x_n^{k-2}.
\label{eq:sparse_binary_moment_null}
\end{equation}
Under $P_{\theta,\alpha}$, the success probability is
\[
q_{\alpha,\theta}
=q_n+\sqrt{\kappa_n}\sqrt{\lambda_\star}\,
z_\alpha(\theta).
\]
It follows from the same two-point calculation that
\begin{align}
\E_\theta[|c_{\alpha,n}|^k]
&\le
\E_0[|c_{\alpha,n}|^k]
+\sqrt{\kappa_n}\sqrt{\lambda_\star}
|z_\alpha(\theta)|(b_n^k+b_n^{-k})
\lesssim
x_n^{k-2}\bigl(1+r_nx_n\bigr).
\label{eq:sparse_binary_moment_theta}
\end{align}
Indeed, $|z_\alpha(\theta)|\lesssim r_n$ because the couplings are uniformly
bounded. Moreover, since $p$ and $d$ are fixed and the coordinates of
$t\in T_n$ lie in $[-1,1]^d$, the tensor feature map satisfies
\begin{equation}
r_n\lesssim s_n^{-1/2},
\qquad
x_n
=\sqrt{\frac{s_n-\de_n}{\de_n}}
+\sqrt{\frac{\de_n}{s_n-\de_n}}
\le
\sqrt{s_n}\,\delta_n,
\label{eq:sparse_feature_binary_scale}
\end{equation}
where
$\delta_n=\de_n^{-1/2}+(s_n-\de_n)^{-1/2}$. Thus
$r_nx_n\lesssim\delta_n$. Multiplying
\eqref{eq:sparse_binary_moment_null} and
\eqref{eq:sparse_binary_moment_theta} by $r_n^{k-2}$ therefore eliminates the
auxiliary quantity $x_n$ and gives, uniformly in $\alpha$ and $\theta$,
\begin{equation}
 r_n^{k-2}\E_\bullet[|c_{\alpha,n}|^k]
 \lesssim \delta_n^{k-2},
\qquad \bullet\in\{0,\theta\},\quad k\ge2.
\label{eq:sparse_binary_moment}
\end{equation}
The assumptions $\de_n\to\infty$ and $s_n-\de_n\to\infty$ imply
$\delta_n\to0$. Hence $1+r_nx_n\lesssim1$ and, for every fixed $k\ge3$,
$\delta_n^{k-2}\lesssim\delta_n$. Formula
\eqref{eq:sparse_binary_moment} is the form used in all subsequent moment
estimates.

\emph{Verification of the local moment bounds.}
Taking the trace of the covariance identity for $U_\alpha$ in
\eqref{eq:sparse_local_coefficient_moments} gives
$\E_0\|U_\alpha\|^2=\lambda\|J_\alpha\|^2$. Moreover,
\eqref{eq:sparse_S_def} and \eqref{eq:sparse_c_null_second_moment} give
$\E_0\|S_\alpha\|^2=\lambda_\star\|J_\alpha\|^2$. Hence the uniform bound on
the couplings yields
\[
\E_0\|U_\alpha\|^2+\E_0\|S_\alpha\|^2
=(\lambda+\lambda_\star)\|J_\alpha\|^2\lesssim1.
\]
The explicit formulas for $U_\alpha$ and $S_\alpha$ in
\eqref{eq:sparse_UV_coefficients} and \eqref{eq:sparse_S_def}, together with
\eqref{eq:sparse_binary_moment} at $k=3$, similarly yield
\begin{equation}
r_n\bigl(
\E_0\|U_\alpha\|^3+
\E_\theta\|U_\alpha\|^3+
\E_0\|S_\alpha\|^3+
\E_\theta\|S_\alpha\|^3
\bigr)
\lesssim\delta_n.
\label{eq:sparse_US_moments}
\end{equation}
By \eqref{eq:sparse_UV_coefficients},
$\|V_\alpha\|_{\mathrm{op}}\lesssim|c_{\alpha,n}|^2$; hence
\eqref{eq:sparse_binary_moment} with $k=4$ and $k=6$ gives
\begin{equation}
r_n\sqrt{\E_0\|V_\alpha\|_{\mathrm{op}}^2}
+r_n^4\sum_{\bullet\in\{0,\theta\}}
\E_\bullet\|V_\alpha\|_{\mathrm{op}}^3
\lesssim
\delta_n+\delta_n^4
\lesssim\delta_n.
\label{eq:sparse_V_moments}
\end{equation}

The uniform well-definedness of the Bernoulli channel ensures that the
denominators in \eqref{eq:sparse_bernoulli_remainder} are bounded away from
zero uniformly over $\alpha$, $t$, and $n$. Since
$|z_\alpha(t)|\lesssim r_n$, that formula implies
\begin{equation}
\sup_{t\in T_n}|R_\alpha(t,G_\alpha)|
\lesssim r_n^3|c_{\alpha,n}|^3.
\label{eq:sparse_R_pointwise}
\end{equation}
Combining \eqref{eq:sparse_R_pointwise} with
\eqref{eq:sparse_binary_moment} at $k=3,6,9$ gives
\begin{align}
&\sup_{t\in T_n}
\left\{
r_n^{-2}|\E_0R_\alpha(t)|
+r_n^{-1}\sqrt{\E_0|R_\alpha(t)|^2}
+r_n^{-2}\E_0|R_\alpha(t)|^3
\right\}
\nonumber\\
&\qquad
+r_n^{-2}\E_\theta
\left[\sup_{t\in T_n}|R_\alpha(t)|^3\right]
\lesssim
\delta_n+\delta_n^2+\delta_n^7
\lesssim\delta_n.
\label{eq:sparse_R_moments}
\end{align}

The starred coefficients, obtained by replacing $\lambda$ with
$\lambda_\star$, satisfy the same pointwise bounds. Consequently,
\eqref{eq:sparse_Delta_def}, the formula for $V_\alpha$ in
\eqref{eq:sparse_UV_coefficients}, and the starred analogue of
\eqref{eq:sparse_R_pointwise} imply
\begin{equation}
|\Delta_\alpha(\theta)|
\lesssim
r_n^2|c_{\alpha,n}|^2+r_n^3|c_{\alpha,n}|^3.
\label{eq:sparse_Delta_pointwise}
\end{equation}
Combining \eqref{eq:sparse_Delta_pointwise} with
\eqref{eq:sparse_binary_moment} at $k=4,6,9$ yields
\begin{equation}
r_n^{-1}\sqrt{\E_0|\Delta_\alpha(\theta)|^2}
+r_n^{-2}\sum_{\bullet\in\{0,\theta\}}
\E_\bullet|\Delta_\alpha(\theta)|^3
\lesssim\delta_n.
\label{eq:sparse_Delta_moments}
\end{equation}
The second-moment identity \eqref{eq:sparse_local_coefficient_moments} and the
bounds \eqref{eq:sparse_US_moments}, \eqref{eq:sparse_V_moments},
\eqref{eq:sparse_R_moments}, and \eqref{eq:sparse_Delta_moments} verify all
conditions in Assumption~\ref{assump:regular_local_bounds}. Moreover,
$r_n\lesssim s_n^{-1/2}=\sqrt{n/M_n}$. Therefore
Theorem~\ref{thm:regular_free_energy_univ} applies and first gives an error of
order $\widetilde\delta_n=\delta_n(1+\delta_n)$. Since $\delta_n\to0$, we have
$\widetilde\delta_n\lesssim\delta_n$ for all sufficiently large $n$, which
gives the asserted free energy comparison.

\subsection{Proof of Theorem~\ref{thm:sparse_bernoulli_posterior_univ}}
\label{app:proof_sparse_bernoulli_posterior_univ}

\begin{proof}
We apply Theorem~\ref{thm:regular_posterior_comparison}. Its assumptions from
Theorem~\ref{thm:regular_free_energy_univ} were verified for the sparse
Bernoulli model in the proof of Theorem~\ref{thm:sparse_bernoulli_univ}; it
therefore remains to construct the Gaussian perturbation, verify the associated
operator conditions, and identify the posterior functionals.

\emph{Gaussian perturbation.}
To instantiate the perturbation in
Theorem~\ref{thm:regular_posterior_comparison}, take the local directions
$H_\alpha$ in \eqref{eq:sparse_H_direction}, set
$B_\alpha=\lambda^{-1}\Id$, and form the product operators $H$ and $B$ as in
\eqref{eq:regular_posterior_product_operators}. The representation
\eqref{eq:gaussian_location_xi_representation}, together with the product
decomposition \eqref{eq:gaussian_product_operators}, gives
\[
Y(t)
=
\sum_{\alpha\in[n]^p}
\left\{
\langle\xi_\alpha,\eta_\alpha(t)\rangle
+\langle A_\alpha\eta_\alpha(\theta),\eta_\alpha(t)\rangle
-\frac12\langle K_\alpha\eta_\alpha(t),\eta_\alpha(t)\rangle
\right\},
\]
where $\xi_\alpha\sim\normal(0,\Sigma_\alpha)$ independently over $\alpha$.
Using the same Gaussian variables, \eqref{eq:Y_beta}--\eqref{eq:Y_tilde} set
$Y_\beta(t)=Y(t)+\beta\widetilde Y(t)$, where
\[
\widetilde Y(t)
=
\sum_{\alpha\in[n]^p}
\left\{
\langle\xi_\alpha,B_\alpha\eta_\alpha(t)\rangle
+a\langle H_\alpha[\eta_\alpha(\theta)-\eta_\alpha(t)],
\eta_\alpha(\theta)-\eta_\alpha(t)\rangle
\right\}.
\]
Since the $\xi_\alpha$ are centered, expanding the deterministic terms and
computing the covariance of the random terms yields the perturbed operators
in \eqref{eq:sparse_perturbed_gaussian_operators} and hence the mean and
covariance of $Y_\beta$ in \eqref{eq:sparse_gaussian_perturbed_moments}. 

\emph{Operator conditions.}
Equations \eqref{eq:bernoulli_gaussian_operators} and
\eqref{eq:sparse_H_direction} give
\[
\Sigma_\alpha=\lambda J_\alpha J_\alpha^\top,
\qquad
H_\alpha=J_\alpha J_\alpha^\top=\Sigma_\alpha B_\alpha.
\]
Consequently, the product operators in
\eqref{eq:regular_posterior_product_operators} satisfy $H=\Sigma B$, and hence
the componentwise range condition \eqref{eq:H_Sigma}. Moreover,
\[
\|H_\alpha\|_{\mathrm{op}}=\|J_\alpha\|^2,
\qquad
\|B_\alpha\|_{\mathrm{op}}=\lambda^{-1}.
\]
The assumption $\sup_{\alpha\in[n]^p}\|J_\alpha\|=O(1)$ in
Theorem~\ref{thm:sparse_bernoulli_univ}, together with fixed $\lambda>0$,
therefore gives $\mathfrak h_n=O(1)$.

\emph{Posterior functionals.}
The test function required by
Theorem~\ref{thm:regular_posterior_comparison} is the function
$\phi_\theta$ in \eqref{eq:phi_theta_lin}, with $H$ given by
\eqref{eq:sparse_H_direction}. Substitution into the normalized functionals in
\eqref{eq:sparse_directional_observables}--
\eqref{eq:sparse_directional_observables_G} gives
\begin{equation*}
\begin{aligned}
a=-1:&\quad
\Phi_n(\theta)=-\cR_n(\theta),
&
\Phi_n^{\mathrm G}(\theta,\beta)
&=-\cR_n^{\mathrm G}(\theta,\beta),
\\
a=0:&\quad
\Phi_n(\theta)=\cV_n(\theta),
&
\Phi_n^{\mathrm G}(\theta,\beta)
&=\cV_n^{\mathrm G}(\theta,\beta).
\end{aligned}
\end{equation*}

The moment estimates from the proof of
Theorem~\ref{thm:sparse_bernoulli_univ} and the bound
$\mathfrak h_n=O(1)$ now verify all assumptions of
Theorem~\ref{thm:regular_posterior_comparison}. Applying that theorem with
$a=-1$ and $a=0$ first gives the sandwich error
$C\{\eps+\widetilde\delta_n/\eps\}$. Since $\delta_n\to0$, we have
$\widetilde\delta_n\lesssim\delta_n$ for all sufficiently large $n$, yielding
the asserted mean squared error and posterior variance bounds, respectively.
\end{proof}

\subsection{Proof of Corollary~\ref{cor:sparse_posterior_limits}}
\label{app:proof_sparse_posterior_limits}

\begin{proof}
The proof of Theorem~\ref{thm:sparse_bernoulli_univ} places the sparse
Bernoulli model in the regular local setting of
Theorem~\ref{thm:regular_free_energy_univ}. The proof of
Theorem~\ref{thm:sparse_bernoulli_posterior_univ} then verifies the additional
perturbation conditions of Theorem~\ref{thm:regular_posterior_comparison} and
identifies the corresponding normalized functionals for $a=0$ and $a=-1$.
Moreover, $\widetilde\delta_n=\delta_n(1+\delta_n)\to0$. Since this error does
not depend on $\theta$, its $\nu_n$-average is also
$\widetilde\delta_n$, as required by
Corollary~\ref{cor:regular_posterior_limit}. It therefore remains only to
identify the pointwise limits of the two averaged perturbed Gaussian free
energies. Since $\pi_n$ and $\nu_n$ are product measures, the results
of~\cite{guionnet2025estimating} give, for each $a\in\{0,-1\}$ and $\beta$ in a
neighborhood of zero,
$\mathcal G_{n,a}^\nu(\beta)\to\mathcal G_a^\nu(\beta)$.

For $a=0$, the additive term in
\eqref{eq:sparse_perturbed_gaussian_free_energy} vanishes, so the averaged
perturbed Gaussian free energy is $\mathcal G_{n,0}^{\nu}(\beta)$. Moreover,
the corresponding normalized functional is
$\Phi_n(\theta)=\cV_n(\theta)$. Since $\delta_n\to0$ and the results
of~\cite{guionnet2025estimating} give
$\mathcal G_{n,0}^{\nu}\to\mathcal G_0^\nu$ pointwise near the origin,
Corollary~\ref{cor:regular_posterior_limit} yields the posterior variance
bounds.

For $a=-1$, set
\[
q_n^\nu
\coloneqq
\int_{\Theta_n}
\frac1{n^2}\left(\sum_{i=1}^n\theta_i^2\right)^2
\nu_n(\dd\theta).
\]
Averaging \eqref{eq:sparse_perturbed_gaussian_free_energy} over $\nu_n$
yields
\[
\int_{\Theta_n}F_n^{\mathrm G}(\theta,\beta)\,\nu_n(\dd\theta)
=
\mathcal G_{n,-1}^{\nu}(\beta)-\beta q_n^\nu.
\]
Because $\nu_n=\nu_0^{\otimes n}$,
\[
q_n^\nu
=
\frac1n\int_{[-1,1]}x^4\,\nu_0(\dd x)
+\left(1-\frac1n\right)
\left(\int_{[-1,1]}x^2\,\nu_0(\dd x)\right)^2
\longrightarrow q^\nu.
\]
Thus the averaged perturbed free energies converge pointwise to
\[
\mathcal F^\nu(\beta)
\coloneqq \mathcal G_{-1}^\nu(\beta)-\beta q^\nu
=-\mathcal M^\nu(\beta).
\]
Thus $\mathcal F^\nu$ is convex and $\mathcal M^\nu$ is concave.
In this case, the normalized functional is
$\Phi_n(\theta)=-\cR_n(\theta)$. Corollary~\ref{cor:regular_posterior_limit}
therefore bounds the subsequential limits of
$-\int_{\Theta_n}\cR_n(\theta)\,\nu_n(\dd\theta)$ in terms of the one-sided
derivatives of $\mathcal F^\nu$ at zero. Multiplying these inequalities by
$-1$ reverses their order. Since $\mathcal F^\nu=-\mathcal M^\nu$, we have
$-\partial_+\mathcal F^\nu(0)=\partial_+\mathcal M^\nu(0)$ and
$-\partial_-\mathcal F^\nu(0)=\partial_-\mathcal M^\nu(0)$. Consequently,
\[
\partial_+\mathcal M^\nu(0)
\le
\liminf_{n\to\infty}
\int_{\Theta_n}\cR_n(\theta)\,\nu_n(\dd\theta)
\le
\limsup_{n\to\infty}
\int_{\Theta_n}\cR_n(\theta)\,\nu_n(\dd\theta)
\le
\partial_-\mathcal M^\nu(0),
\]
as asserted.
\end{proof}

\section{Gaussian Measures and Cameron--Martin Spaces}
\label{app:gaussian_measures_banach}

For completeness, this appendix reviews, without modification, the abstract
theory of Gaussian measures on locally convex spaces developed
in~\cite{bogachev:1998gaussian}, including the Cameron--Martin space, the
Cameron--Martin formula, and the abstract Gaussian integration-by-parts
theorem. Appendix~\ref{app:gibp_separable_banach} then specializes this
theory to $\cX=C(T)$ and proves the exact Gaussian integration-by-parts
identity used in Appendix~\ref{app:proofs_section3}. Finally,
Appendix~\ref{app:gaussian_loglik_derivation} applies the Cameron--Martin
formula to the Gaussian location log-likelihood process of
Definition~\ref{def:gaussian_loc} in the infinite-dimensional Hilbert space
setting.

Following~\cite[Appendix~A.1]{bogachev:1998gaussian}, consider a locally convex
space $\cX$.\footnote{In this appendix, $\cX$ denotes a general locally
convex space, following~\cite{bogachev:1998gaussian}; in
Appendix~\ref{app:gibp_separable_banach}, we specialize to $\cX=C(T)$, the
separable Banach space used in the main text.} Two dual spaces of $\cX$ are relevant below:
the \emph{topological dual}
\begin{equation}\label{eq:topological_dual}
\cX^*
\coloneqq
\{f:\cX\to\bbR:\ f\text{ is linear and continuous}\},
\end{equation}
and the \emph{algebraic dual} $\cX'\coloneqq\{f:\cX\to\bbR:\ f\text{ is linear}\}$,
so that $\cX^*\subseteq\cX'$. We also use the algebraic dual of $\cX^*$,
denoted $(\cX^*)'$, whose elements are linear maps $\cX^*\to\bbR$.

For a locally convex space $\cX$, let $\cE(\cX)$ denote the $\sigma$-field
generated by $\cX^*$; see~\cite[Appendix~A.3]{bogachev:1998gaussian}. A
probability measure $\gamma$ on $(\cX,\cE(\cX))$ is
\emph{Gaussian} if, for every $f\in\cX^*$, the pushforward $\gamma\circ f^{-1}$
is a one-dimensional Gaussian measure on $\bbR$; see
\cite[Section~2.2, Definition~2.2.1(ii)]{bogachev:1998gaussian}. Throughout
this appendix, let $\gamma$ denote a Gaussian measure on a locally convex
space $\cX$.


Let $\mu$ be a measure on $\cE(\cX)$ such that
$\cX^*\subset L^2(\mu)$. Following
\cite[Definition~2.2.7]{bogachev:1998gaussian}, the element
$a_\mu\in(\cX^*)'$ defined by
\[
a_\mu(f)=\int_\cX f(x)\,\mu(\dd x),
\]
is called the \emph{mean} of $\mu$. The \emph{covariance operator} of $\mu$
is the map $R_\mu:\cX^*\to(\cX^*)'$ defined by
\[
R_\mu(f)(g)
\coloneqq
\int_\cX
\bigl[f(x)-a_\mu(f)\bigr]
\bigl[g(x)-a_\mu(g)\bigr]
\,\mu(\dd x).
\]
Next, let $\cX_\gamma^*$ be the closure in $L^2(\gamma)$ of the centered
functionals $\{f-a_\gamma(f):f\in\cX^*\}$.
For later use, extend the covariance operator to a map
$R_\gamma:\cX_\gamma^*\to(\cX^*)'$ by defining, for $f\in\cX_\gamma^*$,
\[
R_\gamma(f)(g)
\coloneqq
\int_\cX f(x)\bigl(g(x)-a_\gamma(g)\bigr)\,\gamma(\dd x),
\qquad g\in\cX^*.
\]
Following the notation introduced after
\cite[Definition~2.2.7]{bogachev:1998gaussian}, for $h\in\cX$, set
\[
|h|_{H(\gamma)}
\coloneqq
\sup\bigl\{f(h):f\in\cX^*,\ R_\gamma(f)(f)\le1\bigr\}.
\]
The \emph{Cameron--Martin space} of $\gamma$ is
\begin{equation}\label{eq:cameron_martin_space}
H(\gamma)
\coloneqq
\{h\in\cX:\ |h|_{H(\gamma)}<\infty\}.
\end{equation}

\begin{lemma}[{\cite[Lemma~2.4.1]{bogachev:1998gaussian}}]
\label{lem:banach_cameron_martin_char}
A vector $h\in\cX$ belongs to $H(\gamma)$ precisely when there exists
$g\in\cX_\gamma^*$ such that $h=R_\gamma(g)$.\footnote{As
in~\cite{bogachev:1998gaussian}, this equality is understood under the
canonical identification of $\cX$ with its image in $(\cX^*)'$.}
In this case, $|h|_{H(\gamma)}=\|g\|_{L^2(\gamma)}$.
\end{lemma}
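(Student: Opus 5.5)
The plan is to prove both directions of the equivalence directly from the definition of $|h|_{H(\gamma)}$ as a supremum over $f\in\cX^*$ with $R_\gamma(f)(f)\le1$. The key tool is the Hilbert-space structure of $\cX_\gamma^*\subset L^2(\gamma)$. For $f\in\cX^*$ write $\bar f\coloneqq f-a_\gamma(f)\in\cX_\gamma^*$. Then $R_\gamma(f)(f)=\|\bar f\|_{L^2(\gamma)}^2$, and for $g\in\cX_\gamma^*$,
\[
R_\gamma(g)(f)=\langle g,\bar f\rangle_{L^2(\gamma)}.
\]
The set $\{\bar f: f\in\cX^*\}$ is dense in $\cX_\gamma^*$ by definition of the closure.

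\emph{Sufficiency.} Suppose $h=R_\gamma(g)$ for some $g\in\cX_\gamma^*$, understood in $(\cX^*)'$. Then $f(h)=\langle g,\bar f\rangle_{L^2(\gamma)}$ for every $f\in\cX^*$, so Cauchy--Schwarz gives $f(h)\le\|g\|_{L^2(\gamma)}$ whenever $\|\bar f\|_{L^2(\gamma)}\le1$. Hence $|h|_{H(\gamma)}\le\|g\|_{L^2(\gamma)}<\infty$. For the reverse inequality, choose $f_k\in\cX^*$ with $\bar f_k\to g/\|g\|_{L^2(\gamma)}$ in $L^2(\gamma)$, using density and assuming $g\neq0$. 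After rescaling so that $\|\bar f_k\|\le1$, we get $f_k(h)\to\|g\|_{L^2(\gamma)}$. This gives equality of the norms.

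\emph{Necessity.} Suppose $|h|_{H(\gamma)}<\infty$. Define the linear functional $\ell(\bar f)\coloneqq f(h)$ on the dense subspace $\{\bar f\}$. Two points need checking.
\begin{enumerate}
\item $\ell$ is well defined. If $\bar f=0$ in $L^2(\gamma)$, then $R_\gamma(tf)(tf)=0\le1$ for all $t\in\bbR$, so $t f(h)\le|h|_{H(\gamma)}$ for every $t$. This forces $f(h)=0$.
\item $\ell$ is bounded. By homogeneity of the defining supremum, $|f(h)|\le|h|_{H(\gamma)}\|\bar f\|_{L^2(\gamma)}$.
\end{enumerate}
Hence $\ell$ extends uniquely to a bounded functional on the Hilbert space $\cX_\gamma^*$ with norm $|h|_{H(\gamma)}$. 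The Riesz representation theorem then yields $g\in\cX_\gamma^*$ with $\langle g,\bar f\rangle_{L^2(\gamma)}=f(h)$, that is, $R_\gamma(g)(f)=f(h)$ for all $f\in\cX^*$. So $h=R_\gamma(g)$ under the canonical identification of $\cX$ with its image in $(\cX^*)'$, and $\|g\|_{L^2(\gamma)}=|h|_{H(\gamma)}$.

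The only delicate step is well-definedness in the necessity direction, handled by the scaling argument in item 1 above. The rest is routine Hilbert-space duality. Since the statement is quoted verbatim from \cite[Lemma~2.4.1]{bogachev:1998gaussian}, it would also suffice to cite it.
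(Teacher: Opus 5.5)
Your argument is correct. The paper gives no proof of its own and simply cites \cite[Lemma~2.4.1]{bogachev:1998gaussian}, whose proof is exactly this Riesz-representation argument on the closed subspace $\cX_\gamma^*\subset L^2(\gamma)$, including your check that $f-a_\gamma(f)\mapsto f(h)$ is well defined and bounded; you only left implicit the trivial case $g=0$, where $f(h)=0$ for all $f\in\cX^*$ and both sides vanish.
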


If $h=R_\gamma(g)$, then $g$ is said to be associated with, or generated by,
$h$. In this case, we use the notation $\widehat h\coloneqq g$. The relation
determining $\widehat h$ is
\begin{equation}\label{eq:hat_h_characterization}
f(h)
=
\int_\cX \bigl[f(x)-a_\gamma(f)\bigr]\widehat h(x)\,\gamma(\dd x),
\qquad \forall f\in\cX^*.
\end{equation}

\subsection{The Cameron--Martin Formula}
\label{app:cameron_martin_formula}

For $h\in\cX$, let $\gamma_h\coloneqq\gamma(\,\cdot-h)$ denote the translate
of $\gamma$ by $h$. The following result gives the Radon--Nikodym density when
$h$ is generated by an element of $\cX_\gamma^*$.

\begin{theorem}[{\cite[Corollary~2.4.3]{bogachev:1998gaussian}}]
\label{thm:cameron_martin_formula}
Let $\gamma$ be a Gaussian measure on a locally convex space $\cX$. Suppose
that $h\in\cX$ satisfies $h=R_\gamma(g)$ for some $g\in\cX_\gamma^*$. Then
$\gamma_h$ and $\gamma$ are equivalent, and
\begin{equation}\label{eq:cameron_martin_formula}
\frac{\dd\gamma_h}{\dd\gamma}(x)
=
\exp\left\{g(x)-\frac12|h|_{H(\gamma)}^2\right\}.
\end{equation}
\end{theorem}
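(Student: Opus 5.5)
The plan is to identify the measure $\nu(\dd x)\coloneqq\exp\{g(x)-\tfrac12|h|_{H(\gamma)}^2\}\,\gamma(\dd x)$ with $\gamma_h$ through characteristic functionals. Measures on $\cE(\cX)$ are determined by their Fourier transforms $f\mapsto\int e^{\mathrm i f(x)}\,\mu(\dd x)$ on $\cX^*$. Indeed, this transform fixes every finite-dimensional image $(f_1,\dots,f_k)_\#\mu$, and the cylinder sets form a $\pi$-system generating $\cE(\cX)$. So it suffices to show three things: $\nu$ is a probability measure, its Fourier transform coincides with that of $\gamma_h$, and the density is strictly positive. The last point gives $\gamma\ll\nu$, hence the asserted equivalence $\gamma_h\sim\gamma$.

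\emph{Step 1: joint Gaussianity.} For $f_1,\dots,f_k\in\cX^*$, the vector $(f_1-a_\gamma(f_1),\dots,f_k-a_\gamma(f_k))$ is centered Gaussian under $\gamma$. The element $g\in\cX_\gamma^*$ is an $L^2(\gamma)$-limit of centered functionals $g_j=\ell_j-a_\gamma(\ell_j)$, $\ell_j\in\cX^*$. Each vector $(f_1-a_\gamma(f_1),\dots,f_k-a_\gamma(f_k),g_j)$ is centered jointly Gaussian. $L^2$-convergence gives convergence in law and of covariances, and limits of centered Gaussian laws are centered Gaussian with the limiting covariance. Hence $(f-a_\gamma(f),g)$ is centered jointly Gaussian for every $f\in\cX^*$. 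Its covariances are $\var(g)=\|g\|_{L^2(\gamma)}^2=|h|_{H(\gamma)}^2$, by Lemma~\ref{lem:banach_cameron_martin_char}, and $\cov(f-a_\gamma(f),g)=f(h)$, by \eqref{eq:hat_h_characterization}.

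\emph{Step 2: Fourier computation.} Let $(U,V)$ be centered jointly Gaussian scalars. Completing the square, via the two-dimensional Gaussian moment generating function extended analytically to complex arguments, gives
\[
\E\exp\{\mathrm iU+V-\tfrac12\var V\}=\exp\{-\tfrac12\var U+\mathrm i\cov(U,V)\}.
\]
Taking $U=0$ shows $\nu(\cX)=1$. Taking $U=f-a_\gamma(f)$ and $V=g$ yields
\[
\int e^{\mathrm if}\,\dd\nu=\exp\{\mathrm i a_\gamma(f)+\mathrm i f(h)-\tfrac12R_\gamma(f)(f)\}.
\]
This is exactly $\int e^{\mathrm i f(x+h)}\,\gamma(\dd x)=e^{\mathrm i f(h)}\hat\gamma(f)$, the Fourier transform of $\gamma_h$. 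Uniqueness then gives $\nu=\gamma_h$, which is \eqref{eq:cameron_martin_formula}.

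The main obstacle is Step 1, in particular ensuring that $g$ is defined on a full-measure set and is jointly Gaussian with the continuous functionals. A secondary point is the uniqueness of measures on $\cE(\cX)$ from characteristic functionals in a general locally convex space, which should be handled via finite-dimensional marginals rather than any Radon property.
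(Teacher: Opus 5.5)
Your proof is correct. The paper gives no argument of its own for this statement. It quotes Bogachev's Corollary~2.4.3 as background for Appendix~\ref{app:gaussian_loglik_derivation}. What you have written is therefore a self-contained substitute: the standard characteristic-functional proof. It consists of three steps:
\begin{itemize}
\item joint Gaussianity of $g$ with each $f-a_\gamma(f)$;
\item the complex Gaussian moment identity, which gives $\int e^{\mathrm i f}\,\dd\nu=e^{\mathrm i f(h)}\hat\gamma(f)$;
\item uniqueness on $\cE(\cX)$ via the $\pi$-system of finite-dimensional cylinder sets.
\end{itemize}

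The step you flag as the main obstacle is routine.
\begin{itemize}
\item Measurability: a subsequence of the $g_j$ converges $\gamma$-a.s. So $g$ has an $\cE(\cX)$-measurable, everywhere finite representative, and the density $\exp\{g-\tfrac12|h|_{H(\gamma)}^2\}$ is strictly positive everywhere.
\item Centering: $g$ is centered because $L^2$ convergence preserves means.
\item The norm identity: you do not need Lemma~\ref{lem:banach_cameron_martin_char}. Since $f(h)=\langle f-a_\gamma(f),g\rangle_{L^2(\gamma)}$ and $R_\gamma(f)(f)=\|f-a_\gamma(f)\|_{L^2(\gamma)}^2$, we get $|h|_{H(\gamma)}=\|g\|_{L^2(\gamma)}$ directly from the density of the centered functionals in $\cX_\gamma^*$.
\end{itemize}

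Your argument uses only finite-dimensional marginals on $\cE(\cX)$. It therefore needs no Radon assumption and covers the theorem in exactly the generality in which it is stated and later applied on $\cH$.
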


Formula~\eqref{eq:cameron_martin_formula} is the Cameron--Martin formula. The
next theorem identifies all shifts for which equivalence holds and gives the
complementary singularity statement.

\begin{theorem}[{\cite[Theorem~2.4.5]{bogachev:1998gaussian}}]
\label{thm:cameron_martin_dichotomy}
Let $\gamma$ be a Gaussian measure on a locally convex space $\cX$, and let
$h\in\cX$.
\begin{enumerate}
\item If $|h|_{H(\gamma)}=\infty$, then $\gamma_h$ and $\gamma$ are mutually
singular.
\item If $|h|_{H(\gamma)}<\infty$, then $\gamma_h$ and $\gamma$ are
equivalent.
\end{enumerate}
In particular,
\begin{equation}\label{eq:cameron_martin_equivalent_shifts}
H(\gamma)
=
\{h\in\cX:\ \gamma_h\sim\gamma\}
=
\{h\in\cX:\ |h|_{H(\gamma)}<\infty\}
=
\cX\cap R_\gamma(\cX_\gamma^*).
\end{equation}
\end{theorem}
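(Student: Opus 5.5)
The plan is to derive the dichotomy from Lemma~\ref{lem:banach_cameron_martin_char} and Theorem~\ref{thm:cameron_martin_formula} for the equivalent case, and from a one-dimensional total-variation argument for the singular case. The identification~\eqref{eq:cameron_martin_equivalent_shifts} then follows by combining the two parts.

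\emph{Equivalent case (part 2).} Suppose $|h|_{H(\gamma)}<\infty$, so $h\in H(\gamma)$ by~\eqref{eq:cameron_martin_space}. Lemma~\ref{lem:banach_cameron_martin_char} provides $g\in\cX_\gamma^*$ with $h=R_\gamma(g)$. Theorem~\ref{thm:cameron_martin_formula} then gives $\gamma_h\sim\gamma$, with the explicit density~\eqref{eq:cameron_martin_formula}. Nothing further is needed here.

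\emph{Singular case (part 1).} Suppose $|h|_{H(\gamma)}=\infty$. By definition there are $f_k\in\cX^*$ with $\sigma_k^2\coloneqq R_\gamma(f_k)(f_k)\le1$ and $f_k(h)\ge k$.
\begin{enumerate}
\item Push both measures forward under $f_k$. The image $\gamma\circ f_k^{-1}$ is $\normal(a_\gamma(f_k),\sigma_k^2)$. Since $f_k$ is linear, the image $\gamma_h\circ f_k^{-1}$ is the same law shifted by $f_k(h)$.
\item Total variation cannot increase under a measurable map, so
\[
\|\gamma_h-\gamma\|_{\mathrm{TV}}\ \ge\ \|\normal(a_k+f_k(h),\sigma_k^2)-\normal(a_k,\sigma_k^2)\|_{\mathrm{TV}},
\qquad a_k\coloneqq a_\gamma(f_k).
\]
\item If $\sigma_k=0$, the two images are distinct point masses, which are already mutually singular. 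If $\sigma_k>0$, the right-hand side equals $2\bigl(2\Phi(f_k(h)/(2\sigma_k))-1\bigr)$, where $\Phi$ is the standard normal distribution function. Since $f_k(h)/\sigma_k\ge k$, this tends to $2$, the maximal value.
\item Total variation equal to $2$ does not by itself give singularity, so extract explicit sets. Take the half-lines $B_k\coloneqq\{x: f_k(x)>a_k+f_k(h)/2\}$. Then $\gamma(B_k)\le\bar\Phi(k/2)$ and $\gamma_h(B_k^c)\le\bar\Phi(k/2)$, with $\bar\Phi\coloneqq 1-\Phi$. Pass to a subsequence along which $\sum_k\bar\Phi(k/2)<\infty$ and set $B\coloneqq\limsup_k B_k$. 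Borel--Cantelli gives $\gamma(B)=0$ and $\gamma_h(B)=1$, i.e.\ $\gamma_h\perp\gamma$.
\end{enumerate}

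\emph{Set identities.} Parts 1 and 2 together show $h\in H(\gamma)$ iff $\gamma_h\sim\gamma$, since singularity excludes equivalence for nonzero probability measures. The last equality in~\eqref{eq:cameron_martin_equivalent_shifts} is Lemma~\ref{lem:banach_cameron_martin_char}.

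The main obstacle is step 4: turning the total-variation bound into a single separating set, and handling measurability with respect to $\cE(\cX)$. The construction above uses only half-spaces defined by elements of $\cX^*$, so $B$ is $\cE(\cX)$-measurable.
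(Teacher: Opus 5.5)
Your argument is correct. The paper gives no proof of this statement; it imports it from \cite[Theorem~2.4.5]{bogachev:1998gaussian}. Your write-up is therefore a self-contained derivation rather than a variant of an argument in the paper. Part~2 reduces to the two previously cited results, Lemma~\ref{lem:banach_cameron_martin_char} and Theorem~\ref{thm:cameron_martin_formula}. Part~1 uses the one-dimensional projection argument. That argument has the advantage of needing only countably many functionals from $\cX^*$, so it takes place entirely in $\cE(\cX)$ and requires no Radon assumption.

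One statement in your step~4 is false, although it does not damage the proof. You say that total variation equal to $2$ does not by itself give singularity, but it does. Take probability measures $\mu,\nu$ with densities $p,q$ relative to $\lambda=\mu+\nu$. Then $\|\mu-\nu\|_{\mathrm{TV}}=\int|p-q|\,\dd\lambda=2-2\int\min(p,q)\,\dd\lambda$. So $\|\mu-\nu\|_{\mathrm{TV}}=2$ forces $\min(p,q)=0$ $\lambda$-a.e. The set $\{p>0\}$ then has full $\mu$-measure and zero $\nu$-measure.

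Steps~2--3 give $\|\gamma_h-\gamma\|_{\mathrm{TV}}\ge2\bigl(2\Phi(k/2)-1\bigr)$ for every $k$, with the bound trivially $2$ when some $\sigma_k=0$. Hence $\|\gamma_h-\gamma\|_{\mathrm{TV}}=2$, and $\gamma_h\perp\gamma$ follows at once. The separating set $\{p>0\}$ is automatically $\cE(\cX)$-measurable, so the measurability concern you flag does not arise.

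Your Borel--Cantelli construction with the half-spaces $B_k$ is a valid, explicit version of this fact, but it is redundant. The passage to a subsequence is also unnecessary, since $\bar\Phi(k/2)\le\tfrac12e^{-k^2/8}$ is already summable over all $k$.
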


Thus, $H(\gamma)$ characterizes precisely those directions $h\in\cX$ along
which translation leaves $\gamma$ quasi-invariant, in the sense that
$\gamma_h$ and $\gamma$ have the same null sets.

\subsection{Gaussian Integration by Parts}
\label{app:gaussian_integration_by_parts}

Let $F:\cX\to\bbR$. For $x,h\in\cX$, the directional derivative of $F$ at
$x$ along $h$ is defined, whenever the limit exists, by
\[
\partial_h F(x)
\coloneqq
\lim_{t\to0}\frac{F(x+th)-F(x)}{t}.
\]
The following theorem is the abstract Gaussian integration-by-parts formula
underlying our comparison results.

\begin{theorem}[{\cite[Theorem~5.1.8]{bogachev:1998gaussian}}]
\label{thm:banach_gibp}
Let $\gamma$ be a Radon Gaussian measure on a locally convex space $\cX$, and
let $h\in H(\gamma)$ with associated element $\widehat h\in\cX_\gamma^*$.
Suppose $F:\cX\to\bbR$ is $\gamma$-measurable and, for $\gamma$-a.e.\ $x$, the
map $t\mapsto F(x+th)$ is locally absolutely continuous on $\bbR$. If
$\partial_hF$ and $F\widehat h$ are both $\gamma$-integrable, then
\[
\int_\cX \partial_hF(x)\,\gamma(\dd x)
=
\int_\cX F(x)\widehat h(x)\,\gamma(\dd x).
\]
\end{theorem}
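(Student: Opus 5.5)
The plan is to reduce the identity to a one-dimensional Gaussian integration by parts along the Cameron--Martin direction $h$, by splitting the Gaussian measure into its component along $h$ and an independent complement. The Cameron--Martin formula (Theorem~\ref{thm:cameron_martin_formula}) suggests a shortcut: $\int F(x+th)\,\gamma(\dd x)=\int F(x)\exp\{t\widehat h(x)-\tfrac{t^2}{2}|h|_{H(\gamma)}^2\}\,\gamma(\dd x)$, and then differentiate at $t=0$. I will not take this route, because exchanging derivative and integral would need domination assumptions beyond the stated integrability of $\partial_hF$ and $F\widehat h$. The slicing argument needs only those two hypotheses.

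\textbf{Step 1 (normalization and splitting).} If $h=0$, both sides vanish. Otherwise, rescaling $h$ and $\widehat h$ by the same constant lets us assume $|h|_{H(\gamma)}=1$. By Lemma~\ref{lem:banach_cameron_martin_char}, $\widehat h\in\cX_\gamma^*$ with $\|\widehat h\|_{L^2(\gamma)}=1$. As an $L^2(\gamma)$-limit of centered continuous linear functionals, $\widehat h$ is a standard normal variable under $\gamma$. It is $\gamma$-a.e.\ linear, with $\widehat h(x+sh)=\widehat h(x)+s$; this follows from \eqref{eq:hat_h_characterization}, which gives $f(h)=\mathrm{Cov}_\gamma(f,\widehat h)$, together with passage to the limit. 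Define $Px\coloneqq x-\widehat h(x)h$. For each $f\in\cX^*$, the variables $f(Px)$ and $\widehat h(x)$ are jointly Gaussian with covariance $f(h)-f(h)\cdot1=0$, hence independent. Using the Radon property (as in \cite{bogachev:1998gaussian}), this gives the disintegration
\[
\int_\cX G(x)\,\gamma(\dd x)=\int_\cX\int_\bbR G(Pz+sh)\,p(s)\,\dd s\,\gamma(\dd z),\qquad p(s)=\tfrac{1}{\sqrt{2\pi}}e^{-s^2/2},
\]
valid for every $\gamma$-integrable $G$.

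\textbf{Step 2 (one-dimensional lemma).} Let $\varphi$ be locally absolutely continuous on $\bbR$ with $\varphi'p$ and $s\varphi(s)p(s)$ integrable. The claim is $\int\varphi'p\,\dd s=\int s\varphi(s)p(s)\,\dd s$. To prove it, write $p(s)=\int_s^\infty u\,p(u)\,\dd u$ for $s\ge0$ and $p(s)=-\int_{-\infty}^s u\,p(u)\,\dd u$ for $s<0$. Then apply Fubini to $\int_0^\infty\varphi'(s)\int_s^\infty up(u)\,\dd u\,\dd s$ and to its negative-half analogue. Absolute integrability holds because the inner integral equals $p(s)$ exactly, so the double integral of absolute values is $\int|\varphi'|p<\infty$. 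Fubini then yields $\int\varphi'p=\int u\,p(u)\,(\varphi(u)-\varphi(0))\,\dd u$. The $\varphi(0)$ term vanishes since $\int u\,p=0$.

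\textbf{Step 3 (assembly).} Apply Step 1 to $G=\partial_hF$ and to $G=F\widehat h$; both are integrable by hypothesis. By Fubini, for $\gamma$-a.e.\ $z$ the slice $\varphi_z(s)\coloneqq F(Pz+sh)$ satisfies the hypotheses of Step 2. Indeed, its derivative is $\varphi_z'(s)=\partial_hF(Pz+sh)$, and $\widehat h(Pz+sh)=s$, so $F\widehat h$ restricted to the slice is $s\varphi_z(s)$. Local absolute continuity of the slice comes from the hypothesis on $t\mapsto F(x+th)$, transferred to a.e.\ slice through the null-set correspondence in the disintegration. Integrating the identity of Step 2 over $z$ gives the theorem.

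The main obstacle is Step 1: making the decomposition $x=Px+\widehat h(x)h$ rigorous. The map $\widehat h$ is only a measurable linear functional, defined $\gamma$-a.e., so $P$ is defined only $\gamma$-a.e.; one must show that $P_\#\gamma$ and $\widehat h$ are independent and that the translation identity $\widehat h(x+sh)=\widehat h(x)+s$ holds off a null set that is invariant along lines. I would handle this by choosing a version of $\widehat h$ that is linear on a full-measure linear subspace containing $h$, which is possible for Radon Gaussian measures, and then verifying independence through characteristic functionals of $(f\circ P,\widehat h)$ for $f\in\cX^*$.
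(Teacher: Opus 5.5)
The paper gives no proof of this statement; it quotes it verbatim from Bogachev, so there is no internal argument to compare against. Your argument is the standard reduction to one dimension along $h$, and it is correct. The splitting of $\gamma$ as the image of $(\gamma\circ P^{-1})\otimes\normal(0,1)$ under $(y,s)\mapsto y+sh$ works. So does the Fubini proof of the one-dimensional identity. The transfer of the exceptional set of bad lines to almost every slice also works, since that set is invariant under translation by $\bbR h$.

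One detail needs adjusting, because the paper applies the theorem to non-centered measures ($\gamma=\mathsf{Law}(Y)$ with mean $m$ in Lemma~\ref{lem:exact_gibp_restated}). In that case $\widehat h$ is a limit of the affine functions $f-a_\gamma(f)$. It is therefore not $\gamma$-a.e.\ linear, and it need not have a version that is linear on a full-measure linear subspace. For example, with $\gamma=\normal(1,1)$ on $\bbR$ and $h=1$, one gets $\widehat h(x)=x-1$.

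Use the affine version $\widehat h(x)=\ell(x-a_\gamma)$ instead. Here $a_\gamma$ is the mean, which lies in $\cX$ for a Radon Gaussian measure, and $\ell$ is a linear version of the associated element for the centered measure $\gamma(\,\cdot+a_\gamma)$, so that $\ell(h)=|h|_{H(\gamma)}^2=1$. The two identities you actually use, $\widehat h(x+sh)=\widehat h(x)+s$ and $\widehat h(Px)=0$, then hold for every $x$, and the rest of your proof is unchanged.
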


The associated element $\widehat h$ in
Theorem~\ref{thm:banach_gibp} is obtained from $h$
through~\eqref{eq:hat_h_characterization}. We also recall the Radon
terminology used in the statement. A finite Borel measure $\mu$ on a
topological space is \emph{Radon} if, for every Borel set $B$ and every
$\eps>0$, there exists a compact set $K_\eps\subset B$ such that
$\mu(B\setminus K_\eps)<\eps$. A Radon measure $\gamma$ on a locally convex
space $\cX$ is called a \emph{Radon Gaussian measure} if its restriction to
$\cE(\cX)$ is a Gaussian measure; see
\cite[Section~3.1, Definition~3.1.1]{bogachev:1998gaussian}.

\section{\texorpdfstring{Gaussian Integration by Parts\\on the Separable
Banach Space $C(T)$}{Gaussian Integration by Parts on the Separable Banach
Space C(T)}}
\label{app:gibp_separable_banach}

We now specialize the abstract theory of
Sections~\ref{app:cameron_martin_formula}--\ref{app:gaussian_integration_by_parts}
to $\cX=C(T)$, equipped with the supremum norm, as in
Section~\ref{sec:general_frame}, and use it to prove Gaussian integration
by parts on this separable Banach space. By the
Riesz--Markov--Kakutani representation theorem, $\cX^*$ is identified with
the space of finite signed regular Borel measures on $T$, with duality
pairing $\langle x,\nu\rangle=\int_T x(t)\,\nu(\dd t)$.

Let $Y=(Y(t))_{t\in T}$ be a Gaussian process with continuous sample paths,
viewed as a $\cX$-valued Gaussian random element, and let
$\gamma=\mathsf{Law}(Y)$ denote its law on $\cX$. Write $m(t)=\E[Y(t)]$ for
the mean function and $K(s,t)=\cov(Y(s),Y(t))$ for the covariance kernel.

\begin{lemma}[Setup for $\cX=C(T)$]
\label{lem:banach_setup_combined}
$\cX=C(T)$ is a locally convex space, $\gamma$ is a Radon Gaussian measure on
$\cX$ with $\cX^*\subseteq L^2(\gamma)$, and, for every $t\in T$,
$h_t\coloneqq K(\cdot,t)\in H(\gamma)$, with associated element
$\widehat{h}_t(x)=x(t)-m(t)$.
\end{lemma}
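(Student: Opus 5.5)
The lemma has three parts: the structural facts about $\cX=C(T)$ and $\gamma$, the moment condition $\cX^*\subseteq L^2(\gamma)$, and the Cameron--Martin membership of $h_t=K(\cdot,t)$ with its associated element. I take them in that order.

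\emph{Structural facts.} $C(T)$ with the supremum norm is a normed space, hence locally convex. It is separable because $T$ is a compact metric space, as recorded at the start of Appendix~\ref{app:proof_prelim}. On a separable Banach space, every Borel probability measure is Radon: it is tight, since a Polish space supports only tight finite Borel measures. Also, the $\sigma$-field $\cE(\cX)$ generated by $\cX^*$ coincides with the Borel $\sigma$-field on a separable Banach space. Since $Y$ is a $\cX$-valued Gaussian random element, each $\nu\in\cX^*$ gives a one-dimensional Gaussian $\int_T Y\,\dd\nu$. So $\gamma$ is a Radon Gaussian measure in the sense of \cite[Definition~3.1.1]{bogachev:1998gaussian}.

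\emph{Square integrability.} For $\nu\in\cX^*$ we have $|\nu(x)|\le\|\nu\|_{TV}\|x\|_\infty$. Fernique's theorem gives $\E\|Y\|_\infty^2<\infty$, hence $\cX^*\subseteq L^2(\gamma)$. The same bound, together with Fubini, gives $a_\gamma(\nu)=\int_T m\,\dd\nu$ and
\[
R_\gamma(\nu)(\mu)=\iint_{T^2}K(s,r)\,\nu(\dd s)\,\mu(\dd r).
\]
Note that $m$ and $K$ are continuous. For $m$ this follows by dominated convergence, using path continuity and the integrable envelope $\|Y\|_\infty$. For $K$, the same argument applies on $T^2$.

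\emph{Identifying $\widehat h_t$.} Set $g_t(x)\coloneqq x(t)-m(t)$. This is $\delta_t-a_\gamma(\delta_t)$ with $\delta_t\in\cX^*$, so $g_t$ is one of the centered functionals and lies in $\cX_\gamma^*$. By Lemma~\ref{lem:banach_cameron_martin_char} and \eqref{eq:hat_h_characterization}, it suffices to check that $h_t\in\cX$ and that
\[
\nu(h_t)=\int_\cX\bigl[\nu(x)-a_\gamma(\nu)\bigr]\bigl[x(t)-m(t)\bigr]\,\gamma(\dd x)\qquad\text{for all }\nu\in\cX^*.
\]
Continuity of $K$ gives $h_t\in C(T)=\cX$. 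For the identity, the right side equals $\E\bigl[\int_T(Y(s)-m(s))\,\nu(\dd s)\,(Y(t)-m(t))\bigr]$. Fubini lets the expectation pass inside the $\nu$-integral: the integrand is dominated by $\|\nu\|_{TV}\,(\|Y\|_\infty+\|m\|_\infty)^2$, which is integrable by Fernique. The result is $\int_T K(s,t)\,\nu(\dd s)=\nu(h_t)$.

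This shows $h_t=R_\gamma(g_t)$, so $h_t\in H(\gamma)$ with $\widehat h_t=g_t$ and $|h_t|_{H(\gamma)}^2=K(t,t)$. Identifying $R_\gamma(g_t)$ with an element of $\cX$ is legitimate because the functionals $\nu\in\cX^*$ separate points of $\cX$.

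\emph{Main obstacle.} The only delicate points are measure-theoretic: matching $\cE(\cX)$ with the Borel $\sigma$-field, the Radon property, and the continuity of $K$. All three rely on separability together with Fernique integrability. I will cite these standard facts rather than reprove them.
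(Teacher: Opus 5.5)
Your proposal is correct and follows essentially the same route as the paper: in both, the key step is the Fubini computation $\nu(h_t)=\cov(\nu(Y),Y(t))$, matched against the defining relation \eqref{eq:hat_h_characterization} to obtain $\widehat h_t(x)=x(t)-m(t)$. The differences are minor. You read off $h_t\in H(\gamma)$, and even the exact value $|h_t|_{H(\gamma)}^2=K(t,t)$, from the converse direction of Lemma~\ref{lem:banach_cameron_martin_char}, whereas the paper first proves $|h_t|_{H(\gamma)}\le K(t,t)^{1/2}$ by a separate Cauchy--Schwarz step; you also check continuity of $K$ so that $h_t\in\cX$, which the paper takes for granted.
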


\begin{proof}
Since $\cX=C(T)$ is a normed space, it is locally convex, and since it is
also separable, every Borel probability measure on it, in particular
$\gamma$, is Radon. As $Y$ is $\cX$-valued Gaussian, $\gamma$ is a Gaussian
measure on $\cX$, and for $f\in\cX^*$, $f(Y)$ is one-dimensional Gaussian,
hence $f\in L^2(\gamma)$. This proves the first assertion.

For the second assertion, fix $t\in T$ and let $f\in\cX^*$ be represented by
the finite signed Borel measure $\nu$ on $T$, so that
$f(x)=\int_T x(s)\,\nu(\dd s)$. Then
\[
f(h_t)=\int_T K(s,t)\,\nu(\dd s)=\cov\Bigl(\int_T Y(s)\,\nu(\dd s),\,Y(t)\Bigr)
=\cov(f(Y),Y(t)),
\]
using Fubini's theorem and the definition of $K$.

We now verify $h_t\in H(\gamma)$. Recall that
\[
|h_t|_{H(\gamma)}
\coloneqq
\sup\bigl\{f(h_t):f\in\cX^*,\ R_\gamma(f)(f)\le1\bigr\},
\]
and that $h_t\in H(\gamma)$ precisely when this supremum is finite; we
must therefore bound $f(h_t)$ over exactly the functionals $f$ satisfying
$R_\gamma(f)(f)\le1$. First, $f(h_t)$ is itself a value
of $R_\gamma$: taking $g=\delta_t\in\cX^*$ (where $\delta_t(x)\coloneqq
x(t)$, so $a_\gamma(\delta_t)=\int_\cX x(t)\,\gamma(\dd x)=\E[Y(t)]=m(t)$)
in the definition of $R_\gamma$ recorded in
Appendix~\ref{app:gaussian_measures_banach},
\begin{align*}
R_\gamma(f)(\delta_t)
&=
\int_\cX\bigl[f(x)-a_\gamma(f)\bigr]\bigl[x(t)-m(t)\bigr]\,\gamma(\dd x)\\
&=
\E\bigl[(f(Y)-\E f(Y))(Y(t)-m(t))\bigr]
=
\cov(f(Y),Y(t))
=
f(h_t),
\end{align*}
using the computation of $f(h_t)$ above. Next, by its definition,
$R_\gamma(f)(g)=\int_\cX[f(x)-a_\gamma(f)][g(x)-a_\gamma(g)]\,\gamma(\dd x)$
is exactly the $L^2(\gamma)$ inner product
$\langle f-a_\gamma(f),\,g-a_\gamma(g)\rangle_{L^2(\gamma)}$ of the
centered functions $f-a_\gamma(f)$ and $g-a_\gamma(g)$ (both are
$\gamma$-square-integrable elements of $\cX^*\subseteq L^2(\gamma)$).
Applying the Cauchy--Schwarz inequality for this inner product to
$f(h_t)=R_\gamma(f)(\delta_t)$ gives
\[
|f(h_t)|
=
|R_\gamma(f)(\delta_t)|
\le
\sqrt{R_\gamma(f)(f)}\;\sqrt{R_\gamma(\delta_t)(\delta_t)}.
\]
Taking $f=g=\delta_t$ in the same definition of $R_\gamma$ gives
$R_\gamma(\delta_t)(\delta_t)=\int_\cX[x(t)-m(t)]^2\,\gamma(\dd x)
=\mathrm{Var}(Y(t))=K(t,t)$, using the definition of the covariance
kernel $K$; write
$\mathrm{sd}(Y(t))\coloneqq\sqrt{\mathrm{Var}(Y(t))}=K(t,t)^{1/2}$ for the
standard deviation of the real-valued Gaussian random variable $Y(t)$, so
that $\sqrt{R_\gamma(\delta_t)(\delta_t)}=\mathrm{sd}(Y(t))$. For every
$f$ in the constraint set defining $|h_t|_{H(\gamma)}$---that is, every
$f\in\cX^*$ with $R_\gamma(f)(f)\le1$---the bound above becomes
\[
|f(h_t)|\le\sqrt{1}\cdot\mathrm{sd}(Y(t))=\mathrm{sd}(Y(t)).
\]
Taking the supremum of the left-hand side over exactly this constraint
set gives $|h_t|_{H(\gamma)}\le\mathrm{sd}(Y(t))=K(t,t)^{1/2}<\infty$
(finite because $K(t,t)=\mathrm{Var}(Y(t))<\infty$, $Y(t)$ being a
real-valued Gaussian random variable), hence $h_t\in H(\gamma)$. By
Lemma~\ref{lem:banach_cameron_martin_char}, $h_t=R_\gamma(g)$ for some
$g\in\cX_\gamma^*$, and it remains to identify $g$. The centered coordinate
functional $\delta_t-a_\gamma(\delta_t)$, where $\delta_t(x)=x(t)$, satisfies,
for every such $f$,
\[
R_\gamma(\delta_t-a_\gamma(\delta_t))(f)
=
\E\bigl[(f(Y)-\E f(Y))(Y(t)-m(t))\bigr]
=
\cov(f(Y),Y(t))
=
f(h_t),
\]
which is precisely the defining
relation~\eqref{eq:hat_h_characterization} for the associated element.
Hence $\widehat{h}_t=\delta_t-a_\gamma(\delta_t)$, that is,
$\widehat{h}_t(x)=x(t)-m(t)$.
\end{proof}

\begin{lemma}[Exact Gaussian integration by parts]
\label{lem:exact_gibp_restated}
Let $Y$ be a Gaussian element of $\cX$ with mean $m\in\cX$ and covariance
kernel $K:T\times T\to\bbR$. Let $G:\cX\to\bbR$ be continuously Fr\'echet
differentiable, with $\E|G(Y)|<\infty$ and
$\E\|DG(Y)\|_{\mathrm{op}}<\infty$, and suppose that
\[
\E\bigl[|G(Y)|\,|Y(t)-m(t)|\bigr]<\infty,
\qquad t\in T.
\]
Then, for every $t\in T$,
\[
\E[G(Y)Y(t)]
=
\E[G(Y)]\,m(t)
+
\E\bigl[DG(Y)[K(\cdot,t)]\bigr].
\]
\end{lemma}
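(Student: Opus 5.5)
We apply the abstract integration-by-parts formula, Theorem~\ref{thm:banach_gibp}, to $\gamma=\mathsf{Law}(Y)$ in the direction $h_t=K(\cdot,t)$. Lemma~\ref{lem:banach_setup_combined} places us in its setting: $\gamma$ is a Radon Gaussian measure on $\cX=C(T)$, $h_t\in H(\gamma)$, and the associated element is $\widehat h_t(x)=x(t)-m(t)$. Theorem~\ref{thm:banach_gibp} with $F=G$ then reads
\[
\int_\cX \partial_{h_t}G(x)\,\gamma(\dd x)=\int_\cX G(x)\bigl(x(t)-m(t)\bigr)\,\gamma(\dd x).
\]
Rewriting both sides as expectations in $Y$ and rearranging gives $\E[G(Y)Y(t)]=\E[G(Y)]m(t)+\E[\partial_{h_t}G(Y)]$. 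It therefore remains to show that $\partial_{h_t}G(Y)=DG(Y)[K(\cdot,t)]$ and to verify the hypotheses of Theorem~\ref{thm:banach_gibp}.

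\emph{Step 1: the directional derivative.} Fréchet differentiability of $G$ at every $x$ implies Gâteaux differentiability, so $\partial_{h_t}G(x)$ exists and equals $DG(x)[h_t]$. Here $h_t=K(\cdot,t)$ lies in $\cX$, because $s\mapsto K(s,t)$ is continuous. Continuity of $K(\cdot,t)$ follows from the $L^2$-continuity of $s\mapsto Y(s)$, which holds since $Y$ has continuous paths and $\E\|Y\|_\infty^2<\infty$ by Fernique's theorem.

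\emph{Step 2: local absolute continuity.} For each $x$, the map $r\mapsto G(x+rh_t)$ is $C^1$ on $\bbR$. Its derivative $DG(x+rh_t)[h_t]$ is continuous in $r$ because $DG$ is continuous, so the map is locally absolutely continuous.

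\emph{Step 3: integrability.} $\gamma$-measurability of $G$ holds since $G$ is continuous. The bound $|\partial_{h_t}G(x)|\le\|DG(x)\|_{\mathrm{op}}\|h_t\|_\infty$ together with $\E\|DG(Y)\|_{\mathrm{op}}<\infty$ shows that $\partial_{h_t}G$ is $\gamma$-integrable. The product $G\widehat h_t$ is integrable exactly by the assumption $\E[|G(Y)|\,|Y(t)-m(t)|]<\infty$. Together with $\E|G(Y)|<\infty$, this also makes $\E[G(Y)Y(t)]$ and $\E[G(Y)]m(t)$ finite, so the rearrangement above is legitimate.

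\emph{Main obstacle.} The delicate point is matching the abstract objects with the concrete ones. The directional derivative $\partial_h$ in Theorem~\ref{thm:banach_gibp} is defined along $h\in\cX$, and we must confirm that $h_t$ really is the $\cX$-element identified with $R_\gamma(\widehat h_t)$ under the canonical embedding of $\cX$ into $(\cX^*)'$. Lemma~\ref{lem:banach_setup_combined} establishes this through $f(h_t)=\cov(f(Y),Y(t))$ for every $f\in\cX^*$. Since $\cX^*$ separates points of $\cX$, this identity determines $h_t$ uniquely, so no ambiguity remains.
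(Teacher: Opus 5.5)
Your proposal is correct and follows essentially the same route as the paper. Both apply Theorem~\ref{thm:banach_gibp} to $\gamma=\mathsf{Law}(Y)$ in the direction $K(\cdot,t)$ via Lemma~\ref{lem:banach_setup_combined}, obtain local absolute continuity from the $C^1$ property of $r\mapsto G(x+rK(\cdot,t))$, check integrability from the stated moment assumptions, and then rearrange. Your extra justifications, namely continuity of $K(\cdot,t)$ via Fernique's theorem and finiteness of $\E[G(Y)Y(t)]$ for the rearrangement, are minor details that the paper leaves implicit.
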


\begin{proof}
We apply Theorem~\ref{thm:banach_gibp} to this setting. Fix $t\in T$ and set
$h=K(\cdot,t)$. By Lemma~\ref{lem:banach_setup_combined}, $\cX=C(T)$ is a
locally convex space, $\gamma=\mathsf{Law}(Y)$ is a Radon Gaussian measure on
$\cX$ with $\cX^*\subseteq L^2(\gamma)$, and $h\in H(\gamma)$, with associated
element $\widehat h(x)=x(t)-m(t)$.

Since $G$ is continuously Fr\'echet differentiable, for every $x\in\cX$ the
map $r\mapsto G(x+rh)$ has derivative $r\mapsto DG(x+rh)[h]$, which is
continuous in $r$; a continuously differentiable function of one real
variable is (locally) absolutely continuous, with its classical derivative
recovering the Fr\'echet derivative along $h$: $\partial_hG(x)=DG(x)[h]=DG(x)[K(\cdot,t)]$.
This verifies the local absolute continuity hypothesis of
Theorem~\ref{thm:banach_gibp} for every $x\in\cX$, hence for
$\gamma$-a.e.~$x$. Moreover, the assumed product integrability gives
\[
\E|G(Y)\widehat h(Y)|
=
\E\bigl[|G(Y)|\,|Y(t)-m(t)|\bigr]
<
\infty,
\]
while
\[
\E|\partial_hG(Y)|
=
\E\bigl|DG(Y)[K(\cdot,t)]\bigr|
\le
\|K(\cdot,t)\|_\infty\,\E\|DG(Y)\|_{\mathrm{op}}
<
\infty
\]
by the assumed bound $\E\|DG(Y)\|_{\mathrm{op}}<\infty$ and boundedness of
$K(\cdot,t)$ (continuity of $K$ on the compact set $T\times T$). Thus both
$\partial_hG$ and $G\widehat h$ are $\gamma$-integrable, and
Theorem~\ref{thm:banach_gibp} applies with this choice of $h$, giving
\[
\int_\cX DG(x)[K(\cdot,t)]\,\gamma(\dd x)
=
\int_\cX G(x)\bigl(x(t)-m(t)\bigr)\,\gamma(\dd x).
\]
Since $\gamma=\mathsf{Law}(Y)$, this reads
\[
\E\bigl[DG(Y)[K(\cdot,t)]\bigr]
=
\E[G(Y)(Y(t)-m(t))]
=
\E[G(Y)Y(t)]-\E[G(Y)]\,m(t),
\]
which is the asserted identity. As $t\in T$ was arbitrary, the
identity holds for every $t\in T$.
\end{proof}

\begin{lemma}[Exact Gaussian integration by parts for the log-partition
functional, fixed direction]
\label{lem:exact_gibp_two_fixed_h}
Let $Y_1,Y_2$ be jointly Gaussian elements of $\cX$ with means
$m_1,m_2\in\cX$ and cross-covariance kernel
$K_{12}(s,t)\coloneqq\cov(Y_1(s),Y_2(t))$, and let $f$ be the log-partition
functional of~\eqref{eq:log_partition_functional}. Then, for every
$h\in\cX$ and $t\in T$,
\[
\E\bigl[Df(Y_1)[h]\,(Y_2(t)-m_2(t))\bigr]
=
\E\bigl[D^2f(Y_1)[h,K_{12}(\cdot,t)]\bigr].
\]
\end{lemma}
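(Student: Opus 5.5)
The plan is to reduce the two-element identity to the one-element identity of Lemma~\ref{lem:exact_gibp_restated}, applied to the jointly Gaussian pair $(Y_1,Y_2)$ viewed as a single Gaussian element. Concretely, I will take a second copy $T'$ of $T$, form the disjoint union $\widetilde T\coloneqq T\sqcup T'$, which is again a compact metric space, and define $\widetilde Y\in C(\widetilde T)$ by $\widetilde Y|_T=Y_1$ and $\widetilde Y|_{T'}=Y_2$. Joint Gaussianity of $(Y_1,Y_2)$ makes $\widetilde Y$ a Gaussian element of $C(\widetilde T)$. Its mean is $\widetilde m=(m_1,m_2)$, and its covariance kernel $\widetilde K$ restricts on $T\times T'$ to $K_{12}$. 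I will write $\iota(x)\coloneqq x|_T$ for the restriction map. With this notation, $\iota(\widetilde Y)=Y_1$ and $\widetilde Y(t')=Y_2(t)$, where $t'\in T'$ is the copy of $t$.

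Next, for fixed $h\in\cX$, I set $G(x)\coloneqq Df(\iota(x))[h]$ on $C(\widetilde T)$. Lemma~\ref{lem:logpartition_smooth} shows that $f$ is $C^\infty$ with bounded derivatives, and $\iota$ is bounded linear. Hence $G$ is continuously Fréchet differentiable, with $|G|\le\|h\|_\infty$ and $DG(x)[k]=D^2f(\iota x)[h,\iota k]$, so $\|DG\|_{\mathrm{op}}\le\|h\|_\infty$. The integrability hypotheses then follow at once. The first is $\E|G|<\infty$. The second is $\E\|DG\|<\infty$. The third is $\E[|G||\widetilde Y(t')-m_2(t)|]\le\|h\|_\infty(\var Y_2(t))^{1/2}$.

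Applying Lemma~\ref{lem:exact_gibp_restated} at the point $t'$ gives $\E[G(\widetilde Y)(\widetilde Y(t')-\widetilde m(t'))]=\E[DG(\widetilde Y)[\widetilde K(\cdot,t')]]$. Since $\iota\widetilde K(\cdot,t')=K_{12}(\cdot,t)$, the right side equals $\E[D^2f(Y_1)[h,K_{12}(\cdot,t)]]$. The left side is exactly $\E[Df(Y_1)[h](Y_2(t)-m_2(t))]$, which proves the claim.

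The main obstacle I expect is checking that $\widetilde Y$ is genuinely a Gaussian element of $C(\widetilde T)$, since Lemma~\ref{lem:exact_gibp_restated} requires this. This means every continuous functional, that is, every signed measure on $\widetilde T$, must have Gaussian law. Such a functional splits as $\nu_1(Y_1)+\nu_2(Y_2)$. Gaussianity then follows from joint Gaussianity of $(Y_1,Y_2)$, interpreted as Gaussianity of the law of the pair on $\cX\times\cX\cong C(\widetilde T)$. A second point to check is that $\widetilde K$ is continuous on $\widetilde T^2$, so that $\widetilde K(\cdot,t')\in C(\widetilde T)$. This follows from continuity of the sample paths together with Fernique's theorem, which gives $L^2$-continuity of $\widetilde Y$.
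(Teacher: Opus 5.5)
Your proof is correct and is essentially the paper's argument. The paper also treats $(Y_1,Y_2)$ as a single Gaussian element, of $\cX\times\cX$ with the max norm. It applies Bogachev's integration by parts to $G(x_1,x_2)=Df(x_1)[h]$, in the direction $(K_{12}(\cdot,t),K_{22}(\cdot,t))$, with associated element $x_2(t)-m_2(t)$. That direction is exactly your $\widetilde K(\cdot,t')$ under the identification $\cX\times\cX\cong C(T\sqcup T')$. Your packaging is a bit more economical: because $T\sqcup T'$ is again a compact metric space, you can invoke Lemma~\ref{lem:exact_gibp_restated} verbatim, whereas the paper re-verifies the Cameron--Martin membership and the associated element directly on the product space.
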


\begin{proof}
When $Y_2=Y_1$, this is exactly Lemma~\ref{lem:exact_gibp_restated} applied with
$G(x)\coloneqq Df(x)[h]$, since $DG(x)[k]=D^2f(x)[h,k]$. Since $Y_2$ differs
from $Y_1$ in general, Lemma~\ref{lem:exact_gibp_restated} does not apply directly to
$Y_1$ alone, and we need an additional product-space argument: we apply it
to the pair $(Y_1,Y_2)$, viewed as a single Gaussian element of the product
space $\cX\times\cX$.

Write $\underline\cX\coloneqq\cX\times\cX$, equipped with the max norm
$\|(x_1,x_2)\|\coloneqq\max(\|x_1\|_\infty,\|x_2\|_\infty)$, and
$\underline Y\coloneqq(Y_1,Y_2)$. Since $(Y_1,Y_2)$ is jointly Gaussian by
hypothesis, $\underline Y$ is a Gaussian element of $\underline\cX$, with mean
$\underline m\coloneqq(m_1,m_2)\in\underline\cX$ and, writing
$K_{11}(s,t)\coloneqq\cov(Y_1(s),Y_1(t))$ and
$K_{22}(s,t)\coloneqq\cov(Y_2(s),Y_2(t))$, covariance structure, for every
$s,t\in T$,
\[
\cov\bigl((Y_1(s),Y_2(s)),\,(Y_1(t),Y_2(t))\bigr)
=
\begin{pmatrix}
K_{11}(s,t) & K_{12}(s,t)\\
K_{12}(t,s) & K_{22}(s,t)
\end{pmatrix};
\]
We now establish, for $\underline\cX$ and $\underline Y$ in place of $\cX$
and $Y$, the same conclusion as Lemma~\ref{lem:banach_setup_combined}. Since
$\underline\cX=\cX\times\cX$ is again a separable Banach space, it is
locally convex, and, as in the proof of
Lemma~\ref{lem:banach_setup_combined}, every Borel probability measure on
it, in particular
$\underline\gamma\coloneqq\mathsf{Law}(\underline Y)$---is Radon. Since
$\underline Y$ is a Gaussian element of $\underline\cX$, for every
$\ell\in\underline\cX^*$ the pushforward $\underline\gamma\circ\ell^{-1}$ is
one-dimensional Gaussian, that is, $\ell(\underline Y)$ is a
one-dimensional Gaussian random variable, so
\[
\E\bigl[\ell(\underline Y)^2\bigr]<\infty,
\qquad\text{that is,}\qquad
\ell\in L^2(\underline\gamma);
\]
this proves $\underline\cX^*\subseteq L^2(\underline\gamma)$. The remaining
assertion, that $\underline h\in H(\underline\gamma)$ for a suitable
$\underline h\in\underline\cX$ with associated element $\widehat{\underline
h}$ is verified separately below.

By the Riesz--Markov--Kakutani representation theorem applied to each
factor, $\underline\cX^*\cong\cX^*\times\cX^*$: every bounded linear
functional $\ell$ on $\underline\cX$ has the form
$\ell(x_1,x_2)=\int_Tx_1\,\dd\nu_1+\int_Tx_2\,\dd\nu_2$ for a pair
$(\nu_1,\nu_2)$ of finite signed Borel measures on $T$.
Fix $t\in T$ and set
$\underline h\coloneqq\bigl(K_{12}(\cdot,t),K_{22}(\cdot,t)\bigr)\in\underline\cX$.
We claim $\underline h\in H(\underline\gamma)$, with associated element
$\widehat{\underline h}(x_1,x_2)=x_2(t)-m_2(t)$. Indeed, for $\ell$ represented by $(\nu_1,\nu_2)$ as above, Fubini's theorem
gives
\begin{align*}
\ell(\underline h)
&=
\int_TK_{12}(s,t)\,\nu_1(\dd s)+\int_TK_{22}(s,t)\,\nu_2(\dd s)\\
&=
\cov\Bigl(\int_TY_1\,\dd\nu_1+\int_TY_2\,\dd\nu_2,\,Y_2(t)\Bigr)
=
\cov\bigl(\ell(\underline Y),Y_2(t)\bigr),
\end{align*}
by $\cov(Y_1(s),Y_2(t))=K_{12}(s,t)$ and $\cov(Y_2(s),Y_2(t))=K_{22}(s,t)$.
By Cauchy--Schwarz,
\[
|\ell(\underline h)|
=
|\cov(\ell(\underline Y),Y_2(t))|
\le
\mathrm{sd}(Y_2(t))\,\|\ell-a_{\underline\gamma}(\ell)\|_{L^2(\underline\gamma)};
\]
taking the supremum over $\ell$ with
$\|\ell-a_{\underline\gamma}(\ell)\|_{L^2(\underline\gamma)}\le1$ gives
$|\underline h|_{H(\underline\gamma)}<\infty$, hence
$\underline h\in H(\underline\gamma)$.

By Lemma~\ref{lem:banach_cameron_martin_char}, $\underline h=R_{\underline\gamma}(g)$
for some $g\in\underline\cX_{\underline\gamma}^*$; it remains to identify
$g$. Exactly as in the proof of Lemma~\ref{lem:banach_setup_combined}, the
centered functional $\ell_0(x_1,x_2)\coloneqq x_2(t)-m_2(t)$ satisfies, for
every $\ell$,
\begin{align*}
R_{\underline\gamma}(\ell_0)(\ell)
&=
\E\bigl[(\ell(\underline Y)-\E\ell(\underline Y))(Y_2(t)-m_2(t))\bigr]\\
&=
\cov(\ell(\underline Y),Y_2(t))
=
\ell(\underline h),
\end{align*}
so $\widehat{\underline h}=\ell_0$, i.e.,
$\widehat{\underline h}(x_1,x_2)=x_2(t)-m_2(t)$.

Fix $h\in\cX$ and set $\underline G(x_1,x_2)\coloneqq Df(x_1)[h]$, depending
only on the first coordinate. Since $\underline\cX=\cX\times\cX$, the
Fr\'echet derivative of $\underline G$ at $(x_1,x_2)$ decomposes into
partial derivatives in the directions $(k_1,0)$ and $(0,k_2)$:
\begin{align*}
D\underline G(x_1,x_2)[(k_1,k_2)]
&=
D\underline G(x_1,x_2)[(k_1,0)]+D\underline G(x_1,x_2)[(0,k_2)]\\
&=
D\underline G(x_1,x_2)[(k_1,0)],
\end{align*}
since $\underline G(x_1,x_2)$ does not depend on $x_2$, so
$D\underline G(x_1,x_2)[(0,k_2)]=0$ for every $k_2$, and
$D\underline G(x_1,x_2)[(k_1,0)]$ is the derivative of the scalar function
$x_1\mapsto Df(x_1)[h]$ in the direction $k_1$, which, since $D^2f(x_1)$ is
by definition the Fr\'echet derivative of $x_1\mapsto Df(x_1)$, equals
$D^2f(x_1)[k_1,h]=D^2f(x_1)[h,k_1]$ by symmetry of the second derivative.
Hence
\begin{equation}
\label{eq:DG_formula}
D\underline G(x_1,x_2)[(k_1,k_2)]=D^2f(x_1)[h,k_1]+0=D^2f(x_1)[h,k_1],
\end{equation}
a single real number for each direction $(k_1,k_2)$. By
Lemma~\ref{lem:logpartition_smooth}, $\underline G$ is continuously
Fr\'echet differentiable, with $D\underline G$ as just computed; moreover,
using $\|Df\|_{\mathrm{op}}\le1$ and $\|D^2f\|_{\mathrm{op}}\le1$,
\[
\E|\underline G(\underline Y)|\le\|h\|_\infty<\infty,
\qquad
\E\|D\underline G(\underline Y)\|_{\mathrm{op}}\le\|h\|_\infty<\infty,
\]


We now apply Theorem~\ref{thm:banach_gibp}. We have already shown that
$\underline\cX$ is locally convex, that $\underline\gamma$ is a Radon
Gaussian measure on $\underline\cX$, and that $\underline h\in
H(\underline\gamma)$ with associated element $\widehat{\underline h}$; it
remains to verify local absolute continuity and the two integrability
hypotheses for $\underline G$.

Since $\underline G$ is continuously Fr\'echet differentiable, for every
$(x_1,x_2)\in\underline\cX$ the map $r\mapsto\underline G((x_1,x_2)+r\underline h)$
has derivative $r\mapsto D\underline G((x_1,x_2)+r\underline h)[\underline h]$,
continuous in $r$; as in the proof of Lemma~\ref{lem:exact_gibp_restated}, this
gives local absolute continuity, with
\begin{align*}
\partial_{\underline h}\underline G(x_1,x_2)
&=
D\underline G(x_1,x_2)[\underline h]\\
&=
D\underline G(x_1,x_2)\bigl[(K_{12}(\cdot,t),K_{22}(\cdot,t))\bigr]\\
&=
D^2f(x_1)\bigl[h,K_{12}(\cdot,t)\bigr],
\end{align*}
using~\eqref{eq:DG_formula}, so only the first component $K_{12}(\cdot,t)$
enters. For the integrability
hypotheses, $\E|\underline G(\underline Y)|\le\|h\|_\infty<\infty$ was shown
above, and
\[
\E\bigl|\partial_{\underline h}\underline G(\underline Y)\bigr|
=
\E\bigl|D^2f(Y_1)[h,K_{12}(\cdot,t)]\bigr|
\le
\|h\|_\infty\,\|K_{12}(\cdot,t)\|_\infty\,\E\|D^2f(Y_1)\|_{\mathrm{op}}
<
\infty,
\]
using $\|D^2f\|_{\mathrm{op}}\le1$ and boundedness of $K_{12}(\cdot,t)$.
Furthermore, since
$\widehat{\underline h}(\underline Y)=Y_2(t)-m_2(t)$ and
$|Df(Y_1)[h]|\le\|h\|_\infty$,
\[
\E\bigl|
\underline G(\underline Y)\widehat{\underline h}(\underline Y)
\bigr|
\le
\|h\|_\infty\,\E|Y_2(t)-m_2(t)|
<
\infty,
\]
because $Y_2(t)-m_2(t)$ is a centered real Gaussian random variable.

\emph{Conclusion.} Theorem~\ref{thm:banach_gibp} applies to
$\underline\gamma$, $\underline G$, and $\underline h$, giving
\[
\int_{\underline\cX}\partial_{\underline h}\underline G(\underline x)\,\underline\gamma(\dd\underline x)
=
\int_{\underline\cX}\underline G(\underline x)\,\widehat{\underline h}(\underline x)\,\underline\gamma(\dd\underline x),
\]
that is,
\[
\E\bigl[D^2f(Y_1)[h,K_{12}(\cdot,t)]\bigr]
=
\E\bigl[\underline G(\underline Y)\,(Y_2(t)-m_2(t))\bigr]
=
\E\bigl[Df(Y_1)[h]\,(Y_2(t)-m_2(t))\bigr].
\]
Rearranging (and recalling $t\in T$, $h\in\cX$ were arbitrary) gives, for
every $h\in\cX$ and $t\in T$,
\begin{equation}
\label{eq:appE_fixed_h}
\E\bigl[Df(Y_1)[h]\,(Y_2(t)-m_2(t))\bigr]
=
\E\bigl[D^2f(Y_1)[h,K_{12}(\cdot,t)]\bigr].
\end{equation}
\end{proof}

Lemma~\ref{lem:exact_gibp_two_fixed_h} does not solve our problem: it
only gives the identity with a fixed direction
$h\in\cX$. What we need to prove is
the identity with the full random element $Y_2$,
\[
\E\bigl[Df(Y_1)[Y_2]\bigr]
=
\E[Df(Y_1)][m_2]
+
\bigl\langle\E[D^2f(Y_1)],K_{12}\bigr\rangle.
\]
If $\cX$ were finite-dimensional, this extension would be immediate:
apply Lemma~\ref{lem:exact_gibp_two_fixed_h} with $h$ equal to each
one-hot coordinate direction $e_i$ in turn, and sum over $i$ to recover
the identity above, using $Y_2=\sum_i(Y_2)_ie_i$. Carrying this out
requires a countable orthonormal basis of directions $h$ to sum over,
which $\cX=C(T)$ does not carry under its own norm.

Nevertheless, a countable orthonormal basis becomes available on passing to
$L^2(T,\pi)$. First, the canonical map from $\cX$ into $L^2(T,\pi)$ is continuous, since
\[
\|h\|_{L^2(\pi)}\le\|h\|_\infty,
\qquad h\in\cX,
\]
using that $\pi$ is a probability measure, so $\pi(T)=1$.
Second, $\cX$ is dense in $L^2(T,\pi)$, and $C(T)$ is itself separable,
since $T$ is a compact metric space; hence $L^2(T,\pi)$ is separable and
admits a countable orthonormal basis, which, by density of $\cX$, may be
chosen to lie in $\cX$ (apply Gram--Schmidt to a dense sequence in
$\cX$; see~\cite[Section~3.6, Problem~6]{kreyszig1991introductory}).

To complete this argument, we must extend $Df(x)$ and $D^2f(x)$, a
priori bounded only with respect to the $\|\cdot\|_\infty$ norm on
$\cX$, to bounded operators on $L^2(T,\pi)$.

\begin{lemma}[$L^2(\pi)$ bounds for $Df$ and $D^2f$]
\label{lem:df_d2f_l2pi_extension}
For every $x\in\cX$ and $h,k\in\cX$,
\begin{align*}
|Df(x)[h]|&\le e^{\|x\|_\infty}\|h\|_{L^2(\pi)},\\
|D^2f(x)[h,k]|&\le2e^{2\|x\|_\infty}\|h\|_{L^2(\pi)}\|k\|_{L^2(\pi)}.
\end{align*}
\end{lemma}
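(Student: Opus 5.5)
The plan is to bound the Gibbs density pointwise against the prior and then apply Cauchy--Schwarz in $L^2(\pi)$. By Lemma~\ref{lem:logpartition_smooth}, $Df(x)[h]=\int_T h\,\dd\pi_x$ and $D^2f(x)[h,k]=\cov_{\pi_x}(h,k)$. The key observation is that $\pi_x$ has density
\[
\frac{\dd\pi_x}{\dd\pi}(t)=\frac{e^{x(t)}}{\int_T e^{x(s)}\pi(\dd s)}.
\]
Since $e^{x(t)}\le e^{\|x\|_\infty}$ and $\int_T e^{x}\,\dd\pi\ge e^{-\|x\|_\infty}$ (using $\pi(T)=1$), this density is bounded by $e^{2\|x\|_\infty}$ uniformly on $T$. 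Write $w_x:=\dd\pi_x/\dd\pi\le e^{2\|x\|_\infty}$.

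For the first derivative, Cauchy--Schwarz gives
\[
\Bigl|\int_T h\,w_x\,\dd\pi\Bigr|\le\Bigl(\int_T h^2 w_x\,\dd\pi\Bigr)^{1/2}\Bigl(\int_T w_x\,\dd\pi\Bigr)^{1/2}=\Bigl(\int_T h^2 w_x\,\dd\pi\Bigr)^{1/2}\le e^{\|x\|_\infty}\|h\|_{L^2(\pi)}.
\]
Splitting $w_x=w_x^{1/2}\cdot w_x^{1/2}$ in this way yields the exponent $\|x\|_\infty$ rather than $2\|x\|_\infty$, and I expect this to match the stated constant.

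For the second derivative, I write
\[
\cov_{\pi_x}(h,k)=\int_T hk\,\dd\pi_x-\int_T h\,\dd\pi_x\int_T k\,\dd\pi_x.
\]
The first term is at most $\|w_x\|_\infty\int_T|hk|\,\dd\pi\le e^{2\|x\|_\infty}\|h\|_{L^2(\pi)}\|k\|_{L^2(\pi)}$ by Cauchy--Schwarz. The product term is bounded by the first-derivative estimate applied twice, giving $e^{\|x\|_\infty}\|h\|_{L^2(\pi)}\cdot e^{\|x\|_\infty}\|k\|_{L^2(\pi)}$. Adding the two contributions yields the factor $2e^{2\|x\|_\infty}$.

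There is no substantive obstacle here. The only point requiring care is the lower bound on the normalizing constant, which is where $\pi(T)=1$ is used. These bounds then show that $Df(x)$ and $D^2f(x)$ extend by density to bounded linear and bilinear forms on $L^2(T,\pi)$.
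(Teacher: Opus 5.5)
Your proof is correct and follows essentially the paper's argument: bound the density $w_x=\dd\pi_x/\dd\pi$ by $e^{2\|x\|_\infty}$ using $\pi(T)=1$, apply Cauchy--Schwarz for $Df$, and split $\cov_{\pi_x}(h,k)$ into the second-moment term and the product of means for $D^2f$. The only cosmetic difference is in the $Df$ step: the paper writes $Df(x)[h]=\langle h,w_x\rangle_{L^2(\pi)}$ and bounds $\|w_x\|_{L^2(\pi)}^2\le\|w_x\|_\infty$, whereas you split $w_x=w_x^{1/2}\cdot w_x^{1/2}$; both give the same constant.
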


The content of these bounds is that $Df(x)$ and $D^2f(x)$, though a
priori only known to be bounded on $\cX$ with respect to the sup norm
$\|\cdot\|_\infty$ (Lemma~\ref{lem:logpartition_smooth}), are in fact
uniformly bounded on $\cX$ with respect to the strictly weaker
$L^2(\pi)$-norm, that is,
$\sup\{|Df(x)[h]|:h\in\cX,\|h\|_{L^2(\pi)}\le1\}<\infty$ (and similarly
for $D^2f(x)$); this is what lets us control the error from truncating
the basis expansion of $Y_2$ in the proof of
Lemma~\ref{lem:exact_gibp_two_logpartition} below.

\begin{proof}
Write $Z(x)\coloneqq\int_Te^{x(s)}\pi(\dd s)$ and
$w_x\coloneqq\dd\pi_x/\dd\pi=e^{x(\cdot)}/Z(x)$; since $T$ is compact and
$x\in\cX$ continuous, $w_x$ is bounded, and since
$Z(x)\ge e^{-\|x\|_\infty}\pi(T)$,
\begin{equation}
\label{eq:wx_bound}
\|w_x\|_\infty\le\pi(T)^{-1}e^{2\|x\|_\infty}.
\end{equation}
By Lemma~\ref{lem:logpartition_smooth},
$Df(x)[h]=\int_Th\,\dd\pi_x=\int_Thw_x\,\dd\pi=\langle h,w_x\rangle_{L^2(\pi)}$
for $h\in\cX$. Since $w_x\ge0$ with $\int_Tw_x\,\dd\pi=1$,
Cauchy--Schwarz gives
$\|w_x\|_{L^2(\pi)}^2\le\|w_x\|_\infty\int_Tw_x\,\dd\pi=\|w_x\|_\infty$, so
\begin{align*}
|Df(x)[h]|
&=
|\langle h,w_x\rangle_{L^2(\pi)}|
\le
\|h\|_{L^2(\pi)}\|w_x\|_{L^2(\pi)}\\
&\le
\|w_x\|_\infty^{1/2}\|h\|_{L^2(\pi)}
\le
\pi(T)^{-1/2}e^{\|x\|_\infty}\|h\|_{L^2(\pi)},
\qquad h\in\cX,
\end{align*}
using~\eqref{eq:wx_bound}. Since $\cX$ is dense in $L^2(T,\pi)$, $Df(x)$
extends uniquely to a bounded linear functional on $L^2(T,\pi)$, still
denoted $Df(x)$, given by the same formula
$Df(x)[h]=\langle h,w_x\rangle_{L^2(\pi)}$ for $h\in L^2(T,\pi)$.

Similarly, since
$D^2f(x)[h,k]=\int_Thk\,\dd\pi_x-\bigl(\int_Th\,\dd\pi_x\bigr)\bigl(\int_Tk\,\dd\pi_x\bigr)$,
for $h,k\in\cX$,
\begin{align*}
|D^2f(x)[h,k]|
&\le
\|w_x\|_\infty\|h\|_{L^2(\pi)}\|k\|_{L^2(\pi)}
+
\|w_x\|_\infty^{1/2}\|h\|_{L^2(\pi)}\cdot\|w_x\|_\infty^{1/2}\|k\|_{L^2(\pi)}\\
&=
2\|w_x\|_\infty\|h\|_{L^2(\pi)}\|k\|_{L^2(\pi)}
\le
2\pi(T)^{-1}e^{2\|x\|_\infty}\|h\|_{L^2(\pi)}\|k\|_{L^2(\pi)},
\end{align*}
using~\eqref{eq:wx_bound} again, so $D^2f(x)$ extends by density to a
bounded bilinear form on $L^2(T,\pi)\times L^2(T,\pi)$, with the same bound.
\end{proof}

The following statement is identical to
Lemma~\ref{lem:exact_gibp_two} in Appendix~\ref{app:proofs_section3}; it is
repeated here in order to provide its proof.

\begin{lemma}[Exact two-element Gaussian integration by parts for the
log-partition functional]
\label{lem:exact_gibp_two_logpartition}
Let $Y_1,Y_2$ be jointly Gaussian elements of $\cX$ with means
$m_1,m_2\in\cX$ and cross-covariance kernel
$K_{12}(s,t)\coloneqq\cov(Y_1(s),Y_2(t))$. For the log-partition functional
$f$ defined in~\eqref{eq:log_partition_functional},
\[
\E\bigl[\sD f(Y_1)[Y_2]\bigr]
=
\E[\sD f(Y_1)][m_2]
+
\bigl\langle\E[\sD^2f(Y_1)],K_{12}\bigr\rangle.
\]
\end{lemma}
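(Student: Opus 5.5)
The plan is to reduce the statement to the fixed-direction identity of Lemma~\ref{lem:exact_gibp_two_fixed_h} by expanding $Y_2$ in a countable orthonormal basis of $L^2(T,\pi)$ chosen inside $\cX$. Let $(e_i)_{i\ge1}\subset\cX$ be such a basis, obtained by Gram--Schmidt on a dense sequence as described above. Write $\bar Y_2\coloneqq Y_2-m_2$. Linearity of $Df(Y_1)$ gives $\E[Df(Y_1)[Y_2]]=\E[Df(Y_1)][m_2]+\E[Df(Y_1)[\bar Y_2]]$. Since $|Df(Y_1)[h]|\le\|h\|_\infty$ and $\E\|Y_2\|_\infty<\infty$ by Fernique, all three expectations are finite. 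It therefore remains to prove $\E[Df(Y_1)[\bar Y_2]]=\langle\E[D^2f(Y_1)],K_{12}\rangle$.

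For the finite-dimensional step, set $c_i\coloneqq\langle\bar Y_2,e_i\rangle_{L^2(\pi)}=\int_T\bar Y_2(t)e_i(t)\,\pi(\dd t)$ and $P_N\bar Y_2\coloneqq\sum_{i\le N}c_ie_i\in\cX$. We then have
\[
\E\bigl[Df(Y_1)[P_N\bar Y_2]\bigr]=\sum_{i\le N}\E\bigl[c_i\,Df(Y_1)[e_i]\bigr].
\]
To each summand we apply Lemma~\ref{lem:exact_gibp_two_fixed_h} with $h=e_i$, after integrating the identity in $t$ against $e_i(t)\pi(\dd t)$. Fubini is justified by $|Df|\le\|e_i\|_\infty$, Gaussian integrability of $\bar Y_2(t)$, and continuity of $K_{12}$. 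This gives
\[
\E\bigl[c_i\,Df(Y_1)[e_i]\bigr]=\E\bigl[D^2f(Y_1)[e_i,k_i]\bigr],\qquad k_i\coloneqq\int_T K_{12}(\cdot,t)e_i(t)\,\pi(\dd t)\in\cX.
\]
On the other hand, take a centered pair $(V,W)$ with cross-covariance $K_{12}$, for instance $(Y_1-m_1,\bar Y_2)$, and note $k_i(s)=\E[V(s)c_i(W)]$. Exchanging $\E_{(V,W)}$ with the bilinear form then gives
\[
\sum_{i\le N}\E\bigl[D^2f(Y_1)[e_i,k_i]\bigr]=\bigl\langle\E[D^2f(Y_1)],K_{12}^{(N)}\bigr\rangle,
\]
where $K_{12}^{(N)}$ is the cross-covariance of $(V,P_NW)$. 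Equivalently, $\E[D^2f(x)[P_NW,V]]=\E[D^2f(x)[V,P_NW]]$ holds at each fixed $x$, and we average over $Y_1$ using an independent copy of $(V,W)$.

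The main obstacle is passing $N\to\infty$ on both sides, because $P_N\bar Y_2\to\bar Y_2$ only in $L^2(\pi)$, not in sup norm. Lemma~\ref{lem:df_d2f_l2pi_extension} handles this. Pathwise,
\[
\bigl|Df(Y_1)[\bar Y_2-P_N\bar Y_2]\bigr|\le e^{\|Y_1\|_\infty}\|\bar Y_2-P_N\bar Y_2\|_{L^2(\pi)},
\]
where $Df(Y_1)$ is identified with its $L^2$ extension $\langle\cdot,w_{Y_1}\rangle$; this extension agrees with the original on $\cX$, so the left side is meaningful. The error tends to $0$ almost surely by Parseval. It is dominated by $e^{\|Y_1\|_\infty}\|\bar Y_2\|_\infty$, which is integrable: Cauchy--Schwarz together with Fernique's bound $\E e^{\alpha\|Y\|^2}<\infty$ gives finiteness of $\E e^{2\|Y_1\|_\infty}$. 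Dominated convergence then applies. On the covariance side, we use the bilinear bound $2e^{2\|x\|_\infty}\|V\|_{L^2}\|W-P_NW\|_{L^2}$ at fixed $x=Y_1$, with an independent copy of $(V,W)$. Dominated convergence, using the integrability of $e^{2\|Y_1\|_\infty}\|V\|_\infty\|W\|_\infty$ by Fernique and independence, sends $\langle\E[D^2f(Y_1)],K_{12}^{(N)}\rangle\to\langle\E[D^2f(Y_1)],K_{12}\rangle$. Combining the two limits proves the claim.

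A secondary point is checking that the contraction $\langle B,K_{12}\rangle$ does not depend on the representing pair $(V,W)$. This holds because, by \eqref{eq:contraction_identity}, it depends only on $K_{12}$.
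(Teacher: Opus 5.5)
Your proposal is correct and follows essentially the same route as the paper. You expand $Y_2-m_2$ in an $L^2(T,\pi)$-orthonormal basis lying in $\cX$, apply the fixed-direction identity of Lemma~\ref{lem:exact_gibp_two_fixed_h} term by term after integrating against the basis function, rewrite the partial sums through an independent copy of the centered pair, and pass to the limit on both sides using the $L^2(\pi)$ bounds of Lemma~\ref{lem:df_d2f_l2pi_extension} with Fernique-based domination. The only cosmetic difference is on the right-hand side: you run dominated convergence jointly over $Y_1$ and the independent copy, whereas the paper first fixes $x$, obtains a bound $2C_0e^{2\|x\|_\infty}$ uniform in $n$, and then substitutes $x=Y_1$.
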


\begin{proof}
Since $m_2$ is deterministic, $\E[Df(Y_1)[m_2]]=\E[Df(Y_1)][m_2]$. Since,
by linearity of $Df(Y_1)[\cdot]$,
$Df(Y_1)[Y_2]=Df(Y_1)[Y_2-m_2]+Df(Y_1)[m_2]$, it suffices to prove
\begin{equation}
\label{eq:appE_centered_goal}
\E\bigl[Df(Y_1)[Y_2-m_2]\bigr]
=
\bigl\langle\E[D^2f(Y_1)],K_{12}\bigr\rangle;
\end{equation}
adding $\E[Df(Y_1)][m_2]$ to both sides then gives the identity of the
lemma. The strategy is to expand $Y_2-m_2$ in a countable orthonormal
basis of $L^2(T,\pi)$ lying in $\cX$, apply
Lemma~\ref{lem:exact_gibp_two_fixed_h} term by term, and pass to the
limit using the $L^2(\pi)$ bounds of Lemma~\ref{lem:df_d2f_l2pi_extension}.

By Lemma~\ref{lem:exact_gibp_two_fixed_h}, for every $h\in\cX$ and $t\in T$,
\begin{equation}
\label{eq:appE_fixed_h_lem34}
\E\bigl[Df(Y_1)[h]\,(Y_2(t)-m_2(t))\bigr]
=
\E\bigl[D^2f(Y_1)[h,K_{12}(\cdot,t)]\bigr].
\end{equation}
It remains to extend this identity from a fixed direction $h$ and point
$t$ to the full random element $Y_2$.

\emph{Setup.} Since $\cX=C(T)$ is separable and dense in $L^2(T,\pi)$,
$L^2(T,\pi)$ is separable; fix an orthonormal basis $\{\psi_j\}_{j\ge1}$
of $L^2(T,\pi)$ with $\psi_j\in\cX$ for every $j$ (apply Gram--Schmidt to
a dense sequence in $\cX$; see~\cite[Section~3.6, Problem~6]{kreyszig1991introductory}).
Since $Y_2$ is a Gaussian element of $C(T)$, $\E[e^{c\|Y_2\|_\infty}]<\infty$
for every $c>0$ (Fernique's theorem), and, since $Y_2-m_2\in\cX\subset
L^2(T,\pi)$, Parseval's identity gives
\begin{equation}
\label{eq:appE_parseval_Y2}
Y_2-m_2=\sum_{j\ge1}\eta_j\psi_j
\quad\text{in }L^2(T,\pi),\quad\text{a.s.},
\qquad
\eta_j\coloneqq\langle Y_2-m_2,\psi_j\rangle_{L^2(\pi)},
\end{equation}
with $\|Y_2^{(n)}-Y_2\|_{L^2(\pi)}\to0$ almost surely, where
$Y_2^{(n)}\coloneqq m_2+\sum_{j\le n}\eta_j\psi_j$.

\emph{Left side of~\eqref{eq:appE_centered_goal}.} By
Lemma~\ref{lem:df_d2f_l2pi_extension},
\begin{align*}
\bigl|Df(Y_1)[Y_2-m_2]-Df(Y_1)[Y_2^{(n)}-m_2]\bigr|
&=
\bigl|Df(Y_1)[Y_2-Y_2^{(n)}]\bigr|\\
&\le
e^{\|Y_1\|_\infty}\|Y_2-Y_2^{(n)}\|_{L^2(\pi)},
\end{align*}
which tends to $0$ a.s.\ by~\eqref{eq:appE_parseval_Y2}.

The coefficients $\eta_j=\langle Y_2-m_2,\psi_j\rangle_{L^2(\pi)}$
in~\eqref{eq:appE_parseval_Y2} are random variables, with
$\E[\eta_j]=\langle\E[Y_2]-m_2,\psi_j\rangle_{L^2(\pi)}=0$ for every $j$
(since $\E[Y_2]=m_2$); consequently
$\E[Y_2^{(n)}]=m_2+\sum_{j\le n}\E[\eta_j]\psi_j=m_2$ for every $n$, so
$m_2$ is also the mean of $Y_2^{(n)}$. Since $\{\psi_j\}$ is orthonormal,
$\|Y_2^{(n)}-m_2\|_{L^2(\pi)}^2=\sum_{j\le n}\eta_j^2$; applying Bessel's
inequality to the (pathwise fixed) element $Y_2-m_2\in L^2(\pi)$, with
Fourier coefficients $(\eta_j)_{j\ge1}$ relative to $\{\psi_j\}$, gives
$\sum_{j\le n}\eta_j^2\le\sum_{j\ge1}\eta_j^2\le\|Y_2-m_2\|_{L^2(\pi)}^2$
a.s., hence
\[
\|Y_2^{(n)}-m_2\|_{L^2(\pi)}\le\|Y_2-m_2\|_{L^2(\pi)}
\qquad\text{a.s., for every }n.
\]
The difference
$|Df(Y_1)[Y_2-m_2]-Df(Y_1)[Y_2^{(n)}-m_2]|$ is therefore dominated by
$2e^{\|Y_1\|_\infty}\|Y_2-m_2\|_{L^2(\pi)}$, which is integrable by Fernique's
theorem and Cauchy--Schwarz's inequality. Dominated convergence and linearity of
$Df(Y_1)[\cdot]$ on the finite sum
$Y_2^{(n)}-m_2=\sum_{j\le n}\eta_j\psi_j$ give
\begin{equation}
\label{eq:appE_lhs_limit}
\E[Df(Y_1)[Y_2-m_2]]=\lim_{n\to\infty}\sum_{j\le n}\E\bigl[\eta_j\,Df(Y_1)[\psi_j]\bigr].
\end{equation}

\emph{Identifying each term of~\eqref{eq:appE_lhs_limit}.} Fix $j$.
Multiply both sides of~\eqref{eq:appE_fixed_h_lem34}, with $h=\psi_j$, by
$\psi_j(t)$ and integrate over $t\in T$ against $\pi(\dd t)$. On the
left,
\begin{align*}
&\int_T\E\bigl[Df(Y_1)[\psi_j]\,(Y_2(t)-m_2(t))\bigr]\psi_j(t)\,\pi(\dd t)\\
&=
\E\Bigl[Df(Y_1)[\psi_j]\int_T(Y_2(t)-m_2(t))\psi_j(t)\,\pi(\dd t)\Bigr]\\
&=
\E\bigl[Df(Y_1)[\psi_j]\,\eta_j\bigr],
\end{align*}
using Fubini's theorem to move $\int_T(\cdot)\,\pi(\dd t)$ inside
$\E[\cdot]$; this is justified by
\[
\int_T\E\bigl[
|Df(Y_1)[\psi_j]|\,|Y_2(t)-m_2(t)|\,|\psi_j(t)|
\bigr]\,\pi(\dd t)
\le
\|\psi_j\|_\infty^2\,\E\|Y_2-m_2\|_\infty
<
\infty
\]
and Fernique's theorem
and the definition of $\eta_j$ in~\eqref{eq:appE_parseval_Y2}.

On the right, define
\begin{equation}
\label{eq:cK12_def}
\cK_{12}\psi_j(s)\coloneqq\int_TK_{12}(s,t)\psi_j(t)\,\pi(\dd t).
\end{equation}
The same manipulation gives
\begin{align*}
&\int_T\E\bigl[D^2f(Y_1)[\psi_j,K_{12}(\cdot,t)]\bigr]\psi_j(t)\,\pi(\dd t)\\
&=
\E\Bigl[\int_TD^2f(Y_1)[\psi_j,K_{12}(\cdot,t)]\psi_j(t)\,\pi(\dd t)\Bigr]\\
&=
\E\Bigl[D^2f(Y_1)\Bigl[\psi_j,\int_TK_{12}(\cdot,t)\psi_j(t)\,\pi(\dd t)\Bigr]\Bigr]\\
&=
\E\bigl[D^2f(Y_1)[\psi_j,\cK_{12}\psi_j]\bigr],
\end{align*}
where the first equality is Fubini's theorem; indeed, its integrand is
bounded in absolute value by
$\|\psi_j\|_\infty^2\|K_{12}\|_\infty$, which is integrable over
$T$. The
second pulls the integral over $t$ inside the second argument of the
bounded bilinear form $D^2f(Y_1)[\psi_j,\cdot]$
(Lemma~\ref{lem:df_d2f_l2pi_extension}), and the third is the
definition~\eqref{eq:cK12_def}.

Equating the two sides,
\begin{equation}
\label{eq:appE_per_j}
\E\bigl[\eta_j\,Df(Y_1)[\psi_j]\bigr]
=
\E\bigl[D^2f(Y_1)[\psi_j,\cK_{12}\psi_j]\bigr].
\end{equation}
Combining~\eqref{eq:appE_lhs_limit} and~\eqref{eq:appE_per_j},
\begin{equation}
\label{eq:appE_reduced}
\E[Df(Y_1)[Y_2-m_2]]
=
\lim_{n\to\infty}\sum_{j\le n}\E\bigl[D^2f(Y_1)[\psi_j,\cK_{12}\psi_j]\bigr].
\end{equation}

\emph{Right side of~\eqref{eq:appE_centered_goal}, for a fixed point.} By
\eqref{eq:appE_reduced}, it remains to show
\[
\lim_{n\to\infty}\sum_{j\le n}\E\bigl[D^2f(Y_1)[\psi_j,\cK_{12}\psi_j]\bigr]
=
\bigl\langle\E[D^2f(Y_1)],K_{12}\bigr\rangle.
\]
We obtain this at the end of the proof by exchanging $\lim_n$ and the
expectation over $Y_1$ (dominated convergence) and evaluating the
resulting a.s.\ limit $\lim_n\sum_{j\le n}D^2f(Y_1)[\psi_j,\cK_{12}\psi_j]$.
Dominated convergence requires two ingredients: the value of this limit,
and a bound on the partial sums that is integrable and uniform in $n$. We
establish both first for an arbitrary \emph{fixed} (non-random) $x\in\cX$;
substituting $x=Y_1$ at the end turns them into, respectively, an a.s.\
statement and an integrable dominating random variable.

Fix $x\in\cX$; we now show
\begin{equation}
\label{eq:appE_rhs_fixed_x}
\lim_{n\to\infty}\sum_{j\le n}D^2f(x)[\psi_j,\cK_{12}\psi_j]
=
\bigl\langle D^2f(x),K_{12}\bigr\rangle,
\end{equation}
together with a bound, uniform in $n$ and depending on $x$
only~(\eqref{eq:appE_rhs_uniform_bound} below), needed at the end of the
proof to pass from~\eqref{eq:appE_rhs_fixed_x} at $x=Y_1$ to its
expectation over $Y_1$.

Let $(Y_1',Y_2')$ be an independent copy of $(Y_1,Y_2)$; since
$\cov(Y_1'(s),Y_2'(t))=K_{12}(s,t)$, the pair $(Y_1'-m_1,Y_2'-m_2)$ is a
valid choice of $(V,W)$ in the definition of the contraction
$\langle\cdot,K_{12}\rangle$. As in~\eqref{eq:appE_parseval_Y2},
$Y_2'-m_2=\sum_j\eta_j'\psi_j$ in $L^2(T,\pi)$, with
$\eta_j'\coloneqq\langle Y_2'-m_2,\psi_j\rangle_{L^2(\pi)}$; as for
$\eta_j$ in the left-side argument, $\E[\eta_j']=0$.

For each $s\in T$, the same Fubini computation used to
obtain~\eqref{eq:appE_per_j} gives
\begin{align*}
\cov\bigl(\eta_j',Y_1'(s)\bigr)
&=
\cov\Bigl(\int_T\bigl(Y_2'(t)-m_2(t)\bigr)\psi_j(t)\,\pi(\dd t),\,Y_1'(s)\Bigr)\\
&=
\int_T\cov\bigl(Y_2'(t),Y_1'(s)\bigr)\,\psi_j(t)\,\pi(\dd t)\\
&=
\int_TK_{12}(s,t)\,\psi_j(t)\,\pi(\dd t)
=
\cK_{12}\psi_j(s),
\end{align*}
using $\cov(Y_1'(s),Y_2'(t))=K_{12}(s,t)$ and the
definition~\eqref{eq:cK12_def} of $\cK_{12}\psi_j$. Since $\E[\eta_j']=0$,
\begin{align*}
\E\bigl[\eta_j'\,(Y_1'(s)-m_1(s))\bigr]
&=
\E[\eta_j'\,Y_1'(s)]-m_1(s)\,\E[\eta_j']\\
&=
\E[\eta_j'\,Y_1'(s)]
=
\cov\bigl(\eta_j',Y_1'(s)\bigr)
=
\cK_{12}\psi_j(s)
\end{align*}
for every $s\in T$. Moreover,
\[
\E\bigl[|\eta_j'|\,\|Y_1'-m_1\|_\infty\bigr]
\le
\bigl(\E\|Y_2'-m_2\|_{L^2(\pi)}^2\bigr)^{1/2}
\bigl(\E\|Y_1'-m_1\|_\infty^2\bigr)^{1/2}
<
\infty
\]
by $|\eta_j'|\le\|Y_2'-m_2\|_{L^2(\pi)}$, Cauchy--Schwarz, and
Fernique's theorem. Hence the $\cX$-valued Bochner integral
$\E[\eta_j'(Y_1'-m_1)]$ equals $\cK_{12}\psi_j$.

Hence, using symmetry of $D^2f(x)$ and linearity in the
first argument,
\begin{align*}
\E\bigl[\eta_j'\,D^2f(x)[Y_1'-m_1,\psi_j]\bigr]
&=
D^2f(x)\bigl[\E[\eta_j'(Y_1'-m_1)],\psi_j\bigr]\\
&=
D^2f(x)[\cK_{12}\psi_j,\psi_j]
=
D^2f(x)[\psi_j,\cK_{12}\psi_j].
\end{align*}
Summing over $j\le n$ and using bilinearity on the resulting finite sum,
\begin{equation}
\label{eq:appE_rhs_finite_n}
\begin{aligned}
Y_2'^{(n)}&\coloneqq m_2+\sum_{j\le n}\eta_j'\psi_j;\\
\sum_{j\le n}D^2f(x)[\psi_j,\cK_{12}\psi_j]
&=
\E\bigl[D^2f(x)[Y_1'-m_1,Y_2'^{(n)}-m_2]\bigr].
\end{aligned}
\end{equation}
By~\eqref{eq:appE_parseval_Y2} applied to $Y_2'$, for every realization
$\omega$, $Y_2'^{(n)}(\omega)\to Y_2'(\omega)$ in $L^2(\pi)$-norm
By
Lemma~\ref{lem:df_d2f_l2pi_extension}, applied with the fixed direction
$Y_1'-m_1$ in the first slot,
\begin{align*}
&\bigl|D^2f(x)[Y_1'-m_1,Y_2'^{(n)}-m_2]-D^2f(x)[Y_1'-m_1,Y_2'-m_2]\bigr|\\
&=
\bigl|D^2f(x)[Y_1'-m_1,Y_2'^{(n)}-Y_2']\bigr|\\
&\le
2e^{2\|x\|_\infty}\,\|Y_1'-m_1\|_{L^2(\pi)}
\bigl\|Y_2'^{(n)}-Y_2'\bigr\|_{L^2(\pi)},
\end{align*}
which tends to $0$ pathwise, for every $\omega$, by the convergence just
noted. By Bessel's inequality (as for $Y_2$ in the left-side argument),
$\|Y_2'^{(n)}-m_2\|_{L^2(\pi)}\le\|Y_2'-m_2\|_{L^2(\pi)}$, so, by the
triangle inequality, $\|Y_2'^{(n)}-Y_2'\|_{L^2(\pi)}\le
2\|Y_2'-m_2\|_{L^2(\pi)}$, so, for every $n$, the random variable
\[
\bigl|D^2f(x)[Y_1'-m_1,Y_2'^{(n)}-m_2]-D^2f(x)[Y_1'-m_1,Y_2'-m_2]\bigr|
\]
whose limit (in $n$) was just shown to be $0$ pathwise is dominated by
\begin{equation}
\label{eq:appE_rhs_dct_dominating}
4e^{2\|x\|_\infty}\,\|Y_1'-m_1\|_{L^2(\pi)}\,\|Y_2'-m_2\|_{L^2(\pi)},
\end{equation}
integrable by Cauchy--Schwarz and Fernique's theorem. Dominated
convergence applied to~\eqref{eq:appE_rhs_finite_n},
with~\eqref{eq:appE_rhs_dct_dominating} as the dominating random
variable, gives
\begin{align*}
\lim_{n\to\infty}\sum_{j\le n}D^2f(x)[\psi_j,\cK_{12}\psi_j]
&=
\E\bigl[D^2f(x)[Y_1'-m_1,Y_2'-m_2]\bigr]\\
&=
\bigl\langle D^2f(x),K_{12}\bigr\rangle,
\end{align*}
which is~\eqref{eq:appE_rhs_fixed_x}.

For the uniform bound, Lemma~\ref{lem:df_d2f_l2pi_extension} applied
to~\eqref{eq:appE_rhs_finite_n}, together with Bessel's inequality
($\|Y_2'^{(n)}-m_2\|_{L^2(\pi)}\le\|Y_2'-m_2\|_{L^2(\pi)}$), gives
\begin{align*}
\Bigl|\sum_{j\le n}D^2f(x)[\psi_j,\cK_{12}\psi_j]\Bigr|
&=
\bigl|\E\bigl[D^2f(x)[Y_1'-m_1,Y_2'^{(n)}-m_2]\bigr]\bigr|\\
&\le
2e^{2\|x\|_\infty}\,\E\bigl[\|Y_1'-m_1\|_{L^2(\pi)}\|Y_2'-m_2\|_{L^2(\pi)}\bigr]
=
2e^{2\|x\|_\infty}\,C_0,
\end{align*}
for every $n$, where
$C_0\coloneqq\E\bigl[\|Y_1'-m_1\|_{L^2(\pi)}\|Y_2'-m_2\|_{L^2(\pi)}\bigr]<\infty$
(finite by Cauchy--Schwarz and Fernique's theorem) is a \emph{fixed}
constant, depending on neither $x$ nor $n$: the auxiliary copy
$(Y_1',Y_2')$ has already been integrated out in bounding the left-hand
side, which is itself a deterministic number once $x$ is fixed. This
bound holds for \emph{every} $x\in\cX$; setting $x=Y_1$ turns it into an
almost-sure bound, uniform in $n$:
\begin{equation}
\label{eq:appE_rhs_uniform_bound}
\Bigl|\sum_{j\le n}D^2f(Y_1)[\psi_j,\cK_{12}\psi_j]\Bigr|
\le
2C_0\,e^{2\|Y_1\|_\infty}
\qquad\text{a.s., for every }n.
\end{equation}

\emph{Conclusion.} The right side of~\eqref{eq:appE_rhs_uniform_bound},
depending on $Y_1$ alone, is integrable by Fernique's theorem, so
dominated convergence permits exchanging $\lim_n$ and the outer
expectation over $Y_1$ in~\eqref{eq:appE_reduced}: using
\eqref{eq:appE_rhs_fixed_x} at $x=Y_1$,
\begin{align*}
\E[Df(Y_1)[Y_2-m_2]]
&=
\E\Bigl[\lim_{n\to\infty}\sum_{j\le n}D^2f(Y_1)[\psi_j,\cK_{12}\psi_j]\Bigr]\\
&=
\E\bigl[\langle D^2f(Y_1),K_{12}\rangle\bigr]
=
\bigl\langle\E[D^2f(Y_1)],K_{12}\bigr\rangle,
\end{align*}
where the last equality follows by linearity of the contraction in its
first argument. This is~\eqref{eq:appE_centered_goal}, which, as noted at the start of
the proof, gives the identity of the lemma.

\end{proof}

\section{\texorpdfstring{Deriving the Gaussian Location Log-Likelihood\\via
the Cameron--Martin Formula}{Deriving the Gaussian Location Log-Likelihood
via the Cameron--Martin Formula}}
\label{app:gaussian_loglik_derivation}

We now justify the claim, made after Definition~\ref{def:gaussian_loc}, that
the Cameron--Martin formula identifies the Gaussian location log-likelihood
process. Throughout this appendix, we take the locally convex space in
Appendix~\ref{app:gaussian_measures_banach} to be a real separable Hilbert
space $\cH$. Since $\cH$ is locally convex, the results of that appendix
apply directly in the present setting.

For the observation-model realization considered in this appendix, assume in
addition to Definition~\ref{def:gaussian_loc} that $K:\cH\to\cH$ is a
covariance operator; in particular, it is bounded, self-adjoint, nonnegative,
and trace class. Write
$\gamma\coloneqq \normal(0,K)=\mathsf{Law}(\xi)$ for
$\xi\sim\normal(0,K)$, viewed as a
centered Gaussian measure on $\cH$.

\begin{lemma}[Gaussian location density]
\label{lem:gaussian_shift_density}
Let $\eta:\Theta\to\cH$ be the feature map in
Definition~\ref{def:gaussian_loc}. For every $t\in T$, the Gaussian measures
$\normal(K\eta(t),K)$ and $\normal(0,K)$ on $\cH$ are equivalent, and
\begin{equation}
\label{eq:hilbert_RN_density}
\frac{\dd \normal(K\eta(t),K)}{\dd \normal(0,K)}(z)
=
\exp\Bigl\{\langle z,\eta(t)\rangle_\cH-\tfrac12\langle
K\eta(t),\eta(t)\rangle_\cH\Bigr\},
\end{equation}
for $\normal(0,K)$-a.e.\ $z$.
\end{lemma}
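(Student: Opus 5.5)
The plan is to apply the Cameron--Martin formula, Theorem~\ref{thm:cameron_martin_formula}, to $\gamma=\normal(0,K)$ with shift $h\coloneqq K\eta(t)$. The translate $\gamma_h=\gamma(\cdot-h)$ is exactly $\normal(K\eta(t),K)$. It therefore suffices to prove two things: that $h=R_\gamma(g)$ for an explicit $g\in\cH_\gamma^*$, and that $|h|_{H(\gamma)}^2=\langle K\eta(t),\eta(t)\rangle_\cH$. Equivalence of the two measures and the density~\eqref{eq:hilbert_RN_density} then follow directly from~\eqref{eq:cameron_martin_formula}.

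\emph{Step 1 (mean and covariance on $\cH$).} By the Riesz representation theorem, $\cH^*\cong\cH$ via $f_u(z)=\langle z,u\rangle_\cH$. Since $\gamma$ is centered, $a_\gamma(f_u)=0$. The covariance is $R_\gamma(f_u)(f_v)=\E\langle\xi,u\rangle\langle\xi,v\rangle=\langle Ku,v\rangle_\cH$, and $\cH^*\subset L^2(\gamma)$ because each $f_u(\xi)$ is a one-dimensional Gaussian.

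\emph{Step 2 (associated element).} Take $g\coloneqq f_{\eta(t)}$, that is, $g(z)=\langle z,\eta(t)\rangle_\cH$. This is a centered element of $\cH^*$, so it lies in $\cH_\gamma^*$. Checking the defining relation~\eqref{eq:hat_h_characterization}: for every $u\in\cH$,
\[
\int_\cH f_u(z)\,g(z)\,\gamma(\dd z)=\langle K\eta(t),u\rangle_\cH=f_u(K\eta(t))=f_u(h).
\]
Under the identification of $\cH$ with its image in $(\cH^*)'$, this gives $h=R_\gamma(g)$, and hence $\widehat h=g$.

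\emph{Step 3 (norm).} By Lemma~\ref{lem:banach_cameron_martin_char},
\[
|h|_{H(\gamma)}^2=\|g\|_{L^2(\gamma)}^2=\E\langle\xi,\eta(t)\rangle_\cH^2=\langle K\eta(t),\eta(t)\rangle_\cH,
\]
which is finite because $K$ is bounded.

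Substituting Steps 2 and 3 into~\eqref{eq:cameron_martin_formula} gives~\eqref{eq:hilbert_RN_density} for $\gamma$-a.e.\ $z$. Theorem~\ref{thm:cameron_martin_dichotomy}, or equally Theorem~\ref{thm:cameron_martin_formula} itself, supplies the equivalence of the two measures.

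The only delicate point is the bookkeeping in Step 2. One has to make sure that $g$, which is a continuous functional, is genuinely an element of $\cH_\gamma^*$ and not merely of its closure. This holds because $g$ is a centered element of $\cH^*$. One also has to read the identity $h=R_\gamma(g)$ correctly through the canonical embedding of $\cH$ into $(\cH^*)'$. Trace-class and nonnegativity of $K$ enter only to guarantee that $\normal(0,K)$ is a well-defined Radon Gaussian measure on $\cH$.
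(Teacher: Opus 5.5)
Your proposal is correct and follows the paper's proof. You apply Theorem~\ref{thm:cameron_martin_formula} to $\gamma=\normal(0,K)$ with $h=K\eta(t)$, and you identify the associated element $\widehat h(z)=\langle z,\eta(t)\rangle_\cH$ through $R_\gamma(\widehat h)(f_v)=\langle K\eta(t),v\rangle_\cH$. The only difference is that you read off $|h|_{H(\gamma)}^2=\|\widehat h\|_{L^2(\gamma)}^2=\langle K\eta(t),\eta(t)\rangle_\cH$ from Lemma~\ref{lem:banach_cameron_martin_char}. The paper instead computes the defining supremum by hand, using Cauchy--Schwarz through $K^{1/2}$ and the maximizer $v=\eta(t)/\langle K\eta(t),\eta(t)\rangle_\cH^{1/2}$, with the degenerate case handled separately; your shortcut is legitimate.
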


\begin{proof}
Fix $t\in T$, and set $u\coloneqq\eta(t)$ and $h_t\coloneqq Ku$. We first
verify directly from the definition that $h_t\in H(\gamma)$, where
$\gamma=\normal(0,K)$. By the Riesz representation theorem, every
$f\in\cH^*$ has the form $f_v(z)=\langle z,v\rangle_\cH$ for a unique
$v\in\cH$. The covariance identity gives
\[
R_\gamma(f_v)(f_v)
=
\E\langle\xi,v\rangle_\cH^2
=
\langle Kv,v\rangle_\cH.
\]
Since $f_v(h_t)=\langle Ku,v\rangle_\cH$, the definition of the
Cameron--Martin norm specializes in the present setting to
\[
|h_t|_{H(\gamma)}
=
\sup\bigl\{\langle Ku,v\rangle_\cH:
v\in\cH,\ \langle Kv,v\rangle_\cH\le1\bigr\}.
\]
Since $K$ is nonnegative and self-adjoint, the Cauchy--Schwarz inequality
in $\cH$ yields
\[
|f_v(h_t)|
=
|\langle Ku,v\rangle_\cH|
=
|\langle K^{1/2}u,K^{1/2}v\rangle_\cH|
\le
\langle Ku,u\rangle_\cH^{1/2}
\langle Kv,v\rangle_\cH^{1/2}.
\]
Set $c\coloneqq\langle Ku,u\rangle_\cH$. For every $v\in\cH$ satisfying
$\langle Kv,v\rangle_\cH\le1$, the preceding inequality gives
\[
f_v(h_t)\le |f_v(h_t)|\le c^{1/2}.
\]
Taking the supremum in the preceding representation of
$|h_t|_{H(\gamma)}$ yields
\[
|h_t|_{H(\gamma)}\le c^{1/2}<\infty,
\]
so $h_t\in H(\gamma)$. We now prove the reverse inequality. If $c>0$, take
$v_0=u/c^{1/2}$. Then
\[
R_\gamma(f_{v_0})(f_{v_0})
=
\langle Kv_0,v_0\rangle_\cH
=1,
\qquad
f_{v_0}(h_t)
=
\langle Ku,v_0\rangle_\cH
=c^{1/2}.
\]
Thus $f_{v_0}$ is included in the defining supremum, which implies
$|h_t|_{H(\gamma)}\ge c^{1/2}$. Combining the two inequalities gives
$|h_t|_{H(\gamma)}=c^{1/2}$. If $c=0$, then
$\|K^{1/2}u\|_\cH^2=0$, so $Ku=K^{1/2}(K^{1/2}u)=0$ and the same equality
holds. Therefore, in either case,
\[
|h_t|_{H(\gamma)}^2
=
\langle Ku,u\rangle_\cH
=
\langle K\eta(t),\eta(t)\rangle_\cH.
\]

We next identify the element associated with $h_t$. Define
$g_u(z)\coloneqq\langle z,u\rangle_\cH$. Since $g_u$ is a centered
continuous linear functional in $L^2(\gamma)$, it belongs to
$\cH_\gamma^*$. For every $v\in\cH$,
\[
R_\gamma(g_u)(f_v)
=
\E\bigl[\langle\xi,u\rangle_\cH\langle\xi,v\rangle_\cH\bigr]
=
\langle Ku,v\rangle_\cH
=
f_v(h_t).
\]
Thus, under the canonical identification of $\cH$ with its image in
$(\cH^*)'$, we have $h_t=R_\gamma(g_u)$. Hence, in the notation following
Lemma~\ref{lem:banach_cameron_martin_char}, the associated element is
\[
\widehat{h}_t(z)=g_u(z)=\langle z,\eta(t)\rangle_\cH.
\]

It remains to apply the Cameron--Martin formula. By definition,
$\gamma_{h_t}=\gamma(\,\cdot-h_t)$ is the law of $\xi+h_t$. Adding the
deterministic vector $h_t=K\eta(t)$ changes the mean of $\xi$ but not its
covariance, and therefore
\[
\gamma_{h_t}
=
\mathsf{Law}(\xi+K\eta(t))
=
\normal(K\eta(t),K).
\]
We have shown above that $h_t=R_\gamma(g_u)$, so the hypotheses of
Theorem~\ref{thm:cameron_martin_formula} are satisfied. That theorem first
gives $\gamma_{h_t}\sim\gamma$ and then gives, for $\gamma$-a.e.\ $z$,
\begin{align*}
\frac{\dd \normal(K\eta(t),K)}{\dd \normal(0,K)}(z)
&=
\frac{\dd\gamma_{h_t}}{\dd\gamma}(z) \\
&=
\exp\left\{\widehat{h}_t(z)-\frac12|h_t|_{H(\gamma)}^2\right\} \\
&=
\exp\Bigl\{\langle z,\eta(t)\rangle_\cH
-\tfrac12\langle K\eta(t),\eta(t)\rangle_\cH\Bigr\}.
\end{align*}
This proves~\eqref{eq:hilbert_RN_density}. The theorem also gives the
equivalence $\normal(K\eta(t),K)\sim\normal(0,K)$.
\end{proof}

Equation~\eqref{eq:hilbert_RN_density} identifies the Radon--Nikodym
derivative only for $\gamma_K$-a.e.\ $z$, where
$\gamma_K=\normal(0,K)$. Here the
qualifier ``$\gamma_K$-a.e.'' refers to the point $z$ in the observation
space $\cH$. To evaluate the density pointwise, for each $t\in T$ define
\[
L_t(z)
\coloneqq
\exp\Bigl\{\langle z,\eta(t)\rangle_\cH
-\tfrac12\langle K\eta(t),\eta(t)\rangle_\cH\Bigr\},
\qquad z\in\cH.
\]
The function $L_t$ is continuous, and Lemma~\ref{lem:gaussian_shift_density}
shows that it is a version of the Radon--Nikodym derivative in
\eqref{eq:hilbert_RN_density}. We henceforth use this particular continuous
version. Its values are uniquely determined on
$\operatorname{supp}(\gamma_K)$: if two continuous versions differed at a
point of the support, continuity would make them differ on an open
neighborhood of that point, which has positive $\gamma_K$-measure by the
definition of support, contradicting their equality for $\gamma_K$-a.e.\
$z$. Thus,
for every $z\in\operatorname{supp}(\gamma_K)$, the density in
\eqref{eq:hilbert_RN_density} is evaluated as $L_t(z)$.

We now verify that the data-generating observation lies in this support.
Under the data-generating law $Q_\theta$, let
$Z(\omega)=A\eta(\theta)+\xi(\omega)$, where
$\xi\sim\normal(0,\Sigma)$, and set
$\gamma_\Sigma=\normal(0,\Sigma)$. Denote the Cameron--Martin spaces of
$\gamma_K$ and $\gamma_\Sigma$ by $H(\gamma_K)$ and $H(\gamma_\Sigma)$,
respectively.
By~\cite[Theorem 3.6.1]{bogachev:1998gaussian},
\[
\operatorname{supp}(\gamma_K)=\overline{H(\gamma_K)},
\qquad
\operatorname{supp}(\gamma_\Sigma)=\overline{H(\gamma_\Sigma)}.
\]
Suppose that
\[
\operatorname{Range}(A)\subseteq\overline{H(\gamma_K)},
\qquad
\overline{H(\gamma_\Sigma)}\subseteq\overline{H(\gamma_K)}.
\]
Then $A\eta(\theta)\in\overline{H(\gamma_K)}$ and
$\xi(\omega)\in\overline{H(\gamma_\Sigma)}
\subseteq\overline{H(\gamma_K)}$ $Q_\theta$-a.s.\ Since
$\overline{H(\gamma_K)}$ is a closed linear subspace of $\cH$, it follows
that
\[
Q_\theta\!\left(
 \left\{\omega:
 Z(\omega)\in\overline{H(\gamma_K)}
 =\operatorname{supp}(\gamma_K)
 \right\}
\right)=1.
\]
Thus the qualifier ``$Q_\theta$-a.s.'' concerns the random outcome $\omega$,
whereas ``$\gamma_K$-a.e.'' above concerns the point $z$. On the displayed
$Q_\theta$-probability-one event, the continuous version fixed above can be
evaluated at $Z(\omega)$, and we define
\[
Y(t,\omega)
\coloneqq
\log L_t(Z(\omega))
=
\langle Z(\omega),\eta(t)\rangle_\cH
-\tfrac12\langle K\eta(t),\eta(t)\rangle_\cH.
\]
The process is therefore well defined $Q_\theta$-a.s., has the mean and
covariance structure of Definition~\ref{def:gaussian_loc}, and is the
Gaussian location log-likelihood process asserted there.

\end{appendix}

\end{document}